\let\mathcal=\CMcal
\def\matheu{\EuScript}
\let\realemph=\emph
\def\@emphi{\@ifnextchar[{\@emphiA}{\@emphiB}}
\def\@emphiA[#1]{\index{#1}\@emphi}
\def\@emphiB#1{\realemph{#1}}
\def\@emphiC<#1!#2>{\index{#1!#2}\index{#2!#1}\@emphi}
\def\@emphiN#1{\realemph{#1}}
\def\emph{\@ifnextchar[{\@emphi}{\@ifnextchar<{\@emphiC}{\@emphiN}}}
\def\symbol#1#2#3{%
  \newglossaryentry{symb:#1}{%
    name={#2},description={#3},sort={#1},type=symbols%
  }\glsadd{symb:#1}%
}
\def\exchangegraphsymbol#1#2#3{%
  \newglossaryentry{exgr:#1}{%
    name={#2},description={#3},sort={#1},type=exgraphs%
  }\glsadd{exgr:#1}%
}
  \def\glossarysection[##1]##2{\textbf{##2}\par\nopagebreak}%
  \renewcommand*{\glsgroupheading}[1]{}%
  \renewcommand*{\glsnamefont}{\unboldmath\let\mathbb=\mathbbnormal}%
\def\Rtext#1{\def\mybreakright{#1}}
\def\mybreakright{}\let\R=\Rtext%
\theoremstyle{break}
\theoremstyle{nonumberplain}
\newtheorem{proof}{Proof}
\numberwithin{equation}{section}
\setlist[enumerate]{topsep=0pt, labelsep=1ex}
\setlist[description]{font=\normalfont,topsep=0pt}
\setlist[enumerate,1]{label=(\roman*), ref={\thetheorem\,(\roman*)}}
\DeclareRobustCommand*{\bfseries}{%
  \not@math@alphabet\bfseries\mathbf
  \fontseries\bfdefault\selectfont
  \boldmath\let\mathbbnormal=\mathbb\let\mathbb=\mathbbm
}
\tikzset{
  external/prefix={cache/},
  external/optimize=true,
}
\newbox\tikzCollectBox
\tikzset{
  phantom text/.style={
    execute at begin node={\setbox\tikzCollectBox=\hbox\bgroup},
    execute at end node={\egroup\phantom{\copy\tikzCollectBox}},
  },
  no text/.style={
    execute at begin node={\setbox\tikzCollectBox=\hbox\bgroup},
    execute at end node={\egroup},
  },
  tangent/.style 2 args={
    decoration={
      markings,
      mark=
      at position #1
      with
      {
        \coordinate (#2 point) at (0pt,0pt);
        \coordinate (#2 unit vector) at (1,0pt);
        \coordinate (#2 orthogonal unit vector) at (0pt,1);
      }
    },
    postaction=decorate
  },
  partial ellipse/.style args={#1:#2:#3}{
    insert path={+ (#1:#3) arc (#1:#2:#3)}
  },
}
\tikzstyle{dash dot dot}=[dash pattern=on 3pt off 2pt on \the\pgflinewidth off 2pt on \the\pgflinewidth off 2pt]
\colorlet{dred}{red}
\colorlet{dblue}{blue}
\tikzset{
  graphbase/.style={
    X/.style={
      draw, black, semithick
    },
    R/.style={
      draw, dred, thick,
      dash pattern=on 10pt off 1pt, dash phase=5pt,
      line join=round, line cap=butt,
    },
    B/.style={
      draw, dblue, thick,
      dash pattern={},
      line join=round, line cap=butt,
    },
    Rthick/.style={
      dred!25, line width=8pt, line join=round, line cap=round,
    },
    Bthick/.style={
      dblue!25, line width=8pt, line join=round, line cap=round,
    },
    Xsemithick/.style={
      black!25, line width=6pt, line join=round, line cap=round,
    },
    alabel/.style={inner sep=2pt},
    vdot/.style={draw,shape=circle,fill=white,inner sep=2.5pt,text depth=0pt},
    vlabel/.style={auto,inner sep=1pt},
    ndot/.style={inner sep=0,outer sep=3pt},
  },
  graphfinal/.style={
    graphbase,
    odot/.style={no text,draw=black,solid,shape=circle,fill=black,inner sep=0,minimum size=1.5mm},
    odotsmall/.style={no text,draw=black,solid,shape=circle,fill=black,inner sep=1pt},
    elabel/.style={no text},
    UElabel/.style={sloped, font=\footnotesize},
  },
  graphnumber/.style={
    graphbase,
    odot/.style={draw,solid,shape=circle,inner sep=1pt,minimum size=1.5mm},
    elabel/.style={shape=circle,fill=white,inner sep=1pt},
  },
  taugraph/.style={
    vdot/.style={
      draw,shape=circle,fill=white,inner sep=2.5pt,text depth=0pt
    },
    BigNode/.style={
      draw, fill=black!5,
      shape=circle, transform shape,
      no text,
    },
    X/.style={
      black, thick,
    },
    R/.style={
      dred, thick,
      dash pattern=on 10pt off 1pt, dash phase=5pt,
    },
    B/.style={
      dblue, thick,
      dash pattern={},
    },
    BR/.style={
      dblue, thick, dash pattern=on 5pt off 7pt,
      postaction={
        draw, dred, thick, dash pattern=on 5pt off 7pt, dash phase=6pt,
      },
      BigLabel/.append style={text=blue!50!red},
    },
    BigLabel/.style={
      sloped, font=\boldmath\footnotesize, inner sep=1pt, fill=white,
      allow upside down=true,
    },
    SmallNode/.style={
      inner sep=1pt,
      font=\tiny\bfseries,
      draw, shape=circle,
      no text, fill=black,
    },
    SmallLabel/.style={
      font=\scriptsize\bfseries, inner sep=1pt, outer sep=0pt, auto,
    },
  },
}
\def\CompleteFourWheelCoordinates#1{
  \begin{scope}[
    shift=(#1.center),
    scale=0.7]

    \coordinate (0) at (0,0);
    \coordinate (1) at (0,1);
    \coordinate (2) at (-1,0);
    \coordinate (3) at (0,-1);
    \coordinate (4) at (1,0);
  \end{scope}
}
\def\CompleteFourWheel#1#2{
  \begin{scope}[
    shift=(#1.center),
    scale=0.7,
    SmallLabel0/.style={SmallLabel, yshift=-0.5mm},
    SmallLabel1/.style={SmallLabel, swap, inner sep=0.5pt},
    SmallLabel2/.style={SmallLabel, xshift=0.5mm},
    SmallLabel3/.style={SmallLabel, swap, inner sep=0.5pt},
    SmallLabel4/.style={SmallLabel, yshift=0.5mm},
    SmallLabel5/.style={SmallLabel, swap, inner sep=0.5pt},
    SmallLabel6/.style={SmallLabel, xshift=-0.5mm},
    SmallLabel7/.style={SmallLabel, swap, inner sep=0.5pt},
    ]

    \node (0) [at={(0,0)}, SmallNode] {0};
    \node (1) [at={(0,1)}, SmallNode] {1};
    \node (2) [at={(-1,0)}, SmallNode] {2};
    \node (3) [at={(0,-1)}, SmallNode] {3};
    \node (4) [at={(1,0)}, SmallNode] {4};

    \def\sA{{0,1,0,2,0,3,0,4}}
    \def\tA{{1,2,2,3,3,4,4,1}}

    \foreach \x [count=\i from 0] in {#2} {
      \pgfmathtruncatemacro{\s}{\sA[\i]}
      \pgfmathtruncatemacro{\t}{\tA[\i]}
      \draw[\x] (\s) -- node[SmallLabel\i] {\i} (\t);
    }
  \end{scope}
}
\def\listalgorithmcfname{List of Algorithms}
\let\chapter=\section 
\newcommand{\lSetKwArray}[2]{%
  \expandafter\def\csname @#1\endcsname##1{\DataSty{#2[}\ArgSty{##1}\DataSty{]}}%
  \expandafter\def\csname#1\endcsname{%
    \@ifnextchar\bgroup{\csname @#1\endcsname}{\DataSty{#2}\xspace}}%
}
\mathchardef\ordinarycolon\mathcode`\:
\def\xnoarrowfill@#1{%
  \setboxz@h{$#1-\m@th$}\ht\z@\z@%
  $#1\m@th\vphantom{\rightarrow}\copy\z@\mkern-6mu\cleaders%
  \hbox{$#1\mkern-2mu\copy\z@\mkern-2mu$}\hfill%
  \mkern-6mu\box\z@$}
\newcommand{\xnoarrow}[2][]{%
  \ext@arrow 3095\xnoarrowfill@{#1}{#2}}
\def\arr#1{[\,#1\,]}
\def\contr{\mathbin{/}}
\def\dotcup{\mathbin{\dot\cup}}
\def\xchgarrow#1#2{\underset{\smash{#2}}{\xrightarrow{\smash{#1}}}}
\def\UExchgarrow#1#2{\underset{\smash{\text{UE}\;#2}}{\xrightarrow{\smash{#1}}}}
\def\UEbixchgarrow#1#2{\underset{\smash{\text{UE}\;#2}}{\xnoarrow{\smash{#1}}}}
\begin{document}


\pagestyle{empty} 

\begin{titlepage}

  \begin{center}\large

    \vspace*{5mm}

    \centerline{\includegraphics[height=15mm]{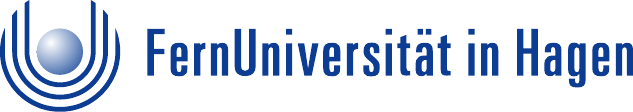}}

    \vfill

    Diploma Thesis
    \vspace*{2cm}

    {\bfseries\huge On the Structure of the Graph of \\ Unique Symmetric Base Exchanges of \\ Bispanning Graphs \par}

    \vfill

    Timo Bingmann

    \vspace*{15mm}

    Karlsruhe, January 14th, 2016

    \vspace*{30mm}

    \begin{tabular}{rl}
      Advisor: & Prof.\ Dr.\ Winfried Hochstättler \\
    \end{tabular}

    \vspace*{10mm}

    Chair for Discrete Mathematics and Optimization \\
    Department of Mathematics and Computer Science \\
    FernUniversität in Hagen

    \vspace*{12mm}
  \end{center}

\end{titlepage}


\pagestyle{plain}

\vspace*{0pt}\vfill

\hrule\medskip

Hiermit versichere ich, dass ich diese Arbeit selbständig verfasst und keine anderen, als die angegebenen Quellen und Hilfsmittel benutzt und die wörtlich oder inhaltlich übernommenen Stellen als solche kenntlich gemacht habe.

\bigskip

\noindent
Karlsruhe, den 14. Januar 2016


\vspace*{5cm}

\clearpage


\vspace*{0pt}\vfill

\selectlanguage{english}
\centerline{\bfseries Abstract}

Bispanning graphs are undirected graphs with an edge set that can be decomposed into two disjoint spanning trees.  The operation of symmetrically swapping two edges between the trees, such that the result is a different pair of disjoint spanning trees, is called an edge exchange or a symmetric base exchange.  The graph of symmetric base exchanges of a bispanning graph contains a vertex for every valid pair of disjoint spanning trees, and edges between them to represent all possible edge exchanges. We are interested in a restriction of these graphs to only unique symmetric base exchanges, which are edge exchanges wherein selecting one edge leaves only one choice for selecting the other. In this thesis, we discuss the structure of the graph of unique symmetric edge exchanges, and the open question whether these are connected for all bispanning graphs.

This abstract problem can be nicely rephrased into a coloring game with two players: Alice and Bob are given a bispanning graph colored with two disjoint spanning trees, and Alice gets to flip the color of any edge. This creates a cycle in one color and a cut in the other, and Bob must then flip a different edge to repair the constraint that both colors represent disjoint spanning trees. Alice's objective is to invert the color of all edges in the graph, and Bob's to prevent this. We are interested in whether Alice can find a sequence of unique edge exchanges for any bispanning graph, since these leave Bob no choice in which edge to select, hence allowing Alice to win with certainty.

In this thesis, we first define and discuss the properties of bispanning graphs in depth. Intuitively, these are locally dense enough to allow the two disjoint spanning trees to reach all vertices, but sparse enough such that disjoint edge sets do not contain cycles. The whole class of bispanning graphs can be inductively constructed using only two operations, which makes the class tractable for inductive proofs.

We then describe in detail directed, undirected, and simplified versions of edge exchange graphs, first with unrestricted edge exchanges, and then with the restriction to unique symmetric base exchanges. These exchange graphs are related to a set of conjectures put forth by White in 1980 on base exchanges in matroids, and also to conjectures on cyclic base orderings of matroids. To date, these conjectures have not been proven in full generality, despite overwhelming computational evidence.

As steps towards showing the conjecture that the graph of unique symmetric base exchanges is connected for all bispanning graphs, we prove a composition method to construct the unique exchange graph of any bispanning graph from the exchange graphs of smaller bispanning graphs. Furthermore, using a computer program developed alongside this thesis, we are able to enumerate and make statements about all small bispanning graphs and their exchanges graphs.

Our composition method classifies bispanning graphs by whether they contain a non-trivial bispanning subgraph, and by vertex- and edge-connectivity. For bispanning graphs containing a non-trivial bispanning subgraph, we prove that the unique exchange graph is the Cartesian graph product of two smaller exchange graphs. For bispanning graphs with vertex-connectivity two, we show that the bispanning graph is the $2$-clique sum of two smaller bispanning graphs, and that the unique exchange graph can be built by joining their exchange graphs and forwarding edges at the join seam. And for all remaining bispanning graphs, we prove a composition method at a vertex of degree three, wherein the unique exchange graph is constructed from the exchange graphs of three reduced bispanning graphs.

We conclude this thesis with ideas and evidence for future approaches to proving the connectivity of the unique exchange graphs and show the most difficult bispanning graphs instances.

\vfill\vfill\vfill
\clearpage

\vspace*{0pt}\vfill

\selectlanguage{german}
\centerline{\bfseries Zusammenfassung}

Bispannende Graphen sind ungerichtete Graphen, deren Kantenmenge sich in zwei disjunkte aufspannende Bäume zerlegen lässt. Man nennt einen symmetrischen Tausch von zwei Kanten zwischen den beiden Bäumen einen zulässigen Kantentausch oder einen symmetrischen Basenwechsel, wenn das Ergebnis ein anderes Paar disjunkter aufspannender Bäume ist. Der Graph der symmetrischen Basenwechsel eines bispannenden Graphen enthält einen Knoten für jedes gültige Paar disjunkter aufspannender Bäume und eine Kante für jeden zulässigen Kantentausch. Wir interessieren uns für die Einschränkung dieses Graphen auf zwingende symmetrische Basenwechsel, bei denen durch die Wahl eines der Tauschkanten die andere eindeutig bestimmt wird. Die vorliegende Arbeit befasst sich mit der Struktur des Graphen der zwingenden symmetrischen Basenwechsel und mit der offenen Frage, ob dieser für alle bispannenden Graphen verbunden ist.

Dieses abstrakte Problem lässt sich anschaulich als Färbungsspiel auf einem Graphen mit zwei Spielern darstellen: Alice und Bob ist ein bispannender Graph gegeben, in dem zwei aufspannende Bäume durch zwei verschiedene Kantenfarben gekennzeichnet sind. Alice darf die Farbe einer Kante tauschen. Hierdurch entsteht ein Kreis in einer Farbe und ein Schnitt in der anderen. Bob muss nun durch Umfärben einer anderen Kanten die Bedingungen wiederherstellen, dass beide Farben zwei disjunkte aufspannende Bäume darstellen. Alices Ziel ist die Farben aller Kanten im Graphen zu tauschen, Bobs dies zu verhindern. Wir interessieren uns dafür, ob Alice eine Folge von zwingenden symmetrischen Basenwechseln finden kann, denn diese zwingen Bob eine bestimmte Kante zu wählen und erlauben es somit Alice mit Sicherheit zu gewinnen.

In dieser Arbeit definieren und diskutieren wir zuerst die Eigenschaften von bispannenden Graphen. Intuitiv sind diese lokal dicht genug, um zwei disjunkte aufspannende Bäume zu zulassen, aber licht genug, dass disjunkte Kantenmengen keine Kreise enthalten. Die Klasse der bispannenden Graphen lässt sich mit nur zwei Operationen induktiv konstruieren, was sie für induktive Beweise greifbar macht.

Dann beschreiben wir im Detail gerichtete, ungerichtete und einfache Varianten von Basenwechselgraphen, zuerst ohne Einschränkung und dann auf zwingende symmetrische Basenwechsel beschränkt. Diese Basenwechselgraphen stehen in Beziehung zu Vermutungen von White aus dem Jahre 1980 und zu weiteren Vermutungen zur zyklischen Basenanordnung von Matroiden. Diese Vermutungen wurden bis heute noch nicht in voller Allgemeinheit bewiesen, trotz einer überwältigenden Anzahl mit Computer verifizierten Beispielen.

Als Schritte um die Vermutung zu zeigen, dass alle Basenwechselgraphen trotz Einschränkung auf zwingende Basenwechsel verbunden sind, beweisen wir eine Methode, um den zwingenden Basenwechselgraphen jedes bispannenden Graphen aus den Basenwechselgraphen kleinerer bispannender Graphen zusammenzusetzen. Für bispannende Graphen, die einen nicht-trivialen bispannenden Teilgraphen enthalten, zeigen wir, dass der zwingende Basenwechselgraph das Kartesische Graphprodukt von zwei kleineren Basenwechselgraphen ist. Für bispannende Graphen mit Knotenzusammenhang zwei können wir beweisen, dass dieser sich als die $2$-Clique-Summe von zwei kleineren bispannenden Graphen darstellen lässt, und dass der zwingende Basenwechselgraph sich durch Zusammenfügen der Basenwechselgraphen dieser beiden konstruieren lässt. Für die übrigen bispannenden Graphen zeigen wir eine Reduktion an einem Knoten mit Grad drei und eine Methode, den zwingende Basenwechselgraphen aus den Basenwechselgraphen von drei reduzierten bispannenden Graphen zu erzeugen.

Als Abschluss der Arbeit diskutieren wir Ideen und Hinweise für zukünftige Ansätze die Verbundenheit des zwingenden Basenwechselgraphen zu beweisen, und verweisen auf die schwierigsten Instanzen bispannender Graphen.

\selectlanguage{english}

\vfill\vfill\vfill
\clearpage


\vspace*{0pt}\vfill

\section*{Acknowledgments and Thanks}

I would like to thank my parents and friends for emotionally supporting me in the long time it took to make this thesis. Special thanks goes to Prof.\ Hochstättler for enabling me to write it with (overly) long periods of interruptions. Additional special thanks go to my Taekwondo sports friends for persistent weekly inquiries about the state of the thesis, and also to my work colleagues for recognizing bispanning graphs as interesting graphs and the deeper problems as worthwhile and demanding.

\vfill\vfill\vfill
\clearpage


\pagestyle{normal}
\renewcommand\sectionmark[1]{\markboth{\thesection\quad\MakeUppercase{#1}}{\thesection\quad\MakeUppercase{#1}}}
\renewcommand\subsectionmark[1]{\markright{\thesubsection\quad\MakeUppercase{#1}}}

\def\contentsname{Table of Contents}
\pdfbookmark[1]{\contentsname}{summary}
\tableofcontents

\clearpage

\pdfbookmark[1]{\listfigurename}{lof}
\listoffigures

\pdfbookmark[1]{\listtablename}{lot}
\listoftables

{
\pdfbookmark[1]{\listalgorithmcfname}{loa}
\parskip=0pt
\listofalgorithms
}

{\def\R#1{}
\pdfbookmark[1]{List of Theorems and Definitions}{lotd}
\section*{List of Theorems and Definitions}
\parskip=0pt

\makeatletter
\def\thm@@thmline@noname#1#2#3#4#5{%
    \ifx\\#5\\%
        \@dottedtocline{-2}{0em}{4em}%
            {\protect\numberline{#2}#3}%
            {#4}%
    \else
        \ifHy@linktocpage\relax\relax
            \@dottedtocline{-2}{0em}{4em}%
                {\protect\numberline{#2}#3}%
                {\hyper@linkstart{link}{#5}{#4}\hyper@linkend}
        \else
            \@dottedtocline{-2}{0em}{4em}%
                {\hyper@linkstart{link}{#5}{\protect\numberline{#2}#3}%
                  \hyper@linkend}%
                {#4}%
        \fi
    \fi}%
\def\thm@@thmline@name#1#2#3#4#5{%
    \ifx\\#5\\%
        \@dottedtocline{-2}{0em}{4em}%
            {#1 \protect\numberline{#2}#3}%
            {#4}
    \else
        \ifHy@linktocpage\relax\relax
            \@dottedtocline{-2}{0em}{4em}%
                {#1 \protect\numberline{#2}#3}%
                {\hyper@linkstart{link}{#5}{#4}\hyper@linkend}%
        \else
            \@dottedtocline{-2}{0em}{6ex}%
                {\hyper@linkstart{link}{#5}%
                  {\hbox to 10.5ex{#1\hfill}\protect\numberline{#2}#3}\hyper@linkend}%
                {#4}%
        \fi
    \fi}
\makeatother

\theoremlisttype{allname}
\listtheorems{theorem,definition,lemma,corollary,remark}
}

\let\Section\section
\def\section{\addtotheoremfile{\protect\medskip}\Section}

\clearpage


\pagestyle{normal}
\renewcommand\sectionmark[1]{\markboth{\thesection\quad\MakeUppercase{#1}}{\thesection\quad\MakeUppercase{#1}}}
\renewcommand\subsectionmark[1]{\markright{\thesubsection\quad\MakeUppercase{#1}}}


\section{Introduction: Unique Base Exchanges as a Coloring Game}

In this thesis we consider whether the restriction to unique or ``forced'' symmetric base exchanges still allows a complete serial exchange of any pair of disjoint spanning trees in bispanning graphs.  The underlying abstract problem is best introduced using a coloring game on a graph.

Let there be two players: Alice and Bob, who play the following game on a special type of graph. The board they play on is a graph whose edge set admits decomposition into exactly two edge-disjoint spanning trees. Such graphs are called \emph{bispanning}, and an example is shown in figure~\ref{fig:game step}. Alice's tree \textcolor{blue}{$T_A$ is colored blue} and Bob's tree \textcolor{red}{$T_B$ is colored red}.

The two players move in turns and Alice gets to start: she selects one edge in the graph and switches the edge's color. Due to the properties of the two spanning trees, switching the color of any edge creates a cycle in one tree and a cut in the other. These violate the constraint that $T_A$ and $T_B$ are spanning trees, and Bob now has to fix this by selecting one edge \emph{different from Alice's} and flipping its color. After the graph is fixed, Alice continues with her next move.

For example, Alice decides to switch the red edge $e_1$ in figure~\ref{fig:game step} to blue. This creates the cycle and cut marked in blue. In this case, Bob has \emph{no choice} but to switch the color of $f_1$ to restore the spanning trees. The reader can verify that no edge other than $e_1$ suffices.

The objective of Alice is to start with $T_A$ and slowly turn her tree into $T_B$, inverting the colors of all edges in the graph. Bob's goal is to prevent Alice from doing so. We invite the reader to play Alice's role using the Java applet at \url{http://panthema.net/2016/uegame/}. The game raises the obvious question: \emph{is there a strategy with which Alice can win on any graph}?

This question is remarkably difficult, and we will consider only strategies where Alice exclusively uses moves which leave Bob \emph{no choice} in the edge he can color. In matroids, these \emph{forced moves} are called unique symmetric base exchanges. Using a computer program, we verified that \emph{Alice can win} for all bispanning graphs with up to 20 vertices. Even more surprising: Alice and Bob never have to color an edge \emph{more than once} on any bispanning graph with up to 12 vertices.

\begin{figure}[b!]\centering


  \def\exampleGraph{
    \coordinate (n0) at (0,0);
    \coordinate (n1) at (-0.8,-0.9);
    \coordinate (n2) at (-2.0,-0.1);
    \coordinate (n3) at (0.9,0.8);
    \coordinate (n4) at (0.2,-0.7);
    \coordinate (n5) at (-1.6,-1.1);
    \coordinate (n6) at (-0.9,0.1);
    \coordinate (n7) at (-0.6,0.9);
    \coordinate (n8) at (1.3,-1.0);
    \coordinate (n9) at (-0.9,-1.5);

    \node (Alice) at (-1.8,0.7) {\includegraphics[height=1cm]{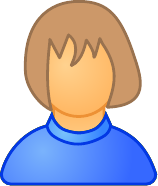}};
    \node (Bob) at (1.6,-0.6) {\includegraphics[height=1cm]{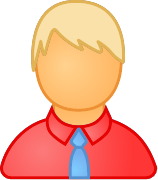}};
  }


  \caption{Example of one round of turns in Alice and Bob's game.}\label{fig:game step}
\end{figure}

A proof that Alice can win for all bispanning graphs will remain open in this thesis; despite overwhelming computation evidence, we could not find a complete proof.  Instead, we consider the structure of the ``super-graphs'' in which edges are moves in the game: each edge resembles a forced exchange done by Alice and Bob. If these \emph{(base) exchange graphs} $\tau_3(G)$ (see section~\ref{sec:exchange graph}) are connected for all bispanning graphs then Alice can win using forced moves.  We show three \emph{composition} methods to construct exchange graphs from known exchange graphs of smaller bispanning graphs. For any bispanning graph, at least one of the three compositions applies: for bispanning graphs containing a \emph{non-trivial bispanning subgraph}, we prove that the exchange graph is the Cartesian graph product of two smaller exchange graphs. For bispanning graphs with \emph{vertex-connectivity two}, we prove that the exchange graph can be build by joining the two smaller exchanges graphs and forwarding edges. And for all remaining atomic bispanning graphs, we prove a composition method at \emph{a vertex of degree three}, wherein the exchange graph is constructed from the exchange graphs of three reduced bispanning graphs.

Adjoint with this thesis, we wrote a computer program which simulates Alice and Bob's game and can construct complete exchange graphs. Using it, we verified all presented theorems about bispanning and exchange graphs. Furthermore, by using a graph enumeration library by McKay and Piperno \cite{McKay201494}, we were able to enumerate all small bispanning graphs and present empirical evidence of various hypothesis in this thesis.

While we can present a composition method for exchange graphs, we could not prove their connectivity using them. This remains unsolved, because conclusions over multiple levels of such compositions are unclear. In the closing section, we discuss four possible future approaches to solving this problem, and then a number of the most ``difficult'' bispanning graphs.


\subsection{Alternative Rules and the Shannon/Lehman Switching Game}

Many difficult abstract problems have been recast as games on graphs~\cite{lason2015coloring}, and sometimes these lead to deeper insights in the field of graph or matroid theory.

Possibly the most famous is the Shannon switching game, also called Lehman's switching game~\cite{lehman1964solution}. In this game, two players ``Cut'' and ``Short'' play on an arbitrary graph with two designated vertices $A$ and $B$. The graph is initially uncolored, and the players move in turns. On Cut's turn, he can delete any uncolored edge, and on Short's turn he can color any remaining edge. If Short can color a path from $A$ to $B$ before Cut disconnects them, then Short wins, otherwise Cut wins. Lehman applied matroid theory to the problem and was the first to arrive at a proper solution. He classified graphs together with designated vertices as either a ``cut game'' if Cut always wins, a ``short game'' if Short always wins, or a ``neutral game'' if the player with the first move wins. A main result is that a graph is a short game, if and only if there is a subgraph which is bispanning and contains both $A$ and $B$. Hence, a bispanning graph is a short game for any pair of vertices. Furthermore, a graph $G$ is a cut game if and only if $G$ augmented with an edge $\{A,B\}$ is not a short game. The remaining combination, when $G$ is not a short game, and $G$ plus $\{A,B\}$ is a short game, characterizes all neutral games.

Comparing the switching game to our bispanning graph game, the decisive difference is that the bispanning constraint must be valid after each round. This greatly restricts the possible moves and makes it more difficult to solve. We can consider what happens to Alice and Bob's game when the rules are slightly changed. For example, what happens if Alice and Bob decide to \emph{cooperate} in inverting the graph? In this scenario, Alice no longer needs forced moves. The resulting game is then rather easy, as many more edge pairs yield valid exchange moves, and the exchange graph is connected~\cite{farber1985edge}.

Another alternative is to \emph{restrict} which edges Alice can choose to force Bob's moves. In our game, Alice may color any edge in the graph. What happens if Alice may only color edges in Bob's tree (or equivalently: only in her tree)? Andres, Hochstättler, and Merkel~\cite{merkel2009basentauschspiel,andres2014base} showed for this restricted game, that an induced complete graph $K_4$ forfeits Alice's goal of inverting the graph. On the other hand, Alice wins in other special graph classes like wheel graphs $W_n$. Even more peculiarly, in some graphs it depends on the initial pairs of trees whether Alice can win. It remains unclear what the precise favorable property of the graph must be for Alice.


\subsection{From Coloring Games to Problems on Matroids}

Our bispanning graph game is related to one of the many abstract problems proposed by White in 1980 on graphs and matroids~\cite{white1980unique}. These are stated in the language of symmetric base exchanges on matroids (see section~\ref{sec:matroids} for more on matroids).

White distinguishes unique base exchanges, of which the ``forced'' moves in Alice and Bob's game are examples, from ordinary symmetric exchanges, which he calls transitive exchanges and are not necessarily unique. Furthermore, he describes subset exchanges and single-element serial exchange sequences. The main objective of White's paper is to characterize the matroid classes, within which unique or transitive base exchanges are sufficient to transform any sequence of bases into any other. We will review his conjectures and results in section~\ref{sec:matroids}. The paper states two main conjectures: that unique base exchanges suffice for \emph{all regular} matroids, and that transitive base exchanges suffice for \emph{all} matroids.

The second conjecture about transitive base exchanges is regarded as important for algebraic geometry, and was quickly reformulated in algebraic terms~\cite{blasiak2008toric}: that the toric ideal of matroid bases is generated by quadric binomials. Blasiak showed in 2008 that this is true for all graphic matroids~\cite{blasiak2008toric}. Bonin extended this in 2013 to all sparse paving matroids~\cite{bonin2013basis}. Thereafter, Laso{\'n} and Micha{\l}ek proved the conjecture for strongly base orderable matroids~\cite{lason2014toric}, and up to saturation, i.e., the saturation of both ideals are equal.

Much less is known about unique exchanges. Even though considerable computational evidence supports the conjuncture for regular matroids, a proof even for graphical matroids is still open. Andres, Hochstättler, and Merkel show that restriction to ``one-sided'' unique exchanges on complementary base pairs yields a class excluding some common graphical matroids~\cite{merkel2009basentauschspiel,andres2014base}. The only general result on unique exchanges in regular matroids is by McGuinness, who uses Seymour's decomposition theorem~\cite{seymour1980decomposition} to show that for every base pair at least one element of the base yields a unique exchange~\cite{mcguinness2014base}. But it is unclear if a sequence of these can be added up to exchange any pair of bases.


\subsection{Other Problems on Bispanning Graphs}

Adding weights to graphs naturally suggests other problems related to spanning trees. Most well-known is the problem of finding spanning trees of minimum weight sum (MSTs)~\cite{boruuvka1926jistem,jarnik1930jistem,kruskal1956shortest,prim1957shortest}, which is now a standard chapter in undergraduate algorithm studies. Looking beyond, instead of finding just a minimum (or maximum) spanning tree, one can order the weight sum of all possible spanning trees, and then ask for a $k$-th smallest (or largest) spanning tree ($k$-MST).

If we consider the whole base exchange graph, where each vertex represents a particular spanning tree, then MSTs are a special subset of the vertices, or more generally, each vertex falls into a particular subset containing all $k$-th smallest trees. Kano proves that any pair of weighted spanning trees is connected in this graph by a path which uses only edge swaps that increase the tree's weight sum~\cite{kano1987maximum}. Using this theorem, he shows special cases for four general conjectures about the distances between spanning trees of different weights. Mayr and Plaxton settled one of these conjectures: any particular $k$-th smallest spanning tree can be obtained from a minimum spanning tree with at most $k-1$ edge swaps~\cite{mayr1992spanning}.

Baumgart considers an open conjecture in Mayr and Plaxton's paper, which if proven true, would imply three more conjectures by Kano~\cite{baumgart2009ranking, baumgart2010partitioning}. The conjecture states that if $S \dotcup T$ are disjoint spanning trees of a bispanning graph $G = (V,E)$ such that the weight sum of $S$ is less than that of $T$, and such that $T$ is the only spanning tree of its weight, then there are at least $|V| - 1$ spanning trees with pairwise different weight sums strictly smaller than $T$. Baumgart proves this conjecture for the cases when $S$ is the only spanning tree of its weight, and for the case when $G$ has no minor isomorphic to the complete graph $K_4$. In this thesis we refer to and reuse some of his decomposition ideas in a different manner.

For weighted bispanning graphs another problem arises naturally: to partition the edge set into two disjoint spanning trees such that one has minimal and the other maximal weight sum. Jochim surveys this problem and describes four algorithms based on the more general matroid intersection theorem~\cite{jochim2014algorithmen}. Since the four algorithms have a time complexity of at least $O(|E|^4)$, she describes attempts to adapt existing partitioning algorithms for bispanning graphs to the minimum-maximum spanning tree partitioning problem. This problem appears to be much more difficult than expected, specially considering that finding only a minimal or maximal spanning tree is computationally easy.


\subsection{Overview of the Thesis}

In section~\ref{sec:graph} we review basic graph theory and matroid theory to build a foundation for the remaining thesis. Since, we require graphs with parallel edges, the definitions are somewhat more complex than in an introductory graph theory textbook. Central for edge exchanges and exchange graphs are the definition of fundamental cycle and cut of an edge. We also prove many theorems on trees in graphs in detail, since these are required for conclusions about bispanning graphs.

Section~\ref{sec:bispanning} first introduces bispanning graphs and block matroids. We address basic theorems about them, the two characterization methods by Nash-Williams and Tutte, and then present an inductive construction method for all bispanning graphs based on the two operation \emph{double-attach} and \emph{edge-split-attach}. The last subsection then reviews an algorithm by Roskind and Tarjan to construct two disjoint spanning trees in any given graph, as we used this algorithm in our computer program.

In section~\ref{sec:exchange graph} we define unique symmetric edge exchanges in bispanning graphs, and describe various types of exchange graphs: directed, undirected and simplified versions. We then show the full exchange graphs of all bispanning graphs with three or four vertices as three examples, prove some straight-forward theorems for exchange graphs that follow from their definition, and compare our view of exchange graphs with the conjectures given by White~\cite{white1980unique}.  Finally, we show a different perspective on paths through an exchange graph by discussing cyclic base ordering in bispanning graphs and how to construct them for unrestricted symmetric edge exchanges.

Section~\ref{sec:decomposing} then presents our compositions for exchange graphs. We first classify bispanning graphs using their vertex- and edge-connectivity and whether they contain a non-trivial bispanning subgraph. For bispanning graphs of the second type, we show in subsection~\ref{sec:decomposition composite} that their exchange graphs are isomorphic to the Cartesian graph product of two smaller bispanning graphs. For bispanning graphs with vertex-connectivity two, we show in subsection~\ref{sec:decompose 2vconn} that they can be gained from two smaller bispanning graphs using a $2$-clique sum operation, and that the exchange graph of the smaller graphs can be joined into the one of the original graph. The only remaining class of bispanning graphs have vertex-connectivity three and contain no bispanning subgraph. In all other authors' works \cite{mayr1992spanning,baumgart2009ranking} these are also the most challenging instances. In subsection~\ref{sec:reduce vdeg3} we are able to prove a method on how to construct the original graph's exchange graph from the exchange graphs of three smaller bispanning graphs. While the last step allows an inductive composition for any bispanning graph, it is not clear how to use it to prove the connectivity of all exchange graphs. We present our ideas on this open topic in subsection~\ref{sec:connectivity ideas}.

\clearpage

\section{Basic Definitions and Theorems on Graphs and Matroids}\label{sec:graph}

In this section, we review basic definitions for graphs and matroids to fix notation and terms, and prove some fundamental theorems that will be used in the remainder of the thesis. The nomenclature is based on multiple textbooks~\cite{harary1969graph,halin1989graphentheorie,west2001introduction,diestel2010graph}, and adapted for \emph{multigraphs} where necessary. Readers familiar with standard graph theory are welcome to scan or skip this section, but may want to stop and regard the less common theorems~\ref{thm:fundamental cycle}--\ref{thm:duality cycle cut}, and the section on matroids. The index at the back of the thesis (page~\pageref{sec:index}) provides an accessible cross-reference in case the reader needs to lookup unknown terms.


\subsection{Preliminaries and Notation}

In this thesis we assume familiarity with basic set theory, and review notation here only shortly to provide a foundation for the following sections.

\begin{definition}[basic set notation and operations]
A \emph[set]{set} is a collection of elements. Elements of a set can be listed within $\{\ldots\}$. $X \cup Y$ denotes the \emph<set!union>{union}, $X \cap Y$ the \emph<set!intersection>{intersection}, and $X \setminus Y$ the \emph<set!difference>{difference} between two sets $X$ and $Y$. Furthermore, $X \dotcup Y$ denotes the \emph<set!disjoint union>{disjoint union} implying $X \cap Y = \emptyset$, and $X \triangle Y := (X \cup Y) \setminus (X \cap Y) = (X \setminus Y) \cup (Y \setminus X)$ the \emph<set!symmetric difference>{symmetric difference}.

For brevity, we let $X + a := X \cup \{a\}$, and $X - a := X \setminus \{a\}$. Multiple operations without parenthesis $X + a - b$ are meant to be read from left to right as $(X + a) - b$, and are unambiguous. These ``arithmetic'' operators always denote operations on \emph{single elements}, never on sets of elements.

The \emph{identity function} on a set $X$ is $\operatorname{id}_X : X \rightarrow X$. The \emph{inverse function} of a bijection $\varphi : X \rightarrow Y$, is $\varphi^{-1} : Y \rightarrow X$.

Lines, primes and other accents on symbols carry \emph{no special meaning}, e.g., $\overline{X}$ is not necessarily the complement or closure of $X$, $X'$ is not a derivate set, and $\hat{X}$ is not a conjunctive set.
\end{definition}

\begin{definition}[partitions of a set]
A \emph[partition]{partition} is a set of disjoint subsets, e.g., $P = \{ \{v_1,v_2,v_3\} \} \cup \{ \{v\} \mid v \in V \setminus \{v_1,v_2,v_3\} \}$. Elements of a partition are also called \emph<member!partition>{members}, and the partitions $\{ V \}$ and $\{ \{v\} \mid v \in V \}$ are called \emph<trivial!partition>{trivial} partitions of $V$.

Given a set $V$, we write $V = V_1 \dotcup \cdots \dotcup V_k$ to define a partition $\{ V_1, \ldots, V_k \}$ of $V$ into $k$ subsets of $V$, where all subsets are pairwise disjoint and an arbitrary order is implied on them.
\end{definition}

To make notation more concise, we remove subscripts on symbols when they are clear from the context, to the extent that this improves comprehensiveness.


\subsection{Basic Graph Definitions}

We begin with the definition of an undirected graph, which we will plainly refer to as a \emph{graph}.

\begin{definition}[graph, vertex, edge, incidence, and ends]
  An \emph<graph!undirected>[graph]{(undirected) graph} $G = (V,E,\delta)$ consists of a set of \emph[vertex]{vertices} $V$, a set of \emph[edge]{edges} $E$, and an \emph<incidence!function>{incidence function} $\delta : E \rightarrow \binom{V}{2}$, where $\binom{V}{2}$ is the set of all subsets of $V$ containing exactly two elements.  For an edge $e \in E$ with $\delta(e) = \{ v_1,v_2 \}$, the vertices $v_1$, $v_2$ are called the \emph[edge!ends]{ends} of $e$.
\end{definition}

Most authors would call the preceding definition an \emph{undirected multigraph without loops}, because it allows multiple edges between a pair of vertices. This is the reason for the complication with the incidence function $\delta$. However, in the context of this thesis we \emph{require} parallel edges and refer to the common ``graph'' structure without parallel edges, as a \emph{simple undirected graph}. The primary focus of this thesis are games on a specific class of undirected graphs, however, during their investigation we also require \emph{directed graphs}, which are discussed separately in section~\ref{sec:directed graphs}.

The next two definitions concern parallel edges, and lay down the basic incidence and adjacency structure of vertices and edges in graphs.

\begin{definition}[parallel edge and simple graph]\label{def:simple graph}
  \begin{enumerate}
  \item Two edges $e_1,e_2 \in E$ in a graph $G = (V,E,\delta)$ are called \emph<parallel!edge>{parallel}, if they have the same ends, so if $\delta(e_1) = \delta(e_2)$.

  \item An undirected graph is called \emph<simple!graph>{simple}, if it contains no parallel edges.

  \item In a simple graph $G = (V,E,\delta)$ we can identify edges $e \in E$ with their ends $\delta(e) \in \binom{V}{2}$ since they are unique. Thus we can specify a simple undirected graph $G$ with just $(V,E)$, where $V$ are the vertices and $E \subseteq \binom{V}{2}$ the edges, and tacitly assume $\delta = \operatorname{id}_E$.

  \end{enumerate}
\end{definition}

\begin{definition}[incidence, adjacency, and vertex degree $\deg_G(v)$]
  If $G = (V,E,\delta)$ is an undirected graph, then
  \begin{enumerate}
  \item a vertex $v \in V$ is called \emph<incident!vertex>{incident} to an edge $e \in E$ if $v \in \delta(e)$, likewise

  \item an edge $e \in E$ is called \emph<incident!edge>{incident} to a vertex $v \in V$ if $v \in \delta(e)$,

  \item two vertices $v_1,v_2 \in V$ are called \emph<adjacent!vertex>{adjacent} if they are incident to a common edge $e \in E$, thus if $e \in E$ exists with $\delta(e) = \{ v_1, v_2 \}$, and

  \item two different edges $e_1 \neq e_2 \in E$ are called \emph<adjacent!edge>{adjacent} if they are incident to a common vertex $v$, thus if $v \in V$ exists with $v \in \delta(e_1)$ and $v \in \delta(e_2)$.

  \item The \emph<degree!vertex>{degree} $\deg_G(v)$\symbol{degree}{$\deg_G(v)$}{degree of vertex $v$} (or \emph<valency!vertex>{valency}) of a vertex $v \in V$ is the number of edges incident to $v$ in $G$, thus $\deg_G(v) := | \{ e \in E \mid v \in \delta(e) \} |$.

  \item A vertex of degree 0 is called \emph<isolated!vertex>{isolated}, while a vertex of degree 1 is called a \emph<pendent!vertex>{pendent} vertex.

  \end{enumerate}
\end{definition}

Even though the degree of a vertex is a very old and fundamental graph theoretic concept, it plays an important role in the remainder of this thesis in conjunction with the following theorem, which is also called the ``First Theorem of Graph Theory'' or the ``Handshake Lemma''.

\begin{theorem}[sum of all vertex degrees \R{\cites[\S\,16]{euler1741solutio}{biggs1976graph}}]\label{thm:handshake}
  If $G = (V,E,\delta)$ is an undirected graph then
$$\sum_{v \in V} \deg_G(v) = 2 \cdot |E| \,.$$
\end{theorem}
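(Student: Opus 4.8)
The plan is to prove the identity by \emph{double counting} the set of incidence pairs
$$I := \{ (v,e) \in V \times E \mid v \in \delta(e) \}.$$
I would compute $|I|$ in two different ways and equate the results.

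First I would count $|I|$ by grouping its elements according to their first coordinate. For a fixed vertex $v \in V$, the pairs $(v,e) \in I$ are exactly the edges incident to $v$, of which there are $\deg_G(v)$ by definition. Summing over all vertices gives $|I| = \sum_{v \in V} \deg_G(v)$. Next I would count $|I|$ by grouping according to the second coordinate. For a fixed edge $e \in E$, the pairs $(v,e) \in I$ correspond to the ends of $e$, i.e.\ to the elements of $\delta(e)$. Since $\delta(e) \in \binom{V}{2}$, every edge has exactly two distinct ends and hence contributes exactly $2$ pairs to $I$, so summing over all edges gives $|I| = \sum_{e \in E} 2 = 2 \cdot |E|$. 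Equating the two expressions for $|I|$ yields the claim.

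There is no genuine obstacle here; the only point that requires care is that each edge contributes exactly $2$ to the count, which relies precisely on the convention $\delta(e) \in \binom{V}{2}$ that rules out loops. Were loops permitted (so that $\delta(e)$ could be a one-element set), an edge could contribute differently and the right-hand side would need adjustment. An alternative route is induction on $|E|$: the empty-edge graph gives $0 = 0$, and deleting a single edge with ends $v_1 \neq v_2$ lowers both $\deg_G(v_1)$ and $\deg_G(v_2)$ by one, decreasing the left-hand side by $2$ and the right-hand side by $2$, so the identity is preserved. I would present the double-counting version, as it is the cleaner and more direct use of the definitions fixed above.
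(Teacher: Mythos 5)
Your double-counting argument is correct and is essentially the same proof as in the paper, which observes in one sentence that each edge is incident to exactly two vertices, so summing degrees over all vertices counts every edge twice. Your version merely makes the incidence-pair set explicit and correctly notes that the factor $2$ rests on the convention $\delta(e) \in \binom{V}{2}$ excluding loops.
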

\begin{proof}
  Every edge $e \in E$ is incident to exactly two vertices $v \in V$, thus summing over all vertices counts each edge twice.
\end{proof}

To be able to compare graphs, we need to define when two graph structures are isomorphic (equal in structure) and identify substructures in graphs.

\begin{definition}[isomorphisms of undirected graphs]\label{def:isomorphism undirected}
  \begin{enumerate}
  \item An \emph<isomorphism!graph>{isomorphism} from a graph $G_1=(V_1,E_1,\delta_1)$ to a graph $G_2=(V_2,E_2,\delta_2)$ consists of a bijection $\varphi_v : V_1 \rightarrow V_2$ on the vertex sets and a bijection $\varphi_e : E_1 \rightarrow E_2$ on the edge sets, such that the incidence of vertices and edges remains the same, namely $\{ \varphi_v(x) \mid x \in \delta_1(e) \} = \{ x \mid x \in \delta_2(\varphi_e(e)) \}$ for all edges $e \in E_1$.

  \item Two graphs $G_1$ and $G_2$ are called \emph<isomorphic!graph>{isomorphic}, written $G_1 \cong G_2$, if an isomorphism from $G_1$ to $G_2$ exists.
    \symbol{G1=G2}{$G_1 \cong G_2$}{isomorphic graphs}

  \end{enumerate}
\end{definition}

\begin{definition}[subgraph, induced and spanning subgraph]
  \begin{enumerate}
  \item A graph $G' = (V',E',\delta')$ is called a \emph[subgraph]{subgraph} of $G = (V,E,\delta)$, denoted by $G' \subseteq G$, if $V' \subseteq V$, $E' \subseteq E$, $\delta' = \delta|_{E'}$ and $\delta'(e') \subseteq V'$ for all $e' \in E'$.

  \item Given a graph $G = (V,E,\delta)$ and a vertex subset $V' \subseteq V$, then the subgraph $G[V'] := (V',E',\delta|_{E'})$ with $E' = \{ e \in E \mid \delta(e) \subseteq V' \}$ is called the subgraph of $G$ \emph<vertex-induced!subgraph>{vertex-induced} by $V'$. It contains all vertices of $V'$ and all edges with both ends in $V'$.\symbol{G.V}{$G[V']$}{vertex-induced subgraph}

  \item Given a graph $G = (V,E,\delta)$ and an edge subset $E' \subseteq E$, then the subgraph $G[E'] := (V',E',\delta|_{E'})$ with $V' = \bigcup_{e \in E'} \delta'(e)$ is called the subgraph of $G$ \emph<edge-induced!subgraph>{edge-induced} by $E'$. It contains all edges of $E'$ and all vertices at their ends.\symbol{G.E}{$G[E']$}{edge-induced subgraph}

  \item A subgraph $G' \subseteq G$ is called \emph<spanning!subgraph>{spanning} if $G'$ contains all vertices of $G$.

  \end{enumerate}
\end{definition}

Of the many examples of undirected small graphs, we highlight only the following two classes (see figure~\ref{fig:example graphs}).

\begin{figure}\centering
  \tikzset{every picture/.style={scale=1.5, graphfinal}}
  \begin{tikzpicture}

    \begin{scope}[scale=1.2,rotate=-30,yshift=-3mm]

      \node (n1) [odot] at (0,0) {1};
      \node (n2) [odot] at (0*120:1) {2};
      \node (n3) [odot] at (1*120:1) {3};
      \node (n4) [odot] at (2*120:1) {4};

      \draw[X] (n1) -- (n2);
      \draw[X] (n1) -- (n3);
      \draw[X] (n1) -- (n4);
      \draw[X] (n2) -- (n3);
      \draw[X] (n2) -- (n4);
      \draw[X] (n3) -- (n4);

    \end{scope}

    \begin{scope}[xshift=35mm,scale=0.95]

      \node (n1) [odot] at (0,0) {1};
      \node (n2) [odot] at (1,-1) {2};
      \node (n3) [odot] at (1,1) {3};
      \node (n4) [odot] at (-1,1) {4};
      \node (n5) [odot] at (-1,-1) {5};

      \draw[X] (n1) -- (n2);
      \draw[X] (n1) -- (n3);
      \draw[X] (n1) -- (n4);
      \draw[X] (n1) -- (n5);
      \draw[X] (n2) -- (n3);
      \draw[X] (n3) -- (n4);
      \draw[X] (n4) -- (n5);
      \draw[X] (n5) -- (n2);

    \end{scope}

    \begin{scope}[xshift=70mm,scale=1.1,rotate=18,yshift=-0.7mm]

      \node (n1) [odot] at (0,0) {1};
      \node (n2) [odot] at (0*72:1) {2};
      \node (n3) [odot] at (1*72:1) {3};
      \node (n4) [odot] at (2*72:1) {4};
      \node (n5) [odot] at (3*72:1) {5};
      \node (n6) [odot] at (4*72:1) {6};

      \draw[X] (n1) -- (n2);
      \draw[X] (n1) -- (n3);
      \draw[X] (n1) -- (n4);
      \draw[X] (n1) -- (n5);
      \draw[X] (n1) -- (n6);
      \draw[X] (n2) -- (n3);
      \draw[X] (n3) -- (n4);
      \draw[X] (n4) -- (n5);
      \draw[X] (n5) -- (n6);
      \draw[X] (n6) -- (n2);

    \end{scope}

  \end{tikzpicture}
  \caption{The complete graphs $K_4 = W_4$, and the wheel graphs $W_5$ and $W_6$.}\label{fig:example graphs}
\end{figure}
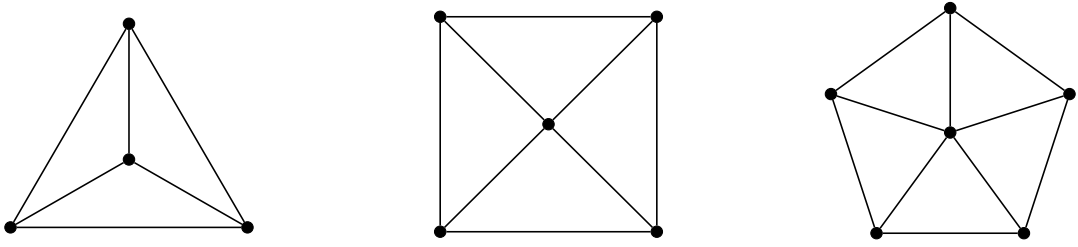

\begin{definition}[complete graph $K_n$, $k$-clique and wheel graph $W_n$]
  \begin{enumerate}
  \item For $n \in \mathbb{N}_1$ the undirected simple graph $(V,E)$ with $V = \{ v_1,\ldots,v_n \}$ and $E = \binom{V}{2}$ is called the \emph<complete!graph>{complete graph} $K_n$\symbol{Kn}{$K_n$}{complete graph with $n$ vertices} with $n$ vertices and $\frac{n(n-1)}{2}$ edges.

  \item A subgraph $G' \subseteq G$ of an undirected graph $G$ is called a \emph{$k$-clique}\index{clique@$k$-clique} of $G$ if $G' \cong K_k$.

  \item For $n \in \mathbb{N}_1$ the undirected simple graph $(V,E)$ with $V = \{ v_1,\ldots,v_n \}$ and $E = \{ \{v_1,v_i\} \mid i = 2,\ldots,n \} \cup \{ \{v_i, v_{i+1}\} \mid i = 2,\ldots,n-1 \} \cup \{ \{ v_n, v_2 \} \}$ is called the \emph[graph!wheel][wheel graph]{wheel graph} $W_n$\symbol{Wn}{$W_n$}{wheel graph with $n$ vertices} with $n$ vertices and $2n - 2$ edges.

  \end{enumerate}
\end{definition}

New graphs can be constructed from old graphs in many ways, and in the following chapters we construct new graphs and show isomorphism to old ones. For easier exhibition we define three basic operations: addition, deletion, and contraction.

\begin{definition}[vertex and edge deletion and addition]
  Given a graph $G = (V,E,\delta)$ and
  \begin{enumerate}
  \item a vertex set $V' \subseteq V$, then \emph<deletion!vertex>{deleting} all vertices in $V'$ along with all incident edges yields the graph $G \setminus V' := (V \setminus V',E',\delta|_{E'})$ with $E' = \{ e \in E \mid \delta(e) \cap V' = \emptyset \}$, or

  \item an edge set $E' \subseteq E$, then \emph<deletion!edge>{deleting} all edges in $E'$ yields the graph $G \setminus E' := (V, E \setminus E', \delta|_{E \setminus E'})$; the remainder $G \setminus E'$ may contain isolated vertices.

  \item For brevity, we write $G - v$ or $G - e$ instead of $G \setminus \{v\}$ or $G \setminus \{e\}$ for \emph[deletion]{deletion} of a single vertex $v \in V$ or a single edge $e \in E$,

  \item $G + e := (V,E \cup \{e\},\delta')$ for \emph<addition!edge>{addition} of an edge $e$ to $G$, where the incidence value $\delta'(e)$ of $e$ must be defined by the context, and

  \item $G + \{v_1,v_2\} := (V,E \cup \{e\},\delta')$ for explicit \emph<addition!edge>{addition} of a new edge $e \notin E$ to $G$ with incidence $\delta'(e) = \{v_1,v_2\}$ for its ends $v_1,v_2 \in V$.

  \end{enumerate}
\end{definition}

\begin{definition}[vertex pair, edge and subgraph contraction]
  \label{def:contraction}\symbol{GslashX}{$G \contr X$}{contraction of $X$}
  For a graph $G = (V,E,\delta)$ and a subset of vertices $X \subseteq V$ we denote by $G \contr X$ the graph obtained from $G$ by \emph[contraction]{contracting} all vertices $X$ into a new vertex $x \notin V$, which becomes incident to all edges priorly incident to any vertex in $X$. As we do not allow loops in graphs, all edges $e \in E$ with $\delta(e) \subseteq X$ are removed during contraction.\

In symbols, given $X \subseteq V$, we define $G \contr X := (V',E',\delta')$ with $V' := (V \setminus X) \cup \{ x \}$, $E' = E \setminus \{ e \in E \mid \delta(e) \subseteq X \}$ and
  $$\delta'(e) = \begin{cases}
    \delta(e)                           & \text{if } \delta(e) \cap X = \emptyset \,, \\
    \{x\} \cup (\delta(e) \setminus X) & \text{otherwise}\,.
  \end{cases}$$
  The \emph{contraction} of a single edge $e \in E$ is the contraction of its ends: $G \contr e := G \contr \delta(e)$.  If $X$ is a set of vertex sets, then $G \contr X$ is the result of contracting these vertex sets in any sequence. Given a subgraph $G' \subseteq G$, the \emph{contraction} of $G \contr G'$ is defined as the contraction of all vertices of $G'$.
\end{definition}

Figure~\ref{fig:example contraction} shows an example of a contraction of the vertex set $X$ into $x$. We defined contraction on sets of vertices, while other authors define contraction of edges. In our case, where loops are prohibited, these different views are indistinguishable.

While the following is clear in the definition above, we make an additional note that when contracting $X \subseteq V$ of a graph $G = (V,E)$, then $G \contr X$ contains exactly all edges of $G$ except those in $G[X]$.

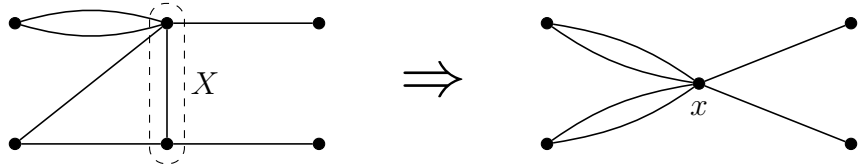
\begin{figure}\centering
  \tikzset{every picture/.style={scale=2, graphfinal}}
  \begin{tikzpicture}[yscale=0.8]

    \begin{scope}

      \node (n1) [odot] at (-1,0) {1};
      \node (n2) [odot] at (-1,1) {2};
      \node (n3) [odot] at (0,0) {3};
      \node (n4) [odot] at (0,1) {4};
      \node (n5) [odot] at (1,0) {5};
      \node (n6) [odot] at (1,1) {6};

      \draw[X] (n1) -- (n3);
      \draw[X] (n1) -- (n4);
      \draw[X] (n2) to[bend left=20] (n4);
      \draw[X] (n2) to[bend left=-20] (n4);
      \draw[X] (n3) -- (n4);
      \draw[X] (n5) -- (n3);
      \draw[X] (n6) -- (n4);

      \draw [dashed,rounded corners=6pt]
      ($(n4.center) + (125:2mm)$) rectangle ($(n3.center) + (305:2mm)$);

      \node at (0.25,0.5) {$X$};

    \end{scope}

    \node at (17.5mm,0.5) {\Huge$\Rightarrow$};

    \begin{scope}[xshift=35mm]

      \node (n1) [odot] at (-1,0) {1};
      \node (n2) [odot] at (-1,1) {2};
      \node (n3) [odot] at (0,0.5) {3};
      \node (n5) [odot] at (1,0) {5};
      \node (n6) [odot] at (1,1) {6};

      \node at (0,0.3) {$x$};

      \draw[X] (n1) to[bend left=15] (n3);
      \draw[X] (n1) to[bend left=-15] (n3);
      \draw[X] (n2) to[bend left=15] (n3);
      \draw[X] (n2) to[bend left=-15] (n3);
      \draw[X] (n5) -- (n3);
      \draw[X] (n6) -- (n3);

    \end{scope}

  \end{tikzpicture}
  \caption{Example of a contraction of $X$ into a new vertex $x$.}\label{fig:example contraction}
\end{figure}


\subsection{Paths and Connectivity}\label{sec:connectivity}

To consider connectivity and connected components of a graph, we need the notion of paths.

\begin{definition}[edge walk, path and cycle]
  \begin{enumerate}
  \item An \emph<edge!walk>[walk]{edge walk} from $v_0$ to $v_n$ is an alternating sequence $v_0,e_1,v_1,e_2,v_2,\ldots,v_{n-1},e_n,v_n$ in a graph $G = (V,E,\delta)$, where each edge $e_i \in E$ is incident to the vertices $v_{i-1} \in V$ and $v_i \in V$, namely $\delta(e_i) = \{ v_{i-1}, v_i \}$ for $i = 1,\ldots,n$. The \emph{length} of a walk is the number of edges it uses, and we say the walk \emph{starts} at $v_0$ and \emph{ends} at $v_n$.

  \item A \emph<edge!path>[path]{path} is an edge walk which contains no vertex more than once.

  \item A \emph<edge!cycle>[cycle]{cycle} is an edge walk, which starts and ends at the same vertex and contains no vertex more than once except $v_0 = v_n$, which is contained exactly twice.
  \end{enumerate}
\end{definition}

Other authors define a cycle more generally as an edge walk with $v_0 = v_n$ without requiring it to visit vertices once. They then call cycles with the restriction \emph{simple}. In this thesis, we do not need this distinction and assume all cycles to visit vertices once (except start and end).

\begin{definition}[connected vertices and graph, and component]
  In a graph $G = (V,E,\delta)$, a pair of vertices $v_1$ and $v_2$ are \emph{connected}, if a path starting at $v_1$ and ending at $v_2$ exists. The whole graph $G$ is called \emph<connected!graph>{connected} if all pairs of vertices are connected.
\end{definition}

\begin{definition}[connected component]
  A \emph[connected!component]{connected component} or just \emph[component]{component} of a graph $G$ is a connected subgraph $G' \subseteq G$, which is not contained in any connected subgraph of $G$ having more vertices or edges than $G'$. We denote the number of connected components of a graph $G$ by $\operatorname{comp}(G)$.
  \symbol{comp(G)}{$\operatorname{comp}(G)$}{number of components of $G$}
\end{definition}

\begin{definition}[vertex and edge cut, cut-vertex, bridge and bond]
  \begin{enumerate}
  \item A vertex subset $V' \subseteq V$ of a graph $G = (V,E,\delta)$ is a \emph<vertex!cut>{vertex cut} if deletion of all $v' \in V'$ increases the number of connected components, thus if $\operatorname{comp}(G \setminus V') > \operatorname{comp}(G)$.

  \item A vertex $v \in V$ of a graph $G = (V,E,\delta)$ is called a \emph[cut-vertex]{cut-vertex} if $\{v\}$ is a vertex cut.

  \item An edge subset $E' \subseteq E$ of a graph $G = (V,E,\delta)$ is called an \emph<edge!cut>{edge cut} if deletion of all $e' \in E'$ increases the number of connected components, thus if $\operatorname{comp}(G \setminus E') > \operatorname{comp}(G)$.

  \item An edge $e \in E$ of a graph $G = (V,E,\delta)$ is called a \emph[cut-edge]{cut-edge} or \emph[bridge]{bridge} if $\{e\}$ is an edge cut.

  \item A vertex or edge cut is called \emph<minimal!cut>{minimal}, if no item can be removed from the set without losing its property. A minimal edge cut is also called a \emph[bond]{bond}.

  \end{enumerate}
\end{definition}

Most edge cuts we discuss in this thesis are actually minimal, so most are bonds. However, as minimality is usually not their decisive property, we talk about edge cuts and explicitly establish minimality when needed.

The previous definition determines edge cuts as subsets of edges. An alternative approach to edge cuts is to separate the vertex set into two (usually disjoint) subsets and taking the edges between them. Other authors then continue by defining edge cuts as sets of vertices. We do not follow this practice, and called these edge cuts \emph{induced} by a vertex set.

\begin{definition}[induced edge cut]
  \begin{enumerate}
  \item For two vertex subsets $S,T \subseteq V$ of a graph $G = (V,E,\delta)$ we define $[S,T] := \{ e \in E \mid \delta(e) \cap S \neq \emptyset \text{ and } \delta(e) \cap T \neq \emptyset \}$, thus as all edges with one end in $S$ and the other in $T$.

  \item A vertex subset $S \subseteq V$ of a connected graph $G = (V,E,\delta)$ defines the edge cut $[S,V \setminus S]$, which is called the minimal edge cut \emph<induced!cut>{induced} by $S$.

  \end{enumerate}
\end{definition}


\subsection{Trees in Graphs}\label{sec:trees}

Trees and enumeration of trees were among the founding applications of graph theory. In this thesis we consider graphs with two disjoint spanning trees, and thus need to precisely specify properties of trees. In the following definition, we distinguish the term ``tree'' as a set of edges of a graph possibly containing further edges, and the term ``tree-graph'' as a graph with exactly a tree as edge set.

\begin{definition}[forest, tree-graph, tree, spanning tree and leaf]\label{def:tree}
  \begin{enumerate}
  \item A graph containing no cycle is called \emph<acyclic!graph>{acyclic} or a \emph[forest]{forest}. All forests are simple.

  \item A connected forest is called a \emph[tree-graph]{tree-graph}.\label{def:tree-graph}

  \item If an edge subset $F \subseteq E$ of a graph $G = (V,E,\delta)$ edge-induces a forest $G[F]$, we also call the edge set $F$ an \emph<edge!forest>{edge forest} in $G$. Likewise if $T \subseteq E$ edge-induces a tree-graph $G[T]$, we call $T$ a \emph<tree!edge>[tree]{tree} in $G$.\label{def:tree detail}

  \item A tree $T \subseteq E$ in a graph $G = (V,E,\delta)$ is called \emph<spanning!tree>{spanning} if $G[T]$ is a spanning subgraph of $G$. Thus a spanning tree of $G$ is an edge set connecting all vertices of $G$, and $G[T]$ is always a spanning tree-graph of $G[T]$ if $T$ is a tree in $G[T]$.\label{def:spanning tree}

  \item A vertex $v \in V$ of a forest $G[F]$ is called a \emph<vertex!leaf>{leaf} if $\deg_{G[F]}(v) = 1$. We also call the one edge incident to a leaf $v$ a \emph<edge!leaf>{leaf edge}.

  \end{enumerate}
\end{definition}

\begin{lemma}[two leaves in a tree]\label{lem:two leaves}
  Every tree-graph $G = (V,E,\delta)$ with $|V| \geq 2$ contains at least two leaves.
\end{lemma}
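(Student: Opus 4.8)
The plan is to use the Handshake Lemma (Theorem~\ref{thm:handshake}) together with the basic counting property that a tree-graph on $|V|$ vertices has exactly $|V|-1$ edges. First I would establish or invoke that a tree-graph is connected and acyclic, so that the number of edges satisfies $|E| = |V| - 1$; this is a standard fact that follows from the definitions of forest and tree-graph, and I will assume it as the basic edge-count identity for trees. Combining this with the Handshake Lemma gives $\sum_{v \in V} \deg_G(v) = 2|E| = 2(|V| - 1) = 2|V| - 2$.

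The key step is then a counting argument by contradiction. Since $G$ is connected and $|V| \geq 2$, every vertex has degree at least $1$ (an isolated vertex would disconnect the graph). Suppose for contradiction that $G$ has at most one leaf, i.e.\ at most one vertex of degree exactly $1$. Then all but at most one vertex have degree at least $2$, which forces
$$\sum_{v \in V} \deg_G(v) \geq 2(|V| - 1) + 1 = 2|V| - 1\,.$$
This strictly exceeds $2|V| - 2$, contradicting the degree-sum identity derived above. Hence $G$ must contain at least two vertices of degree $1$, i.e.\ at least two leaves.

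An alternative, more self-contained route avoids the edge-count identity entirely: take a path of maximum length in $G$, which exists because $G$ is finite and, being connected with $|V| \geq 2$, contains at least one edge. The two endpoints of this maximal path must each be leaves, since any additional edge at an endpoint would either extend the path (contradicting maximality) or, leading to an earlier path vertex, create a cycle (contradicting that a tree-graph is acyclic). This produces two distinct leaves directly.

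I expect the main obstacle to be deciding which prerequisite to lean on. The counting proof is shortest but silently assumes the tree edge-count $|E| = |V| - 1$, which has not yet been proved in the excerpt; the maximal-path proof is cleaner in that it uses only connectedness and acyclicity from Definition~\ref{def:tree}, so I would favor it here, taking care to argue rigorously that both endpoints have degree exactly one. A minor subtlety in the path argument is ensuring the two endpoints are genuinely distinct vertices, which holds because a maximal path in a graph with at least one edge has length at least one.
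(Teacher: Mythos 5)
Your preferred maximal-path argument is essentially identical to the paper's own proof: take a longest path, note it cannot close into a cycle since the graph is acyclic, and conclude both endpoints are leaves because any further incident edge would either extend the path or create a cycle. Your instinct to reject the degree-counting route was also correct, and for a stronger reason than mere ordering: the paper proves $|E| = |V| - 1$ for trees (theorem~\ref{thm:tree}) by induction on deleting a leaf whose existence is guaranteed by this very lemma, so invoking that edge count here would be circular.
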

\begin{proof}
  Since $G$ is connected and has two vertices, it contains an edge. Consider a path $P$ of maximal length $k \geq 1$ in $G$ which starts at $v_0$ and ends at $v_k$. $P$ cannot be a cycle, since $G$ is acyclic. Hence, both $v_0$ and $v_k$ are each a leaf in $G$, connected only by the edge in the path. Otherwise one could extend $P$ at $v_0$ or $v_k$ and thus obtain a longer path.
\end{proof}

\begin{theorem}[spanning tree equivalences]\label{thm:tree}
  The following conditions are equivalent for a graph $G = (V,E,\delta)$ with $|V| \geq 1$:
  \begin{enumerate}
  \item $T = E$ is a spanning tree of $G$ and $G = G[T]$ is a tree-graph.
  \item $G$ is acyclic and connected.
  \item $G$ is connected and $|E| = |V| - 1$\,.\label{thm:tree connected}
  \item $G$ is acyclic and $|E| = |V| - 1$\,.\label{thm:tree acyclic}
  \item Every pair of vertices in $G$ is connected by a unique path.\label{thm:tree unique path}
  \item $G$ is connected and every edge $e \in E$ is a bridge in $G$.\label{thm:tree bridge}
  \end{enumerate}
\end{theorem}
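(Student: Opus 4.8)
The plan is to anchor all six conditions to condition~(ii) (acyclic and connected), and then prove two separate cycles of implications that each return to~(ii): one purely \emph{structural} cycle through the path/bridge characterizations~(v) and~(vi), and one \emph{counting} cycle through the edge-count characterizations~(iii) and~(iv). Since every condition will be shown equivalent to~(ii), they are all equivalent to one another. First I would dispose of~(i)~$\Leftrightarrow$~(ii), which is essentially a matter of unfolding definitions: a \emph{tree-graph} is by definition a connected forest, i.e.\ a connected acyclic graph, and $T=E$ is automatically spanning in $G[T]=G$, so asserting that $G$ is a tree-graph with spanning tree $E$ says exactly that $G$ is connected and acyclic.

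For the structural cycle I would prove (ii)~$\Rightarrow$~(v)~$\Rightarrow$~(vi)~$\Rightarrow$~(ii), none of which needs any edge count. For (ii)~$\Rightarrow$~(v), connectedness supplies at least one path between any $u,v$; if two \emph{distinct} paths existed, one locates the first vertex at which they diverge and the first subsequent vertex at which they rejoin, and the two internally disjoint segments between these vertices form a cycle, contradicting acyclicity. For (v)~$\Rightarrow$~(vi), connectedness is immediate from existence of paths, and if some edge $e=\{u,v\}$ were \emph{not} a bridge then $G-e$ would still connect $u$ and $v$ by a path $P$, so $P$ together with the one-edge path along $e$ would be two distinct $u$--$v$ paths, contradicting uniqueness. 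For (vi)~$\Rightarrow$~(ii), connectedness is assumed, and if $G$ contained a cycle then any edge on that cycle could be deleted without disconnecting the graph (reroute through the rest of the cycle), so it would not be a bridge, contradicting~(vi).

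For the counting cycle I would prove (ii)~$\Rightarrow$~(iii)~$\Rightarrow$~(iv)~$\Rightarrow$~(ii). The implication (ii)~$\Rightarrow$~(iii) is where the real work sits: I would induct on $|V|$, using Lemma~\ref{lem:two leaves} to find a leaf $v$ in the tree-graph $G$; deleting $v$ and its unique incident edge preserves both connectedness and acyclicity while reducing $|V|$ and $|E|$ each by one, so the inductive hypothesis $|E(G-v)|=|V|-2$ gives $|E|=|V|-1$, with base case $|V|=1$ trivial. Then (iii)~$\Rightarrow$~(iv) follows cleanly from the already-established (ii)~$\Rightarrow$~(iii): if a connected $G$ with $|E|=|V|-1$ had a cycle, I would repeatedly delete cycle edges (each is a non-bridge, so connectedness survives) until the graph is acyclic; the resulting connected acyclic graph has exactly $|V|-1$ edges by (ii)~$\Rightarrow$~(iii), yet at least one edge was removed, forcing the original $|E|\geq|V|$ and contradicting $|E|=|V|-1$. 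Finally (iv)~$\Rightarrow$~(ii): an acyclic $G$ is a forest whose components $C_1,\dots,C_k$ are each connected and acyclic, hence tree-graphs, so (ii)~$\Rightarrow$~(iii) applied componentwise yields $|E|=\sum_i(|V_i|-1)=|V|-k$; comparing with $|E|=|V|-1$ forces $k=1$, i.e.\ connectedness.

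The main obstacle is the counting cycle, and specifically the correct base implication (ii)~$\Rightarrow$~(iii): the leaf-removal induction must be set up carefully so that deleting a leaf genuinely preserves acyclicity and connectedness and lowers $|V|$ and $|E|$ in lockstep, and the forest component count in (iv)~$\Rightarrow$~(ii) then reuses this result rather than re-deriving it. The structural implications, by contrast, reduce to the single recurring idea that a cycle and a non-bridge are two faces of the same phenomenon, so I expect them to be routine once the ``two distinct paths yield a cycle'' argument is stated precisely.
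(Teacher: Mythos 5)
Your proposal is correct and its building blocks largely coincide with the paper's: the definitional unfolding for (i)~$\Leftrightarrow$~(ii), the leaf-removal induction via Lemma~\ref{lem:two leaves} for (ii)~$\Rightarrow$~(iii), the componentwise count $|E| = |V| - k$ for (iv)~$\Rightarrow$~(ii), and the ``two distinct paths yield a cycle'' and ``cycle edge is not a bridge'' arguments all appear verbatim in the paper. Where you genuinely diverge is in the implication topology and in two individual steps. The paper is hub-and-spoke around~(ii): it proves (ii)~$\Leftrightarrow$~(v) and (ii)~$\Leftrightarrow$~(vi) separately, and for the counting conditions it shows (iii)~$\Rightarrow$~(ii),(iv) by a direct argument --- a cycle $C$ uses $|C|$ edges on $|C|$ vertices, and the remaining $|V|-|C|$ vertices need at least $|V|-|C|$ further edges to be connected to it, exceeding $|V|-1$. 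You instead close two chains, (ii)~$\Rightarrow$~(v)~$\Rightarrow$~(vi)~$\Rightarrow$~(ii) and (ii)~$\Rightarrow$~(iii)~$\Rightarrow$~(iv)~$\Rightarrow$~(ii), which introduces two implications the paper does not prove: (v)~$\Rightarrow$~(vi) (non-bridge edge plus uniqueness of paths gives an immediate contradiction) and (iii)~$\Rightarrow$~(iv) by repeatedly deleting non-bridge cycle edges and then invoking the already-established (ii)~$\Rightarrow$~(iii) on the acyclic remainder. Your route buys economy and reuse --- each counting fact is derived once and then cited, and it sidesteps the paper's slightly informal claim that the off-cycle vertices ``require at least $|V|-|C|$ edges'' --- at the cost of making the (iii)~$\Rightarrow$~(iv) step dependent on the prior induction rather than self-contained; the paper's direct count, conversely, is local to that implication and needs no deletion process. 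Both are complete proofs, and your chaining is, if anything, the tidier bookkeeping.
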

\begin{proof}
  (i) and (ii) are equivalent by definition~\ref{def:tree}: an acyclic connected graph is a connected forest, which is a tree-graph and vice versa. The edge set of a tree-graph is by definition a spanning tree.

  (ii) implies (iii) and (iv): We start an induction at $|V| = 1$, since (iii) and (iv) are trivially true for the tree-graph with one vertex. Let $G$ be a graph with $|V| \geq 2$ and $v \in V$ a leaf, which exists due to lemma~\ref{lem:two leaves}. Deleting the leaf results in $G' = (V',E',\delta') := G - v$, which is a smaller graph that remains acyclic and connected. By induction we have $|E'| = |V'| - 1$, and since exactly one vertex and one edge were deleted, $|E| = |E'| + 1 = |V'| - 1 + 1 = |V| - 1$.

  (iii) implies (ii) and (iv): Assume that a cycle $C \subseteq E$ exists in $G$. The cycle connects $|C|$ vertices using $|C|$ edges. The other $|V| - |C|$ vertices require at least $|V| - |C|$ edges to connect to the cycle, however, as $|E| \overset{\text{(iii)}}{=} |V| - 1 < |V| = |C| + (|V| - |C|)$, not enough edges exist to connect the graph. Thus $G$ is acyclic.

  (iv) implies (ii) and (iii): Decompose the graph $G$ into its $k$ connected components $G_1,\ldots,G_k$ with $G_i = (V_i,E_i,\delta_i)$. Each component is acyclic and connected, and thus fulfills (ii), from which (iii) and (iv) follow, so we have $|E_i| = |V_i| - 1$ for all $i = 1,\ldots,k$. In total the graph contains $|E| = \sum_{i=1}^k |E_i| = |V| - k$ edges, hence $k = 1$ and $G$ must be connected.

  (ii) implies (v): As $G$ is connected, each pair of vertices is connected by at least one path. Let $P$ and $Q$ be two different paths between a pair of vertices, then these differ starting at a vertex $v$ and rejoin at $w$. The edges between $v$ and $w$ in $P$ and $Q$ together for a cycle, which contradicts that $G$ is acyclic, so exactly one path exists.

  (v) implies (ii): It is clear that $G$ is connected. Assume $C$ is a cycle in $G$, then every pair $v,w \in C$ is connected by two different paths in $C$, and also in $G$, which contradicts that only a unique path exists between any pair in $G$.

  (ii) implies (vi): Consider an edge $e \in E$ which is not a bridge, then $G - e$ remains connected. Let $P$ be a path in $G - e$ from one end of $e$ to the other, then $P$ extended with $e$ is a cycle in $G$. However, $G$ is acyclic, so every edge is a bridge.

  (vi) implies (ii): Assume $C$ is a cycle in $G$, then any edge $e \in C$ is not a bridge, since $G - e$ remains connected. So $G$ is acyclic and connected.
\end{proof}

\begin{theorem}[fundamental cycle $C_G(T,e)$]\label{thm:fundamental cycle}
  \symbol{C_G(T,e)}{$C_G(T,e)$}{fundamental cycle of $e$ and $T$}
  Given a graph $G = (V,E,\delta)$ and a spanning tree $T \subseteq E$, then every non-tree edge $e \in E \setminus T$ defines a cycle in $G[T] + e$. This cycle is unique with respect to $T$, and hence called the \emph<fundamental!cycle>{fundamental cycle} $C_G(T,e)$ of $e$ and $T$ in $G$.
\end{theorem}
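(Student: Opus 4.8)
The plan is to reduce both the existence and the uniqueness claim to the unique-path characterization of spanning trees already established in Theorem~\ref{thm:tree}, specifically item~\ref{thm:tree unique path}. Write $\delta(e) = \{u,v\}$ for the ends of the non-tree edge $e$. Since $T$ is a spanning tree, $G[T]$ is a connected tree-graph on all of $V$, so $u$ and $v$ are joined by a path $P$ in $G[T]$, and by Theorem~\ref{thm:tree}(v) this path is \emph{unique}.

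For existence, I would simply observe that concatenating $P$ with the edge $e$ closes it into a cycle: $P$ is a path from $u$ to $v$ using only tree edges, and $e$ is incident to exactly $u$ and $v$ by $\delta(e) = \{u,v\}$, so appending $e$ returns to the start vertex and visits no vertex twice except the closing one. Since $e \in E \setminus T$ and $P \subseteq T$, the edge $e$ does not already lie on $P$, so $P + e$ is a genuine cycle lying in $G[T] + e$.

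For uniqueness, the key step is to argue that \emph{every} cycle $C$ in $G[T] + e$ must contain $e$. Indeed, if $C$ avoided $e$, then $C$ would be a cycle entirely within $G[T]$, contradicting that the tree-graph $G[T]$ is acyclic (definition of a tree, via Theorem~\ref{thm:tree}). Hence any cycle $C$ in $G[T] + e$ uses $e$, and deleting $e$ from $C$ leaves a $u$--$v$ walk in $G[T]$; since $C$ visits no vertex twice this walk is in fact a path. By the uniqueness of the $u$--$v$ path in $G[T]$, this path is exactly $P$, so $C = P + e$. Therefore the cycle is unique, justifying the notation $C_G(T,e)$.

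I do not expect a genuine obstacle here: the statement is essentially a repackaging of the unique-path property of trees. The only point requiring mild care is the reduction in the uniqueness part, namely spelling out that acyclicity of $G[T]$ forces every cycle of $G[T] + e$ to pass through $e$, which is what lets us translate ``unique cycle'' into ``unique tree path.'' I would also make sure to invoke that $G[T]$ is connected and spanning precisely so that the path $P$ exists at all for arbitrary ends of $e$.
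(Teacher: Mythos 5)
Your proposal is correct and follows essentially the same route as the paper: both construct the cycle by closing the unique tree path between the ends of $e$ (via theorem~\ref{thm:tree unique path}) and derive uniqueness of the cycle from uniqueness of that path. The only difference is that you spell out the intermediate step the paper leaves implicit, namely that acyclicity of $G[T]$ forces every cycle of $G[T] + e$ to pass through $e$, which is a worthwhile clarification but not a different argument.
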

\begin{proof}
  The ends of the non-tree edge $e$ are connected by a unique path $P$ in the spanning tree-graph $G[T]$ (theorem~\ref{thm:tree unique path}). Thus $P$ extended by $e$ is the fundamental cycle in $G$, and this cycle is unique as $P$ is unique with respect to $T$.
\end{proof}

\begin{theorem}[fundamental cut $D_G(T,e)$]\label{thm:fundamental cut}
  \symbol{D_G(T,e)}{$D_G(T,e)$}{fundamental cut of $e$ and $T$}
  Given a graph $G = (V,E,\delta)$ and a spanning tree $T \subseteq E$, then every tree edge $e \in T$ defines a minimal edge cut in $G[E \setminus T] + e$. This minimal edge cut is unique with respect to $T$, and hence called the \emph<fundamental!cut>{fundamental cut} $D_G(T,e)$ of $e$ and $T$ in $G$.
\end{theorem}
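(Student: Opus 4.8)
The plan is to run the exact dual of the argument for the fundamental cycle (Theorem~\ref{thm:fundamental cycle}): there we passed from a non-tree edge to the unique tree path between its ends; here I pass from a tree edge to the unique vertex bipartition it induces. Since $e \in T$ and $G[T]$ is a spanning tree-graph, every edge of $G[T]$ is a bridge by the equivalence in Theorem~\ref{thm:tree}. Deleting an edge raises the number of connected components by at most one, and a bridge raises it by at least one (it is an edge cut by definition); as $G[T]$ is connected, $G[T] - e$ therefore has precisely two components. I denote their vertex sets by $S$ and $\bar S := V \setminus S$, and set $D := [S, \bar S]$, the edge cut induced by $S$.

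First I would identify $D$ with the claimed fundamental cut. The two ends of $e$ lie in different components of $G[T] - e$, so $e$ crosses the bipartition and $e \in D$. Conversely, no other tree edge crosses: any $f \in T - e$ is an edge of $G[T] - e$, whose two components are exactly $S$ and $\bar S$, so $f$ has both ends on the same side. Hence $D \cap T = \{e\}$ and therefore $D - e \subseteq E \setminus T$, i.e. $D \subseteq (E \setminus T) + e$. This is precisely the assertion that the cut lies ``in $G[E \setminus T] + e$'', in the same sense that the fundamental cycle lies in $G[T] + e$: besides $e$ it uses only co-tree edges.

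Next I would check that $D$ is a \emph{minimal} edge cut of $G$. It is an edge cut because, after deleting all of $[S, \bar S]$, no edge joins the nonempty sets $S$ and $\bar S$, so the component count strictly increases. For minimality I would use that both $G[S]$ and $G[\bar S]$ are connected: the component of $G[T] - e$ with vertex set $S$ is a connected spanning subgraph of $G[S]$ (all its edges are tree edges with both ends in $S$), and symmetrically for $\bar S$. Consequently, for any $f \in D$ the graph $G \setminus (D - f)$ still contains the crossing edge $f$ together with the connected $G[S]$ and $G[\bar S]$, hence remains connected; so $D - f$ is not a cut and $D$ is minimal.

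Finally, uniqueness follows because the bipartition $\{S, \bar S\}$ is nothing but the component structure of $G[T] - e$, which depends only on $T$ and $e$; thus $D = D_G(T,e)$ is well-defined. One can moreover see it is the only bond of $G$ inside $(E \setminus T) + e$ that contains $e$: such a bond $[A, \bar A]$ may use no tree edge but $e$, so $T - e$ never crosses $\{A, \bar A\}$, forcing each of $S, \bar S$ entirely onto one side and hence $\{A, \bar A\} = \{S, \bar S\}$. The one subtlety I would flag explicitly is the reading of ``minimal edge cut in $G[E \setminus T] + e$'': minimality is meant with respect to $G$ (that is, $D$ is a bond of $G$), while ``$G[E \setminus T] + e$'' only records which edges $D$ may use; $D$ need \emph{not} be a minimal cut of the standalone graph $G[E \setminus T] + e$, so I would take care not to conflate the two.
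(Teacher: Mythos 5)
Your proof is correct, and its construction---$e$ is a bridge of $G[T]$, so $G[T]-e$ has exactly two components $S$ and $\bar S$, and $D:=[S,\bar S]$ is the desired cut---is exactly the paper's. Where you genuinely depart is the uniqueness argument, and you also prove more than the paper writes down. The paper argues uniqueness through the cut space: assuming a second minimal cut $D'=[V_1',V_2']$ inside $G[E\setminus T]+e$, it forms the symmetric difference $\overline D = D \triangle D'$, checks that $\overline D$ is again an edge cut for the ``mixed'' bipartition $[(V_1\cap V_1')\cup(V_2\cap V_2'),\,(V_1\cap V_2')\cup(V_2\cap V_1')]$, and observes that since $e\in D\cap D'$ this cut lies entirely in $E\setminus T$, which is absurd because $T$ still connects $G-\overline D$. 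You instead pin down the bipartition directly: no edge of $T-e$ can cross a bond contained in $(E\setminus T)+e$, so each of $S$ and $\bar S$ lies wholly on one side, and $e$ forces them onto opposite sides, giving $\{A,\bar A\}=\{S,\bar S\}$. Your route is more elementary and produces the useful identity $D\cap T=\{e\}$ explicitly; the paper's symmetric-difference argument is the one that foreshadows the cocircuit/cut-space viewpoint exploited later (e.g.\ theorem~\ref{thm:duality cycle cut} and the matroid versions). You also supply two things the paper merely asserts: a genuine proof of minimality (via connectivity of $G[S]$ and $G[\bar S]$), and the correct reading of ``minimal edge cut in $G[E\setminus T]+e$'' as minimality in $G$. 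One small completeness point: your uniqueness statement covers bonds that contain $e$; to rule out a bond in $(E\setminus T)+e$ avoiding $e$, add the one-liner that such a cut would lie inside $E\setminus T$, so $T$ would still connect all of $G$ after its removal---a step the paper also leaves implicit when it asserts $e\in D\cap D'$ without comment. Likewise, both you and the paper tacitly use that a minimal edge cut of a connected graph is an induced cut $[A,V\setminus A]$, so you are no worse off there.
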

\begin{proof}
  As $e \in T$ is a bridge in the tree-graph $G[T]$ (theorem~\ref{thm:tree bridge}), $G[T] - e$ is composed of two connected components $G[V_1]$ and $G[V_2]$ with $V_1 \dotcup V_2 = V$. The thereby defined edge set $D := [V_1,V_2]$ in $G[E \setminus T] + e$ is a minimal edge cut in $G$. To show that $D$ is unique, assume $D' := [V'_1,V'_2]$ is a \emph{different} minimal edge cut contained in $G[E \setminus T] + e$, and let $\overline{D} := D \triangle D' = (D \cup D') \setminus (D \cap D')$.  We claim that $\overline{D}$ is an edge cut for $[(V_1 \cap V'_1) \cup (V'_2 \cap V_2),\, (V_1 \cap V'_2) \cup (V'_1 \cap V_2)] = [\overline{V}_1,\overline{V}_2]$. To see why, consider without loss of generality an edge $f$ from $(V_1 \cap V'_1)$ to $(V_1 \cap V'_2)$: then $f \in D'$ and $f \notin D$, hence $f \in \overline{D}$. The other three pairs are analogous, hence $\overline{D}$ is another different edge cut, if $D$ and $D'$ are different. This third cut is contained in $G[E \setminus T]$ (excluding $e$, as $e \in D \cap D'$). However, then $T \subseteq E \setminus \overline{D}$ connects all vertices in the graph $G - \overline{D}$, contradicting that $\overline{D}$ is an edge cut. Thus $D'$ cannot exist, and $D$ is unique with respect to $T$.
\end{proof}

\begin{remark}[clarification of edge sets of fundamental cycles and cuts]\label{rem:cycle cut sets}
  The definitions of fundamental cycle and cut assign for an edge $e$ and tree $T$ a cycle and cut. In the cycle case the edge $e$ is required to be \emph{outside} of the tree, and the cycle closed with $e$ contains edges from \emph{inside} $T$, while in the cut case the edge $e$ is \emph{inside} the tree, and the cut edges \emph{outside} the tree $T$.
  In symbols, given a graph $G = (V,E,\delta)$ and a spanning tree $T \subseteq E$, then
  \begin{enumerate}
  \item for \parbox{10ex}{$e \in E \setminus T$} we have $C_G(T,e) \subseteq T + e$, and
  \item for \parbox{10ex}{$e \in T$} we have $D_G(T,e) \subseteq (E \setminus T) + e$.
  \end{enumerate}
\end{remark}

\begin{theorem}[duality of fundamental cycle and cut]\label{thm:duality cycle cut}
  Given a graph $G = (V,E,\delta)$ and a spanning tree $T \subseteq E$, then for every tree edge $e \in T$ and non-tree edge $f \in E \setminus T$, we have
  \[
  e \in C(T,f) \quad\text{if and only if}\quad f \in D(T,e) \,.
  \]
\end{theorem}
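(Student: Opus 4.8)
The plan is to prove both implications by relating membership in the fundamental cycle and fundamental cut back to the two-component structure that defines the cut. The key observation is that the fundamental cut $D(T,e)$ of a tree edge $e \in T$ is, by the proof of theorem~\ref{thm:fundamental cut}, exactly the induced edge cut $[V_1,V_2]$, where $G[T]-e$ splits the spanning tree-graph into the two components $G[V_1]$ and $G[V_2]$ with $V_1 \dotcup V_2 = V$. Meanwhile, the fundamental cycle $C(T,f)$ of a non-tree edge $f \in E \setminus T$ is the unique path $P$ between the ends of $f$ inside $G[T]$, closed up by $f$ itself. So I would first fix $e \in T$ and $f \in E \setminus T$, write $a,b$ for the two ends of $f$, and set up the bipartition $V = V_1 \dotcup V_2$ determined by removing $e$ from the tree.

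For the forward direction, suppose $e \in C(T,f)$. Since $C(T,f) = P + f$ and $e$ lies on the tree-path $P$ from $a$ to $b$, removing $e$ disconnects $P$, so $a$ and $b$ must lie in different components of $G[T]-e$; say $a \in V_1$ and $b \in V_2$. Then $f$ is an edge with one end in $V_1$ and the other in $V_2$, so by definition $f \in [V_1,V_2] = D(T,e)$. (Here I use remark~\ref{rem:cycle cut sets} to note $f \in (E\setminus T)+e$, so $f$ is indeed eligible to lie in the cut.) Conversely, suppose $f \in D(T,e) = [V_1,V_2]$, so the two ends $a,b$ of $f$ satisfy $a \in V_1$ and $b \in V_2$. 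The fundamental cycle $C(T,f)$ consists of $f$ together with the unique tree-path $P$ from $a$ to $b$ in $G[T]$. Because $a$ and $b$ lie in different components of $G[T]-e$, this path $P$ cannot remain inside a single component, so it must traverse $e$ — indeed $e$ is the only tree edge whose deletion separates $V_1$ from $V_2$, so any $V_1$-$V_2$ tree-path uses $e$. Hence $e \in P \subseteq C(T,f)$.

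The main conceptual point, which I would state carefully, is the fact that within the tree-graph $G[T]$ the single edge $e$ is the unique bridge separating $V_1$ and $V_2$ (theorem~\ref{thm:tree bridge} combined with theorem~\ref{thm:tree unique path}), so that ``the tree-path between two vertices crosses $e$'' is equivalent to ``the two vertices lie on opposite sides of the $V_1,V_2$ split''. Everything else is then a direct unwinding of the definitions of $C$ and $D$. I expect no serious obstacle here; the only thing requiring a little care is making sure the degenerate cases do not slip through — in particular that $f$ having one end in each of $V_1,V_2$ is genuinely equivalent to its fundamental-cycle path using $e$, which rests on the uniqueness of tree-paths rather than on any counting argument. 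A clean way to package both directions at once is to note that both statements are equivalent to the single assertion that the ends of $f$ are separated by the partition $\{V_1,V_2\}$ induced by deleting $e$ from $T$.
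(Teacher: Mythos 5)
Your proposal is correct and follows essentially the same route as the paper's own proof: both arguments hinge on the two-component split $V = V_1 \dotcup V_2$ of $G[T]-e$ and the observation that each of the two membership statements is equivalent to the ends of $f$ lying in different components. Your converse direction is phrased slightly more directly (the unique tree-path from one end of $f$ to the other must traverse $e$, since $e$ is the only tree edge crossing the split) where the paper instead assembles a cycle from paths inside the components and invokes uniqueness of the fundamental cycle, but this is a cosmetic difference, not a different method.
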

\begin{proof}
  As $e \in T$, $G[T] - e$ is composed of two connected components $G[V_1]$ and $G[V_2]$ with $V_1 \dotcup V_2 = V$. See figure~\ref{fig:example dual cycle cut} for a sketch, which illustrates the following two proof directions.

  Assume $e \in C(T,f)$ with $e \neq f$, since $f \in D(T,f)$ is trivially true. Since $e \in C(T,f)$, the cycle is composed of $e$, $f$, and two unique paths in $T$ between their ends. Thus the two ends of $f$ are in different components, and hence $f \in D(T,e)$, because the cut is unique.

  Now assume $f \in D(T,e)$ with $e \neq f$, since $e \in C(T,e)$ is trivial. Since $f \in D(T,e)$, the two ends of $f$ are in different components. The ends of $e$ are also in different components, so one can find two paths in the components connecting their ends. All together these form a cycle, which is the cycle $C(T,f)$ because it is unique with respect to $T$, hence $e \in C(T,f)$.
\end{proof}

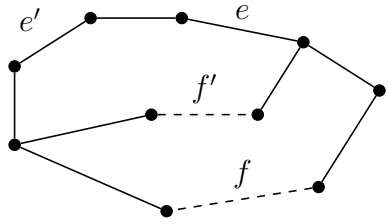
\begin{figure}\centering
  \tikzset{every picture/.style={scale=2, graphfinal}}
  \begin{tikzpicture}[yscale=0.8]

    \begin{scope}

      \node (n1) [odot] at (0,-0.2) {1};
      \node (n2) [odot] at (-1,-0.4) {2};
      \node (n3) [odot] at (-2,0.15) {3};
      \node (n4) [odot] at (-2,0.8) {4};
      \node (n5) [odot] at (-1.5,1.2) {5};
      \node (n6) [odot] at (-0.9,1.2) {6};
      \node (n7) [odot] at (-0.1,1.0) {7};
      \node (n8) [odot] at (0.4,0.6) {8};

      \node (n9) [odot] at (-1.1,0.4) {9};
      \node (n10) [odot] at (-0.4,0.4) {10};

      \draw[X] (n2) -- (n3) -- (n4) -- node[auto] {$e'$} (n5) -- (n6) -- node[above] {$e$} (n7) -- (n8) -- (n1);
      \draw[X,dashed] (n1) -- node[above] {$f$} (n2);

      \draw[X] (n3) -- (n9);
      \draw[X,dashed] (n9) -- node[above] {$f'$} (n10);
      \draw[X] (n10) -- (n7);

    \end{scope}

  \end{tikzpicture}
  \caption{Example of duality of fundamental cycle and cut theorem~\ref{thm:duality cycle cut}.}\label{fig:example dual cycle cut}
\end{figure}


\subsection{Directed Graphs}\label{sec:directed graphs}

When considering exchange graphs on game, we require directed graphs in intermediate steps.

\begin{definition}[directed graph, vertex, arc, incidence, and ends]
  A \emph<graph!directed>{directed graph} $G = (V,E,\delta)$ consists of a set of \emph[vertex]{vertices} $V$, a set of \emph[arc]{arcs} $E$, and an \emph<incidence!function>{incidence function} $\delta : E \rightarrow V \times V$, where $V \times V$ is the set of ordered pairs of $V$.  For an edge $e \in E$ with $\delta(e) = (v_1,v_2)$, the first vertex $v_1$ is the \emph<tail!arc>{tail} and second vertex $v_2$ is the \emph<head!arc>{head} of $e$, together they are also called the \emph<arc!ends>{ends} of $e$. We also say that $e$ is an edge \emph{from} its tail $v_1$ \emph{to} its head $v_2$.
\end{definition}

While our definition of directed graphs allows loop $(v,v)$, we will never use them in this thesis.

\begin{definition}[isomorphisms of directed graphs]\label{def:isomorphism directed}
  \begin{enumerate}

  \item An \emph<isomorphism!graph>{isomorphism} from a directed graph $G_1=(V_1,E_1,\delta_1)$ to a directed graph $G_2=(V_2,E_2,\delta_2)$ consists of a bijection $\varphi_v : V_1 \rightarrow V_2$ on the vertex sets and a bijection $\varphi_e : E_1 \rightarrow E_2$ on the arc sets, such that incidence of vertices and arcs remains the same, namely $(\varphi_v \times \varphi_v)( \delta_1(e) ) = \delta_2( \varphi_e(e) )$ for all arcs $e \in E_1$, where $\varphi_v \times \varphi_v$ maps both ends of $e$.

  \item Two undirected or directed graphs $G_1$ and $G_2$ are called \emph<isomorphic!graph>{isomorphic}, written $G_1 \cong G_2$, if an isomorphism from $G_1$ to $G_2$ exists.
    \glsadd{symb:G1=G2}

  \end{enumerate}
\end{definition}

Obviously, if $\varphi_v$ and $\varphi_e$ define an isomorphism as described in the definition, then it implies \( (\varphi_v^{-1} \times \varphi_v^{-1})(\delta_2(e)) = \delta_1( \varphi_e^{-1}(e) ) \) for all arcs $e \in E_2$ due to the inverse functions of the bijections.


\subsection{Matroids}\label{sec:matroids}

Spanning trees of undirected graphs are among the most natural cases of the more general dependence structure called a \emph{matroid}~\cite{whitney1935abstract}. In this subsection we review basic matroid definitions and theorems, and refer to the standard and introductory textbooks on matroid theory \cite{welsh1976matroid,oxley2011matroid,gordon2012matroids} for more details and proofs. The focus of this thesis, however, remains primarily on graphs (graphical matroids), since the problem behind Alice and Bob's game of unique exchange has not been solved even in the graphical case. Nevertheless, the more general perspective of matroid theory may help.

\begin{definition}[matroid, independent and dependent set, base, circuit]
  A \emph[matroid]{matroid} $M = (E,\mathcal{I})$ consists of a finite \emph[matroid!ground set]{ground set} $E$, containing its \emph{elements}, and a collection $\mathcal{I}$ of subsets of $E$, called \emph[matroid!independent sets]{independent sets}, such that:
  \begin{enumerate}
  \item $\emptyset \in \mathcal{I}$,
  \item if $A \in \mathcal{I}$ and $B \subseteq A$ then $B \in \mathcal{I}$, and \hfill(\emph{closed under subsets})
  \item if $A, B \in \mathcal{I}$ with $|A| < |B|$, then there is an element $x \in B \setminus A$, such that $A \cup \{x\} \in \mathcal{I}$. \\\null\hfill(\emph{independence augmentation})
  \end{enumerate}
  All subset of $E$ which are not independent are called \emph[matroid!dependent sets]{dependent}. A \emph<base!matroid>{base} of $M$ is a maximal independent set, while a \emph<circuit!matroid>{circuit} of $M$ is a minimal dependent set. The collection of bases of $M$ is denoted by $\matheu{B}(M)$ and the collection of circuits by $\matheu{C}(M)$.
  \symbol{B(M)}{$\matheu{B}(M)$}{bases of a matroid $M$}
  \symbol{C(M)}{$\matheu{C}(M)$}{circuits of a matroid $M$}
\end{definition}

Due to the generality of matroids, there are multiple equivalent ways to define them. The equivalence of these definitions is often not obvious, which is why some authors call the equivalent axiomatic definitions \emph[cryptomorphism]{cryptomorphisms}. We only state other definitions as theorems, and refer to the standard textbooks for proofs.

\begin{theorem}[circuit axioms \R{\cites[thm.\,1.1.4]{oxley2011matroid}}]
  A collection $\matheu{C}$ of subsets of a finite set $E$ defines the circuits of a matroid $M = (E,\mathcal{I})$ with $\mathcal{I} = \{ A \subseteq E \mid \nexists\, C \in \matheu{C} : C \subseteq A \}$, if and only if it satisfies the following conditions:
  \begin{enumerate}
  \item $\emptyset \notin \matheu{C}$,
  \item if $C_1,C_2 \in \matheu{C}$ and $C_1 \subseteq C_2$, then $C_1 = C_2$, and \hfill(\emph{clutter})
  \item if $C_1, C_2 \in \matheu{C}$ with $C_1 \neq C_2$ and $x \in C_1 \cap C_2$ then there is a circuit $C_3 \in \matheu{C}$ \\ such that $C_3 \subseteq (C_1 \cup C_2) \setminus \{ x \}$. \hfill(\emph{weak circuit elimination})\label{thm:weak circuit elimination}
  \end{enumerate}
\end{theorem}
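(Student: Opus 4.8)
The statement is an equivalence, so the plan is to prove the two directions separately; both amount to checking that defining $\mathcal{I}$ as ``the subsets containing no member of $\matheu{C}$'' is compatible with the independence axioms.

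For necessity I would assume $\matheu{C} = \matheu{C}(M)$ is the circuit collection of a matroid $M$ and verify (i)--(iii). Condition (i) is immediate: since $\emptyset \in \mathcal{I}$, the empty set is independent and hence not a minimal dependent set. Condition (ii) is exactly the minimality built into the definition of a circuit: if $C_1 \subseteq C_2$ with both minimal dependent, they must coincide. The only substantial point is weak circuit elimination (iii), which I would argue by contradiction. Assuming $(C_1 \cup C_2) - x$ is independent, note that $C_1$ and $C_2$ are two distinct circuits, both contained in $C_1 \cup C_2$ and both containing $x$. A short rank computation then gives a contradiction: by submodularity together with $r(C_1 \cap C_2) = |C_1 \cap C_2|$ (since $C_1 \cap C_2$ is a proper subset of a circuit, hence independent) one gets $r(C_1 \cup C_2) \le (|C_1| - 1) + (|C_2| - 1) - |C_1 \cap C_2| = |C_1 \cup C_2| - 2$, whereas independence of $(C_1 \cup C_2) - x$ would force $r(C_1 \cup C_2) \ge |C_1 \cup C_2| - 1$. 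Hence $(C_1 \cup C_2) - x$ is dependent and contains the required circuit $C_3$. (Equivalently, one may invoke uniqueness of the fundamental circuit of $x$ relative to $(C_1 \cup C_2) - x$.)

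For sufficiency I would assume (i)--(iii) and show that $\mathcal{I} = \{ A \mid \nexists\, C \in \matheu{C} : C \subseteq A \}$ satisfies the three independence axioms. Axiom (I1) follows from (i): since $\emptyset \notin \matheu{C}$, no circuit lies inside $\emptyset$, so $\emptyset \in \mathcal{I}$. Axiom (I2) is immediate, as a circuit inside $B \subseteq A$ would also sit inside $A$. The entire content lies in the augmentation axiom (I3), and this is the step I expect to be the main obstacle, since it is the only place where weak elimination (iii) is genuinely used.

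For (I3) I would use a minimal-counterexample argument. Suppose there are $I_1, I_2 \in \mathcal{I}$ with $|I_1| < |I_2|$ for which no $x \in I_2 \setminus I_1$ gives $I_1 + x \in \mathcal{I}$, and among all such failing pairs choose one minimizing $|I_1 \setminus I_2|$. First, $I_1 \setminus I_2 \neq \emptyset$, since otherwise $I_1 \subseteq I_2$ and any $x \in I_2 \setminus I_1$ yields $I_1 + x \subseteq I_2 \in \mathcal{I}$; fix $e \in I_1 \setminus I_2$. The key subclaim is that $I_2 - f + e \in \mathcal{I}$ for some $f \in I_2 \setminus I_1$. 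I would prove this by contradiction using (iii): if every $I_2 - f + e$ is dependent it contains a circuit $C_f$, and since $I_2 \in \mathcal{I}$ each $C_f$ must contain $e$ and avoid $f$, with $C_f - e \subseteq I_2$. If two such circuits $C_{f_0} \neq C_{f_1}$ occur, weak elimination on $e$ produces a circuit inside $(C_{f_0} \cup C_{f_1}) - e \subseteq I_2$, contradicting $I_2 \in \mathcal{I}$; if they all coincide with a single $C$, then $C$ avoids every $f \in I_2 \setminus I_1$, forcing $C \subseteq I_1$ and contradicting $I_1 \in \mathcal{I}$. Granting the subclaim, set $I_2' := I_2 - f + e$; then $I_2' \in \mathcal{I}$, $|I_2'| = |I_2| > |I_1|$, and $|I_1 \setminus I_2'| = |I_1 \setminus I_2| - 1$, so by minimality $(I_1, I_2')$ is not a counterexample. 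Hence some $g \in I_2' \setminus I_1$ satisfies $I_1 + g \in \mathcal{I}$; but $I_2' \setminus I_1 \subseteq I_2 \setminus I_1$, so this $g$ also witnesses augmentation for the original pair $(I_1, I_2)$ — the desired contradiction. The delicate part throughout is the bookkeeping in the subclaim, making sure each circuit $C_f - e$ lands inside $I_2$ so that weak elimination applies cleanly.
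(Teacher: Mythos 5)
The paper itself gives no proof of this theorem: it is stated as one of the cryptomorphic axiom systems with a pointer to Oxley (thm.\,1.1.4), so your argument has to stand on its own. Most of it does. The necessity direction is the standard submodularity computation and is correct, and in the sufficiency direction your verification of the independence axioms is also correct — in particular the minimal-counterexample argument for augmentation, with the case split on whether the circuits $C_f$ all coincide or not, is exactly the kind of argument the textbook proof uses, and your bookkeeping ($e \in C_f$, $f \notin C_f$, $C_f - e \subseteq I_2$) is right.

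There is, however, a genuine gap in the sufficiency direction: you prove that $(E,\mathcal{I})$ is a matroid, but the theorem claims more, namely that $\matheu{C}$ \emph{is the set of circuits} of that matroid, and these are not the same statement. The tell-tale sign is that your sufficiency argument never uses the clutter condition (ii), and that condition is precisely what the missing step needs; it is not automatic from what you proved. Concretely, take $E = \{1,2,3\}$ and $\matheu{C} = \{\{1,2\},\{1,3\},\{2,3\},\{1,2,3\}\}$: this collection satisfies (i) and (iii) but not (ii), the induced $\mathcal{I} = \{\emptyset,\{1\},\{2\},\{3\}\}$ is a perfectly good matroid (it is $U_{1,3}$), yet its circuits are only the three $2$-element sets, so $\matheu{C}$ is not recovered as the circuit collection. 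To close the gap, append the identification step: every $C \in \matheu{C}$ is dependent, and each proper subset of $C$ is independent, since a member of $\matheu{C}$ contained in a proper subset of $C$ would contradict (ii); hence each $C \in \matheu{C}$ is a minimal dependent set. Conversely, every minimal dependent set of $(E,\mathcal{I})$ contains some member of $\matheu{C}$, which is itself dependent, so by minimality the two are equal. With that paragraph added, your proof is complete.
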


\begin{theorem}[base axioms \R{\cites[cor.\,1.2.5]{oxley2011matroid}}]
  A collection $\matheu{B}$ of subsets of a finite set $E$ defines the bases of a matroid $M = (E,\mathcal{I})$ with $\mathcal{I} = \{ A \subseteq E \mid \exists\, B \in \matheu{B} : A \subseteq B \}$, if and only if it satisfies the following conditions:
  \begin{enumerate}
  \item $\matheu{B} \neq \emptyset$, and \hfill(\emph{non-trivial})
  \item if $B_1, B_2 \in \matheu{B}$ and $x \in B_1 \setminus B_2$, then there is an element $y \in B_2 \setminus B_1$ \\ such that $(B_1 \setminus \{ x \}) \cup \{ y \} \in \matheu{B}$. \hfill(\emph{weak base exchange})
  \end{enumerate}
\end{theorem}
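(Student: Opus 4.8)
The statement is an equivalence, so the plan is to prove both directions, with the reverse implication carrying essentially all of the difficulty.

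For the forward direction, suppose $\matheu{B}$ is the collection of bases (maximal independent sets) of a matroid $M=(E,\mathcal{I})$. I would first record that all bases are equicardinal: if $|B_1|<|B_2|$ for $B_1,B_2\in\matheu{B}$, then independence augmentation yields an $x\in B_2\setminus B_1$ with $B_1+x\in\mathcal{I}$, contradicting the maximality of $B_1$. Property (non-trivial) is then immediate, since a finite ground set with $\emptyset\in\mathcal{I}$ always admits a maximal independent set. For (weak base exchange), given $B_1,B_2\in\matheu{B}$ and $x\in B_1\setminus B_2$, I would apply augmentation to the independent set $B_1-x$ (of size one less than $|B_2|$) against $B_2$: this produces $y\in B_2\setminus(B_1-x)$ with $(B_1-x)+y\in\mathcal{I}$. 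Since $y\in B_2$ and $x\notin B_2$ force $y\neq x$ and hence $y\in B_2\setminus B_1$, and since $(B_1-x)+y$ has maximum cardinality and is therefore maximal, it lies in $\matheu{B}$, which is exactly (weak base exchange).

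For the reverse direction, assume (non-trivial) and (weak base exchange) and define $\mathcal{I}$ as in the statement. The first key step is a lemma that \emph{all members of $\matheu{B}$ are equicardinal}. I would prove this by contradiction: among all pairs $B_1,B_2\in\matheu{B}$ with $|B_1|\neq|B_2|$, pick one minimizing $|B_1\setminus B_2|$, assume $|B_1|>|B_2|$, take any $x\in B_1\setminus B_2$, and apply (weak base exchange) to obtain $y\in B_2\setminus B_1$ with $B_1'=(B_1-x)+y\in\matheu{B}$; then $|B_1'|=|B_1|\neq|B_2|$ while $|B_1'\setminus B_2|=|B_1\setminus B_2|-1$, contradicting minimality. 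With equicardinality in hand, the axioms $\emptyset\in\mathcal{I}$ and (closed under subsets) hold trivially from the definition of $\mathcal{I}$.

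The main obstacle is verifying independence augmentation. Given $I_1,I_2\in\mathcal{I}$ with $|I_1|<|I_2|$, I would choose bases $B_1\supseteq I_1$ and $B_2\supseteq I_2$ minimizing $|B_1\setminus B_2|$ over all such extending pairs, and split into two cases. If some $e\in(I_2\setminus I_1)\cap B_1$ exists, then $I_1+e\subseteq B_1$ gives $I_1+e\in\mathcal{I}$ and we are done. Otherwise $(I_2\setminus I_1)\cap B_1=\emptyset$, forcing $I_2\setminus I_1\subseteq B_2\setminus B_1$; a counting argument then shows there must exist $x\in(B_1\setminus B_2)\setminus I_1$, since $B_1\setminus B_2\subseteq I_1$ would imply $B_1\setminus B_2\subseteq I_1\setminus I_2$ and hence $|I_1\setminus I_2|\ge|B_1\setminus B_2|=|B_2\setminus B_1|\ge|I_2\setminus I_1|$, contradicting $|I_2|>|I_1|$. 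Applying (weak base exchange) to $B_1,B_2$ with this $x$ produces a basis $B_1'\supseteq I_1$ with $|B_1'\setminus B_2|<|B_1\setminus B_2|$, contradicting minimality; so the second case cannot occur. The delicate point throughout is that the stated exchange axiom only lets one \emph{choose the element removed}, not the one added, so the entire argument is arranged to remove an element outside $I_1$, thereby preserving $I_1\subseteq B_1'$ while strictly shrinking the symmetric difference.

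Finally, I would check that $\matheu{B}$ coincides with the bases of $(E,\mathcal{I})$: each $B\in\matheu{B}$ is independent and, by equicardinality, maximal, since any $B+e\in\mathcal{I}$ would sit in a strictly larger member of $\matheu{B}$; conversely any maximal independent set is contained in some $B\in\matheu{B}$ of equal size and hence equals it. This closes the equivalence.
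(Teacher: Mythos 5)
Your proposal is correct, but there is nothing in the paper to compare it against: the paper explicitly states that it only \emph{cites} the cryptomorphism theorems and defers their proofs to the standard textbooks (here Oxley, cor.~1.2.5), so it contains no proof of this statement at all. Your argument is a correct, self-contained version of the standard textbook proof: the forward direction via equicardinality of bases and augmentation applied to $B_1 - x$; the reverse direction via the key lemma that (weak base exchange) forces all members of $\matheu{B}$ to be equicardinal, followed by verification of the three independence axioms, where augmentation is obtained from a minimizing pair of extending bases $B_1 \supseteq I_1$, $B_2 \supseteq I_2$ and an exchange that removes an element of $B_1 \setminus B_2$ lying outside $I_1$ — exactly the right way to cope with the asymmetry that the axiom lets you pick the removed element but not the added one. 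This differs only cosmetically from Oxley's own minimization (he minimizes $|B_2 \setminus (I_2 \cup B_1)|$ with one basis fixed, rather than $|B_1 \setminus B_2|$ over pairs), and both yield the same contradiction. One small presentational point: in your equicardinality lemma, minimize $|B_1 \setminus B_2|$ over pairs ordered so that $|B_1| > |B_2|$ from the outset (rather than minimizing over all unequal pairs and then ``assuming'' $|B_1| > |B_2|$), since $|B_1 \setminus B_2|$ and $|B_2 \setminus B_1|$ need not coincide before equicardinality is established; the argument goes through verbatim with that ordering, so this is a wording fix, not a gap.
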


From the weak base exchange axiom many different equivalent axioms or lemmata were derived. For our base exchange game, we require a symmetric axiom, sometimes also called ``strong'' base exchange.

\begin{lemma}[strong (symmetric) base exchange \R{\cite{brualdi1969comments}}]\label{lem:strong base exchange}
  If $M$ is a matroid, and $B_1, B_2 \in \matheu{B}(M)$ are two bases, then for every $x \in B_1$ there exists an $y \in B_2$, such that $(B_1 \setminus \{ x \}) \cup \{ y \} \in \matheu{B}(M)$ and $(B_2 \setminus \{ y \}) \cup \{ x \} \in \matheu{B}(M)$.
\end{lemma}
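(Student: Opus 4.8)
The plan is to start from the weak base exchange axiom, which guarantees only a one-sided exchange, and bootstrap it into a symmetric statement. Given bases $B_1, B_2$ and a fixed element $x \in B_1$, I would first apply the weak base exchange axiom to obtain some $y \in B_2 \setminus B_1$ with $B_1' := (B_1 \setminus \{x\}) \cup \{y\} \in \matheu{B}(M)$. The difficulty is that this $y$ need not simultaneously satisfy that $(B_2 \setminus \{y\}) \cup \{x\}$ is a base; the weak axiom controls only what happens to $B_1$. So the real content is to show that among the candidate elements one can always choose a $y$ that works on \emph{both} sides at once.

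The cleanest route I would take is via circuits and fundamental cycles. Since $B_2$ is a base and $x \notin B_2$ (the case $x \in B_2$ is handled by taking $y = x$, as then both resulting sets equal $B_1$ and $B_2$ respectively), the set $B_2 + x$ contains a unique circuit $C$, the fundamental circuit of $x$ with respect to $B_2$. Every element $y \in C - x$ lies in $B_2$ and has the property that $(B_2 \setminus \{y\}) \cup \{x\}$ is again a base, because deleting one element of the unique circuit from $B_2 + x$ destroys the only dependency. Thus the right-hand condition holds for \emph{every} $y \in C - x$, and the task reduces to showing that at least one such $y$ also satisfies the left-hand condition $(B_1 \setminus \{x\}) \cup \{y\} \in \matheu{B}(M)$.

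To secure the left-hand side, I would use a counting or augmentation argument restricted to the circuit $C$. Consider $B_1 - x$, an independent set of size $|B_1| - 1$. The elements of $C - x$ all lie in $B_2$, hence in a base, and I want one of them to extend $B_1 - x$ to a base. The key step is to argue that not every $y \in C - x$ can fail: if $(B_1 - x) + y$ were dependent for all $y \in C - x$, then each such $y$ lies in the closure of $B_1 - x$, so the whole circuit $C$ would lie in the closure of $(B_1 - x)$, forcing $x$ itself into that closure (since $C - x$ spans $x$ via the circuit dependency), contradicting that $x \notin \operatorname{cl}(B_1 - x)$ because $B_1 = (B_1 - x) + x$ is independent. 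I expect this closure step to be the main obstacle, since it requires invoking the closure/rank machinery of matroids rather than the raw axioms; an alternative is a direct symmetric-difference counting argument on $B_1 \triangle B_2$ in the spirit of the fundamental cut/cycle duality proved earlier in Theorem~\ref{thm:duality cycle cut}. Either way, once a single $y \in C - x$ satisfying both conditions is exhibited, the lemma follows immediately.
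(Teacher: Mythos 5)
The paper itself gives no proof of this lemma: it is quoted from the literature (Brualdi), and the surrounding text explicitly defers such proofs to the standard textbooks. So your proposal cannot coincide with ``the paper's approach''; it must be judged on its own, and on its own it is essentially correct --- it is in fact the standard (Brualdi-style) argument. The reduction to the case $x \notin B_2$, the observation that every $y$ in the fundamental circuit $C$ of $x$ with respect to $B_2$ makes $(B_2 \setminus \{y\}) \cup \{x\}$ a base (uniqueness of the circuit in $B_2 + x$), and the closure contradiction --- if all $y \in C - x$ fail on the $B_1$ side then $C - x \subseteq \operatorname{cl}(B_1 \setminus \{x\})$, hence $x \in \operatorname{cl}(C - x) \subseteq \operatorname{cl}(B_1 \setminus \{x\})$, contradicting independence of $B_1$ --- are all sound. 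One small imprecision to patch: failure of the left-hand condition $(B_1 \setminus \{x\}) \cup \{y\} \in \matheu{B}(M)$ does not only mean that $(B_1 \setminus \{x\}) \cup \{y\}$ is dependent; it can also mean $y \in B_1 \setminus \{x\}$, in which case the set simply has too few elements. But that case places $y$ in $\operatorname{cl}(B_1 \setminus \{x\})$ trivially, so the contradiction argument absorbs it; you should just say ``fails to be a base'' rather than ``is dependent'' when defining the bad set of $y$'s. Also note that your weak-exchange opening paragraph is ultimately unused --- the circuit-plus-closure argument stands alone --- so it could be dropped. Compared with the thesis, which treats this as a black-box citation, your route has the advantage of being self-contained, at the cost of invoking the closure operator, which the thesis only introduces in passing (in the proof of the duality of fundamental circuit and cocircuit); a variant avoiding closure entirely can be given by a cardinality/augmentation argument applied to $B_1 \setminus \{x\}$ and the independent set $(B_2 \setminus \{y\}) \cup \{x\}$, but your version is the cleaner one.
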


Due to symmetric exchanges operation on bases of graphs or matroids, we are most interested in these.

\begin{definition}[rank of a subset $r_M(A)$ and rank of a matroid $r(M)$]
  \symbol{r_M(X)}{$r_M(X)$}{rank of $X$ in the matroid $M$}
  If $M = (E,\mathcal{I})$ is a matroid and $X \subseteq E$, then the \emph{rank} of $X$ in $M$ is the size of the largest independent set in $X$. Formally the \emph{rank function} $r_M : 2^E \rightarrow \mathbb{N}_0$ is defined as $r_M(X) := \max \{ |A| \mid A \in \mathcal{I} \text{ and } A \subseteq X \}$.

  The \emph<rank!matroid>{rank} of a matroid $M = (E,\mathcal{I})$ is plainly $r_M(E)$, and also written as $r(M)$.
\end{definition}

\begin{theorem}[rank function properties \R{\cites[cor.\,1.3.4]{oxley2011matroid}}]
  If $E$ is a finite set and $r : 2^E \rightarrow \mathbb{N}_0$ a function on $E$, then $r$ defines the rank function of a matroid on $E$ if and only if it satisfies the following conditions:
  \begin{enumerate}
  \item If $A \subseteq E$, then $0 \leq r(A) \leq |A|$,
  \item if $A \subseteq B \subseteq E$, then $r(A) \leq r(B)$, and
  \item if $A,B \subseteq E$, then $r(A \cup B) + r(A \cap B) = r(A) + r(B)$
  \end{enumerate}
\end{theorem}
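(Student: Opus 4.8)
The plan is to prove the biconditional by the standard two-sided route, taking as the candidate matroid the one whose independent sets are exactly the subsets on which $r$ is ``full'', that is $\mathcal{I} := \{ A \subseteq E \mid r(A) = |A| \}$. The forward direction verifies that every matroid rank function satisfies (i)--(iii); the backward direction shows that any $r$ obeying (i)--(iii) is recovered as the rank function $r_M$ of the matroid $(E,\mathcal{I})$ just defined, so that the two constructions invert one another.

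For the forward direction, assume $r = r_M$ for a matroid $M = (E,\mathcal{I})$. Property (i) is immediate from the definition of $r_M$ as a maximum over independent subsets: $\emptyset \in \mathcal{I}$ gives $r(A) \ge 0$, and no independent set inside $A$ can exceed $|A|$ elements, so $r(A) \le |A|$. Property (ii) follows since every independent subset of $A$ is also an independent subset of $B$ whenever $A \subseteq B$, so the defining maximum can only grow. For (iii) I would choose a maximal independent set $I$ of $A \cap B$ with $|I| = r(A\cap B)$, repeatedly apply the independence augmentation axiom to enlarge $I$ first inside $A$ and then across $A \cup B$, and account for how the newly added elements distribute between $A \setminus B$ and $B \setminus A$; relating these counts to $r(A)$, $r(B)$, $r(A\cup B)$, and $r(A\cap B)$ is the heart of establishing (iii), and is the step where I expect the analysis to be most delicate.

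For the backward direction, suppose $r$ satisfies (i)--(iii) and put $\mathcal{I} := \{ A \subseteq E \mid r(A) = |A| \}$. I would first observe that (iii) forces $r$ to behave additively on singletons: applying (iii) to the sets $A$ and $\{e\}$ for $e \notin A$ and using $r(\emptyset) = 0$ yields $r(A + e) = r(A) + r(\{e\})$, while (i) pins each $r(\{e\}) \in \{0,1\}$. With this the three independent-set axioms are checked directly: $\emptyset \in \mathcal{I}$ by (i); closure under subsets holds because $A \in \mathcal{I}$ forces $r(\{e\}) = 1$ for every $e \in A$, whence any $B \subseteq A$ also satisfies $r(B) = |B|$; and independence augmentation holds because for $A, B \in \mathcal{I}$ with $|A| < |B|$ any $x \in B \setminus A$ (which exists by cardinality) has $r(\{x\}) = 1$, so $r(A + x) = |A| + 1$ and $A + x \in \mathcal{I}$. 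Finally I would confirm $r_M = r$ by noting that the largest full subset of $A$ collects exactly the elements $e \in A$ with $r(\{e\}) = 1$, whose number is $\sum_{e \in A} r(\{e\}) = r(A)$.

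The main obstacle throughout is condition (iii). In the backward direction it is the sole engine: it is what converts the purely local ``additivity on singletons'' into global control of $\mathcal{I}$, and without it neither closure under subsets nor augmentation can be driven. In the forward direction, establishing (iii) is the genuinely delicate step, since it demands tracking precisely how a maximal independent set of $A \cap B$ extends through $A \cup B$ and how the augmenting elements split between $A \setminus B$ and $B \setminus A$; this is exactly where the full strength of the augmentation axiom enters, and I expect it to be the part of the argument requiring the greatest care.
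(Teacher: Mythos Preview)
The statement as written in the paper carries a typo: condition (iii) should be the \emph{submodular inequality} $r(A\cup B)+r(A\cap B)\le r(A)+r(B)$, not an equality. Your forward direction cannot succeed because the equality version is simply false for most matroids. Take $U_{1,2}$ on $E=\{1,2\}$ with $A=\{1\}$, $B=\{2\}$: then $r(A\cup B)+r(A\cap B)=1+0=1$ while $r(A)+r(B)=1+1=2$. Your outlined argument (extend a maximal independent set of $A\cap B$ through $A$ and then through $A\cup B$, and count) is exactly the standard proof of the \emph{inequality}; it cannot be tightened to an equality, and you should have caught this when you flagged (iii) as ``the most delicate step''.

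Your backward direction is internally consistent, but only because the over-strong hypothesis trivialises the problem: from equality in (iii) you correctly derive $r(A+e)=r(A)+r(\{e\})$, which by induction forces $r(A)=\sum_{e\in A}r(\{e\})$. The matroid you then recover is nothing more than a direct sum of loops and coloops. So the biconditional you have proved is ``$r$ satisfies (i), (ii), and modularity iff $r$ is the rank function of a free-matroid-plus-loops'', which is a correct but much weaker statement than intended. Once (iii) is relaxed to $\le$, your additivity step collapses and the entire backward argument must be rebuilt: you can no longer read $r(\{e\})=1$ off membership in an independent set, closure under subsets needs a different justification, and augmentation requires genuinely using submodularity together with monotonicity. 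The paper itself does not supply a proof (it defers to Oxley), but the standard route does define $\mathcal{I}=\{A:r(A)=|A|\}$ as you do; the substantive work in the corrected version lies in showing augmentation and in verifying $r_M=r$, neither of which your current sketch addresses.
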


Using the rank function of a subset, one can classify subsets of elements via their rank, similar to vector geometries. For this thesis, the number of elements in a base and circuit are maybe most important.

\begin{theorem}[flat, rank of a hyperplane, base, and circuit \R{\cites[lem.\,1.3.5]{oxley2011matroid}}]
  If $M = (E,\mathcal{I})$ is a matroid with rank function $r_M$, then
  \begin{enumerate}
  \item a subset $A \subseteq E$ is called \emph<closed!matroid>{closed} or a \emph<flat!matroid>{flat} if $A = \{ x \in E \mid r_M(A \cup \{ x \}) = r_M(A) \}$,
  \item a closed subset $A \subseteq E$ is called a \emph<hyperplane!matroid>{hyperplane} if and only if $r_M(A) = r(M) - 1$,
  \item a subset $A \subseteq E$ is a base if and only if $|A| = r_M(A) = r(M)$, and
  \item a subset $A \subseteq E$ is a circuit if and only if $A \neq \emptyset$ and for all $x \in A$, $r_M(A \setminus \{x\}) = |A| - 1 = r_M(A)$.
  \end{enumerate}
\end{theorem}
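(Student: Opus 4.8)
Items (i) and (ii) are merely the \emph{definitions} of \emph{flat}/\emph{closed} and of \emph{hyperplane}, so nothing is to be shown for them; the substance of the theorem lies in the two characterizations (iii) and (iv). My plan is to prove each as an equivalence, treating the two implications separately, and to draw only on the rank function $r_M$, the independence axioms, and the monotonicity of rank established in the preceding theorem.

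For (iii) the plan is to unfold ``base'' as ``maximal independent set''. In the forward direction, if $A \in \matheu{B}(M)$ then $A \in \mathcal{I}$, so the largest independent subset of $A$ is $A$ itself and hence $r_M(A) = |A|$; to obtain $|A| = r(M)$ I would observe that all bases have a common cardinality, which follows directly from the independence augmentation axiom (were two bases of different size to exist, the smaller could be enlarged by an element of the larger, contradicting its maximality), and this common value is exactly $r_M(E) = r(M)$. Conversely, $r_M(A) = |A|$ forces $A \in \mathcal{I}$, and $|A| = r(M) = r_M(E)$ says no independent set is larger, so $A$ is a maximal independent set, i.e.\ a base.

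For (iv) I would similarly read ``circuit'' as ``minimal dependent set''. In the forward direction, a circuit is nonempty (since $\emptyset \in \mathcal{I}$, the empty set is never a circuit) and dependent, so $r_M(A) \le |A| - 1$; minimality makes every $A - x$ independent, whence $r_M(A - x) = |A| - 1$, and monotonicity $r_M(A - x) \le r_M(A)$ then pins $r_M(A)$ down to exactly $|A| - 1$. Conversely, from $r_M(A) = |A| - 1 < |A|$ the set $A$ is dependent, while $r_M(A - x) = |A| - 1 = |A - x|$ makes each $A - x$ independent; since independence is closed under subsets, every proper subset of $A$ lies inside some $A - x$ and is therefore independent, so $A$ is minimally dependent, a circuit.

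The only step that requires genuine care --- and the one I would flag as the main obstacle --- is the forward direction of (iv): establishing the \emph{equality} $r_M(A) = |A| - 1$ rather than merely the inequality $r_M(A) \le |A| - 1$. This is the single place where minimality and monotonicity must be combined: deleting any one element of a circuit yields an independent set of size $|A| - 1$, and its rank can only fail to decrease (not increase) when the element is restored, which forces the rank of the whole circuit to equal $|A| - 1$. Everything else reduces to rereading the definitions of base and circuit through the lens of the rank function.
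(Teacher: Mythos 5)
Your proof is correct, and it is worth noting that the paper itself offers no proof to compare against: it states this result as a cryptomorphism citation to Oxley (lem.~1.3.5) and explicitly defers all such proofs to the standard textbooks. What you have written is the standard argument one finds there, and it is complete. You correctly isolate (i) and (ii) as definitions, your proof of (iii) properly uses independence augmentation to get equicardinality of bases and the definition of $r_M$ to pass between $|A| = r_M(A)$ and independence, and your proof of (iv) handles the one genuinely delicate point --- forcing the equality $r_M(A) = |A| - 1$ rather than just the upper bound --- exactly as it should, by combining minimality (every $A - x$ is independent of rank $|A|-1$) with monotonicity of $r_M$. The converse of (iv) also correctly closes the gap from ``every $A - x$ is independent'' to ``every proper subset is independent'' via closure of $\mathcal{I}$ under subsets. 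The only dependency to be aware of is that you invoke monotonicity of the rank function; this follows immediately from the definition of $r_M$ as a maximum over independent subsets (and is item (ii) of the paper's rank-function theorem), so nothing is missing.
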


After the multiple equivalent definitions of matroids, we can provide a series of canonical examples.

\begin{example}[uniform matroid $U_{k,n}$]
  $U_{k,n} = (E,\mathcal{I})$ with $E = \{1,\ldots,n\}$ and $\mathcal{I} = \{ A \subseteq E \mid |A| \leq k \}$ is a matroid, called the \emph<uniform!matroid>{uniform} matroid.
\end{example}

 In $U_{k,n}$ all possibly subsets of $n$ elements containing at most $k$ items are independent, while all larger subsets are dependent. Thus each subset with exactly $k$ items is a base, each subset with $k+1$ items is a circuit, and $r(U_{k,n}) = k$.

\begin{example}[vector matroid {$\matheu{M}[A]$} of a matrix]
  Given a $m \times n$ matrix $A$ over a field $F$, then $A$ defines a matroid $(E,\mathcal{I})$ as follows: let $E$ be the set of columns of $A$ (or $n$ representatives thereof) and $\mathcal{I}$ be the collection of all subsets of $E$ containing columns of $A$, which are linearly independent as vectors of the vector space $F^m$. This matroid is denoted by $\matheu{M}[A]$, and is called the \emph<vector!matroid>{vector matroid} of $A$.
\end{example}

The columns of $A$ define the column space $\langle A \rangle$ as a subspace of $F^m$ with dimension $\dim_F(\langle A \rangle) = \operatorname{rank}_F(A) = r(\matheu{M}[A])$. The matroid bases of $\matheu{M}[A]$ are all vectorial basis of the column space of $A$ (maximal vector sets that are linearly independent), while the circuits of $\matheu{M}[A]$ are all minimal sets of linearly dependent column vectors in $A$.

For our game scenario, the following class of matroids connects graph theory and matroid theory.

\begin{example}[cycle matroid {$\matheu{M}[G]$} of a graph]
  Given a graph $G = (V,E,\delta)$ with incidence matrix $H$, then $H$ defines a vector matroid $\matheu{M}[H]$, which is also called the \emph<cycle!matroid>{cycle matroid} of $G$ and denoted by $\matheu{M}[G]$.
\end{example}

In figure~\ref{fig:example graphic} the cycle matroid of an example graph $G$ is shown. It is the vector matroid of the incidence matrix $H$, also shown in the figure, within which each row represents a vertex of the graph, and each column specifies the adjacency of two vertices, or, equivalently, an edge in the graph. The matroid hence has one ground element for each edge of the graph, the circuits $\matheu{C}$ are all simple cycles, and the bases $\matheu{B}$ all possible spanning trees of $G$. The set of independent sets $\mathcal{I}$ contains all bases, and all subsets of bases, down to the empty set.

\begin{figure}\centering
\hfill%
\begin{tikzpicture}[graphfinal,
  baseline=(current bounding box.center),
  scale=1.5, auto, label distance=-2pt]

  \node [odot,label={180:$v_1$}] (1) at (-1,0) {};
  \node [odot,label={90:$v_2$}] (2) at (0,1) {};
  \node [odot,label={0:$v_3$}] (3) at (1,0) {};
  \node [odot,label={270:$v_4$}] (4) at (0,-1) {};

  \node at (-1,1) {$G$};

  \draw (1) to node[inner sep=1pt] {$a$} (2);
  \draw (2) to node[inner sep=1pt] {$b$} (3);
  \draw (3) to node[inner sep=1pt] {$c$} (4);
  \draw (4) to node[inner sep=1pt] {$d$} (1);
  \draw (1) to[bend left=15] node[inner sep=3pt] {$e$} (3);
  \draw (1) to[bend left=-15] node[inner sep=3pt,swap] {$f$} (3);

  \begin{scope}[
    yshift=-28mm,
    every matrix/.style={
      row sep=2pt, column sep=2pt, inner sep=0pt,
      every node/.style={minimum size=14pt, text depth=0pt},
      matrix of math nodes,
    },
    ]

    \matrix (m) [ left delimiter=(,right delimiter=) ]
    {
      1 & 0 & 0 & 1 & 1 & 1 \\
      1 & 1 & 0 & 0 & 0 & 0 \\
      0 & 1 & 1 & 0 & 1 & 1 \\
      0 & 0 & 1 & 1 & 0 & 0 \\
    };

    \matrix (ma) at (m.north) [yshift=3mm]
    {
      a & b & c & d & e & f \\
    };
    \node at (m.west) [xshift=-8mm] {$H = $};
    \matrix (me) at (m.east) [xshift=6mm]
    {
      v_1 \\ v_2 \\ v_3 \\ v_4 \\
    };
  \end{scope}

\end{tikzpicture}
\hfill%
\begin{minipage}{0.53\linewidth}
\begin{align*}
E = \{           & a,b,c,d,e,f \} \,,                                     \\[\medskipamount]
\matheu{C} = \{  & \{e,f\}, \{a,b,e\}, \{a,b,f\}, \{c,d,e\}, \{c,d,f\},   \\
                 & \{a,b,c,d\} \} \,,                                     \\[\medskipamount]
\matheu{B} = \{  & \{a,b,c\}, \{a,b,d\}, \{a,c,d\}, \{a,c,e\}, \{a,c,f\}, \\
                 & \{a,d,e\}, \{a,d,f\}, \{b,c,d\}, \{b,c,e\}, \{b,c,f\}, \\
                 & \{b,d,e\}, \{b,d,f\} \} \,,                            \\[\medskipamount]
\mathcal{I} = \{ & \emptyset, \{a\}, \{b\}, \{c\}, \{d\}, \{e\}, \{f\},   \\
                 & \{a,b\}, \{a,c\}, \{a,d\}, \{a,e\}, \{a,f\}, \{b,c\},  \\
                 & \{b,d\}, \{b,e\}, \{b,f\}, \{c,d\}, \{c,e\}, \{c,f\},  \\
                 & \{d,e\}, \{d,f\} \} \cup \matheu{B} \,.
\end{align*}
\end{minipage}
\hfill\null%

\caption{A graph $G$, its incidence matrix $H$ and cycle matroid $\matheu{M}[G]$.}\label{fig:example graphic}
\end{figure}
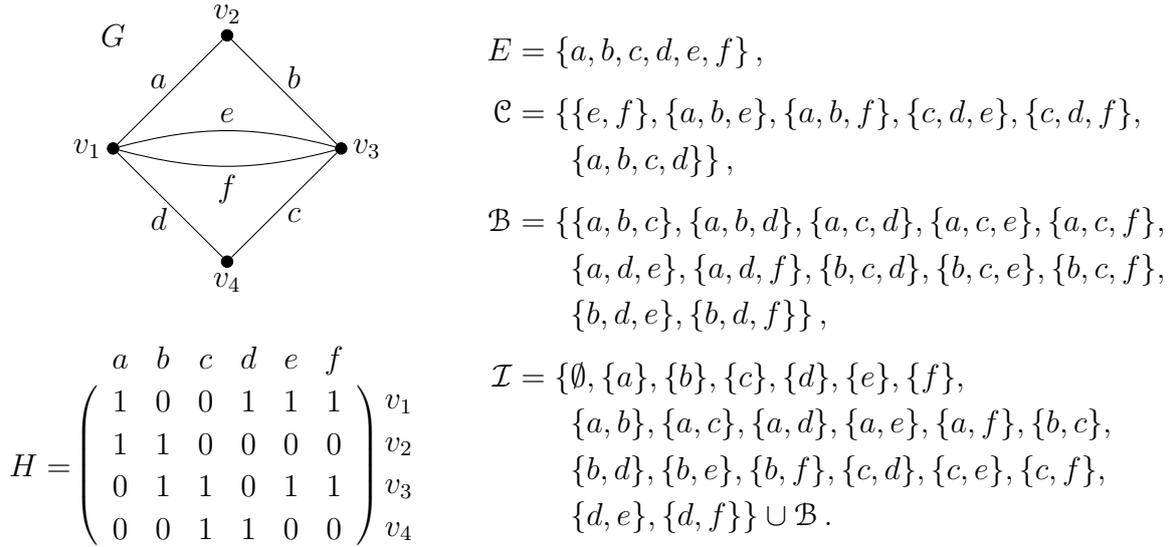

While only planar graphs have a \emph{dual} graph, which is defined by connecting adjacent faces of the planar graph, all matroids have a dual.

\begin{theorem}[dual of a matroid \R{\cites[thm.\,2.1.1]{oxley2011matroid}}]\label{thm:dual matroid}
  Let $M$ be a matroid with bases $\matheu{B}(M)$, then the collection $\matheu{B}^*(M) = \{ E \setminus B \mid B \in \matheu{B}(M) \}$ of subsets of $E$ defines a matroid $M^*$ on the elements $E$ with bases $\matheu{B}^*(M)$. This matroid $M^*$ is called the \emph<dual!matroid>{dual} matroid of $M$, and $(M^*)^* = M$.
\end{theorem}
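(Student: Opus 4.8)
The plan is to verify that $\matheu{B}^*(M)$ satisfies the two base axioms, thereby producing a matroid $M^*$, and then to compute the double dual directly from the definition of bases. First I would dispose of non-triviality: since $M$ is a matroid we have $\emptyset \in \mathcal{I}$, so $M$ has at least one base and $\matheu{B}(M) \neq \emptyset$; complementing, $\matheu{B}^*(M) \neq \emptyset$ as well.

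The heart of the argument is the weak base exchange axiom for $\matheu{B}^*(M)$, and here I would invoke the strong (symmetric) base exchange lemma~\ref{lem:strong base exchange} for $M$. Let $B_1^*, B_2^* \in \matheu{B}^*(M)$ with $B_i^* = E \setminus B_i$ for bases $B_1, B_2 \in \matheu{B}(M)$, and take $x \in B_1^* \setminus B_2^*$. Unwinding the complements, $x \in B_1^* \setminus B_2^*$ is equivalent to $x \in B_2 \setminus B_1$. I would then apply lemma~\ref{lem:strong base exchange} to the pair $B_2, B_1$ and the element $x \in B_2$, obtaining some $y \in B_1$ for which both $(B_1 - y) + x \in \matheu{B}(M)$ and $(B_2 - x) + y \in \matheu{B}(M)$.

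The key point, and the step I expect to require the most care, is that the exchanged element $y$ must land in $B_2^* \setminus B_1^*$, i.e. in $B_1 \setminus B_2$, for the dual axiom to read correctly. This is exactly where the symmetry of the exchange is indispensable rather than the bare weak exchange: if $y$ were an element of $B_2$, then since $y \neq x$ (because $x \notin B_1 \ni y$) the set $(B_2 - x) + y$ would coincide with $B_2 - x$, which has one fewer element than $B_2$ and hence cannot be a base; the \emph{second} conclusion of lemma~\ref{lem:strong base exchange} therefore forces $y \notin B_2$, giving $y \in B_1 \setminus B_2 = B_2^* \setminus B_1^*$. With such a $y$ fixed, I would compute $(B_1^* - x) + y = E \setminus \big((B_1 - y) + x\big)$, using that complementation turns removing $x \notin B_1$ and adjoining $y \in B_1$ into adjoining $x$ and removing $y$ inside the complement. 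Since $(B_1 - y) + x \in \matheu{B}(M)$, its complement lies in $\matheu{B}^*(M)$, which is precisely the required exchange. This establishes weak base exchange, so by the base-axiom characterization stated above, $\matheu{B}^*(M)$ is the family of bases of a matroid $M^*$ on $E$.

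Finally, for $(M^*)^* = M$ I would use that a matroid is determined by its ground set together with its bases, and compute $\matheu{B}^*(M^*) = \{\, E \setminus B^* \mid B^* \in \matheu{B}^*(M) \,\} = \{\, E \setminus (E \setminus B) \mid B \in \matheu{B}(M) \,\} = \matheu{B}(M)$, since complementation within the finite set $E$ is an involution. As $M$ and $(M^*)^*$ share the same ground set and the same collection of bases, they are identical.
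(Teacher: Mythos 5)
Your proof is correct. Note that the paper itself offers no proof of this theorem: it explicitly defers such cryptomorphism-style results to the standard textbooks (citing Oxley, thm.~2.1.1), so there is no in-paper argument to compare against, and your proposal fills a gap the paper leaves to the reader. Your route verifies the two base axioms for $\matheu{B}^*(M)$ and obtains the exchange step from the strong symmetric exchange lemma~\ref{lem:strong base exchange}, whereas the textbook proof the paper points to derives the dual exchange property more elementarily, straight from independence augmentation (showing that for bases $B_1, B_2$ and $x \in B_2 \setminus B_1$ there is $y \in B_1 \setminus B_2$ with $(B_1 - y) + x$ a base, via the unique circuit in $B_1 + x$). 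Your use of the symmetric exchange lemma is a heavier hammer, but it buys a clean handling of the one delicate point — forcing $y$ into $B_1 \setminus B_2$ rather than merely into $B_1$ — through the equicardinality of bases, which the paper's rank characterization (a set is a base if and only if $|A| = r_M(A) = r(M)$) justifies. One caution is in order: the paper also states lemma~\ref{lem:strong base exchange} without proof, citing Brualdi, and your argument is non-circular only because Brualdi's proof of symmetric exchange does not itself pass through the dual matroid; since the lemma precedes the theorem in the paper's logical ordering and has an independent proof in the literature, your argument stands. The double-dual computation $\matheu{B}^*(M^*) = \matheu{B}(M)$ via the involution $B \mapsto E \setminus B$ is correct and complete.
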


As each matroid $M$ and its dual $M^*$ are on the same ground set $E$, one can regard special sets of $M^*$ as special sets in $M$, whose names are then prefixed with \emph{co-}.

\begin{definition}[cobase, cocircuit, and cohyperplane]
  Let $M$ be a matroid and $M^*$ its dual, then
  \begin{enumerate}
  \item the bases of $M^*$ are called \emph<cobase!matroid>{cobases} of $M$, denoted by $\matheu{B}^*(M)$,
    \symbol{B^*(M)}{$\matheu{B}^*(M)$}{cobases of a matroid $M$}
  \item the circuits of $M^*$ are called \emph<cocircuit!matroid>{cocircuits} of $M$, denoted by $\matheu{C}^*(M)$, and
    \symbol{C^*(M)}{$\matheu{C}^*(M)$}{cocircuits of a matroid $M$}
  \item the hyperplanes of $M^*$ are called \emph<cohyperplane!matroid>{cohyperplanes} of $M$.
  \end{enumerate}
\end{definition}

As with graphs, matroids are considered isomorphic if and only if a bijection exists which preserves the structure, in this case the independence structure.

\begin{definition}[isomorphic matroids]
  An \emph<isomorphism!matroid>{isomorphism} from a matroid $M_1 = (E_1,\mathcal{I}_1)$ to a matroid $M_2 = (E_2,\mathcal{I}_2)$ is a bijection $f : E_1 \rightarrow E_2$ on the element sets such that for all $X \subseteq E_1$ the image $f(E_1) \in \mathcal{I}_2$ if and only if $E_1 \in \mathcal{I}_1$. Two matroids $M_1$ and $M_2$ are called \emph<isomorphic!matroid>{isomorphic}, written $M_1 \cong M_2$, if an isomorphism from $M_1$ to $M_2$ exists.
\end{definition}

The variety of structures fulfilling an axiomatic matroid definition is surprisingly large, and one of the most interesting and important fields in matroid theory is the classification of matroids by their internal structure. These classes then have special properties that can be used to prove more elaborate theorems. The following standard definitions suffice for this thesis.

\begin{definition}[graphic, cographic, representable, binary, and regular matroids]
  \begin{enumerate}
  \item A matroid $M$ is called \emph<graphic!matroid>{graphic} if it is isomorphic to the cycle matroid of a graph.
  \item A matroid $M$ is called \emph<cographic!matroid>{cographic} if it is isomorphic to the dual of a graphic matroid.
  \item A matroid $M$ is called \emph<representable!matroid>{$F$-representable}, if $M \cong \matheu{M}[A]$ for a matrix $A$ over a field $F$.
  \item A matroid $M$ is called \emph<binary!matroid>{binary}, if $M$ is $\mathbb{F}_2$-representable where $\mathbb{F}_2$ is the field with two elements.
  \item A matroid $M$ is called \emph<regular!matroid>{regular} or \emph<unimodular!matroid>{unimodular}, if $M$ is representable over any field.
  \end{enumerate}
\end{definition}

\begin{remark}[graphs vs. graphic matroids]
  If $G$ is a connected graph, and $M = \matheu{M}[G]$ the corresponding graphic matroid, then
  \begin{enumerate}
    \item the set of bases $\matheu{B}(M)$ is the set of all spanning trees of $G$,
    \item the set of circuits $\matheu{C}(M)$ is the set of all (simple) cycles of $G$,
    \item the set of cocircuits $\matheu{C}^*(M)$ is the set of all minimal cuts of $G$.
  \end{enumerate}
\end{remark}

Obviously, Alice and Bob's graph game is played on a graphic matroid. Other than in graphs, circuits, cocircuits, bases and cobases are the \emph{primary} structure objects in matroids. Matroids hence inherently have fundamental circuits and ``cuts'', which are better called cocircuits.

\begin{theorem}[fundamental circuit $C_M(B,e)$]\label{def:matroid funcircuit}
  \symbol{C_M(B,e)}{$C_M(B,e)$}{fundamental circuit of $e$ and $B$}
  Given a matroid $M = (E,\mathcal{I})$ and a base $B \in \matheu{B}(M)$, then every non-base element $e \in E \setminus B$ closes a circuit contained in $B + e$. This circuit is unique within $B + e$, contains $e$, and is called the \emph[fundamental!circuit (matroid)][matroid!fundamental circuit]{fundamental circuit} $C_M(B,e)$ of $x$ and $B$ in $M$.
\end{theorem}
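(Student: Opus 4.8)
The plan is to mirror the structure of the graph-theoretic fundamental cycle proof (Theorem~\ref{thm:fundamental cycle}), but to carry it out entirely in matroid language using only the circuit and independence axioms. Three things must be shown: that a circuit lives inside $B + e$, that every such circuit contains $e$, and that it is unique within $B + e$.

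For existence, I would first exploit that $B$ is a base, hence a \emph{maximal} independent set. Adjoining the non-base element $e$ must therefore destroy independence, so $B + e \notin \mathcal{I}$. Being dependent, $B + e$ must contain some minimal dependent subset, which is by definition a circuit $C \subseteq B + e$.

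To see that $e \in C$, I would invoke that $\mathcal{I}$ is closed under subsets: every subset of the independent set $B$ is again independent and hence cannot be dependent, let alone a circuit. Since $C$ is dependent, it cannot be contained in $B$, and as the only further available element is $e$, we conclude $e \in C$. The same reasoning shows simultaneously that any circuit inside $B + e$ must contain $e$.

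The heart of the argument -- and the only place where more than a definitional unfolding is required -- is uniqueness. Suppose $C_1, C_2 \subseteq B + e$ were circuits with $C_1 \neq C_2$; both contain $e$ by the previous step, so $e \in C_1 \cap C_2$. The weak circuit elimination axiom (item~\ref{thm:weak circuit elimination}) then produces a circuit $C_3 \subseteq (C_1 \cup C_2) \setminus \{ e \}$. But $C_1 \cup C_2 \subseteq B + e$, so deleting $e$ leaves $C_3 \subseteq B$, exhibiting a circuit inside the independent set $B$ -- a contradiction. Hence $C_1 = C_2$. I expect this elimination step to be the only genuine obstacle; it is precisely the matroid abstraction of the ``two distinct paths close up to a cycle'' uniqueness argument used in the graphic case.
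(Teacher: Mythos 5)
Your proposal is correct and follows essentially the same route as the paper's proof: existence from the maximality of $B$, membership of $e$ in every circuit of $B+e$ from heredity of independence, and uniqueness via the weak circuit elimination axiom applied at $e$, yielding a circuit inside the independent set $B$ — a contradiction. Your write-up is in fact slightly more explicit than the paper's (which leaves the ``each contains $e$'' step as an assertion), but the argument is identical.
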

\begin{proof}
  As $B$ is maximally independent and $e \notin B$, $B + e$ contains a circuit. Let $C_1,C_2 \subseteq B + e$ be two different circuits, then each contains $e$. Furthermore, due to theorem~\ref{thm:weak circuit elimination}, $(C_1 \cup C_2) - x$ contains a circuit. However, $(C_1 \cup C_2) - x \subseteq B$, so this cannot be a circuit, and hence $C_1$ cannot be different from $C_2$.
\end{proof}

\begin{theorem}[fundamental cocircuit $D_M(B,e)$]\label{def:matroid funcut}
  \symbol{D_M(B,e)}{$D_M(B,e)$}{fundamental cocircuit of $e$ and $B$}
  Given a matroid $M = (E,\mathcal{I})$ and a base $B \in \matheu{B}(M)$, then every base element $e \in B$ defines a cocircuit contained in $E \setminus B + e$. This cocircuit is unique within $(E \setminus B) + e$, contains $e$, and is called the \emph[fundamental!cocircuit (matroid)][matroid!fundamental cocircuit]{fundamental cocircuit} $D_M(B,e)$ of $e$ and $B$ in $M$.
\end{theorem}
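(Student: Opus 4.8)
The plan is to prove Theorem~\ref{def:matroid funcut} (fundamental cocircuit) by exploiting matroid duality, so that the result becomes the exact dual statement of the fundamental circuit theorem~\ref{def:matroid funcircuit}, which is already established. First I would recall that by theorem~\ref{thm:dual matroid}, the dual $M^*$ is a matroid on the same ground set $E$, and that a cocircuit of $M$ is by definition a circuit of $M^*$. The key translation step is to observe that if $B \in \matheu{B}(M)$ is a base of $M$, then its complement $E \setminus B$ is a base of $M^*$, since $\matheu{B}^*(M) = \{ E \setminus B \mid B \in \matheu{B}(M) \}$. Hence a base element $e \in B$ corresponds precisely to a non-base element of $M^*$, namely $e \notin E \setminus B$, so $e \in E \setminus (E \setminus B)$.

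With this dictionary in hand, the second step is a direct appeal to the fundamental circuit theorem applied to $M^*$ with the base $E \setminus B$ and the non-base element $e$. That theorem guarantees a unique circuit of $M^*$ contained in $(E \setminus B) + e$, containing $e$. Unwinding the definitions, a circuit of $M^*$ is a cocircuit of $M$, so this is exactly a unique cocircuit of $M$ contained in $(E \setminus B) + e$ and containing $e$, which is the claimed fundamental cocircuit $D_M(B,e)$. In symbols, the identification is
\[
D_M(B,e) = C_{M^*}(E \setminus B, e) \,,
\]
and uniqueness and the containment property transfer verbatim from theorem~\ref{def:matroid funcircuit}.

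The main obstacle, and the only point requiring genuine care, is justifying the complementation fact that $e \in B$ is a non-base element of the dual base $E \setminus B$, together with the bookkeeping that $(E \setminus B) + e$ in $M^*$ really is the set named in the statement. This is a routine set-theoretic check: $e \in B$ means $e \notin E \setminus B$, so $e \in E \setminus (E \setminus B)$ is a valid non-base element of $M^*$, and $(E \setminus B) + e$ is the dual analogue of $B + e$. An alternative, more self-contained route would mirror the proof of theorem~\ref{def:matroid funcircuit} directly: argue that $(E \setminus B) + e$ is dependent in $M^*$ (equivalently, that $B - e$ fails to span $M$), hence contains a cocircuit, and then use the weak cocircuit-elimination property (the dual of theorem~\ref{thm:weak circuit elimination}) to show that two distinct such cocircuits would produce a further cocircuit inside $E \setminus B$, contradicting that $B$ is spanning. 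I would prefer the duality argument for its brevity, since all the needed machinery is already available in the excerpt.
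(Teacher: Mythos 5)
Your proposal is correct and follows exactly the paper's own argument: the paper also passes to the dual matroid $M^*$, notes that $E \setminus B$ is a base of $M^*$ with $e$ a non-base element, applies the fundamental circuit theorem there, and concludes $D_M(B,e) = C_{M^*}(E \setminus B, e)$. Your extra care about the complementation bookkeeping and the sketched elimination-based alternative are fine but not needed beyond what the paper does.
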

\begin{proof}
  Consider the dual matroid $M^*$, wherein $B$ is a cobase, hence $B^* = E \setminus B$ is a base (theorem~\ref{thm:dual matroid}). Then $C_{M^*}(B^*,e)$ is the unique fundamental circuit of $e \in E \setminus B^*$ and $B^*$ in $M^*$. So $C_{M^*}(B^*,e)$ is a uniquely defined cocircuit in $M$. In short: $D_M(B,e) = C_{M^*}(E \setminus B, e)$.
\end{proof}

Some other authors denote the fundamental cocircuit with $C_M^*(B,e) := C_{M^*}(E \setminus B, e)$, but we prefer the analogon to the graph definition.

\begin{theorem}[duality of fundamental circuit and cocircuit]\label{thm:duality circuit cocircuit}
  Given a matroid $M = (E,\mathcal{I})$, a base $B \in \matheu{B}(M)$, then for every base element $e \in B$ and non-base element $f \in E \setminus B$, we have
  \[
  e \in C_M(B,f) \quad\text{if and only if}\quad f \in D_M(B,e) \,.
  \]
\end{theorem}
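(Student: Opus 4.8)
The plan is to deduce this statement from the graph-theoretic version, Theorem~\ref{thm:duality cycle cut}, via the dual matroid. The key observation is that both fundamental circuit and fundamental cocircuit have already been characterized through duality: we have $C_M(B,f)$ defined directly in Theorem~\ref{def:matroid funcircuit}, while $D_M(B,e) = C_{M^*}(E \setminus B, e)$ by the proof of Theorem~\ref{def:matroid funcut}. So the entire statement can be transported to a single matroid, namely we should recognize that asking ``$e \in C_M(B,f)$ iff $f \in D_M(B,e)$'' is exactly a self-dual symmetry claim.

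First I would set $B^* := E \setminus B$, which is a base of $M^*$ by Theorem~\ref{thm:dual matroid}. Then $e \in B$ means $e \in E \setminus B^*$ is a non-base element of $M^*$, and $f \in E \setminus B$ means $f \in B^*$ is a base element of $M^*$. Rewriting both sides of the biconditional in terms of $M^*$: the right-hand side $f \in D_M(B,e) = C_{M^*}(B^*, e)$ becomes a statement about a fundamental \emph{circuit} in $M^*$, and the left-hand side $e \in C_M(B,f) = D_{M^*}(B^*, f)$ becomes a statement about a fundamental \emph{cocircuit} in $M^*$. Thus the claim for $(M, B, e, f)$ with $e$ a base element and $f$ a non-base element is literally the claim for $(M^*, B^*, f, e)$ with the roles of circuit/cocircuit and of base/non-base element swapped.

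This means the biconditional is \emph{self-dual}: proving one direction for all matroids automatically yields the other direction by applying it to $M^*$ and using $(M^*)^* = M$. So I would only prove, say, the forward direction ``$e \in C_M(B,f) \implies f \in D_M(B,e)$'' directly, and obtain the converse for free by dualizing. For the forward direction, the cleanest route is again to reduce everything to fundamental circuits: since $D_M(B,e) = C_{M^*}(B^*,e)$, I need $e \in C_M(B,f) \implies f \in C_{M^*}(B^*, e)$. Here one uses the standard orthogonality between circuits of $M$ and cocircuits of $M$, i.e.\ $|C \cap C^*| \neq 1$ for any circuit $C$ and cocircuit $C^*$; applying this to $C = C_M(B,f)$ and $C^* = D_M(B,e)$ forces the intersection, which always contains $e$, to contain a second common element, and an element-counting argument within $B+f$ versus $(E\setminus B)+e$ pins that element down to $f$.

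The main obstacle will be whether the paper wants to invoke circuit--cocircuit orthogonality (which is a standard matroid fact but has not been stated as a theorem in the excerpt) or instead argue from first principles. If orthogonality is not assumed available, the honest alternative is a purely combinatorial argument: $C_M(B,f) \subseteq B + f$ contains $f$ and some base elements, while $D_M(B,e) \subseteq (E \setminus B) + e$ contains $e$ and some non-base elements, so their only possible common elements are $e$ (from $B$-side via the cocircuit) and $f$ (from the non-base side via the circuit). The real content is then showing that $e \in C_M(B,f)$ forces the circuit to ``cross'' the cocircuit at $f$ as well, which is most transparently seen by passing through the graphic intuition of Theorem~\ref{thm:duality cycle cut} and replacing ``path in the tree'' with ``circuit through $B$.'' I expect the author's proof to exploit exactly the $D_M(B,e) = C_{M^*}(E\setminus B,e)$ identity to make this a short dual of Theorem~\ref{def:matroid funcircuit} rather than reproving orthogonality from scratch.
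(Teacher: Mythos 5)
Your main route is correct, but it is genuinely different from the paper's argument for the forward implication. The paper assumes $e \in C_M(B,f)$ and works with the hyperplane $H$ spanned by $B \setminus e$ (the source misprints this as $B \setminus f$): if $f$ were in $H = \operatorname{cl}(B \setminus e)$, the unique circuit in $B + f$ would lie inside $(B \setminus e) + f$ and miss $e$, a contradiction; hence both $e$ and $f$ lie in $E \setminus H$, which is exactly the cocircuit $D_M(B,e)$, and the converse is then, just as you predicted, dispatched ``by duality.'' You instead invoke circuit--cocircuit orthogonality ($|C \cap C^*| \neq 1$) together with the counting observation $(B+f) \cap ((E \setminus B)+e) = \{e,f\}$, which is a valid and arguably cleaner proof; note it even makes your dualization step unnecessary, since $f \in C_M(B,f)$ and $e \in D_M(B,e)$ always hold, so the same orthogonality-plus-counting argument gives both implications symmetrically. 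What the paper's route buys is self-containedness relative to its own toolkit: it uses only the closure/rank/hyperplane notions it has introduced, in effect proving the single instance of orthogonality it needs, whereas your route imports orthogonality as an external standard fact. You flag this dependency honestly, but be aware that your fallback sketch (appealing to the ``graphic intuition'' of theorem~\ref{thm:duality cycle cut}) is a gesture rather than a proof for general matroids; to make it rigorous you would end up reproducing essentially the paper's hyperplane argument.
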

\begin{proof}
  Assume $e \in C_M(B,f)$ and consider the hyperplane $H$ spanned by $B \setminus f$. Then $e \not\in H$, for otherwise $f$ would be in the closure $\operatorname{cl}(H) = \{ x \in E \mid r(H) = r(H \cup \{x\})$ of $H$ and hence in $H$. Thus $e$ and $f$ are in the cocircuit $E \setminus H = D_M(B,e)$. The other implication follows by duality.
\end{proof}

\begin{definition}[deletion, restriction, and contraction]
  If $M = (E,\mathcal{I})$ is a matroid and $E' \subseteq E$ a subset of elements, then \emph{deletion} of $E'$ from $M$ yields a matroid $M - E' := (E \setminus E', \{ A \in \mathcal{I} \mid A \cap E' = \emptyset \})$, while \emph{restriction} of $E'$ to $M$ yields $M|_{E'} := M - (E \setminus E') = (E', \{ A \subseteq E' \mid A \in \mathcal{I} \})$, and \emph{contraction} of $E'$ from $M$ yields a matroid $M \contr E' := (M^* - E')^*$.
\end{definition}

\clearpage

\section{Bispanning Graphs}\label{sec:bispanning}

\begin{figure}[p]
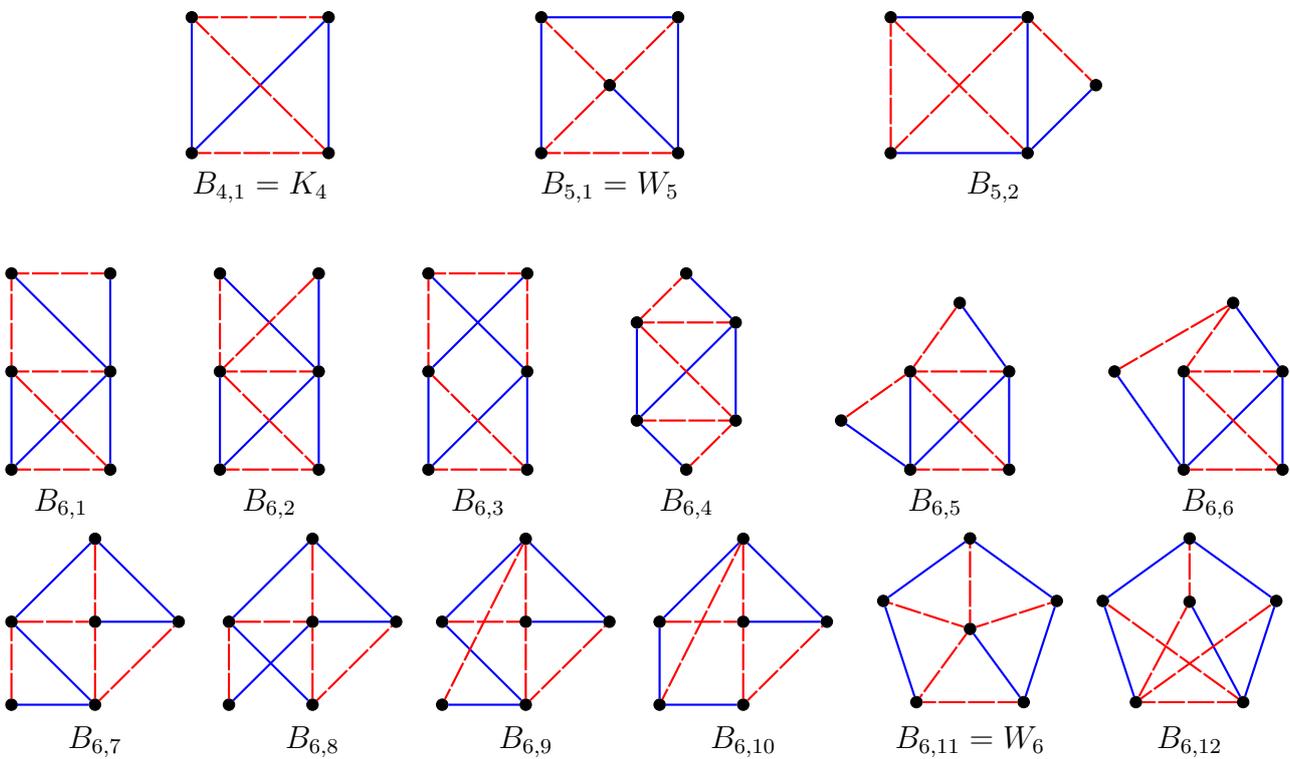
\centering
  \tikzset{every picture/.style={scale=1.8, graphfinal}}
  \tikzset{baseline=(n1.center)}


  \medskip

  \caption{All non-isomorphic simple bispanning graphs with at most six vertices.}\label{fig:small simple bispanning}
\end{figure}

The underlying objects of interest in this thesis are bispanning graphs, which are the board on which Alice and Bob play the unique exchange game.

\begin{definition}[bispanning graph]\label{def:bispanning}
  A graph $G = (V,E,\delta)$ is called \emph<bispanning!graph>[bispanning graph]{bispanning} if its edge set can be decomposed of two disjoint spanning trees $S$ and $T$, so if $E = S \cup T$ with $S \cap T = \emptyset$. We write $S \dotcup T$ for two disjoint spanning trees.
\end{definition}

See figures~\ref{fig:small-bispanning} and \ref{fig:small simple bispanning} for many examples of bispanning graphs. To refer to specific bispanning graph instances, we label small non-isomorphic bispanning graphs with $B_{n,k}$ where $n$ is the number of vertices. Table~\ref{tab:num bispanning} shows the total number of non-isomorphic bispanning graphs for small vertex sets.

To better refer to $S$ and $T$ in the figures, we always color \textcolor{blue}{$S$ blue} (solid) and \textcolor{red}{$T$ red} (and dashed infrequently). In general one can swap both, except when discussing examples in detail.

Immediately from the basic definition of bispanning graphs, many properties follow. We summarize straight-forward ones from previous authors~\cites[thm.\,6]{merkel2009basentauschspiel}[lem.\,2.2]{baumgart2009ranking} in the following theorem.

\begin{theorem}[basic properties of bispanning graphs]
  \begin{enumerate}
  \item If $G = (V,E,\delta)$ is a bispanning graph, then $|E| = 2 |V| - 2$.\label{thm:bispanning edge count}

  \item $K_1$ and $K_4$ are the two smallest simple bispanning graphs.

  \item No three edges are pairwise parallel in a bispanning graph.\label{thm:bispanning no three parallel}

  \item Every vertex in a bispanning graph has degree at least 2.\label{thm:bispanning deg 2}

  \item Every bispanning graph has a vertex of degree 2 or 3.\label{thm:bispanning deg23}

  \item If all $v \in V$ have $\deg(v) \geq 3$, then there are at least four vertices with degree 3.

  \end{enumerate}
\end{theorem}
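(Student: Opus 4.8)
The plan is to reduce all six items to two ingredients: every spanning tree of a graph on $|V|$ vertices has exactly $|V|-1$ edges (theorem~\ref{thm:tree}), and the two trees $S$ and $T$ in a decomposition $E = S \dotcup T$ are disjoint, so that edge counts and vertex incidences add across the trees.

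Item~\ref{thm:bispanning edge count} is the cornerstone: as $S$ and $T$ are disjoint spanning trees, $|E| = |S| + |T| = (|V|-1) + (|V|-1) = 2|V| - 2$. From this, item~\ref{thm:bispanning no three parallel} is immediate, since $S$ and $T$ are forests and two parallel edges already form a cycle; hence each tree contains at most one edge of any pairwise-parallel family, bounding such a family by two. For item~\ref{thm:bispanning deg 2} I assume $|V| \geq 2$, the graph $K_1$ being the degenerate exception with a single isolated vertex: each spanning tree-graph $G[S]$ and $G[T]$ is connected on all of $V$, so every vertex meets at least one edge of $S$ and at least one edge of $T$, and disjointness gives $\deg_G(v) = \deg_{G[S]}(v) + \deg_{G[T]}(v) \geq 2$.

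The two degree-distribution items are handshake counts (theorem~\ref{thm:handshake}) played against the edge count $2|V|-2$. For item~\ref{thm:bispanning deg23}, if every vertex had degree at least $4$ then $2|E| = \sum_{v} \deg_G(v) \geq 4|V|$, so $|E| \geq 2|V|$, contradicting $|E| = 2|V|-2$; together with item~\ref{thm:bispanning deg 2} some vertex therefore has degree $2$ or $3$. For item~(vi), assuming every degree is at least $3$ and letting $n_3$ be the number of degree-$3$ vertices, splitting the handshake sum gives
\[
4|V| - 4 \;=\; \sum_{v \in V} \deg_G(v) \;\geq\; 3 n_3 + 4(|V| - n_3) \;=\; 4|V| - n_3,
\]
which rearranges to $n_3 \geq 4$.

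The item requiring the most care is item~(ii), the classification of the smallest simple bispanning graphs, as it combines an exclusion argument with an explicit construction. I would first note that $K_1$ is bispanning via the empty pair $S = T = \emptyset$, so it is the smallest. For $|V| \in \{2,3\}$ the required edge count $2|V|-2$ equals $2$ or $4$, yet a simple graph on two or three vertices has at most $\binom{2}{2} = 1$ or $\binom{3}{2} = 3$ edges respectively; these counts fall short, so no simple bispanning graph exists on two or three vertices. For $|V| = 4$ the edge count forces all $\binom{4}{2} = 6$ edges, so $G = K_4$, and the decomposition into the two spanning paths $S = \{\,\{1,2\},\{2,3\},\{3,4\}\,\}$ and $T = \{\,\{1,3\},\{1,4\},\{2,4\}\,\}$ certifies that $K_4$ is indeed bispanning; hence $K_1$ and $K_4$ are the two smallest. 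The main obstacle is thus not conceptual but bookkeeping: making this case analysis airtight while keeping the degenerate $K_1$ consistent with items~\ref{thm:bispanning deg 2} and~\ref{thm:bispanning deg23}.
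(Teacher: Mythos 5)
Your proposal is correct and follows essentially the same route as the paper: the edge count $|E| = 2|V|-2$ from the two disjoint spanning trees, pigeonhole among the two trees for item~(iii), connectivity of both tree-graphs for item~(iv), and handshake-lemma counting against $2|V|-2$ for items~(v) and~(vi). Your item~(ii) replaces the paper's figure reference and parity remark with an explicit spanning-path decomposition of $K_4$ and a direct comparison of $2|V|-2$ against $\binom{|V|}{2}$, and your note that $K_1$ must be excepted from item~(iv) is a small tightening the paper omits, but these are refinements of the same argument rather than a different approach.
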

\begin{proof}
  \begin{enumerate}
  \item As $E = S \dotcup T$ and $S$ and $T$ are spanning trees, both $|S| = |T| = |V| - 1$ due to theorem~\ref{thm:tree} and thus $|E| = 2 |V| - 2$.

  \item One can verify that $K_1$ and $K_4$ are bispanning by regarding figure~\ref{fig:small-bispanning}. For $|V| = 4$, (i)~implies $|E| = 6$, and $K_4$ is the only simple graph with these properties. Any simple graph with less than four vertices has at most three edges. Due to (i) any bispanning graph has an even number of edges, thus only $|V| = 1$ solves $|E| = 2 |V| - 2$.

  \item Three pairwise parallel edges would obviously form a cycle of length two in one of the disjoint spanning trees.

  \item In any bispanning graph $(V,S \dotcup T,\delta)$ both $G[S]$ and $G[T]$ are connected subgraphs, thus every vertex has at least two incident edges.

  \item Consider a bispanning graph $G = (V,E,\delta)$ with $\deg_G(v) \geq 4$ for all $v \in V$. Then theorem~\ref{thm:handshake} implies $2 |E| = \sum_{v \in V} \deg_G(v) \geq 4 |V|$. However, (i) requires $2 |E| = 4 |V| - 4$, which contradicts $2 |E| \geq 4 |V|$.

  \item Let $X = \{ v \in V \mid \deg_G(v) = 3 \} \subseteq V$ be all vertices of degree three in a bispanning graph $G = (V,E,\delta)$. Again, theorem~\ref{thm:handshake} and (i) imply $2 |E| = 4 |V| - 4 = \sum_{v \in V} \deg_G(v) = 3 |X| + \sum_{v \in V \setminus X} \deg_G(v) \geq 3 |X| + 4 (|V| - |X|) = 4 |V| - |X|$. Thus $|X| \geq 4$.
\hfill\proofSymbol
\end{enumerate}
\end{proof}

\begin{table}\centering
  \begin{tabular}{crrr}
    \toprule
    $|V|$ & bispanning    & simple           & atomic      \\ \midrule
    1     & 1             & 1                & -           \\
    2     & 1             & 0                & 1           \\
    3     & 2             & 0                & 0           \\
    4     & 9             & 1                & 1           \\
    5     & 46            & 2                & 1           \\
    6     & 380           & 12               & 4           \\
    7     & 4\,229        & 92               & 15          \\
    8     & 61\,103       & 1\,010           & 109         \\
    9     & 1\,077\,101   & 14\,957          & 1\,075      \\
    10    & 22\,364\,980  & 275\,748         & 14\,506     \\
    11    & 532\,825\,027 & 6\,000\,780      & 236\,243    \\
    12    &               & 149\,469\,333    & 4\,491\,490 \\
    13    &               & 4\,176\,699\,079 &             \\ \bottomrule
  \end{tabular}
  \caption{The number of non-isomorphic general, simple, and atomic bispanning graphs for small numbers of vertices.}\label{tab:num bispanning}
\end{table}

Intuitively, bispanning graphs have to be ``locally dense'' enough to allow the two disjoint spanning trees to reach all vertices, but sparse enough such that disjoint edge sets do not contain cycles, and hence are trees. Obviously, each vertex needs at least two edges.  Remarkably, the same ``connectivity factor'' carries over to \emph{sets of vertices}, as independently discovered in 1961 by Nash-Williams and Tutte.  They formulated the same idea in two different ways: Nash-Williams used vertex partitions to separate the graph, while Tutte uses edge cuts.  The original proofs of the two theorems are rather intricate. We omit them here and refer a shorter proof~\cite{chen1994short} and to Diestel's textbook \cite[sect.~2.4]{diestel2010graph}. The original theorems by Nash-Williams and Tutte apply to an arbitrary number $k$ of disjoint spanning trees, but we only require the case of $k=2$.

\begin{theorem}[arboricity of a graph (vertex partition version) \R{\cite{nash1961edge}}]\label{thm:nash-williams}
  An undirected graph $G=(V,E,\delta)$ is a bispanning graph, if and only if $|E| = 2 |V| - 2$ and
  $$|E_P| \geq 2 (|P| - 1) \quad\text{for every partition $P$ of $V$,}$$
  where $E_P$ is the set of edges with ends in different members of the partition $P$ and $|P|$ is the number of members.
  \symbol{EP}{$E_P$}{edges between members of $P$}
\end{theorem}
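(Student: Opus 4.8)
The plan is to prove the two implications separately, with the forward (necessity) direction reducing to a short contraction argument and the converse (sufficiency) carrying essentially all of the difficulty.

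For necessity, suppose $E = S \dotcup T$ with $S,T$ spanning trees, and fix a partition $P$ of $V$. Contracting every member of $P$ to a single vertex yields the multigraph $G \contr P$ on $|P|$ vertices whose edge set is exactly $E_P$ (the within-member edges vanish as loops). Since contraction preserves connectivity, the image of the spanning tree $S$ is a connected spanning subgraph of $G \contr P$, and a connected graph on $|P|$ vertices has at least $|P|-1$ edges; these surviving edges are precisely the edges of $S$ lying in $E_P$, so $|S \cap E_P| \geq |P| - 1$, and likewise $|T \cap E_P| \geq |P| - 1$. As $S$ and $T$ are disjoint, $|E_P| \geq |S \cap E_P| + |T \cap E_P| \geq 2(|P|-1)$, while $|E| = |S| + |T| = 2(|V|-1)$ by theorem~\ref{thm:tree}.

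For sufficiency, assume the two counting conditions. First note that $G$ is connected: applying the hypothesis to the partition of $V$ into connected components gives $0 = |E_P| \geq 2(|P|-1)$, forcing $|P| = 1$. The key reduction is to translate the partition inequality into the rank condition of the cycle matroid $\matheu{M}[G]$. Let $A \subseteq E$ be arbitrary and let $P_A$ be the partition of $V$ into the vertex sets of the connected components of the spanning subgraph $(V,A)$; then $r_{\matheu{M}[G]}(A) = |V| - |P_A|$, and every edge of $A$ runs inside a member of $P_A$, so $A \cap E_{P_A} = \emptyset$ and hence $|A| \leq |E| - |E_{P_A}|$. Combining $|E_{P_A}| \geq 2(|P_A|-1)$ with $|E| = 2|V|-2$ gives
\[
  |A| \;\leq\; 2|V| - 2 - 2(|P_A| - 1) \;=\; 2\bigl(|V| - |P_A|\bigr) \;=\; 2\,r_{\matheu{M}[G]}(A),
\]
so $|A| \leq 2\,r(A)$ for every $A \subseteq E$, which is exactly the hypothesis of the matroid union (Nash-Williams covering) theorem for covering $E$ by two independent sets. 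Applying that theorem to the cycle matroid, $E$ partitions into two forests $S$ and $T$, each with at most $|V|-1$ edges; since $|S| + |T| = |E| = 2(|V|-1)$, both have exactly $|V|-1$ edges, and an acyclic spanning subgraph on $V$ with $|V|-1$ edges is a spanning tree by theorem~\ref{thm:tree acyclic}. Thus $E = S \dotcup T$ decomposes into two spanning trees and $G$ is bispanning.

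The genuine obstacle is the invoked matroid union theorem; everything else is bookkeeping that the hypothesis $|E| = 2|V|-2$ makes exact. If one insists on a self-contained argument avoiding matroid union, the two-forest decomposition must instead be produced directly by an augmentation argument: start from an arbitrary spanning tree $S$, set $T = E \setminus S$, and while $T$ contains a cycle, swap a suitable edge of $T$ into $S$ and a tree edge of $S$ out into $T$ using fundamental cycles and cuts (theorems~\ref{thm:fundamental cycle}--\ref{thm:duality cycle cut}), arguing from the partition/rank inequality that an exchange strictly reducing the number of independent cycles in $T$ always exists. Making this feasibility-and-termination argument precise is the technical heart, and is precisely the part that Chen's short proof and Diestel's textbook develop.
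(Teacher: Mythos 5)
Your proposal is mathematically sound, but note that the paper itself gives no proof of this theorem: it explicitly omits the original (intricate) arguments and refers the reader to Chen's short proof and to Diestel's textbook, both of which establish the hard direction by a direct augmentation/exchange argument. Your necessity direction---contracting the members of $P$ (definition~\ref{def:contraction}) and observing that each of the two disjoint spanning trees survives as a connected spanning subgraph of $G \contr P$ on $|P|$ vertices, so that $|S \cap E_P| \geq |P|-1$ and $|T \cap E_P| \geq |P|-1$---is complete and correct. Your sufficiency direction takes a genuinely different route from the cited sources: you use the exact edge balance $|E| = 2|V|-2$ to convert the partition (packing) condition into the arboricity (covering) condition $|A| \leq 2\, r_{\matheu{M}[G]}(A)$ for all $A \subseteq E$, invoke the matroid union theorem to split $E$ into two forests, and let the counting force both forests to be spanning trees via theorem~\ref{thm:tree}; each individual step is correct, in particular $r_{\matheu{M}[G]}(A) = |V| - |P_A|$ and $A \cap E_{P_A} = \emptyset$, and the final step from two disjoint forests of size $|V|-1$ to two spanning trees is exactly theorem~\ref{thm:tree acyclic}. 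What this buys is brevity and a conceptual explanation of why packing and covering coincide under this edge count; what it costs is self-containedness, since the matroid union theorem is a result of essentially the same depth as the theorem being proven and appears nowhere in the paper (its matroid section contains no union or covering machinery). Your fallback sketch---augmentation along fundamental cycles and cuts---is precisely the content of Chen's and Diestel's proofs, and as you concede, the feasibility-and-termination argument there is the real technical heart; as written, your proposal leaves that unproven, so it is a complete proof only modulo the matroid union black box.
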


\begin{theorem}[arboricity of a graph (edge cut version) \R{\cite{tutte1961problem}}]\label{thm:tutte bispanning}
  An undirected graph $G=(V,E,\delta)$ is a bispanning graph, if and only if $|E| = 2 |V| - 2$ and
  $$|X| \geq 2 ( \operatorname{comp}(G \setminus X) - 1) \quad\text{for all edge subsets $X \subseteq E$.}$$
\end{theorem}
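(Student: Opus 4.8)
The plan is to derive this characterization directly from the Nash-Williams version in theorem~\ref{thm:nash-williams}, rather than re-proving the intricate combinatorial core. Since both theorems carry the identical edge-count hypothesis $|E| = 2|V| - 2$, it suffices to show that, under this hypothesis, the partition condition $|E_P| \geq 2(|P|-1)$ (over all partitions $P$ of $V$) is equivalent to the edge-cut condition $|X| \geq 2(\operatorname{comp}(G \setminus X) - 1)$ (over all edge subsets $X \subseteq E$). The bridge between the two formulations is the standard observation that the connected components of $G \setminus X$ induce a partition of $V$, while conversely the crossing edges $E_P$ of a partition $P$ form an edge cut.

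First I would show that the partition condition implies the edge-cut condition. Fix any $X \subseteq E$ and let $P = \{V(C_1), \ldots, V(C_m)\}$ be the partition of $V$ given by the vertex sets of the $m = \operatorname{comp}(G \setminus X)$ components of $G \setminus X$. Every edge of $E_P$ joins two distinct components of $G \setminus X$, so it cannot survive in $G \setminus X$; hence $E_P \subseteq X$. Combining this with the partition condition gives $|X| \geq |E_P| \geq 2(|P| - 1) = 2(\operatorname{comp}(G \setminus X) - 1)$, as required.

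Conversely, I would show the edge-cut condition implies the partition condition. Given a partition $P$, set $X := E_P$. In $G \setminus E_P$ no edge joins two different members of $P$, so every component is contained in a single member; therefore $\operatorname{comp}(G \setminus E_P) \geq |P|$. Applying the edge-cut condition to $X = E_P$ then yields $|E_P| = |X| \geq 2(\operatorname{comp}(G \setminus X) - 1) \geq 2(|P| - 1)$.

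I do not expect a genuine obstacle here: the two inequalities are dual reformulations of the same cut structure, and all the real content — that these local density bounds actually force a decomposition into two spanning trees — is already packaged in theorem~\ref{thm:nash-williams}. The only points requiring care are minor: in the second direction a single member of $P$ may induce several components of $G \setminus E_P$, but this only increases $\operatorname{comp}(G \setminus E_P)$ and hence strengthens the inequality; and one should note that both side conditions independently force $G$ to be connected (by taking $X = \emptyset$, respectively the partition of $V$ into its components), consistent with a bispanning graph being connected. If instead one wanted a proof independent of Nash-Williams, the natural route would be matroid union applied to the cycle matroid $\matheu{M}[G]$, but that reproduces exactly the intricate argument the surrounding text chooses to cite rather than carry out.
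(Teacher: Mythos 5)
Your derivation is correct, but it is worth noting that the paper does not actually prove this theorem at all: it explicitly omits the proofs of both theorem~\ref{thm:nash-williams} and theorem~\ref{thm:tutte bispanning}, citing Tutte's original paper, a shorter proof by Chen, and Diestel's textbook. So your route --- reducing the edge-cut version to the partition version via the correspondence between components of $G \setminus X$ and the crossing edges $E_P$ of a partition --- is not the paper's approach but a clean, self-contained alternative to citation, and both directions of your equivalence check out: $E_P \subseteq X$ when $P$ is the component partition of $G \setminus X$, and $\operatorname{comp}(G \setminus E_P) \geq |P|$ since each component of $G \setminus E_P$ lies inside a single (nonempty) member of $P$. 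What your argument buys is exactly the equivalence that the paper later invokes \emph{without proof}: in the proof of corollary~\ref{thm:atomic tutte}, the author appeals to ``the equivalence of theorems~\ref{thm:nash-williams} and \ref{thm:tutte bispanning}'', which is precisely the statement you establish. The only caveat is that your proof is conditional on Nash-Williams' theorem, which is itself only cited in the paper, so you have not replaced the ``intricate combinatorial core'' but relocated it --- as you yourself acknowledge; within the paper's framework this is entirely legitimate, and arguably your derivation would be a useful addition to the text since it discharges an equivalence the paper uses but never writes down.
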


Depending on the situation, one can chose either theorem for proving that a graph is bispanning. We will mostly be using Nash-Williams' theorem, though one can surely rewrite the proofs to use Tutte's.


\subsection{Block Matroids}

The generalization of bispanning graphs to matroids are called block matroids and these follow a very similar definition.

\begin{definition}[block matroid]\label{def:block matroid}
  A matroid $M = (E,\mathcal{I})$ is called a \emph{block matroid} if its element set $E$ is the disjoint union of two bases.
\end{definition}

Naturally, the cycle matroid of every bispanning graph is a block matroid.  Examples for \emph{non-graphic} block matroids are $U_{2,4}$, or more generally $U_{k,2k}$ for $k \geq 2$, and the special regular matroids $R_{10}$ and $R_{12}$ (see appendix of \cite{oxley2011matroid}). Furthermore, the dual of a block matroid is also a block matroid, hence the duals of non-planar bispanning graphs yield more examples for non-graphic block matroids.

Since $r(M)$ is the size of a base, $|E| = 2 \, r(M)$ holds for every block matroid. But this criterion is not sufficient: consider the matroid with bases $\{ \{a,b\}, \{a,c\}, \{a,d\} \}$ as a counterexample. It has rank two, but the ground set $\{a,b,c,d\}$ has four elements which are not the disjoint union of any two bases, hence it is not a block matroid despite fulfilling the equality.

Hence, one needs a more elaborate characterization for block matroids. Edmonds showed that the principle behind the theorems of Nash-Williams and Tutte (\ref{thm:nash-williams} and \ref{thm:tutte bispanning}) carries over to block matroids.
\begin{theorem}[block matroid packing \R{\cites{edmonds1965lehman,edmonds1965minimum}[cor.\,8.2.58]{west2001introduction}}]
  A matroid $M = (E,\mathcal{I})$ is a block matroid, if and only if $|E| = 2 \, r(M)$ and
  $$|E| - |A| \geq 2 (r_M(E) - r_M(A)) \rlap{\qquad for all $A \subseteq E\,.$}$$
\end{theorem}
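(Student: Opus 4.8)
The plan is to first rewrite the displayed inequality into a more symmetric covering condition, then dispatch necessity by a short counting argument, and finally reduce the substantive direction, sufficiency, to the matroid union theorem, which will carry the real weight. The reformulation step is this: once $|E| = 2\,r(M)$ is in force, the bound $|E| - |A| \geq 2(r_M(E) - r_M(A))$ is equivalent to $|A| \leq 2\,r_M(A)$ for every $A \subseteq E$, since substituting $|E| = 2\,r(M) = 2\,r_M(E)$ cancels the $2\,r_M(E)$ terms and leaves $-|A| \geq -2\,r_M(A)$. Working with $r_M(A) \geq \tfrac12|A|$ is cleaner and mirrors the partition and edge-cut conditions of Nash--Williams (\ref{thm:nash-williams}) and Tutte (\ref{thm:tutte bispanning}); in particular it forces $M$ to be loopless.

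For necessity, I would suppose $E = B_1 \dotcup B_2$ with $B_1, B_2 \in \matheu{B}(M)$. Then $|E| = |B_1| + |B_2| = 2\,r(M)$ is immediate. For an arbitrary $A \subseteq E$, decompose $A = (A \cap B_1) \dotcup (A \cap B_2)$; each part is a subset of a base, hence independent and contained in $A$, so $r_M(A) \geq |A \cap B_i|$ for $i = 1, 2$. Adding the two bounds gives $2\,r_M(A) \geq |A \cap B_1| + |A \cap B_2| = |A|$, which is exactly the reformulated inequality.

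For sufficiency I would aim to show that $E$ is the \emph{union} of two independent sets; the counting then finishes the job, because each independent set has at most $r(M)$ elements, so a cover of the $2\,r(M)$-element set $E$ by two independent sets must use two disjoint sets of size exactly $r(M)$, each of which is therefore a base. To produce the cover I would invoke the matroid union theorem: the $2$-fold union of $M$ with itself has rank
\[
\min_{A \subseteq E} \bigl( |E \setminus A| + 2\,r_M(A) \bigr),
\]
and this equals the maximum size of a set expressible as a union of two independent sets. Taking $A = \emptyset$ shows the minimum is at most $|E|$, while the hypothesis $|A| \leq 2\,r_M(A)$ rewrites as $|E \setminus A| + 2\,r_M(A) \geq |E|$ for every $A$, so the minimum is exactly $|E|$; hence some union of two independent sets has size $|E|$ and must cover all of $E$.

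The main obstacle is therefore the matroid union theorem itself, which I am using as a black box. If a self-contained argument is wanted, I would replace it by the standard augmentation proof: among all pairs $(I_1, I_2)$ of disjoint independent sets maximize $|I_1| + |I_2|$; if this total is below $|E|$, pick an uncovered element and build an alternating exchange structure, which either increases the total (contradicting maximality) or closes off to a set $A$ that is saturated in both $I_1$ and $I_2$, forcing $|A| > 2\,r_M(A)$ and contradicting the hypothesis. Getting the alternating/augmenting bookkeeping exactly right --- in particular verifying that a non-augmentable configuration really does manufacture a violating $A$ --- is the only genuinely delicate step; the reformulation, the necessity direction, and the final base-size counting are all routine.
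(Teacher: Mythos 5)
Your proof is correct; note that the paper itself gives no proof of this theorem, stating it with citations to Edmonds and to West's textbook, and your argument --- necessity by splitting $A$ into $A \cap B_1$ and $A \cap B_2$, sufficiency by applying the matroid union theorem to $M \vee M$ and then forcing the two covering independent sets to be disjoint bases by the size count $|E| = 2\,r(M)$ --- is precisely the standard route those cited sources take. The reformulation to $|A| \leq 2\,r_M(A)$ also recovers the paper's subsequent corollary, so nothing is missing.
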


\begin{corollary}[simplified block matroid packing]
  A matroid $M = (E,\mathcal{I})$ with $|E| = 2 \, r(M)$ is a block matroid, if and only if $|E| = 2 \, r(M)$ and
  $$|A| \leq 2 \, r_M(A) \rlap{\qquad for all $A \subseteq E\,.$}$$
\end{corollary}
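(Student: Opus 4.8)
The plan is to obtain this as an immediate consequence of the block matroid packing theorem stated just above, since under the common hypothesis $|E| = 2\,r(M)$ the two characterizing inequalities are the same statement rewritten. First I would recall that $r(M) = r_M(E)$, so the hypothesis reads $|E| = 2\,r_M(E)$. The packing theorem characterizes block matroids by the condition $|E| - |A| \geq 2(r_M(E) - r_M(A))$ for all $A \subseteq E$, together with $|E| = 2\,r(M)$; since the latter is already assumed in the corollary, only the inequality needs to be translated into the simpler form.

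The key step is a term-by-term algebraic equivalence valid for each fixed $A \subseteq E$. Substituting $|E| = 2\,r_M(E)$ into the left-hand side of the packing inequality gives $2\,r_M(E) - |A| \geq 2\,r_M(E) - 2\,r_M(A)$; cancelling the common summand $2\,r_M(E)$ on both sides and multiplying by $-1$ (which reverses the inequality) yields exactly $|A| \leq 2\,r_M(A)$. Every manipulation here is reversible, so for each individual $A$ the packing inequality holds if and only if $|A| \leq 2\,r_M(A)$. Quantifying over all $A \subseteq E$ then preserves the equivalence, so the whole family of packing inequalities holds if and only if $|A| \leq 2\,r_M(A)$ for every $A \subseteq E$.

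Combining the two observations, a matroid $M$ with $|E| = 2\,r(M)$ satisfies the hypotheses of the packing theorem if and only if it satisfies $|A| \leq 2\,r_M(A)$ for all $A \subseteq E$; by the packing theorem this in turn is equivalent to $M$ being a block matroid, which is precisely the claim. There is essentially no real obstacle in this argument: the only points worth stating carefully are that the equivalence of the two inequalities is established for each fixed $A$ before quantifying universally, and that $r(M)$ and $r_M(E)$ denote the same quantity, so that the cancellation of $2\,r_M(E)$ against the substituted $|E|$ is legitimate.
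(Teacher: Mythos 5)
Your proof is correct and is exactly the intended derivation: the paper states this as an immediate corollary of the block matroid packing theorem, and your substitution of $|E| = 2\,r_M(E)$ into the packing inequality, followed by cancellation and sign reversal, is the reversible algebraic rewriting that justifies it. The care you take in noting that $r(M) = r_M(E)$ and that the equivalence holds for each fixed $A$ before quantifying is appropriate, and nothing further is needed.
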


Our counterexample above violates the theorem since $|\{b,c,d\}| = 3 \nleq 2 \cdot 1 = 2 \, r_M(\{b,c,d\})$.

From the definition of block matroids, $E = B_1 \dotcup B_2$, one could suspect them to be self-dual ($M \cong M^*$) by definition. However, this would require all bispanning graphs to be planar (see \cite[sect.~2.3]{oxley2011matroid}), which is not the case: $B_{6,12}$ in figure~\ref{fig:small simple bispanning} is the smallest non-planar bispanning graph. The error in this hypothesis, is that while $E$ must be disjoint union for some pair(s) of bases, this need not hold for all bases and their complement. Hence, a block matroid may contain bases of which the complement is not a base, and this relation is flipped in the dual matroid. Hence, the dual of $\matheu{M}[B_{6,12}]$ is non-graphic but a block matroid.


\subsection{Inductive Construction with Double-Attach and Edge-Split-Attach}

Another special property of bispanning graphs is that they yield to a simple inductive construction method involving just two operations: \emph{double-attach} and \emph{edge-split-attach}. Using these two operations all bispanning graphs can be constructed from a single vertex. This constructive property makes the class of bispanning graphs tractable to inductive proofs, since one only needs to show that a property remains true under both operations. Alternatively, structural proofs can decompose a bispanning graph using the two operations in a similar way as in the proof of theorem~\ref{thm:inductive construction}: by reducing either at a vertex of degree two or three.

In some sense, the two operations \emph{double-attach} and \emph{edge-split-attach} are just slightly more complex than the two operations in the inductive construction of the well-studied \emph{series-parallel networks}~\cites{duffin1965topology}[sect.~5.4]{oxley2011matroid}, or the single operation of the open ear decomposition of $2$-vertex-connected graphs.

\begin{theorem}[double-attach and edge-split-attach operations\R{\cite[sect.~2.2.1]{baumgart2009ranking}}]\label{thm:inductive operations}
  If $G = (V,E,\delta)$ is a bispanning graph and $v \notin V$ a new vertex, then a larger bispanning graph with $|V|+1$ vertices and $|E|+2$ edges can be created using either of the following operations:
  \begin{enumerate}
  \item Let $x,y \in V$ be two not necessarily distinct vertices in $G$, then $$G' := (V \cup \{v\},E) + \{x,v\} + \{y,v\}$$ is a bispanning graph. We call this a \emph<double-attach!operation>{double-attach} operation of $v$ at $x,y$ (see figure~\ref{fig:double-attach}).\label{thm:double-attach}

  \item Let $\bar{e} \in E$ be an edge with $\delta(\bar{e}) = \{x,y\}$ and $z \in V$ a third not necessarily distinct vertex, then $$G'' := (V \cup \{v\},E) - \bar{e} + \{x,v\} + \{y,v\} + \{z,v\}$$ is a bispanning graph. We call this an \emph<edge-split-attach!operation>{edge-split-attach} operation of $\bar{e}$ at $z$, as illustrated in figure~\ref{fig:edge-split-attach}.\label{thm:edge-split-attach}

  \end{enumerate}
\end{theorem}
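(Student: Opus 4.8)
The plan is to prove both operations produce bispanning graphs by verifying the Nash-Williams condition (theorem~\ref{thm:nash-williams}) for the new graph, building on the assumption that $G$ is already bispanning. For each operation, the edge-count condition is immediate: we add one vertex and two edges, so if $|E| = 2|V| - 2$ held for $G$, then the new graph has $|E'| = |E| + 2 = 2(|V|+1) - 2$, as required. The substance lies in checking that every partition $P'$ of the enlarged vertex set $V' = V \cup \{v\}$ satisfies $|E'_{P'}| \geq 2(|P'| - 1)$.

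First I would handle the double-attach operation. Given a partition $P'$ of $V'$, I consider the member containing the new vertex $v$ and how $v$'s two new edges $\{x,v\}$ and $\{y,v\}$ contribute to the cross-partition edge count. The natural reduction is to relate $P'$ to a partition $P$ of the original $V$ obtained by deleting $v$ from its member (and dropping that member if it becomes empty). There are two cases. If $v$ sits alone in a singleton member of $P'$, then both new edges are cross edges (regardless of where $x,y$ land), so they contribute $2$ to $|E'_{P'}|$, while removing the singleton drops $|P'|$ by one and hence the target bound $2(|P'|-1)$ by $2$; the inherited inequality for $P$ on $G$ then closes the gap. If $v$ shares its member with other vertices, then deleting $v$ yields a partition $P$ of $V$ with $|P| = |P'|$, and I must check that the new edges compensate for any vertices of $G$ that were separated: here the key observation is that at most the new edges incident to $v$ are lost, but the target bound is unchanged, and since $G$ already satisfies the strict-enough inequality one can absorb the deficit. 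The analogous argument works for edge-split-attach, with the extra bookkeeping that the edge $\bar e$ is removed while three new edges $\{x,v\},\{y,v\},\{z,v\}$ are added; the crucial point is that wherever the partition places $x,y,z,v$, the three new edges cross at least as often as $\bar e$ did plus the two extra crossings demanded by the larger partition.

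I expect the main obstacle to be the careful case analysis tracking exactly how many of the new edges cross the partition $P'$ versus how the removal of $\bar e$ (in the edge-split case) affects $|E_P|$ for the reduced partition. In particular, for edge-split-attach one must verify that splitting $\bar e$ and reconnecting through $v$ never destroys the Nash-Williams inequality: the delicate subcase is when $x$, $y$, and $z$ lie in two or three distinct members while $v$ occupies its own member, so that the count of new crossing edges must be compared precisely against the loss of $\bar e$ and the increase of the right-hand side. Rather than invoke Nash-Williams, an arguably cleaner alternative I would keep in mind is the direct approach: exhibit the two disjoint spanning trees explicitly. For double-attach, route one new edge $\{x,v\}$ into $S$ and the other $\{y,v\}$ into $T$, which extends each spanning tree to cover $v$ while preserving acyclicity (each added edge is a leaf edge at $v$). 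For edge-split-attach, suppose without loss of generality $\bar e \in S$; then the new trees are $S' = (S - \bar e) + \{x,v\} + \{y,v\}$ and $T' = T + \{z,v\}$, and one checks that $S'$ remains a spanning tree (the path through $v$ replaces $\bar e$) and $T'$ is a spanning tree with $v$ as a new leaf. This explicit-tree argument sidesteps the partition counting entirely and is likely the route I would ultimately write up, reserving the Nash-Williams verification as a cross-check.
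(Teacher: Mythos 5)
Your final write-up choice---exhibiting the trees explicitly by putting $\{x,v\}$ into $S$ and $\{y,v\}$ into $T$ for double-attach, and taking $S - \bar{e} + \{x,v\} + \{y,v\}$ together with $T + \{z,v\}$ (after assuming $\bar{e} \in S$ without loss of generality) for edge-split-attach---is exactly the paper's proof, and it is correct. The Nash-Williams partition count you sketch first would also go through, but it is strictly more work than needed here; the paper reserves that style of argument for the converse-type statements (contraction of bispanning subgraphs, atomicity), where no explicit trees are available.
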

\begin{proof} Let $S \dotcup T$ be two disjoint spanning trees of $G$.
  \begin{enumerate}
  \item If $e_x$ and $e_y$ are the two new edges of $G'$ with $\delta'(e_x) = \{x,v\}$ and $\delta'(e_y) = \{y,v\}$, then $S + e_x$ and $T + e_y$ are two disjoint spanning trees of $G'$.

  \item Let $e_x$, $e_y$, and $e_z$ be the three new edges of $G''$ with $\delta''(e_x) = \{x,v\}$, $\delta''(e_y) = \{y,v\}$, and $\delta''(e_z) = \{z,v\}$. Either $S$ or $T$ contains $\bar{e}$, so assume without loss of generality $\bar{e} \in S$, otherwise relabel. Then $S - \bar{e} + e_x + e_y$ and $T + e_z$ are two disjoint spanning trees of $G''$.

  \end{enumerate}
\end{proof}
\begin{figure}[b]\centering
  \tikzset{every picture/.style={scale=2.5, graphfinal}}

  \hfill%
  \subcaptionbox{double-attach operation\label{fig:double-attach}}[66mm]{%
    \centering
    \begin{tikzpicture}[yscale=0.9]

      \draw[fill=black!5] (0.5,0) ellipse (9mm and 3mm);
      \node at (0.5,-0.1) {$G$};

      \node (x) [vdot] at (0,0) {$x$};
      \node (y) [vdot] at (1,0) {$y$};
      \node (v) [vdot] at (0.5,1) {$v$};

      \draw[B] (x) -- node [vlabel] {$e_x$} (v);
      \draw[R] (y) -- node [vlabel,swap] {$e_y$} (v);

    \end{tikzpicture}
  }%
  \hfill%
  \subcaptionbox{edge-split-attach operation\label{fig:edge-split-attach}}[70mm]{%
    \centering
    \begin{tikzpicture}[yscale=0.9,xscale=0.9]

      \draw[fill=black!5] (1,0) ellipse (15mm and 3mm);
      \node at (1.35,-0.1) {$G$};

      \node (x) [vdot] at (0,0) {$x$};
      \node (y) [vdot] at (1,0) {$y$};
      \node (z) [vdot] at (2,0) {$z$};
      \node (v) [vdot] at (1,1) {$v$};

      \draw[B,dashed] (x) -- node [vlabel,above,inner sep=3pt] {$\bar{e}$} (y);
      \draw[B] (x) -- node [vlabel] {$e_x$} (v);
      \draw[B] (y) -- node [vlabel,swap] {$e_y$} (v);
      \draw[R] (z) -- node [vlabel,swap] {$e_z$} (v);

    \end{tikzpicture}
  }%
  \hfill\null%
  \caption{The double-attach operation and edge-split-attach operations.}
\end{figure}

\begin{theorem}[inductive construction of bispanning graphs\R{\cite[thm.\,2.6]{baumgart2009ranking}}]\label{thm:inductive construction}
  Any bispanning graph $G = (V,E,\delta)$ can be constructed using the operations ``double-attach'' and ``edge-split-attach'' starting with a single vertex.
\end{theorem}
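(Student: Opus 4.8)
The plan is to prove the statement by induction on the number of vertices $|V|$, by inverting the two operations of theorem~\ref{thm:inductive operations}. The base case is $|V| = 1$, where $G = K_1$ is already the single starting vertex. For the inductive step I would take a bispanning graph $G = (V, E, \delta)$ with $|V| \geq 2$ together with a fixed decomposition $E = S \dotcup T$ into disjoint spanning trees, and locate a vertex $v$ of degree $2$ or $3$, which exists by theorem~\ref{thm:bispanning deg23}. Since $G[S]$ and $G[T]$ are both spanning, hence connected, $v$ is incident to at least one edge of each tree; this observation pins down exactly how the edges at $v$ distribute between $S$ and $T$ and drives both cases.

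In the degree-$2$ case, $v$ has exactly one incident edge $e_x = \{x, v\}$ in $S$ and one $e_y = \{y, v\}$ in $T$, so $v$ is a leaf of both $G[S]$ and $G[T]$. Deleting a leaf from a spanning tree yields a spanning tree of the smaller graph (theorem~\ref{thm:tree}), so $S - e_x$ and $T - e_y$ witness that $G - v$ is bispanning. By the induction hypothesis $G - v$ is constructible, and $G$ is recovered from it by a double-attach of $v$ at $x, y$, allowing $x = y$ for parallel edges; this is precisely operation~\ref{thm:double-attach}.

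In the degree-$3$ case, the three edges at $v$ distribute so that one tree receives two of them and the other receives one, so after possibly swapping the roles of $S$ and $T$ I may assume $e_x = \{x, v\}$ and $e_y = \{y, v\}$ lie in $S$ while $e_z = \{z, v\}$ lies in $T$. Because $G[S]$ is acyclic, $x \neq y$. I would then set $\hat{G} := (G - v) + \{x, y\}$, writing $\bar{e} = \{x, y\}$ for the new edge, and check that $S' := (S - e_x - e_y) + \bar{e}$ and $T' := T - e_z$ are disjoint spanning trees of $\hat{G}$: removing the degree-$2$ vertex $v$ from the tree-graph $G[S]$ breaks the unique path between $x$ and $y$ and splits it into exactly two components, one containing $x$ and the other $y$, which $\bar{e}$ reconnects into a spanning tree of $V - v$, while $v$ is a leaf of $G[T]$, so $T - e_z$ stays spanning. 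Disjointness holds because $\bar{e} \notin E$ and $S' - \bar{e} \subseteq S$, $T' \subseteq T$. Hence $\hat{G}$ is bispanning on $|V| - 1$ vertices, is constructible by induction, and $G$ arises from $\hat{G}$ by the edge-split-attach of $\bar{e}$ at $z$ (operation~\ref{thm:edge-split-attach}), which reinserts $v$ together with exactly $e_x$, $e_y$, $e_z$.

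The routine parts are the vertex and edge counts, which match $2|V| - 2$ automatically since both reductions remove one vertex and lower the edge count by two. The main obstacle is the degree-$3$ case: one must pair the two edges belonging to the \emph{same} tree to form $\bar{e}$ and verify that deleting $v$ from that tree produces exactly two components rejoined by $\bar{e}$, rather than disconnecting it further or leaving a cycle. Some care is also needed with parallel edges and with the possibility that $z$ coincides with $x$ or $y$, but theorem~\ref{thm:inductive operations} explicitly permits non-distinct attachment vertices, so these degenerate configurations cause no real difficulty.
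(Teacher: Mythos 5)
Your proof is correct and takes essentially the same route as the paper's: induction on $|V|$ using theorem~\ref{thm:bispanning deg23}, with the degree-two case handled by reversing a double-attach and the degree-three case by reversing an edge-split-attach, exactly as in the paper. The only cosmetic difference is that the paper treats $|V|=2$ as an explicit extra base case, while you absorb it into the degree-two case with $x=y$; your added details (each tree must reach $v$, $x\neq y$ by acyclicity of $G[S]$, and the two-component argument for $S'$) merely spell out steps the paper leaves implicit.
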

\begin{proof}
  We prove this by induction over the number of vertices $|V|$. The only bispanning graph with $|V| = 1$ is an isolated vertex and serves as a basis for the construction. For $|V| = 2$ the only bispanning graph is a pair of vertices with two parallel edges (see figure~\ref{fig:small-bispanning}), and this graph can be constructed from a single vertex using one ``double-attach'' operation.

  Now consider a bispanning graph $G = (V,E,\delta)$ with $|V| \geq 3$ and two disjoint spanning trees $S \dotcup T$. Due to theorem~\ref{thm:bispanning deg23}, the graph $G$ contains a vertex of degree two or three. If $G$ contains a vertex $v$ of degree two, then $v$ is connected to the remainder of $G$ by only two edges $e_1$ and $e_2$. Let $e_1 \in S$ and $e_2 \in T$ without loss of generality. Removal of $v$ together with $e_1$ and $e_2$ yields a graph $G' := G - v$ with $|V| - 1$ vertices and $|E| - 2$ edges, which admits the two disjoint spanning trees $S - e_1$ and $T - e_2$, as $e_1$ and $e_2$ are leaves in the trees. Thus applying the induction hypothesis to $G'$ assures that a sequence of the two operations exists for $G'$, and this sequence can be extended with a ``double-attach'' operation of $v$ at the other ends of $e_1$ and $e_2$, which results in $G$.

  If $G$ contains no vertex of degree two, then $G$ contains a vertex $v$ of degree three, which is connected to the remainder of $G$ by exactly three edges $e_1$, $e_2$, and $e_3$. Two of the three edges must be contained in the same disjoint spanning tree of $G$. Without loss of generality, we can assume the edges and trees labeled as $e_1, e_2 \in S$ and $e_3 \in T$. Let $x$ and $y$ be the other ends of $e_1$ and $e_2$ and consider the graph $G' := G - v + \{x,y\}$, which is the reversal of the ``edge-split-attach'' operation (theorem~\ref{thm:edge-split-attach}). As $e_3$ is a leaf edge in $T$, $T - e_3$ is a spanning tree of $G'$, and $S - e_1 - e_2 + \{x,y\}$ is a second disjoint spanning tree of $G'$. Hence, $G'$ is a bispanning graph with $|V| - 1$ vertices and the induction hypothesis can be applied. A sequence of operations for $G'$ can thus be extended with a ``edge-split-attach'' operation to construct $G$.
\end{proof}

\begin{remark}[properties of inductive construction]
  The inductive construction process and the proof of theorem~\ref{thm:inductive construction} show important properties of bispanning graphs, however, we must highlight some short-comings of the process:
  \begin{enumerate}
  \item While the construction process implicitly defines two disjoint spanning trees, the proof \emph{assumes known} disjoint spanning trees. Hence, it does not yield an algorithm to find them, and we will deal with this in the next section.

  \item Different construction sequences can construct isomorphic bispanning graphs. The different sequences leading to the same bispanning graph may even contain different types of operations.

    The simplest example is $B_{3,2}$ (see figure~\ref{fig:small-bispanning}, page~\pageref{fig:small-bispanning}), which can either be constructed using two double-attach operations, or one double-attach followed by one edge-split-attach.

  \item Contrarily to other matroid constructions, the two operations are not dual to each other.
  \end{enumerate}
\end{remark}


\subsection{Finding Two Disjoint Spanning Trees}

In this section we discuss an algorithm for finding two disjoint spanning trees in a graph $G = (V,E,\delta)$, or determining that no such exist. The algorithm thus tests whether a given uncolored graph is bispanning, and delivers $E = S \dotcup T$.

This problem immediately appears similar to the classical minimum-cost spanning tree problem~\cite{boruuvka1926jistem,jarnik1930jistem,kruskal1956shortest,prim1957shortest}, which however only asks for a single spanning tree but additionally minimizes the tree's edge weight sum. The well-known solutions for the minimum-cost spanning tree by Kruskal and Jarn\'ik-Prim are textbook examples of greedy algorithms.

In this thesis, we review an algorithm by Roskind and Tarjan \cites[chapter~3]{roskind1983edge}{roskind1985note}, which can in general find $k$ disjoint spanning trees in $\mathcal{O}(k^2 |V|^2)$ time for unit weight edges, or minimum-cost spanning trees in $\mathcal{O}(|E| \log |E| + k^2 |V|^2)$ time. It is based on greedily finding augmenting edge swap sequences, which alternate between existing disjoint forests, similar to the bipartite matching algorithm by Berge~\cite{berge1957two}. For our bispanning graph application, we present precise pseudo-code of an optimized and simplified version with $k = 2$, which additionally takes parallel edges and a pre-existing coloring into account. The resulting algorithm thus runs in $\mathcal{O}(|V|^2)$ time.

A very similar algorithm is given in \cite{andres2014base}. It too searches for augmenting edge swap sequences using fundamental cycle/cuts. Due to repeated searches for cycles and cuts, it is only bounded by $\mathcal{O}(|E|^3) = \mathcal{O}(|V|^3)$ time, however, it is also considerable simpler. Jochim surveys in total three algorithms for the ``partitioning'' problems of bispanning graphs~\cite{jochim2014algorithmen}.

The algorithm by Roskind and Tarjan is presented as three subroutines in algorithms~\ref{alg:algbispanning}--\ref{alg:bfs-augment}. The basic idea behind the algorithm is to keep two disjoint forests as edge sets, which are initially empty, and attempt to add new edges one at a time to the first one. If this would create a cycle, test for each edge of the cycle whether one could swap it into the other forest. If this in turn creates a cycle, try to resolve it again. This recursive resolution search space is explored breadth first and resembles a tree-graph of alternating swap sequences. When finally one edge is found that can be swapped without violating the two forests' property, the whole chain of swap sequences is executed. If no such edge can be found, then the original edge cannot be added to either disjoint forest.

To get better time complexity, Roskind and Tarjan, do not perform repeated searches for a cycle in the forests. Instead, they construct two ``colored'' trees from the disjoint edge sets, with the trees rooted at one of the new edge's vertices. The trees are constructed using a colored breadth-first search (BFS) and stored as predecessor edges for each vertex. Then, the test for a cycle can be implemented using a union-find data structure, and the cycle itself can be found using the corresponding colored tree by walking the predecessors to a known vertex. This trick yields the quadratic time complexity.

The main routine \FuncSty{FindBispanningTrees} in algorithm \ref{alg:algbispanning} takes a graph $G$ with an optionally pre-filled edge color array \DataSty{color}, containing values \textsl{blue}, \textsl{red}, and any other value like \textsl{black} for unmarked edges. The pre-filled array can be used, for example, to keep most edge colors the same, when a user wishes to add an edge to the graph. The array \DataSty{color} defines two edge sets, \textsl{blue} and \textsl{red}, which are certified to be forests throughout the algorithm by using two union-find data structure $U_{\textsl{blue}}$ and $U_{\textsl{red}}$. In lines 3--7, the initial colors are checked for cycles and added to the union-find data structure. After this initialization, the main loop follows in line 8--16, wherein each edge is added to one of the forests; if an edge cannot be added, then the algorithm terminates and returns \textsl{false}. When considering an edge $e$, the algorithm first checks whether it was pre-colored (line 9), then tests whether it can be added to either forest without further ado using the union-find data structures (lines 10--13), which is a simple optimization (no costly colored BFS are done). If this is not possible, the \FuncSty{AugmentTree} routine is called to find an augmenting color swap sequence.

The initial steps of \FuncSty{AugmentTree} (algorithm~\ref{alg:bfs-augment}) are to construct two colored breadth-first trees rooted at vertex $v_0$, which is an arbitrary end of the edge $e_0$, that should be added to the forests. The subroutine \FuncSty{ColoredBFS} in algorithm \ref{alg:bfs-color} is a standard breadth-first search modified to consider only edges of a specific color (\textsl{blue} or \textsl{red}). The array \DataSty{label} is used to remember the previous edge in the augmenting color swap sequence that will be constructed, hence \DataSty{label} resembles a tree on the edge set. The queue $Q$ contains unseen edges for the breadth-first exploration of this edge set.

To explore the edge swap sequence space, an edge $e$ is taken from $Q$ and inspected whether it can be added to a forest set. If $e$ is \textsl{blue} then, the forest \textsl{red} is the destination, otherwise, if $e$ is \textsl{red} or \textsl{black}, then it should become \textsl{blue}. Then the corresponding union-find data structure is queried (line 9), whether $e$ would close a cycle. If it does not, then $e$ is the final edge is a valid color swap sequence, that was previously saved in the \DataSty{label} edge tree. It only remains to walk the tree branch backwards to the root, swapping colors along the way (lines 11--14). However, if $e$ closes a cycle in the other edge set, then the swap sequence space needs further exploration along the cycle. The cycle edges are found by moving along the corresponding colored BFS tree backwards until either the root $v_0$ or a vertex is found that is already inspected (lines 21--24). The starting vertex $x$ for the walk backwards in $\DataSty{pred}_c$ is the end of $e$, which was not reached by the BFS exploration yet (lines 17--19). The walk backwards needs to push edges on a stack, and then push them in reverse order into the BFS queue, because the exploration must be done breadth-first from $e_0$.

\begin{algorithm}[p]
  \caption{Find two disjoint spanning trees}\label{alg:algbispanning}
  \SetKwFunction{FindBispanningTrees}{FindBispanningTrees}
  \SetKwFunction{AugmentTree}{AugmentTree}
  \lSetKwArray{pred}{pred}
  \lSetKwArray{color}{color}
  \def\blue{\textsl{blue}}
  \def\red{\textsl{red}}

  \Function{\FindBispanningTrees{$G = (V,E,\delta),\color[]$}}
  {
    \KwIn{A graph $G$, and an optionally pre-initialized edge color array $\color[]$.}

    $U_\blue := \KwSty{new}\ \FuncSty{UnionFind}()$,\quad $U_\red := \KwSty{new}\ \FuncSty{UnionFind}()$ \tcp*{Union-find for two forests.}

    \ForEach(\tcp*[f]{Initially check each edge $e$.}){$e \in E$ \KwSty{with} $\{ v_1,v_2 \} := \delta(e)$}{
      \uIf{$\color[e] \in \{ \blue, \red \}$ \KwSty{and} $U_{\color[e]}.\FuncSty{find}(v_1) \neq U_{\color[e]}.\FuncSty{find}(v_2)$}{
        $U_{\color[e]}.\FuncSty{union}(v_1,v_2)$ \tcp*{If edge $e$ is already colored and independent.}
      }
      \Else{
        $\color[e] := \textsl{black}$ \tcp*{If edge $e$ is uncolored or closes a cycle.}
      }
    }

    \ForEach(\tcp*[f]{Goal: add $e$ to either forest.}){$e \in E$ \KwSty{with} $\{ v_1,v_2 \} := \delta(e)$}{
      \lIf{$\color[e] \in \{ \blue, \red \}$}{
        \KwSty{continue} \tcp*[f]{If edge $e$ is already colored.}%
      }
      \uElseIf{$U_\blue.\FuncSty{find}(v_1) \neq U_\blue.\FuncSty{find}(v_2)$}{
        $\color[e] := \blue$,\quad $U_\blue.\FuncSty{union}(v_1,v_2)$ \tcp*{Easy case: $e$ is independent in blue forest.}
      }
      \uElseIf{$U_\red.\FuncSty{find}(v_1) \neq U_\red.\FuncSty{find}(v_2)$}{
        $\color[e] := \red$,\quad $U_\red.\FuncSty{union}(v_1,v_2)$ \tcp*{Easy case: $e$ is independent in red forest.}
      }
      \Else{
        \If(\tcp*[f]{Otherwise: try to find an}){\KwSty{not} $\AugmentTree(G,\color[],e,U_\blue,U_\red)$}{
          \Return \textsl{false} \tcp*{augmenting swap sequence.}
        }
      }
    }
    \Return \textsl{true} \tcp*{All edges in one of the forests.}
  }
  \KwOut{\textsl{true} if $G$ is bispanning and $\color[]$ contains two disjoint trees, \textsl{false} otherwise.}

\end{algorithm}

\begin{algorithm}[p]
  \caption{Calculate colored breadth-first search tree}\label{alg:bfs-color}
  \SetKwFunction{ColoredBFS}{ColoredBFS}
  \lSetKwArray{color}{color}
  \lSetKwArray{pred}{pred}
  \def\blue{\textsl{blue}}
  \def\red{\textsl{red}}

  \Function{\ColoredBFS{$G = (V,E,\delta),r \in V,\color[],c \in \{ \blue,\red \}$}}
  {
    \KwIn{A graph $G$, a root $r$, an edge color array $\color[]$, and a color filter $c$.}

    \lForEach{$v \in V$}{
      $\pred[v] := \bot$ \tcp*[f]{Reset predecessor array,}
    }

    $Q := \KwSty{new}\ \FuncSty{Queue}(\{ r \})$,\quad $\pred[r] := \top$ \tcp*{mark root with sentinel, and initialize queue.}

    \While{$Q.\FuncSty{notEmpty}()$}{
      $v := Q.\FuncSty{popTop}()$ \tcp*{Process graph breadth first.}
      \ForEach(\tcp*[f]{Iterate over neighbors,}){$e \in \{ e \in E \mid v \in \delta(e) \}$ \KwSty{with} $\{v,w\} := \delta(e)$}{
        \If(\tcp*[f]{if neighbor is unseen and color matches,}){$\pred[w] = \bot$ \KwSty{and} $\color[e] = c$}{
          $\pred[w] := e$,\quad $Q.\FuncSty{append}(w)$ \tcp*{set predecessor edge and queue for visit.}
        }
      }
    }
    \Return $\pred$ \;
    \KwOut{The array $\pred[]$ contains predecessor edges forming a breadth-first tree of color $c$ in $G$ rooted at $r$.}
  }
\end{algorithm}

\begin{algorithm}[p]
  \caption{Cyclic search for an augmenting path}\label{alg:bfs-augment}
  \SetKwFunction{ColoredBFS}{ColoredBFS}
  \SetKwFunction{AugmentTree}{AugmentTree}
  \lSetKwArray{pred}{pred}
  \lSetKwArray{color}{color}
  \lSetKwArray{label}{label}
  \def\blue{\textsl{blue}}
  \def\red{\textsl{red}}

  \Function{\AugmentTree{$G = (V,E,\delta),\color[],e_0 \in E,U_\blue,U_\red$}}
  {
    \KwIn{A graph $G$, an array $\color[]$, two union-find data structures $U_\blue$ and $U_\red$, and a new edge $e_0$ with $\{ v_0, w_0 \} := \delta(e_0)$.}

    $\pred_\blue := \ColoredBFS(G,v_0,\blue)$ \tcp*{Find predecessor BFS trees for}
    $\pred_\red := \ColoredBFS(G,v_0,\red)$ \tcp*{walking cycles back instead of searching.}

    \lForEach(\tcp*[f]{Clear labels.}){$e \in E$}{
      $\label[e] := \bot$
    }

    $Q := \KwSty{new}\ \FuncSty{Queue}(\{ e_0 \})$ \tcp*{Initialize BFS queue with root edge.}

    \While{$Q.\FuncSty{notEmpty}()$}{
      $e := Q.\FuncSty{popTop}()$ \KwSty{with} $\{ v,w \} := \delta(e)$
      \tcp*{Inspect edge in swap sequence space, and}

      $c := (\KwSty{if } \color[e] = \blue \KwSty{ then } \red \KwSty{ else } \blue)$
      \tcp*{try to add it to the other forest.}

      \uIf{$U_c.\KwSty{find}(v) \neq U_c.\KwSty{find}(w)$}{
        $U_c.\KwSty{union}(v,w)$
        \tcp*{Edge $e$ can be swapped without closing a cycle,}

        \While(\tcp*[f]{which finished a valid color swap sequence!}){$e \neq e_0$}{
          $\KwSty{swap}(c, \color[e])$
          \tcp*{Hence perform swap sequence along predecessors}
          $e := \label[e]$
          \tcp*{in label up to root edge, swapping $c$ along the way,}
        }
        $\color[e] := c$ \tcp*{and finally coloring $e_0$.}
        \Return \textsl{true}
      }
      \Else(\tcp*[f]{If $e$ closes a cycle, let $x$}){
        \lIf(\tcp*[f]{be the end vertex of $e$}){$v \neq v_0$ \KwSty{and} $\label[\pred_c[v]] = \bot$}{
          $x := v$
        }
        \lElseIf(\tcp*[f]{previously unreached}){$w \neq v_0$ \KwSty{and} $\label[\pred_c[w]] = \bot$}{
          $x := w$
        }
        \lElse(\tcp*[f]{by edges in the \DataSty{label} tree.}){
          \KwSty{abort}(``This cannot occur.'')
        }

        $S := \KwSty{new}\ \FuncSty{Stack}()$ \;

        \While(\tcp*[f]{Collect edges in cycle back to}){$x \neq v_0$ \KwSty{and} $\label[\pred_c[x]] = \bot$}{
          $e' := \pred[x]$ \KwSty{with} $\{ v', w' \} := \delta(e')$
          \tcp*{the last previously inspected edge, and}

          $S.\KwSty{push}(e')$ \tcp*{save these on $S$.}

          $x := (\KwSty{if } x = v' \KwSty{ then } w' \KwSty{ else } v')$ \;
        }

        \While{$S.\FuncSty{notEmpty}()$}{
          $e' := S.\FuncSty{popTop}()$ \;
          $\label[e'] := e$
          \tcp*{Label edges in cycle with $e$ in swap sequence, and}

          $Q.\FuncSty{push}(e')$
          \tcp*{queue for inspection in order outgoing from clump.}
        }
      }
    }
    \Return \textsl{false} \;
    \KwOut{\textsl{true} if $e_0$ was colored, \textsl{false} is no augmenting color swap sequence was found.}
  }
\end{algorithm}

\clearpage

\section{Exchange Graphs and Games}\label{sec:exchange graph}

In this section we formalize the swapping of edges in the games on bispanning graphs. We first define symmetric edge exchanges and then exchange graphs where vertices are configurations of disjoint spanning trees and edges correspond to possible edge exchanges. The focus of the thesis is on exchange graphs containing only unique edge exchanges or ``forced moves'', where Alice leaves Bob no choice in which edge to color.

For this section we generally assume that a particular bispanning graph $G = (V,E,\delta)$ is given, though we reiterate this premise in the theorems to keep them self-consistent.


\subsection{Unique and Unrestricted Symmetric Edge Exchanges}

We first define and prove the existence of unrestricted (not necessarily unique) and unique symmetric edge exchanges, without implying an order on the pair of trees.

\begin{theorem}[(unrestricted) symmetric edge exchange in bispanning graphs]\label{thm:edge exchange}
  Given two disjoint spanning trees $S \dotcup T$ in a bispanning graph $G$, then for every edge $e \in S$ there exists at least one edge $f \in T$ , such that $S - e + f$ and $T + e - f$ are a pair of disjoint spanning trees of $G$.

  Any edge $f \in D(S,e) \cap C(T,e)$ with $f \neq e$ can be selected, and we call this operation a \emph<edge!exchange>[edge exchange]{(unrestricted) symmetric edge exchange} or simply \emph{edge exchange} $(e,f)$ on $(S,T)$ with $(e,f) \in S \times T$.
\end{theorem}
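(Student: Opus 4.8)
The plan is to prove the statement in three parts: first locate where $D(S,e)$ and $C(T,e)$ live, then show their intersection contains an edge $f\neq e$, and finally verify that any such $f$ produces a valid exchange. Since $e\in S$ and $S\dotcup T$ are disjoint, $e\notin T$, so $e$ is simultaneously a tree edge of $S$ and a non-tree edge of $T$. By remark~\ref{rem:cycle cut sets} this gives $C(T,e)\subseteq T+e$ and $D(S,e)\subseteq (E\setminus S)+e = T+e$; in particular any element of $D(S,e)\cap C(T,e)$ other than $e$ must lie in $T$, which will automatically ensure $f\neq e$.

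For the existence step I would delete $e$ from the spanning tree $S$. Since $e$ is a bridge in $G[S]$ (theorem~\ref{thm:tree bridge}), $G[S]-e$ falls into two components with vertex sets $V_1\dotcup V_2 = V$, and the ends $\delta(e)=\{x,y\}$ lie in different components, say $x\in V_1$ and $y\in V_2$. The fundamental cut is precisely the set of edges of $G$ crossing this partition, $D(S,e)=[V_1,V_2]$, and apart from $e$ every crossing edge belongs to $T$. On the other hand the fundamental cycle $C(T,e)$ consists of $e$ together with the unique path $P$ in $G[T]$ connecting $x$ and $y$ (theorem~\ref{thm:fundamental cycle} and theorem~\ref{thm:tree unique path}). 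Because $P$ starts in $V_1$ and ends in $V_2$, it must traverse at least one edge $f$ crossing the partition; this $f$ lies both in $P\subseteq C(T,e)$ and in $[V_1,V_2]=D(S,e)$, and $f\in T$ forces $f\neq e$. Hence $D(S,e)\cap C(T,e)$ contains an edge different from $e$.

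Finally, for any $f\in D(S,e)\cap C(T,e)$ with $f\neq e$ (so $f\in T$), I would certify both swapped sets as spanning trees using duality of fundamental cycle and cut (theorem~\ref{thm:duality cycle cut}). Applied to the tree $S$ with tree edge $e$ and non-tree edge $f$, the membership $f\in D(S,e)$ yields $e\in C(S,f)$; thus $e$ lies on the unique cycle of $S+f$, so $S-e+f$ is acyclic with $|V|-1$ edges and therefore a spanning tree by theorem~\ref{thm:tree acyclic}. Symmetrically, applied to the tree $T$ with tree edge $f$ and non-tree edge $e$, the membership $f\in C(T,e)$ yields $e\in D(T,f)$; thus $e$ reconnects the two components of $G[T]-f$, so $T+e-f$ is connected with $|V|-1$ edges and therefore a spanning tree by theorem~\ref{thm:tree connected}. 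Disjointness is immediate, since $e\in S\setminus T$ and $f\in T\setminus S$ imply that $S-e+f$ and $T+e-f$ partition $E$.

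I expect the main obstacle to be the existence step: the crux is to read off the vertex partition $(V_1,V_2)$ induced by deleting $e$ from $S$ and to argue that the $T$-path closing the fundamental cycle is forced to cross it, landing in both the cut and the cycle at once. Once this intersection edge is in hand, the two verification steps become bookkeeping, carried cleanly by the duality theorem together with the edge-count characterizations of spanning trees.
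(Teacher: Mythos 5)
Your proof is correct, and it follows the same two-phase outline as the paper --- first produce an edge $f \neq e$ in $D(S,e) \cap C(T,e)$, then certify both swapped edge sets as spanning trees --- but both phases are executed differently. For existence, the paper invokes a parity fact stated without proof (the cycle crosses the cut an even number of times, hence $|D(S,e) \cap C(T,e)| = 2k$ with $k \geq 1$), whereas you argue constructively: the unique $T$-path joining the ends of $e$ must cross the bipartition $(V_1,V_2)$ obtained by deleting $e$ from $S$, and any crossing edge lies simultaneously in $[V_1,V_2] = D(S,e)$ and in $C(T,e)$. Your version is more elementary and self-contained; the paper's version yields the stronger even-cardinality statement, which it reuses immediately afterwards when defining unique exchanges through $D(S,e) \cap C(T,e) = \{e,f\}$. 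For verification, the paper reads the two memberships off directly --- $f \in D(S,e)$ reconnects the two components of $G[S]-e$, so $S-e+f$ is connected and theorem~\ref{thm:tree connected} applies; $f \in C(T,e)$ lies on the unique cycle of $T+e$, so $T+e-f$ is acyclic and theorem~\ref{thm:tree acyclic} applies --- while you first flip both memberships through the duality theorem~\ref{thm:duality cycle cut} and then argue on the dual side, showing $S-e+f$ acyclic and $T+e-f$ connected. Both are sound and cost the same edge-counting bookkeeping; the paper's route is shorter, yours makes the cut/cycle symmetry of the exchange explicit, which is a useful perspective for the reversibility results that follow.
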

\begin{proof}
  Removing $e$ from $S$ opens the cut $D(S,e) \subseteq T + e$ and adding $e$ to $T$ closes the cycle $C(T,e) \subseteq T + e$ (see theorems~\ref{thm:fundamental cut}, \ref{thm:fundamental cycle}). Since $e$ is in both cycle and cut, their intersection contains at least another edge, because the cycle crosses the cut an even number of times. Hence $|D(S,e) \cap C(T,e)| = 2 k$ for some integer $k \geq 1$. Select any $f \in D(S,e) \cap C(T,e)$ with $f \neq e$, then $f \in T$. Adding $f$ to $S - e$ yields a tree due to theorem~\ref{thm:tree connected}, and removing $f$ from $T + e$ yields a tree due to theorem~\ref{thm:tree acyclic}.
\end{proof}

\begin{definition}[unique symmetric edge exchange in bispanning graphs]\label{def:unique edge exchange}
  Given two disjoint spanning trees $S \dotcup T$ in a bispanning graph $G$, if for an edge $e \in S$ there exists \emph{only a single edge} $f \in T$, such that $S - e + f$ and $T + e - f$ are a pair of disjoint spanning trees of $G$, then the symmetric edge exchange $(e,f)$ on $(S,T)$ is called \emph{unique}.

  Equivalently, an edge exchange $(e,f)$ on $(S,T)$ with $(e,f) \in S \times T$ is unique, if and only if $D(S,e) \cap C(T,e) = \{ e,f \}$, which yields only one choice for $f \in T$.
\end{definition}

The (unrestricted) symmetric edge exchanges defined in theorem~\ref{thm:edge exchange} correspond to moves by Alice and Bob, where Bob can sometimes choose freely between multiple edges to repair the two disjoint spanning trees. Unique symmetric edge exchanges (of definition~\ref{def:unique edge exchange}) are restricted such that Bob has only one possible edge to choose. We will further explain this decisive difference using figure~\ref{fig:example edge exchange}, which shows a bispanning graph and two edge exchanges: one non-unique and one unique.

\pagebreak[4]

First regard $e \in T$: by coloring $e$ blue Alice may close the cycle $C(S,e)$, which is highlighted by the thick blue cycle, and opens the cut $D(T,e)$ marked by the dashed blue line. Bob can then select any of the edges $\{ f_1, f_2, f_3 \}$ to break the cycle and amend the cut. Note that both cycle and cut contain another edge, which does not suffice as it is not in the intersection of cycle and cut. This is an example of a non-unique edge exchange $(e,f_i)$ on $(T,S)$.

However, if Alice chooses to color $e' \in S$ red, and thereby close the red cycle and open the red cut, Bob has no choice but to color $f'_1$ blue. This is an example for a ``forced move'' by Alice, or in words of the new definition: a unique edge exchange $(e',f'_1)$ on $(S,T)$.

We have now modeled the basic edge swap operation of the bispanning graph games, however, we still need to assign the players ownership of the trees. Therefore, we have to keep an order of the disjoint trees: $S$ will be Alice's tree and $T$ Bob's, and a state of the game is an ordered pairs $(S,T)$. Depending on the rules, Alice may then pick an edge $e$ from $S$ or $T$, or just from $T$. Since the game state is an ordered pair, we must define two distinct edge exchange operations:

\begin{figure}
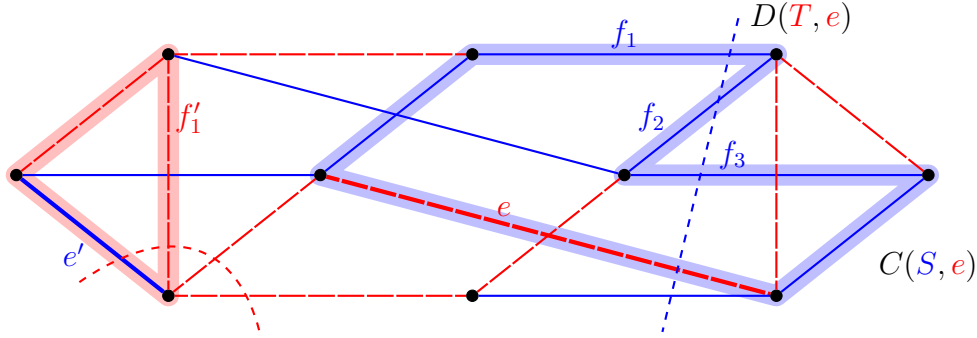
\centering
  \tikzset{every picture/.style={scale=2, graphfinal}}

  \caption{A bispanning graph with a non-unique edge exchange and a unique edge exchange.}\label{fig:example edge exchange}
\end{figure}

\begin{definition}[ordered symmetric $S$/$T$ edge exchange in bispanning graphs]\label{def:ordered edge exchange}
  For an ordered pair of disjoint trees $(S,T)$ and an edge exchange $(e,f) \in S \times T$, we call the transition to $S - e + f$ and $T + e - f$ a \emph{(symmetric) $S$ edge exchange} and write
  $$(S,T) \xchgarrow{(e,f)}{S} (S - e + f, T + e - f) \,.$$
  Likewise, for an edge exchange $(f,e) \in T \times S$, we call the transition to $S + f - e$ and $T - f + e$ a \emph{(symmetric) $T$ edge exchange} and write
  $$(S,T) \xchgarrow{(f,e)}{T} (S + f - e, T - f + e) \,.$$
  Furthermore, if the edge exchange is unique, we add the subscript ``UE'' to the transition arrow, yielding $\UExchgarrow{(f,e)}{T}$ in the second case above.
\end{definition}

The last definition raises the question why a distinction has to be made between $S$ and $T$ exchanges.  Indeed, for unrestricted exchanges the distinction between $S$ and $T$ edge exchanges is not important, since there is no restriction on the edge exchange and the resulting tree pair is identical.  However, for unique edge exchanges \emph{the distinction matters}, since one exchange can be unique and other non-unique.

Reconsider $e' \in S$ and $f'_1 \in T$ in figure~\ref{fig:example edge exchange}: while $(e',f'_1)$ is a unique edge exchange, because $D(S,e') \cap C(T,e') = \{ e', f'_1 \}$, the edge exchange $(f'_1,e')$ is not unique: $D(T,f'_1) \cap C(S,f'_1)$ contains $e'$, $f'_1$, $f_1$ and many other blue edges to the left of $f_1$.

While the previous definition is very technical, instead of $S$ and $T$ in subscript we will illustrate the edge exchange arrows using colors: \textcolor{blue}{blue for $S$} and \textcolor{red}{red for $T$}. Also, for an unlabeled edge exchange $(e,f)$ one can deduce whether it is a $S$ or $T$ edge exchange by regarding if $e \in S$ or $e \in T$.

A first remarkable property of unique exchanges is that they can be undone:

\begin{theorem}[reversibility of unique symmetric edge exchanges]\label{thm:reversibility unique exchange}
  If a $S$ edge exchange $(e,f) \in S \times T$ is unique for a pair of disjoint spanning trees $(S,T)$ in a bispanning graph $G$, then $(f,e)$ is a unique $S$ edge exchange for $(S - e + f, T + e - f)$. In symbols:
$$(S,T) \UExchgarrow{(e,f)}{S} (S - e + f, T + e - f) \text{\; if and only if \;} (S - e + f, T + e - f) \UExchgarrow{(f,e)}{S} (S,T) \,.$$
Likewise, the same is true for unique $T$ edge exchanges:
$$(S,T) \UExchgarrow{(f,e)}{T} (S + f - e, T - f + e) \text{\; if and only if \;} (S + f - e, T - f + e) \UExchgarrow{(e,f)}{T} (S,T) \,.$$
\end{theorem}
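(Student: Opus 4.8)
The plan is to notice that \emph{validity} of the reverse exchange is automatic, so the only real content of the statement is the preservation of \emph{uniqueness}, which I would reduce to showing that the fundamental cut and cycle governing the reverse exchange coincide with those of the forward exchange. Throughout, write $S' := S - e + f$ and $T' := T + e - f$. First I would record the bookkeeping: since $e \in S$ and $f \in D(S,e) \cap C(T,e) \subseteq T$, we have $f \in S'$ and $e \in T'$, so $(f,e) \in S' \times T'$ is a legitimate candidate for an $S$ edge exchange on $(S',T')$. Moreover $S' - f + e = S$ and $T' - e + f = T$, so this exchange returns exactly $(S,T)$; it is therefore automatically a valid symmetric edge exchange, because $(S,T)$ is a pair of disjoint spanning trees by hypothesis. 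Thus the one thing left to prove is that the reverse exchange is unique, i.e., by definition~\ref{def:unique edge exchange} applied with the substitution $S \mapsto S'$, $T \mapsto T'$, $e \mapsto f$, that $D(S',f) \cap C(T',f) = \{f,e\}$.

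The heart of the argument is the pair of identities $D(S',f) = D(S,e)$ and $C(T',f) = C(T,e)$. For the cut: deleting $e$ from the spanning tree $S$ splits $G[S]$ into two components with vertex sets $V_1 \dotcup V_2 = V$, and by theorem~\ref{thm:fundamental cut} we have $D(S,e) = [V_1,V_2]$. The uniqueness hypothesis gives $f \in D(S,e)$, so $f$ has one end in $V_1$ and one in $V_2$; consequently $G[S'] = G[S] - e + f$ reconnects the two parts into a spanning tree, and deleting $f$ from it gives $G[S'] - f = G[S] - e$, which has precisely the same two components $V_1, V_2$. Hence $D(S',f) = [V_1,V_2] = D(S,e)$. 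For the cycle I would simply compare edge sets: since $f \in C(T,e)$ and $e \notin T$, the graph underlying the reverse fundamental cycle is $G[T'] + f = G[T] + e - f + f = G[T] + e$, which is exactly the edge set defining $C(T,e)$. As a spanning tree plus one edge contains a unique cycle (theorem~\ref{thm:fundamental cycle}), these unique cycles coincide: $C(T',f) = C(T,e)$.

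Combining the two identities yields $D(S',f) \cap C(T',f) = D(S,e) \cap C(T,e) = \{e,f\}$, which is precisely the uniqueness criterion for the reverse exchange, establishing the forward implication. The reverse implication of the ``if and only if'' then follows by running the same argument starting from the pair $(S',T')$ and its unique exchange $(f,e)$, whose reverse is $(e,f)$ on $(S,T)$; and the statement for $T$ edge exchanges follows verbatim after interchanging the roles of $S$ and $T$ throughout (replacing $D(S,e) \cap C(T,e)$ by $D(T,f) \cap C(S,f)$ in the uniqueness criterion).

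The step I expect to require the most care is the cut identity $D(S',f) = D(S,e)$: it hinges on verifying that $f$ genuinely crosses the bipartition $[V_1,V_2]$, so that deleting $f$ from $S'$ reproduces the original two components rather than creating a different split — this is exactly where the hypothesis $f \in D(S,e)$ is consumed. The cycle identity, by contrast, is a one-line edge-set comparison, and the validity of the reverse exchange needs no computation at all beyond observing that it lands back on $(S,T)$.
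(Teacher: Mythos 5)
Your proof is correct, and its engine is the same as the paper's: the edge-set equalities relating $(S',T') = (S-e+f,\,T+e-f)$ to $(S,T)$ force the fundamental cut and cycle that govern the reverse exchange to coincide with those of the forward exchange, so the uniqueness criterion of definition~\ref{def:unique edge exchange} transfers unchanged. The only substantive difference is which pair of identities gets used. You establish $D(S',f)=D(S,e)$ (via $S'-f=S-e$) and $C(T',f)=C(T,e)$ (via $T'+f=T+e$), which is precisely the pair demanded by the criterion $D(S',f)\cap C(T',f)=\{f,e\}$ for the reverse unique $S$ exchange $(f,e)$ on $(S',T')$. The paper instead proves the mirrored pair $C(S',e)=C(S,f)$ (via $S'+e=S+f$) and $D(T',e)=D(T,f)$ (via $T'-e=T-f$) and concludes from $C(S,f)\cap D(T,f)=\{e,f\}$; measured against the definitions, that intersection is the uniqueness criterion for the $T$ exchange $(f,e)$ on $(S,T)$ and for the reverse $T'$ exchange $(e,f)$ on $(S',T')$, so the paper's computation, read literally, settles the second displayed equivalence (the $T$ case) and obtains the first from its closing remark that the argument repeats with $S,T$ and $e,f$ interchanged. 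Your version is thus, if anything, better aligned with the statement as written: each identity is consumed exactly where the definition requires it, and you correctly isolate the one place the hypothesis enters, namely that $f$ crosses the bipartition $[V_1,V_2]$ (equivalently, that $S'$ and $T'$ really are trees), without which the reverse fundamental cut and cycle would not even be well defined.
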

\begin{proof}
  As $S' := (S + f) - e$ is a tree, adding the edge $e$ closes a unique cycle $C(S',e)$. This cycle $C(S',e) = C((S + f) - e, e) = C(S,f)$, since re-adding $e$ to $S'$ closes the same cycle $C(S,f)$ as was closed by $f$ and broken by removing $e$. Likewise, $T' := (T - f) + e$ is a tree and removing the edge $e$ opens a unique cut $D(T',e)$. This cut $D(T',e) = D((T - f) + e, e) = D(T,f)$, since removing $e$ from $T'$ re-opens the same cut $D(T,f)$ as was bridged by $f$ and amended by $e$.
  Thus $C(S',e) \cap D(T',e) = C(S,f) \cap D(T,f) = \{ e,f \}$, and $(f,e)$ is a unique edge exchange.

  The same argument, interchanging $S'$, $T'$ and $S$, $T$ and $e$, $f$, shows the theorem for unique $T$ edge exchanges.
\end{proof}

The previous theorem allows us to interpret unique edge exchanges as an undirected relation between a pair of two disjoint spanning trees $(S,T)$ and $(S',T')$: they are linked by a unique edge exchange if some $(e,f) \in S \times T$ or $(f,e) \in (S',T')$ can accomplish the unique edge exchange. The previous theorem guarantees that the other pair exists and is unique.

\begin{definition}[undirected unique symmetric edge exchange in bispanning graphs]
  For an ordered pair of disjoint trees $(S,T)$ and a unique $S$ edge exchange $(e,f) \in S \times T$ we write the transition bidirectional due to theorem~\ref{thm:reversibility unique exchange}  as
  $$(S,T) \UEbixchgarrow{(e,f)}{S} (S - e + f, T + e - f) \,,$$
  which implies
  $$(S - e + f, T + e - f) \UEbixchgarrow{(f,e)}{S} (S,T) \,.$$
  Analogously, transitions due to unique $T$ edge exchange are written with bidirectional edges.
\end{definition}

As with arcs, instead of $S$ and $T$ in subscript, we will illustrate symmetric edge exchanges as lines using colors: \textcolor{blue}{blue for $S$} and \textcolor{red}{red for $T$}, and label the edge lines with $(e,f)$. These edge labels can be read left to right: if an exchange is labeled $(e,f) \in S \times T$ (a unique $S$ edge exchange), then $e$ must be in $S$ on the left side of the exchange and on the right side in $T' = T + e - f$. The same bidirectional edge label can also be read right to left: $f$ must be in $S' = S - e + f$ on the right of the exchange and on the left in $S$. So the edge exchange label $(e,f)$ must be read in the same direction as the transition.


\subsection{Directed Exchange Graphs}\label{sec:directed exchange graphs}

Having precisely defined restricted and unique symmetric edge exchanges, we now construct graphs of these edge exchanges. We call the graphs \emph<exchange!graph>[exchange graph]{exchange graphs}, while other authors also name them \emph{tree pair graphs} or \emph{base pair graphs}, since their vertices contain pairs of trees or matroids bases.  To match other authors~\cite{merkel2009basentauschspiel,andres2014base}, the exchange graphs we focus are called $\tau_2$, $\tau_3$, and $\tau_4$; $\tau_1$ was previously used to represent exchanges of whole edge subsets.

\begin{definition}[(unrestricted) exchange graph $\vec{\tau}_2$ of bispanning graphs]
  Given a bispanning graph $G = (V,E,\delta)$, we define a new directed graph $\vec{\tau}_2(G) = (V_{\tau(G)},E_{\vec{\tau}_2(G)},\delta_{\vec{\tau}_2(G)})$ with
  \exchangegraphsymbol{1tau2}{$\vec{\tau}_2(G)$}{(unrestricted) directed}
  \begin{enumerate}
  \item vertex set $V_{\tau(G)} = \{ (S,T) \mid E = S \dotcup T \text{ are disjoint spanning trees of $G$} \}$,
  \item arc set $\begin{aligned}[t] E_{\vec{\tau}_2(G)} =\; & \{ (e,f,S,T) \mid (e,f) \in S \times T \text{ is a $S$ edge exchange for } (S,T) \} \\ \cup\;& \{ (e,f,S,T) \mid (e,f) \in T \times S \text{ is a $T$ edge exchange for } (S,T) \} \,, \end{aligned}$
  \item and incidence\\[\medskipamount]
    \smash[t]{\(
    \delta_{\vec{\tau}_2(G)}( (e,f,S,T) ) = \begin{cases}
      (\, (S,T), (S - e + f, T + e - f) \,) & \!\text{if } (e,f) \in S \times T \,, \\
      (\, (S,T), (S + e - f, T - e + f) \,) & \!\text{if } (e,f) \in T \times S \,.
    \end{cases}
    \)}
  \end{enumerate}
\end{definition}

Intuitively, the vertices in the graph $\vec{\tau}_2(G)$ and all other exchange graphs specify all possible configurations of the disjoint spanning trees in the base bispanning graph $G$. An arc in $\vec{\tau}_2(G)$ marks a possible transition from one configuration to another by swapping $(e,f)$.

This definition of $\vec{\tau}_2(G)$ corresponds to the game where Alice picks an edge $e \in S \cup T$ and closes a cycle in either $S$ or $T$ (and opens a cut in the other tree) by inverting the color of $e$. Bob then breaks the cycle by inverting the color an edge in the other tree. The graph $\vec{\tau}_2(G)$ places no restrictions on whether Bob has a choice in selecting the edge or not.

$\vec{\tau}_2(G)$ is a pretty dense graph, since obviously theorem~\ref{thm:edge exchange} guarantees that for every tree pair $(S,T)$ and every edge $e \in S \cup T$ there exists at least one exchange edge $f$. We thus make the graph sparser by allowing only unique edge exchanges:

\begin{definition}[directed unique exchange graph $\vec{\tau}_3$ of bispanning graphs]\label{def:directed tau}
  Given a bispanning graph $G = (V,E,\delta)$, we define a new directed graph $\vec{\tau}_3(G) = (V_{\tau(G)},E_{\vec{\tau}_3(G)},\delta_{\vec{\tau}_3(G)})$ with
  \exchangegraphsymbol{1tau3}{$\vec{\tau}_3(G)$}{directed unique}
  \begin{enumerate}
  \item vertex set $V_{\tau(G)} = \{ (S,T) \mid E = S \dotcup T \text{ are disjoint spanning trees of $G$} \}$,
  \item arc set \\
    $
    \begin{aligned}[t] E_{\vec{\tau}_3(G)}
        =\; & \{ (e,f,S,T) \mid (e,f) \in S \times T \text{ is a unique $S$ edge exchange for } (S,T) \} \\
      \cup\;& \{ (e,f,S,T) \mid (e,f) \in T \times S \text{ is a unique $T$ edge exchange for } (S,T) \} \\
        =\; & \{ (e,f,S,T) \mid (e,f) \in S \times T \text{ and } D(S,e) \cap C(T,e) = \{e,f\} \} \\
      \cup\;& \{ (e,f,S,T) \mid (e,f) \in T \times S \text{ and } D(T,e) \cap C(S,e) = \{e,f\} \} \,,
    \end{aligned}
    $
  \item and incidence \\[\medskipamount]
    \smash[t]{\(
      \delta_{\vec{\tau}_3(G)}( (e,f,S,T) ) = \begin{cases}
        (\, (S,T), (S - e + f, T + e - f) \,) & \!\text{if } (e,f) \in S \times T \,, \\
        (\, (S,T), (S + e - f, T - e + f) \,) & \!\text{if } (e,f) \in T \times S \,.
      \end{cases}
      \)}
  \end{enumerate}
\end{definition}

Again, the vertices in the graph $\vec{\tau}_3(G)$ and all other exchange graphs specify all possible configurations of the disjoint spanning trees in the base bispanning graph $G$. An arc in $\vec{\tau}_3(G)$ marks a possible ``forced'' transition from one configuration to another by swapping $(e,f)$. This exchange graph is the main focus of this thesis.

The exchange graph of $\vec{\tau}_3(G)$ corresponds to the game where Alice deliberately picks an edge $e \in S \cup T$, which leaves Bob no choice in which edge to color to break the cycle and amend the cut. These ``forced moves'' are represented exactly by unique edge exchanges.

\begin{figure}\centering
  \tikzset{every picture/.style={scale=1.2}}
  \begin{tikzpicture}[
    taugraph,
    BigNode/.append style={
      minimum size=17mm,
    },
    ]

    \node at (0,0) (n0256) [BigNode] {0,2,5,6};

    \node at (180-20:4) (n1256) [BigNode] {1,2,5,6};
    \node at (180+20:4) (n0236) [BigNode] {0,2,3,6};

    \node at (28:4) (n0235) [BigNode] {0,2,3,5};
    \node at (0:4) (n0245) [BigNode] {0,2,4,5};
    \node at (-28:4) (n0257) [BigNode] {0,2,5,7};

    \CompleteFourWheel{n0256}{R,B,R,B,B,R,R,B}

    \CompleteFourWheel{n1256}{B,R,R,B,B,R,R,B}
    \CompleteFourWheel{n0236}{R,B,R,R,B,B,R,B}

    \draw[R,<-] (n1256) to [bend left=7] node [BigLabel] {$(1,0)$} (n0256);
    \draw[R,->] (n1256) to [bend left=-7] node [BigLabel] {$(0,1)$} (n0256);

    \draw[B,<-] (n1256) to [bend left=21] node [BigLabel] {$(0,1)$} node[black,above=5mm,align=center] {two unique\\exchange} (n0256);
    \draw[B,->] (n1256) to [bend left=-21] node [BigLabel] {$(1,0)$} (n0256);

    \draw[B,<-] (n0236) to [bend left=7] node [BigLabel] {$(5,3)$} (n0256);
    \draw[B,->] (n0236) to [bend left=-7] node [BigLabel] {$(3,5)$} (n0256);

    \CompleteFourWheel{n0257}{R,B,R,B,B,R,B,R}
    \CompleteFourWheel{n0245}{R,B,R,B,R,R,B,B}
    \CompleteFourWheel{n0235}{R,B,R,R,B,R,B,B}

    \draw[B,->] (n0256) to [bend left=7] node [BigLabel] {$(6,3)$} node [black,above=5mm,align=center,xshift=-3mm] {some\\non-unique\\exchanges} (n0235);
    \draw[B,->] (n0256) to [bend left=7] node [BigLabel] {$(6,4)$} (n0245);
    \draw[B,->] (n0256) to [bend left=7] node [BigLabel] {$(6,7)$} (n0257);

    \draw[B,<-] (n0256) to [bend left=-7] node [BigLabel] {$(3,6)$} (n0235);
    \draw[B,<-] (n0256) to [bend left=-7] node [BigLabel] {$(4,6)$} (n0245);
    \draw[B,<-] (n0256) to [bend left=-7] node [BigLabel] {$(7,6)$} (n0257);

  \end{tikzpicture}
  \caption{Excerpt of the exchange graph $\vec{\tau}_2(W_5)$ with a set of non-unique exchanges and a unique exchange.}\label{fig:example directed tau}
\end{figure}
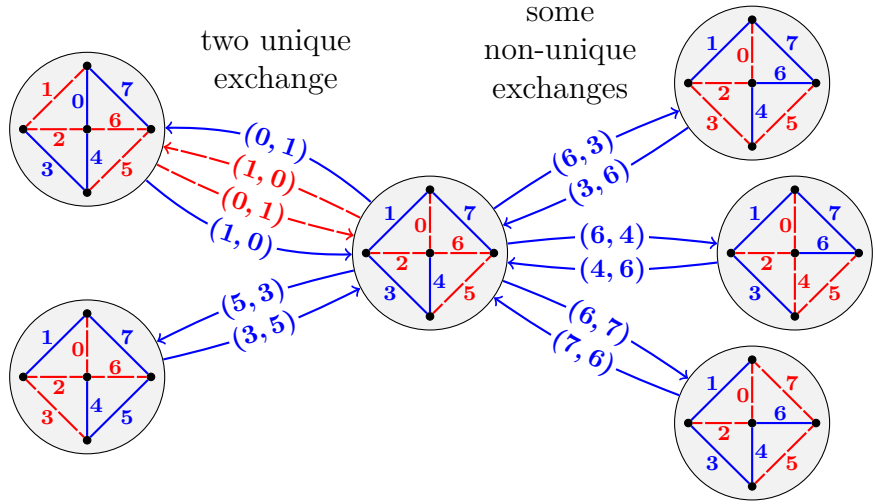

Figure~\ref{fig:example directed tau} shows an excerpt of the exchange graph $\vec{\tau}_2(W_5)$ as an example of the structure of exchange graphs. In each vertex of $\vec{\tau}_2$ the pair of disjoint trees $(S,T)$ is represented by the small colored bispanning graphs with numbered edges. An arc between vertices is labeled with the edge exchange $(e,f)$ and colored \textcolor{blue}{blue} if it is \textcolor{blue}{$S$ edge exchange} or \textcolor{red}{red} if it is a \textcolor{red}{$T$ edge exchange}. This color is the one Alice must change the edge $e$ to for the particular transition.

Of the many edge exchanges shown in figure~\ref{fig:example directed tau} some are unique and some non-unique. Notice that if $(e,f)$ is a unique edge exchange, then no other arc $(e,f')$ with $f' \neq f$ can exit the vertex; the same is not true for non-unique edge exchanges, as can be seen in the figure.

The last directed exchange graph further limits which type of unique edge exchanges are allowed:

\begin{definition}[left-unique exchange graph $\vec{\tau}_4$ of bispanning graphs]\label{def:directed tau4}
  Given a bispanning graph $G = (V,E,\delta)$, we define a new directed graph $\vec{\tau}_4(G) = (V_{\tau(G)},E_{\vec{\tau}_4(G)},\delta_{\vec{\tau}_4(G)})$ with
  \exchangegraphsymbol{1tau4}{$\vec{\tau}_4(G)$}{directed left-unique}
  \begin{enumerate}
  \item vertex set $V_{\tau(G)} = \{ (S,T) \mid E = S \dotcup T \text{ are disjoint spanning trees of $G$} \}$,
  \item arc set\\
    \(
    E_{\vec{\tau}_4(G)} = \{ (e,f,S,T) \mid (e,f) \in S \times T \text{ is a unique $S$ edge exchange for } (S,T) \} \,,
    \)
  \item and incidence $\delta_{\vec{\tau}_4(G)}( (e,f,S,T) ) = (\, (S,T), (S - e + f, T + e - f) \,) \,.$
  \end{enumerate}
\end{definition}

This last exchange graph $\vec{\tau}_4(G)$ corresponds to the game where Alice deliberately picks an edge $e$ only from her tree $S$, which leaves Bob no choice in which edge in $T$ to color to break the cycle and amend the cut. Again, these ``forced moves'' are represented exactly by unique edge exchanges, but this time the forcing edge may only be selected from $S$. This restriction to ``left-unique'' edge exchanges truly changes the structure of $\vec{\tau}_4$~\cite{andres2014base}. The analogous restriction to selecting edges only from $T$ for unique exchanges is symmetric and results in the same exchange graphs (one can just swap the roles of $S$ and $T$). In figures of $\vec{\tau}_4(G)$ the restriction to one kind of unique edge exchange can be clearly seen, as all edges are of the same color: blue for left-unique exchanges and red for right-unique ones.


\subsection{Undirected Exchange Graphs}\label{sec:undirected exchange graphs}

We defined all three exchange graphs $\vec{\tau}_2$, $\vec{\tau}_3$, and $\vec{\tau}_4$ in section~\ref{sec:directed exchange graphs} as directed graphs due to the intuitive nature of a directed edge exchange. However, due to theorem~\ref{thm:reversibility unique exchange} and that unrestricted symmetric exchanges are reversible as well, all arcs in the exchange graphs have a twin arc doing the transition in reverse.

We can thus reduce the directed exchange graphs into an undirected form by combining the twin arcs into an undirected edge. The formal method we choose to do this is to define edges only for trees $S < T$, where $<$ is an arbitrary total order on the set of spanning trees of $G$. Since of the twin arcs exactly one has $S < T$ and the other $S > T$, the undirected graph will represent both if just one is included:

\begin{definition}[undirected exchange graphs $\tau_2(G)$, $\tau_3(G)$, and $\tau_4(G)$]\label{def:undirected tau}
  Given a bispanning graph $G = (V,E,\delta)$ and a directed exchange graph $\vec{\tau}_i(G) = (V_\tau,E_{\vec{\tau}_i(G)},\delta_{\vec{\tau}_i(G)})$ with $i \in \{ 2,3,4 \}$, we define a new undirected graph $\tau_i(G) = (V_{\tau(G)},E_{\tau_i(G)},\delta_{\tau_i(G)})$ with
  \exchangegraphsymbol{2tau2}{$\tau_2(G)$}{(unrestricted) undirected}
  \exchangegraphsymbol{2tau3}{$\tau_3(G)$}{undirected unique}
  \exchangegraphsymbol{2tau4}{$\tau_4(G)$}{undirected restricted unique}
  \begin{enumerate}
  \item the same vertex set $V_{\tau(G)} = \{ (S,T) \mid E = S \dotcup T \text{ are disjoint spanning trees of $G$} \}$,
  \item edge set $E_{\tau_i(G)} = \{ (e,f,S,T) \in E_{\vec{\tau}_i(G)} \mid S < T \}$
  \item and incidence\\[\medskipamount]
    \smash[t]{\(
      \delta_{\tau_i(G)}( (e,f,S,T) ) = \begin{cases}
        \{\, (S,T), (S - e + f, T + e - f) \,\} & \!\text{if } (e,f) \in S \times T \,, \\
        \{\, (S,T), (S + e - f, T - e + f) \,\} & \!\text{if } (e,f) \in T \times S \,.
      \end{cases}
      \)}
  \end{enumerate}
\end{definition}

This compact meta-definition yields undirected exchange graphs $\tau_2(G)$ containing all unrestricted symmetric edge exchanges, $\tau_3(G)$ containing all unique $S$ and $T$ symmetric edge exchanges, $\tau_4(G)$ contains all unique $T$ symmetric edge exchanges.

\begin{remark}[directed exchange graph $\vec{\tau}_3(G)$ or undirected exchange graph $\tau_3(G)$]
  We defined the directed exchange graph $\vec{\tau}_3(G)$ with vertex set $V_{\tau(G)}$ and arc set $E_{\vec{\tau}_3(G)}$, and the undirected exchange graph $\tau_3(G)$ with the same vertex set $V_{\tau(G)}$ and undirected edge set $E_{\tau_3(G)}$. In the remaining thesis we will mostly reference $\tau_3(G)$ in the text, but \emph{prove theorems using} $\vec{\tau}_3(G)$, because the directed version allows us to better reason about (directional) unique symmetric edge exchanges. The theorem about $\vec{\tau}_3(G)$ carry over to $\tau_3(G)$ appropriately.
\end{remark}

Other authors more laxly directly define undirected exchange graphs as follows:

\begin{definition}[simple undirected exchange graphs $\bar{\tau}_2(G)$, $\bar{\tau}_3(G)$, and $\bar{\tau}_4(G)$]\label{def:simple tau}
  Given a bispanning graph $G = (V,E,\delta)$, we define three simple undirected graphs $\bar{\tau}_i(G) = (V_{\tau(G)},E_{\bar{\tau}_i(G)})$, $i \in \{2,3,4\}$ with
  \exchangegraphsymbol{3tau2}{$\bar{\tau}_2(G)$}{(unrestricted) simple}
  \exchangegraphsymbol{3tau3}{$\bar{\tau}_3(G)$}{simple unique}
  \exchangegraphsymbol{3tau4}{$\bar{\tau}_4(G)$}{simple restricted unique}
  \begin{enumerate}
  \item vertex set $V_{\tau(G)} = \{ (S,T) \mid E = S \dotcup T \text{ are disjoint spanning trees of $G$} \}$, and
  \item edge sets \\[\medskipamount]
    $\begin{aligned}[t] E_{\bar{\tau}_2(G)} = \{ & \{ (S,T), (T'_1,T'_2) \} \mid \\
      &\exists (e,f) \in S \times T : (T'_1,T'_2) = (S - e + f, T + e - f) \} \,,
    \end{aligned}$ \\[\medskipamount]
    $\begin{aligned}[t] E_{\bar{\tau}_3(G)} = \{ & \{ (S,T), (T'_1,T'_2) \} \mid \\
      &\exists (e,f) \in S \times T : (T'_1,T'_2) = (S - e + f, T + e - f) \\
      &\text{and}\quad D(S,e) \cap C(T,e) = \{ e,f \} \quad\text{or}\quad D(S,f) \cap C(T,f) = \{ e,f \} \quad\} \,,
    \end{aligned}$ \\[\medskipamount]
    $\begin{aligned}[t] E_{\bar{\tau}_4(G)} = \{ & \{ (S,T), (T'_1,T'_2) \} \mid \\
      &\exists (e,f) \in S \times T : (T'_1,T'_2) = (S - e + f, T + e - f) \\
      &\text{and}\quad D(S,e) \cap C(T,e) = \{ e,f \} \quad\} \,.
    \end{aligned}$
  \end{enumerate}
\end{definition}

The graphs $\bar{\tau}_2(G)$, $\bar{\tau}_3(G)$, and $\bar{\tau}_4(G)$ from the preceding definition are ``simple versions'' of the graphs $\tau_2(G)$, $\tau_3(G)$, and $\tau_4(G)$, and can be gained from them by joining all parallel edges in the corresponding $\tau_i(G)$. There is, however, an important difference to our definition: the simple exchange graphs do not contain the information whether the unique edge exchanges is caused by either $(e,f)$ or $(f,e)$, i.e., whether the unique exchange is a $S$ or $T$ edge exchange.

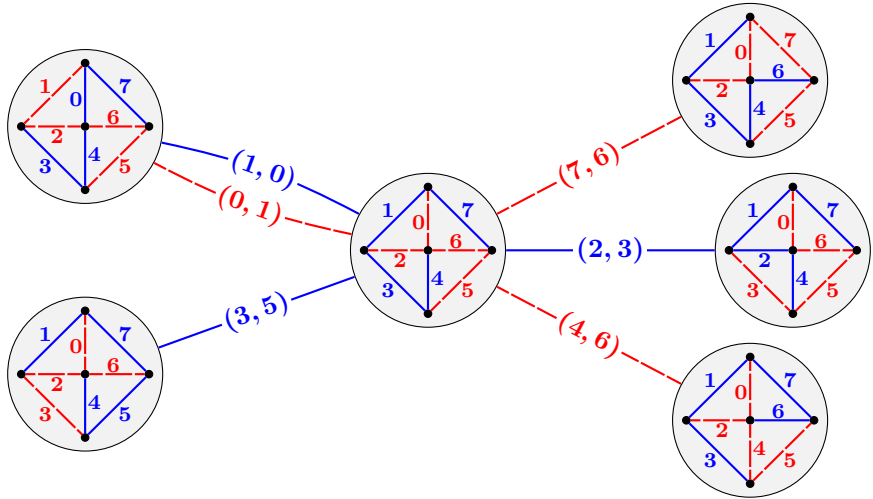
\begin{figure}\centering
  \tikzset{every picture/.style={scale=1.2}}
  \begin{tikzpicture}[
    taugraph,
    BigNode/.append style={
      minimum size=17mm,
    },
    ]

    \node at (0,0) (n0256) [BigNode] {0,2,5,6};
    \CompleteFourWheel{n0256}{R,B,R,B,B,R,R,B}

    \node at (180-20:4) (n1256) [BigNode] {1,2,5,6};
    \CompleteFourWheel{n1256}{B,R,R,B,B,R,R,B}

    \draw[B] (n1256) to[bend left=8] node [BigLabel] {$(1,0)$} (n0256);
    \draw[R] (n1256) to[bend left=-8] node [BigLabel] {$(0,1)$} (n0256);

    \node at (180+20:4) (n0236) [BigNode] {0,2,3,6};
    \CompleteFourWheel{n0236}{R,B,R,R,B,B,R,B}

    \draw[B] (n0236) to node [BigLabel] {$(3,5)$} (n0256);

    \node at (0-28:4) (n0245) [BigNode] {0,2,4,5};
    \CompleteFourWheel{n0245}{R,B,R,B,R,R,B,B}

    \draw[R] (n0256) to node [BigLabel] {$(4,6)$} (n0245);

    \node at (0:4) (n0356) [BigNode] {0,3,5,6};
    \CompleteFourWheel{n0356}{R,B,B,R,B,R,R,B}

    \draw[B] (n0256) to node [BigLabel] {$(2,3)$} (n0356);

    \node at (0+28:4) (n0257) [BigNode] {0,2,5,7};
    \CompleteFourWheel{n0257}{R,B,R,B,B,R,B,R}

    \draw[R] (n0256) to node [BigLabel] {$(7,6)$} (n0257);

  \end{tikzpicture}
  \caption{Excerpt of the undirected unique exchange graph $\tau_3(W_5)$.}\label{fig:example undirected tau}
\end{figure}

Figure~\ref{fig:example undirected tau} shows an excerpt of the undirected exchange graph $\tau_3(W_5)$, which we will further study in section~\ref{sec:mend-brokenUEs}. The figure shows all unique edge exchanges for the center vertex. Compare the unique edges exchanges on the right with those shown in the previous figure~\ref{fig:example directed tau}: the twin arcs are grouped as an edge.

\begin{remark}[guide to reading unique exchange graph figures]
  In each vertex of the exchange graph the pair of disjoint trees $(S,T)$ is represented by the small colored bispanning graphs with numbered edges.

  Undirected edges between vertices are labeled with $(e,f)$, which represents unique exchanges in both directions. When following the edge left to right (in reading direction of the label) the edge symbolizes an edge exchange $(e,f)$, and when following the edge right to left (opposite to reading direction of the label) the edge symbolizes the reverse edge exchange $(f,e)$.

  An arc is colored \textcolor{blue}{blue} if it is a \textcolor{blue}{$S$ edge exchange}, and \textcolor{red}{red} if it is a \textcolor{red}{$T$ edge exchange}. This color is the one Alice must change the first edge to, for the particular transition to occur. In larger exchange graphs parallel blue and red edges are combined using interleaved blue/red dashes, and labeled with $\{e,f\}$ as both $S$ and $T$ edge exchanges are possible.
\end{remark}


\subsection{Basic Theorems and Observations on Exchange Graphs}

In this section we collect some straight-forward insights into unique symmetric edges exchange, and prove basic theorems about them. Figures \ref{fig:tau-x3}, \ref{fig:tau-x4}, and \ref{fig:tau-k4} show the complete unique exchange graphs $\tau_3$ for all bispanning graphs with three or four vertices. Remarkably, the exchange graphs of all these small graphs are isomorphic except for the one of $K_4$. In section~\ref{sec:reduce vdeg3}, the unique exchange graph of $W_5$ is shown in figure~\ref{fig:tau-w5} (page~\pageref{fig:tau-w5}). Table~\ref{tab:properties small bispanning} shows some basic properties about small bispanning graphs with up to six vertices, and their exchange graph.

\begin{figure}
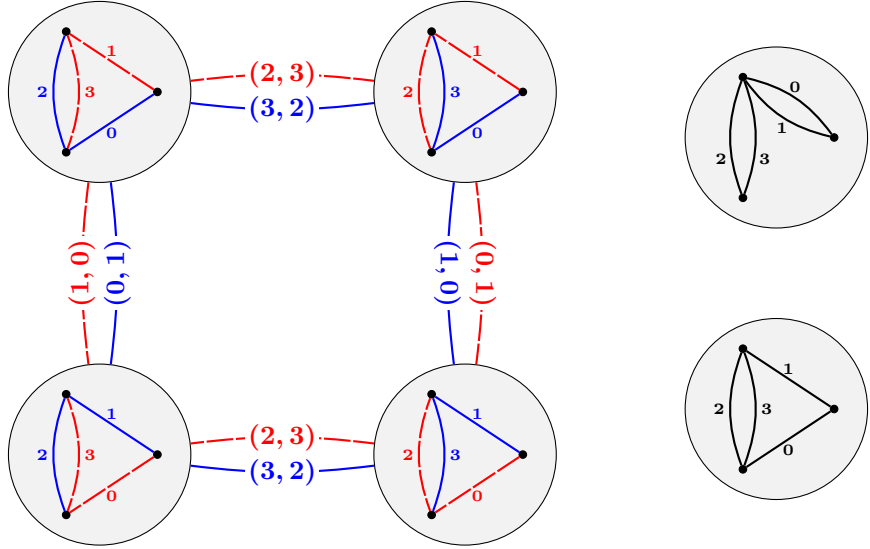
\centering

  \caption{The unique exchange graph $\tau_3(G)$ for $G \in \{ B_{3,1}, B_{3,2} \}$.}\label{fig:tau-x3}
\end{figure}

The first observation is that each leaf edge of a spanning tree yields a unique exchange.

\begin{lemma}[leafUEs: unique exchanges due to leaf edges]\label{lem:leafUE}
  If $(S,T) \in V_{\tau(G)}$ is a pair of disjoint spanning trees of a bispanning graph $G = (V,E,\delta)$, and $v \in V$ is a leaf in the tree-graph $G[S]$ incident only to $e \in S$, then there exists a unique $S$ edge exchange $(e,f) \in S \times T$ where $\{ e, f \} = D(S,e) \cap C(T,e)$.

  We call such a unique exchange a \emph[leafUE]{leafUE}.

  Analogously, if $v \in V$ is a leaf in $G[T]$ incident only to $e \in T$, there exists a unique $T$ edge exchange $(e,f) \in T \times S$ where $\{ e, f \} = D(T,e) \cap C(S,e)$.
\end{lemma}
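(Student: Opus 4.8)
The plan is to compute the intersection $D(S,e) \cap C(T,e)$ directly and show it has exactly two elements, which by Definition~\ref{def:unique edge exchange} is precisely the condition for $(e,f)$ to be a unique $S$ edge exchange. The leaf hypothesis makes both the fundamental cut and the fundamental cycle very easy to describe in a neighborhood of $v$, so the whole argument reduces to a short structural observation.

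First I would pin down the fundamental cut $D(S,e)$. Since $v$ is a leaf of the tree-graph $G[S]$ whose only incident $S$-edge is $e$, deleting $e$ isolates $v$: by theorem~\ref{thm:fundamental cut} the two components of $G[S] - e$ are the single vertex $\{v\}$ and the remainder on $V - v$. Hence the induced cut is $D(S,e) = [\{v\}, V - v]$, i.e., exactly the set of all edges of $G$ incident to $v$. Apart from $e$ itself, every such edge lies in $T$, because $e$ is the only $S$-edge at $v$.

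Next I would describe the fundamental cycle $C(T,e)$. By theorem~\ref{thm:fundamental cycle} this is the unique cycle inside $G[T] + e$, and it contains $e$; since $e$ is incident to $v$, the vertex $v$ lies on this cycle. A cycle visits each of its vertices exactly once, so precisely two of its edges are incident to $v$: one is $e$, and I would name the other $f$. As $C(T,e) \subseteq T + e$ and $f \neq e$, we get $f \in T$. Intersecting, an edge of $C(T,e)$ lies in $D(S,e)$ if and only if it is incident to $v$, and by the previous line these are exactly $e$ and $f$; therefore $D(S,e) \cap C(T,e) = \{e,f\}$, the defining condition for a unique $S$ edge exchange. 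The analogous $T$ case is obtained verbatim by exchanging the roles of $S$ and $T$.

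I expect no serious obstacle here; the only point worth stating carefully is why the intersection cannot be larger. Theorem~\ref{thm:edge exchange} already guarantees $|D(S,e) \cap C(T,e)| = 2k$ with $k \geq 1$, so the entire content of the lemma is ruling out $k \geq 2$. This is exactly what the ``star cut meets a cycle in two edges'' observation accomplishes: since the cut equals the full set of edges at the single vertex $v$, and a cycle through $v$ uses only two edges at $v$, no third common edge can appear.
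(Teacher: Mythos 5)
Your proposal is correct and takes essentially the same route as the paper's proof: identify $D(S,e)$ as the star of all edges at the isolated leaf $v$, observe that the cycle $C(T,e)$ passes through $v$ exactly once and hence uses precisely two edges of that star, and conclude the intersection is $\{e,f\}$. Your write-up is just a more detailed spelling-out of the same two observations, so there is nothing to add.
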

\begin{proof}
  As $v$ is a leaf in $G[S]$, it is adjacent to exactly one edge in $S$. Therefore, $D(S,e) = \{ e \in E \mid v \in \delta(e) \}$, since $v$ is isolated in $G[S] - e$. As the cycle $C(T,e)$ visits $v$ only once, we have $| D(S,e) \cap C(T,e) | = 2$, and thus shown that $e$ yields a unique $S$ edge exchange. Analogous arguments show that $v$ yields a unique $T$ edge exchange if $v$ is a leaf in $G[T]$.
\end{proof}

\begin{theorem}[minimum degree of $\tau_3(G)$]
  For every bispanning graph $G = (V,E,\delta)$ with $|V| \geq 3$, every vertex in $\vec{\tau}_3(G)$ and $\tau_3(G)$ is incident to at least four arcs or edges, in symbols $\deg_{\vec{\tau}_3(G)}(v) \geq 4$ and $\deg_{\tau_3(G)}(v) \geq 4$ for all $v$.
\end{theorem}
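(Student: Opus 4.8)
The plan is to manufacture four explicit unique exchanges leaving the vertex $(S,T)$ by applying the leafUE lemma (Lemma~\ref{lem:leafUE}) to \emph{both} trees, and then to check that the four resulting arcs are pairwise distinct. Since $|V| \ge 3$, both spanning-tree-graphs $G[S]$ and $G[T]$ have at least two vertices, so by Lemma~\ref{lem:two leaves} each of them contains at least two leaves, and each leaf will supply one unique exchange.

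First I would fix two distinct leaves $u_1 \ne u_2$ of $G[S]$, incident to leaf edges $e_1, e_2 \in S$. I would argue $e_1 \ne e_2$ as follows: if both leaves were incident to the same edge $e$, then $\delta(e) = \{u_1,u_2\}$ would be the only edge at either vertex, so the connected component of $u_1$ in $G[S]$ would be $\{u_1,u_2\}$, contradicting that $G[S]$ is connected and spans $\ge 3$ vertices. By Lemma~\ref{lem:leafUE}, $e_1$ and $e_2$ each induce a unique $S$ edge exchange $(e_1,f_1)$ and $(e_2,f_2)$, giving two arcs $(e_1,f_1,S,T)$ and $(e_2,f_2,S,T)$ out of $(S,T)$ in $\vec{\tau}_3(G)$. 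Symmetrically, two distinct leaves of $G[T]$ with leaf edges $g_1 \ne g_2 \in T$ yield two unique $T$ edge exchanges and two further arcs $(g_1,h_1,S,T)$ and $(g_2,h_2,S,T)$.

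Next I would verify the four arcs are distinct. The two $S$ exchange arcs differ in their first component ($e_1 \ne e_2$), as do the two $T$ exchange arcs ($g_1 \ne g_2$); moreover every $S$ exchange arc has its first component in $S$ and every $T$ exchange arc has its first component in $T$, so an $S$ arc cannot coincide with a $T$ arc because $S \cap T = \emptyset$. Hence $(S,T)$ has at least four outgoing arcs in $\vec{\tau}_3(G)$, which already gives $\deg_{\vec{\tau}_3(G)}(S,T) \ge 4$.

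Finally, for the undirected graph I would transport this count through the arc-to-edge correspondence. By the reversibility theorem (Theorem~\ref{thm:reversibility unique exchange}) every unique-exchange arc lies in a twin pair, and $\tau_3(G)$ retains exactly one representative per twin pair, incident to both endpoints of that pair. The four outgoing arcs above lie in four distinct twin pairs, because the twin of an arc leaving $(S,T)$ has its source at the neighbouring vertex and therefore cannot equal any other arc leaving $(S,T)$; so they contribute four distinct edges of $\tau_3(G)$, each incident to $(S,T)$, giving $\deg_{\tau_3(G)}(S,T) \ge 4$ as well. The only delicate point — and the place I expect the real work to sit — is the distinctness of the leaf edges, which is precisely where the hypothesis $|V| \ge 3$ is needed (for $|V| = 2$ the single bispanning graph $B_2$ has its two tree edges sharing both leaves, so only two exchanges arise and the bound fails).
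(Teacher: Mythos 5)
Your proposal is correct and follows exactly the paper's argument: lemma~\ref{lem:two leaves} gives two leaves in each of $G[S]$ and $G[T]$, lemma~\ref{lem:leafUE} turns each into a unique exchange, and the four resulting arcs are distinct. The only difference is that you spell out the distinctness checks (distinct leaf edges for $|V|\ge 3$, $S\cap T=\emptyset$ separating $S$- from $T$-exchanges, and the twin-arc correspondence for $\tau_3(G)$) which the paper's proof states without elaboration.
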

\begin{proof}
  We only need to prove the case for $\vec{\tau}_3(G)$. Due to lemma~\ref{lem:two leaves}, the two disjoint spanning trees $S \dotcup T = E$ in $G$ both have at least two leaves. Each of the four leaves yields a different unique exchange, due to lemma~\ref{lem:leafUE}.
\end{proof}

Recently, McGuinness proved a similar theorem for all regular matroids~\cite{mcguinness2014base}: for every base pair $B$ and $B'$ of a regular matroid $M$ there exists at least one element $e \in B$ such that there is a unique element $f \in B'$ for which $B - e + f$ and $B' - f + e$ are bases of $M$. The proof is very involving and contains no hints about whether the corresponding exchange graph is connected.

Beyond the observation that leaves always yield unique exchanges, other simple subgraphs, as sketched in figure~\ref{fig:small-cycle-cut}, also yield unique exchanges straight-forwardly.

\begin{remark}[unique exchanges of parallel edges and edges at degree two vertices]
  Parallel edges and edges at degree two vertices are obviously swappable only with their partners, as their cycle and cut have size \emph{exactly two}, respectively. The bispanning graphs in figure \ref{fig:tau-x4} show many examples of these subgraphs.

  Due to these \emph{fixed exchange partners}, which are \emph{independent} of the remaining graph, one can swap the pair at each vertex of $\tau_3(G)$.  Hence, $\tau_3(G)$ is composed of two isomorphic copies of the $\tau_3$ of the remaining graph, and arcs between the two for the pair at each vertex.  We do not state an explicit theorem about this, since it is a special case of the theorem~\ref{thm:composite tau decompose}.
\end{remark}
As mentioned in the previous remark, parallel edges and edges at degree two vertices have a cycle and cut of size two. While every cycle of size two is always a pair of parallel edges, cuts of size two can occur in other situations as well (see figure~\ref{fig:vconn-econn}, page~\pageref{fig:vconn-econn}).

The success of the investigation of size two raises the question which subgraphs have cycles and cuts of size three. Cycles of size three are exactly \emph{triangles} in the bispanning graph, and vertices of degree three always yield a cut of size three; though, as before, cuts of size three can occur otherwise as well.

Due to the pidgin hole principle, of the three edges in a triangle, one edge $e_1$ must be colored differently from the other two $e_2$ and $e_3$ (see figure~\ref{fig:small-cycle-cut three}). Changing the color of $e_1$ forms a cycle with the others, of which exactly one is in the cut opened by $e_1$ in the other color. Every triangle yields a unique exchange by coloring $e_1$ as the cycle has size three. However, which of the two edges $e_2$ and $e_3$ is the exchange partner depends on the remaining graph, namely the cut opened by swapping $e_1$. Contrarily to cycles and cuts of size two, the exchange partner in the triangle need not always yield a unique exchange of the other type.

Similarly, of the three edges at a vertex of degree three, one edge $e_1$ must be colored differently from the other two $e_2$ and $e_3$ (see figure~\ref{fig:small-cycle-cut three} again). Changing the color of $e_1$ forms a cycle with one of the others, which yields a unique exchange to that edge. As with triangles, which of the two edges is the exchange partner depends on the remaining graph, namely which of the edges $e_2$ and $e_3$ is connected to $e_1$ in the corresponding tree. As with triangles, the exchange partner need not always yield a reverse unique exchange of the other type.

Having regarded cycles of length two and three, the obvious question is ``What about cycle of length four?'' We call these \emph{squares}, and since they have four edges, the pidgin hole principle only helps little. If we assume that three edges are the same color, the one differently colored edge $e_1$ may yield a unique exchange (see figure~\ref{fig:small-cycle-cut four} again). There are only three non-isomorphic ways $e_1$ can be connected to the other two vertices of the square in the same tree, labeled as cases $A$, $B$, and $C$ in the figure.  For squares, only the cases $A$ and $B$ yield a unique edge exchange, while $C$ is the \emph{prototypical subgraph} in which $e_1$ does not yield a unique exchange.

The musings about triangles and squares suggest that graphs with large girth (length of smallest cycle), like triangle-free or square-free graphs, may be a counter example for connectivity of $\tau_3$. One could even assume that no triangle-free or square-free bispanning graphs exist. This, however, is not the case: the smallest triangle-free bispanning graph has seven vertices and can be see in the center of figure~\ref{fig:bispanning7}, page~\pageref{fig:bispanning7}, while the smallest square-free bispanning graphs have 18 vertices (see figure~\ref{fig:bispanning-squarefree}, page~\pageref{fig:bispanning-squarefree}, for one of the eight), and the $\tau_3$ of both are connected.

\begin{table}
  \def\tabcolsep{4.2pt}
  \def\skipamount{7.3pt}
  \begin{minipage}[t]{0.5\linewidth}\centering\vspace{0pt}%
%
  \end{minipage}
  \caption[Properties of small bispanning graphs and their exchange graphs.]{Properties of small bispanning graphs and their exchange graphs, where the columns ``deg seq'' lists the vertex degree sequence of $G$, and ``degr'' the minimum and maximum degree of vertices in $\tau_3(G)$.}\label{tab:properties small bispanning}
\end{table}

\begin{figure}
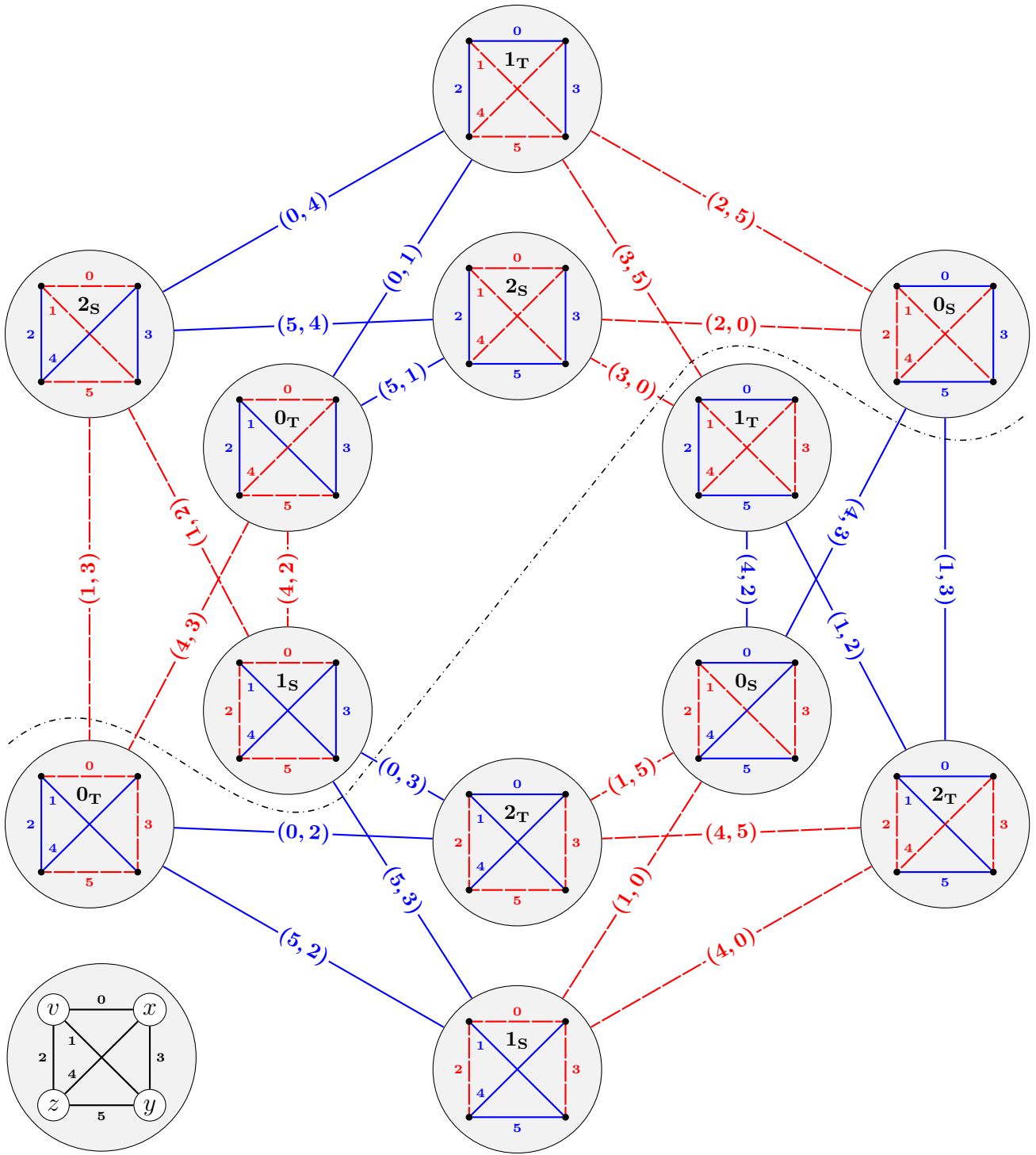
\centering
  \tikzset{every picture/.style={
      scale=2, graphfinal,
      yscale=0.8,
      cut/.style={dashed, line cap=round}
    }}

  \hfill%
  \subcaptionbox{cycle and cut of size two\label{fig:small-cycle-cut two}}{%

  \caption{The unique exchange graph $\tau_3(K_4)$.}\label{fig:tau-k4}
  \end{fullpage}
\end{figure}


\subsection{Conjectures by White on Base Exchanges}

While we developed the various exchange graphs $\tau_i(G)$ in the previous subsections specifically for bispanning graphs, one can immediately transfer their definitions to block matroids. This transfer is possible because the generalization of symmetric edge exchanges (theorem~\ref{thm:edge exchange}) is given by the strong symmetric base exchange lemma~\ref{lem:strong base exchange}.

In 1980, White put forth a range of conjectures concerning unique and other types of base exchanges on matroids~\cite{white1980unique}. He describes six different classes of base exchanges, and discusses in which classes of matroids any pair of base sequences is related using only the specific exchange type. The paper contains proofs of some of the straight-forward classifications; but also far reaching conjectures, of which some were proven under special premises, but none were finally disproved. In this section we review parts of the paper, reformat the definitions and theorem, reference recent results, and draw links to our definitions.

White lets $A$ and $B$ be two bases of a matroid $M$. For any element $x \in A$, he denotes $E(x; A,B) := \{ y \in B \mid A - x + y \text{ and } B - y + x \text{ are bases of M} \}$. Due to lemma~\ref{lem:strong base exchange}, $|E(x; A,B)| \geq 1$ for all possible parameters. Furthermore, the same property extends to \emph{subsets} of elements: if $X \subseteq A$, then $E(X; A,B) := \{ Y \subseteq B \mid (A \setminus X) \cup Y \text{ and } (B \setminus Y) \cup X \text{ are bases of M} \}$ is non-empty for all possible parameters~\cite{brylawski1973some,greene1973multiple,woodall1974exchange}. Obviously, the case $|E(x; A,B)| = 1$ in graphic matroids corresponds directly to our unique edge exchanges (definition~\ref{def:unique edge exchange}).

After $E(x; A,B)$, he considers sequences $B = \arr{B_1,B_2,\ldots,B_m}$ of $m$ bases of a matroid $M$. He then defines, if $E(x; B_i,B_j) = \{ y \}$ for some $1 \leq i < j \leq m$ and $x \in B_i$, that the base sequence
\begin{equation}\label{eq:white-ue}
B' = \arr{B_1,\ldots,B_{i-1},B_i - x + y,B_{i+1},\ldots,B_{j-1},B_j - y + x,B_{j+1},\ldots,B_m}
\end{equation}
is obtained from $B$ by a \emph{unique (single-element) exchange}. This can be seen as a relation of $B$ and $B'$, and he writes $\simeq_1$ for the transitive closure of this relation. In a second step, he further allows the relation to also \emph{permute the order} of the bases in the sequence, and denotes this transitive closure with $\simeq_2$.

As a third variant, he generalizes from single-element unique exchanges to subsets of items: if $E(X; B_i,B_j) = \{ Y \}$ for some $1 \leq i < j \leq m$ and $X \subseteq B_i$, then the base sequence
$$B' = \arr{B_1,\ldots,B_{i-1},(B_i \setminus X) \cup Y,B_{i+1},\ldots,B_{j-1},(B_j \setminus Y) \cup X,B_{j+1},\ldots,B_m}$$
is obtained from $B$ by a \emph{unique subset exchange}. Again, sequences that can be obtained from each other are in relation, and the transitive closure is denoted as $\simeq_3$.

Going beyond unique exchanges, he defines relations $\sim_1$, $\sim_2$, and $\sim_3$, which each drop the constraint $|E(\cdot; B_i,B_j)| = 1$. In more detail: if $B'$ can be obtained from $B$ as stated above, due to $|E(\cdot; B_i,B_j)| \geq 1$, then this operation is called a \emph{symmetric (single-element) exchange} or \emph{symmetric subset exchange}. The corresponding transitive closures are $\sim_1$ of symmetric exchanges, $\sim_2$ of symmetric exchange and permutations, and $\sim_3$ of symmetric subset exchanges.

Before we continue reviewing White's results on these exchange types, we have to consider how they are related to the unique exchanges we defined in the previous subsections. For bispanning graphs or block matroids we are interested in base sequences $(B_1,B_2)$ with $m=2$ such that $B_1 \dotcup B_2 = E$. These are a very special subset of White's definition, but they are included. The precondition to equation~\eqref{eq:white-ue} requires $i < j$, hence, White's definition of \emph{unique (single-element) exchanges} and of $\simeq_1$ actually corresponds only to the left-unique exchanges of $\tau_4(G)$ (definition~\ref{def:directed tau4}), where $e$ is required to be in $S$. The laxer relation $\simeq_2$ allows permutation of the two bases, but this too does not directly correspond to $\tau_3(G)$, where $e$ can be chosen from $S$ or $T$. By allowing permutation, Alice and Bob could just swap trees in one step, instead of performing a sequence of single-element edge exchanges. However, if $\tau_3(G)$ turns out to be connected for all $G$, then this \emph{does} imply that all complementary base pairs are related by White's $\simeq_2$, as permutation of the bases can then be emulated using single-element exchanges. In summary, neither $\simeq_1$ nor $\simeq_2$ is directly equivalent to the unique exchange graph $\tau_3(G)$, but lies in between. It is close to $\simeq_1$, but allows the unique exchange to be initiated by either base, and equal to $\simeq_2$ if $\tau_3(G)$ is connected.

But White is interested in more general matroids than block matroids. And hence, he asks if \emph{all} base pairs are related, while we are only interested in complementary pairs $(B_1,B_2)$, and whether they are related to $(B_2,B_1)$. But our restriction to complementary tree pairs is rather artificial: if $B_1 \cap B_2 \neq \emptyset$, then one can contract and delete pairs from the intersection to gain a full exchange problem on a smaller bispanning graph. Until the full exchange problem is solved, we suggest putting the partial one to the side, since it is unclear how the contraction and deletion may affect a swap sequence. Furthermore, the extreme case $B_1 \cap B_2 = E \setminus \{e,f\}$ asks for a unique swap sequence for any pair of edges.

White proves that $\simeq_1$, $\simeq_2$, $\sim_1$, $\sim_2$, and $\sim_3$ are equivalence relations. Reflexivity and transitivity are clear, only symmetry is difficult for $\simeq_1$ and $\simeq_2$; for $\sim_\alpha$ it is also trivial. The proof of symmetry of $\simeq_1$ and $\simeq_2$ corresponds to the matroid generalization of theorem~\ref{thm:reversibility unique exchange}. Whether $\simeq_3$ is symmetric is unknown.

To classify matroids using the exchange types, White calls two base sequences $B$ and $B'$ \emph{compatible} if the multiplicity of all elements in the contained bases are equal, which is an obvious necessary condition for the sequences to be related by unique and other exchange types. He defines $\operatorname{UE}(1)$, $\operatorname{UE}(2)$, $\operatorname{UE}(3)$, $\operatorname{TE}(1)$, $\operatorname{TE}(2)$, and $\operatorname{TE}(3)$ to be the class of all matroids, within which \emph{every} base sequence with $m \geq 2$ is related to \emph{every compatible} base sequence by $\simeq_1$, $\simeq_2$, $\simeq_3$, $\sim_1$, $\sim_2$, and $\sim_3$, respectively. He declares matroids in $\operatorname{UE}(\alpha)$ to satisfy the \emph{unique exchange property} of type $\simeq_\alpha$, and in $\operatorname{TE}(\alpha)$ to satisfy the \emph{transitive exchange property} of type $\sim_\alpha$.  Additionally, White defines $\operatorname{UE}(\alpha)'$ to be the class of matroids such that every base sequence with $m = 2$ is related to every compatible sequence by $\simeq_\alpha$. For the classes, the inclusions $\operatorname{UE}(\alpha) \subseteq \operatorname{UE}(\alpha)'$, and $\operatorname{UE}(\alpha) \subseteq \operatorname{TE}(\alpha)$ hold for $\alpha = 1,2,3$, and $\operatorname{UE}(\alpha) \subseteq \operatorname{UE}(\alpha+1)$, $\operatorname{UE}(\alpha)' \subseteq \operatorname{UE}(\alpha + 1)'$, and $\operatorname{TE}(\alpha) \subseteq \operatorname{UE}(\alpha+1)$ hold for $\alpha = 1,2$.

\begin{remark}[theorems by White in \cite{white1980unique}]
White proves the following theorems about the classes $\operatorname{UE}(\alpha)$ and $\operatorname{TE}(\alpha)$:
\begin{enumerate}
\item All classes $\operatorname{UE}(\alpha)$, $\operatorname{UE}(\alpha)'$, and $\operatorname{TE}(\alpha)$ for $\alpha = 1,2,3$, are closed under taking minors (hereditary), self-dual, and closed under direct sum.~\hfill\cite[prop.\,5]{white1980unique}\label{thm:white1}

\item $\operatorname{UE}(1) = \operatorname{UE}(1)'$ is the class of cycle matroids of series-parallel networks.~\hfill\cite[thm.\,6]{white1980unique}\label{thm:white2}

\item $\operatorname{UE}(3)'$ is a subset of the set of binary matroids, but not equal.~\hfill\cite[prop.\,7]{white1980unique}\label{thm:white3}

\end{enumerate}
\end{remark}

We can now consider how these results carry over to our scenarios.  For better comparison, we denote with $\operatorname{BUE}_3$ the set of all block matroids, including cycle matroids of bispanning graphs, for which $\tau_3(M)$ is connected, and with $\operatorname{BUE}_4$ those, for which $\tau_4(M)$ is connected.

However, in $\tau_\alpha(G)$, vertices in the graph are required to be \emph{complementary} base pairs $(B_1,B_2)$, and $\operatorname{BUE}_\alpha$ only contains block matroids. This requirement voids much of White's theorems for our exchange game. Of theorem~\ref{thm:white1}, only self-duality and closure under direct sums remain valid. The important property of being closed under minors is not valid for $\operatorname{BUE}_\alpha$, because deletion or contraction of a base in a block matroid does not always yield a block matroid. Likewise, in White's proof of theorem~\ref{thm:white1} the base pairs do not remain complementary under projection to/from a minor.

The proof of theorem~\ref{thm:white2} is completely dependent on the property of $\operatorname{UE}(1)$ and $\operatorname{UE}(1)'$ being closed under minors, and hence does not carry to complementary base pairs on block matroids.  One could now think that $\operatorname{BUE}_4 \subseteq \operatorname{UE}(1)'$, but this is false. A simple set of counterexamples are $\tau_4(W_n)$, which are connected for all $n \geq 4$~\cite[thm.\,23]{andres2014base}, but no $W_n$ is a series-parallel network.  The error in this hypothesis is that $\operatorname{UE}(1)'$ contains only matroids for which \emph{all} base pairs are related by $\simeq_1$, while in $\operatorname{BUE}_4$ \emph{only complementary} base pairs have to be related. Hence, $\operatorname{BUE}_4$ can contain $\matheu{M}(W_n)$, while $\operatorname{UE}(1)'$ does not. On the other hand, $\operatorname{UE}(1)' \nsubseteq \operatorname{BUE}_4$, since $\operatorname{UE}(1)'$ contains non-block matroids.

Theorem~\ref{thm:white3} seems to have little relevance, as unique subset exchanges are probably very difficult to transform into unique single-element exchanges.

\begin{remark}[conjectures by White in \cite{white1980unique}]
White conjectures the following about the classes $\operatorname{UE}(\alpha)$ and $\operatorname{TE}(\alpha)$:
\begin{enumerate}
\item All regular matroids are in $\operatorname{UE}(2)$.~\hfill\cite[conj.\,8]{white1980unique}\label{conj:white1}

\item For every two bases $B_1$ and $B_2$ of a regular matroid, there exists $x \in B_1$ such that $|E(x; B_1,B_2)| = 1$.~\hfill\cite[rem.\,10]{white1980unique}\label{conj:white2}

\item $\operatorname{TE}(1) = \operatorname{TE}(2) = \operatorname{TE}(3)$ is the set of all matroids.~\hfill\cite[conj.\,12]{white1980unique}\label{conj:white3}

\item The relation $\sim_3$ is equal to $\sim_1$, hence every symmetric subset exchange can be decomposed into single-element exchanges.~\hfill\cite[conj.\,13]{white1980unique}\label{conj:white4}

\end{enumerate}
\end{remark}
The exchange game played by Alice and Bob is a special variant of White's conjecture~\ref{conj:white1}. The main difference being that only complementary base pairs need to be swapped using unique exchanges, while in $\operatorname{UE}(2)$ any base pairs need to be in relation. However, the questions of whether all regular block matroids are in $\operatorname{BUE}_3$, and whether all regular matroids are in $\operatorname{UE}(2)$, probably have the same challenges at their core. As it is not even known whether all graphic block matroids are in $\operatorname{BUE}_3$, we consider the broad structure of $\tau_3(G)$ in this thesis.

Conjecture~\ref{conj:white2} is a side-remark of White and has been proven by McGuinness~\cite{mcguinness2014base} using Seymour's decomposition theorem for regular matroids~\cite{seymour1980decomposition}.

The most broad conjecture~\ref{conj:white3} turned out to be the most intensely studied one, since it has relationships to many other branches of mathematics like algebraic geometry.  Blasiak showed in 2008 that the conjecture is true for all graphic matroids~\cite{blasiak2008toric}. Bonin extended this in 2013 to all sparse paving matroids~\cite{bonin2013basis}. Thereafter, Laso{\'n} and Micha{\l}ek proved the conjecture for strongly base orderable matroids~\cite{lason2014toric}, and up to saturation (see \cite[sect.~4.3]{lason2015coloring} for details).

There has also been a lot of work on conjecture~\ref{conj:white4}, some which overlaps with attempts to prove conjecture~\ref{conj:white3}. The underlying question, of whether a symmetric subset exchange can be decomposed into a serial single-element exchange is attributed to Gabow~\cite{gabow1976decomposing}, and we will discuss it in the next section, in the context of cyclic base orderings.


\subsection{Cyclic Base Orderings}

Let us go back to Alice and Bob's exchange game: translated into the nomenclature from this chapter, the players are tasked to find a path through the exchange graph $\tau_3(G)$ from one vertex $(S,T)$ to its complement $(T,S)$. For each edge in the path, one pair of elements $(e,f)$ is exchanged between the disjoint spanning trees. For Alice to win with certainty, she has to find a path wherein Bob has no choice of $f$, and in $\tau_3(G)$, all edges imply this constraint. But this setting is only a special case of a wider range of possible exchange games.

The same exchange paths can be sought for in the context of base pairs of matroids, including pairs that are not necessarily complementary. Many authors have raised different questions and gained various results in this area. Gabow first considered whether symmetric \emph{subset} base exchanges can be decomposed~\cite{gabow1976decomposing}. He succeeded in decomposing them into smaller independent subset base exchanges, and poses the question whether they can be decomposed into single-element exchanges. While the question has been answered for many large classes of matroids, to date, the answer is still unknown in the general case.

A much more intriguing conjecture by Gabow, even beyond the decomposition of a subset exchange into single-element exchanges, is to order the ground set of elements such that all $r$ cyclically consecutive elements form a base. These base orderings have an immediate relationship to the path through $\tau_2(G)$ or $\tau_3(G)$ which Alice is seeking. But little is known about paths with the unique exchange property, hence we first neglect the unique exchange property:
\begin{conjecture}[cyclic base ordering \R{\cite{gabow1976decomposing}}]
  If $B_1$ and $B_2$ are disjoint bases of a matroid $M$ with rank $r := r(M)$ and ground set $E$, then an ordering $\arr{ b_1,\ldots, b_r, b_{r+1}, \ldots, b_{2r} }$ with $B_1 = \{ b_1,\ldots,b_r \}$ and $B_2 = \{ b_{r+1},\ldots,b_{2r} \}$ exists such that every set of $r$-cyclically consecutive elements is a base of $M$.

  An alternative, slightly stronger conjecture, is that all elements of $E$ can be ordered such that all $r$-cyclically consecutive elements are a base of $M$.
\end{conjecture}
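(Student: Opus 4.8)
The plan is to recast the ordering $\arr{b_1, \ldots, b_{2r}}$ as a structure on the exchange graph and then attack it by induction on $r$. First I would observe that, since $|E| = |B_1 \dotcup B_2| = 2r$, every window $W_i := \{ b_i, b_{i+1}, \ldots, b_{i+r-1} \}$ (indices modulo $2r$) is exactly the complement of the antipodal window $W_{i+r}$; hence requiring every window to be a base is equivalent to requiring that each of the $2r$ cyclic ``halves'' is one member of a complementary base pair. Passing from $W_i$ to $W_{i+1}$ deletes $b_i$ and inserts $b_{i+r}$, so a cyclic base ordering is precisely a closed walk of length $2r$ in the unrestricted exchange graph $\tau_2(M)$ that reaches the complementary pair $(B_2,B_1)$ after exactly $r$ symmetric single-element exchanges and in which each element is inserted once and deleted once around the cycle. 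This ``monotone staircase'' constraint is far stronger than mere connectivity of $\tau_2(M)$, which is already known~\cite{farber1985edge}.

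For the inductive step I would pick any $x \in B_1$ and, by the strong base exchange lemma~\ref{lem:strong base exchange}, a partner $y \in B_2$ with $B_1 - x + y$ and $B_2 - y + x$ both bases. Contracting $x$ and deleting $y$ yields $M' := (M \contr \{x\}) - \{y\}$ on the $2(r-1)$ elements $(B_1 - x) \dotcup (B_2 - y)$; a short check shows $B_1 - x$ and $B_2 - y$ are disjoint bases of $M'$, so $M'$ is again a block matroid and the induction hypothesis supplies a cyclic ordering $c_1, \ldots, c_{2r-2}$ of $M'$ with $\{ c_1, \ldots, c_{r-1} \} = B_1 - x$ and $\{ c_r, \ldots, c_{2r-2} \} = B_2 - y$. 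I would then splice $x$ and $y$ back at antipodal positions, forming $x, c_1, \ldots, c_{r-1}, y, c_r, \ldots, c_{2r-2}$. The boundary windows recover $B_1$ and $B_2$, and the two windows adjacent to them are $(B_1 - x) + y$ and $(B_2 - y) + x$, which are bases by the choice of $(x,y)$.

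The hard part is everything in between. Each reduced window $C$ of $M'$ lifts to a base of $M$ only as $C + x$ (the standard contraction fact), whereas the staircase pattern forces roughly half of the spliced windows to be filled with $y$ rather than $x$, and $C + y$ need not be a base. The obstruction vanishes cleanly exactly when $\{x,y\}$ is a circuit of size two — a parallel pair — for then $C + x$ is a base if and only if $C + y$ is, and the whole splice goes through; this is the matroid version of the trivial parallel-edge and degree-two reductions noted earlier. I would therefore expect the argument to succeed verbatim for every non-simple or non-cosimple (i.e.\ non-atomic) matroid, and to stall precisely on the simple, cosimple instances, where no exchange partner is forced to be $y$-extendable. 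This is the same atomic core that obstructs the other composition results in this thesis.

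Finally, for the general ground set and the stronger $|E| > 2r$ version I would first reduce to the necessary \emph{uniform density} condition, which for block matroids is automatic from the block matroid packing theorem, and then attempt the cyclic-flow / discharging construction of van den Heuvel and Thomass\'e, which resolves the uniformly dense case whenever $\gcd(|E|, r) = 1$. The disjoint-bases regime $|E| = 2r$ is exactly the worst case of that method, since $\gcd(2r, r) = r$; bridging this coprimality gap is, I expect, the genuine obstacle, and its persistence is why the statement is offered as a conjecture rather than proved as a theorem.
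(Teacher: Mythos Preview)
The statement is a \emph{conjecture}, and the paper does not prove it; it is presented as open, with a survey of partial results (graphic matroids, sparse paving matroids, rank $\leq 5$, the coprime case). Your proposal is therefore not to be judged against a paper proof but against whether it actually closes the problem---and you yourself correctly conclude that it does not. Your diagnosis of the obstruction is accurate: the contraction/deletion splice $C + x \leftrightarrow C + y$ only propagates when $\{x,y\}$ is a parallel pair (or dually a series pair), so the induction stalls precisely on simple cosimple block matroids, and the van den Heuvel--Thomass\'e discharging argument is blocked by $\gcd(2r,r)=r$. This is an honest and well-informed account of why the conjecture remains open.

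Where the paper goes further than your sketch is in the \emph{graphic} special case: theorem~\ref{thm:cbo-bispanning} proves the first form of the conjecture for every bispanning graph. The method is different from yours. Rather than contracting an arbitrary exchange pair $(x,y)$, it uses the inductive construction of bispanning graphs by the two operations \emph{double-attach} and \emph{edge-split-attach} (theorem~\ref{thm:inductive construction}), and shows how to extend a cyclic base ordering of the reduced graph across each operation. Double-attach is trivial (insert the two leaf edges at antipodal positions), and for edge-split-attach the proof shows that of the four ways to order the pair $\myalt{e_x,e_y}$ against $\myalt{e_z,t'_i}$, exactly two are valid, with the correct choice determined by which of $e_x,e_y$ lies in the fundamental cycle closed by $e_z$. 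This sidesteps your ``$C+y$ need not be a base'' problem entirely, because the reduction is tailored to the graph structure rather than to an arbitrary matroid exchange. The price is that the argument is specific to graphic matroids and does not generalise; your contraction/deletion framing is the natural matroid-level attack, and its failure on the atomic core is exactly the difficulty everyone runs into.
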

Farber, Richter, and Shank in 1985~\cite{farber1985edge} proved for graphic matroids that subset exchanges can be composed into single-element exchange by showing that $\tau_2(G)$ is connected.  Kajitani, Ueno, and Miyano~\cite{kajitani1988ordering}, Edmonds (recorded for posterity by Wiedemann)~\cite{wiedemann2006cyclic}, and later Cordovil and Moreira~\cite{cordovil1993bases} showed that cyclic base orderings can be found for all graphical matroids, which implies that subset exchanges can be decomposed. Farber proved that subset base exchanges can be decomposed for transversal matroids in 1989~\cite{farber1989basis}. Van den Heuvel and Thomass\'e proved the stronger cyclic base ordering conjecture for the case when $|E|$ and $r(M)$ are coprime~\cite{heuvel2012cyclic}.  Recently, Bonin showed the conjecture for all sparse paving matroids~\cite{bonin2013basis}, Kotlar and Ziv proved it for any matroid of rank $4$~\cite{kotlar2013serial}. Later, Kotlar extended this to any matroid of rank $5$~\cite{kotlar2013circuits}, and furthermore proves that at least three consecutive symmetric single-element base exchanges exist for any pair of disjoint bases.

\def\carr#1#2{\arr{#1\!\mid\!#2}}
For bispanning graphs, Baumgart gives a constructive method to calculate a cyclic base (edge) ordering using the inductive construction consisting of operations in theorem~\ref{thm:inductive operations} (page~\pageref{thm:inductive operations}). Since this method is reused in section~\ref{sec:reduce vdeg3}, we review the proof in detail. For bispanning graphs, we insert a ``$\mid$'' in the notation of a base ordering to better separate edges of a disjoint pair of spanning trees.
\begin{theorem}[cyclic base ordering of bispanning graphs\R{\cite[thm.\,5.2]{baumgart2009ranking}}]\label{thm:cbo-bispanning}
For every bispanning graph $G = (V,E,\delta)$ and every pair of disjoint spanning trees $E = S \dotcup T$, there exists a cyclic base ordering $\carr{ s_1, \ldots, s_m }{ t_1, \ldots, t_m }$ where $m = |S| = |T|$, $S = \{ s_1,\ldots,s_m \}$ and $T = \{ t_1,\ldots,t_m \}$.
\end{theorem}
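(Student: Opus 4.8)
The plan is to prove the statement by induction on $|V|$, following the inductive construction of Theorem~\ref{thm:inductive construction}: a cyclic base ordering is carried along as the graph is built up by the double-attach and edge-split-attach operations of Theorem~\ref{thm:inductive operations}. Equivalently, given $G$ with $|V| \geq 3$, I reduce at a vertex of degree two or three — one exists by Theorem~\ref{thm:bispanning deg23} — apply the induction hypothesis to the resulting smaller bispanning graph, and then insert the two new edges into its cyclic ordering. The base cases are $K_1$ (empty ordering) and $B_2$, where $\carr{s_1}{t_1}$ trivially has both length-one windows equal to a spanning tree. Throughout I use the reformulation that $\carr{s_1,\ldots,s_m}{t_1,\ldots,t_m}$ is a cyclic base ordering if and only if each of the $2m$ windows of $m$ cyclically consecutive edges is a spanning tree of $G$, and I note that the two ``extreme'' windows are exactly $S$ and $T$.

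For a vertex $v$ of degree two, its two incident edges $e_1,e_2$ must lie one in each tree, since $G[S]$ and $G[T]$ are both spanning and connected and hence each must reach $v$, which has only two edges; say $e_1 \in S$, $e_2 \in T$. The reduction $G' := G - v$ is the reverse double-attach, with disjoint spanning trees $S - e_1$ and $T - e_2$ (both $e_i$ are leaf edges at $v$). By the induction hypothesis $G'$ has a cyclic base ordering, and I insert $e_1$ at the end of the $S$-block and $e_2$ at the end of the $T$-block, giving $\carr{s_1,\ldots,s_{m-1},e_1}{t_1,\ldots,t_{m-1},e_2}$. Because $e_1$ and $e_2$ then sit exactly $m$ apart in the cycle of length $2m$, every length-$m$ window contains precisely one of them; deleting that edge leaves a length-$(m-1)$ window, which is a spanning tree of $G'$ by the induction hypothesis, and re-adding the deleted edge — a pendant edge at $v$ — yields a spanning tree of $G$. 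This case is therefore clean.

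The degree-three case is the crux. Take $v$ of degree three with $e_1,e_2 \in S$ (to $x,y$) and $e_3 \in T$ (to $z$), as in the proof of Theorem~\ref{thm:inductive construction}. The reverse edge-split-attach gives $G'' := G - v + \{x,y\}$ with new edge $\bar e = \{x,y\}$ lying in $S'' := S - e_1 - e_2 + \bar e$, and $T'' := T - e_3$. By the induction hypothesis $G''$ has a cyclic base ordering in which $\bar e$ occupies one slot of the $S''$-block, and I build the ordering for $G$ by replacing $\bar e$ with $e_1,e_2$ and inserting $e_3$ into the $T$-block. The naive choice — $e_1,e_2$ adjacent and $e_3$ antipodal — fails, because a window may then contain two edges incident to $v$ whose far endpoints are already joined by the remaining window edges, closing a cycle through $v$; the smallest such obstruction is $\{e_2,e_3\}$ together with the edge $\{y,z\}$, forming a triangle. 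The correct placement is governed by the fundamental cycle of $\bar e$: removing $e_1,e_2$ splits $S$ into components $X \ni x$ and $Y \ni y$ on $V - v$, and $z$ lies in exactly one of them, so I pair $e_3$ with the opposite-side edge ($e_1$ if $z \in Y$, otherwise $e_2$) and keep it separated from the same-side edge, so that any window holding two $v$-edges pairs edges whose endpoints remain in different components under the window's other edges.

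The main obstacle is exactly this verification in the degree-three case: after fixing the positions I must show that every length-$m$ window of the new ordering is a spanning tree of $G$. I would classify windows by which of $e_1,e_2,e_3$ they contain. A window holding both $e_1$ and $e_2$ corresponds, via the identification $\bar e \mapsto e_1+e_2$, to a $\bar e$-containing window of $G''$, hence to the spanning tree $(W'' - \bar e) + e_1 + e_2$; a window holding exactly one of the three is a spanning tree of $G - v$ extended by a single pendant-type edge at $v$; and the remaining ``mixed'' windows are valid precisely by the component-separation condition arranged above. The delicate point — and the reason the placement must be structural rather than merely positional — is ruling out the cycle-through-$v$ configuration for all mixed windows simultaneously; once this is secured using the fundamental cycle $C_G(S,e_3)$ and the fundamental cut $D_{G''}(S'',\bar e)$ (Theorems~\ref{thm:fundamental cycle} and~\ref{thm:fundamental cut}), the remaining checks reduce to the spanning-tree property inherited from $G''$ and from $G - v$.
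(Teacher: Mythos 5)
Your induction, base cases, degree-two insertion, and window classification (both of $e_1,e_2$ / exactly one special edge / mixed window) all match the paper's proof, and those parts are correct. The gap is in the rule you use to fix the placement at the degree-three vertex. You read off the side of $z$ from the tree $S$: the components $X \ni x$, $Y \ni y$ of $S - e_1 - e_2$, equivalently the fundamental cycle $C_G(S,e_3)$ and the cut $D_{G''}(S'',\bar e)$. But the condition a mixed window must satisfy refers to a different tree. Write the ordering of $G''$ as $s'_1,\ldots,s'_{m-1}$ and $t'_1,\ldots,t'_{m-1}$ with $\bar e = s'_i$; then the mixed window has the form $\{e'\} \cup F \cup \{e_3\}$ with $e' \in \{e_1,e_2\}$ and $F = \{s'_{i+1},\ldots,s'_{m-1},t'_1,\ldots,t'_{i-1}\}$, where $F + \bar e$ is the $\bar e$-containing window of the ordering of $G''$ --- a spanning tree of $G''$ that mixes edges of $S''$ and $T''$ and is in general \emph{not} $S''$. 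For this window to be a spanning tree of $G$, the edge $e'$ must join $v$ to the component of $F$ not containing $z$, so the forced choice of $e'$ is governed by which side of $F$ (not of $S - e_1 - e_2$) the vertex $z$ lies on.

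These two classifications genuinely differ: both $S''$ and $F + \bar e$ are spanning trees of $G''$ containing $\bar e$, and already in $K_4$ (vertices $x,y,z,w$ with $\bar e = \{x,y\}$) the star at $x$ puts $z$ on the $x$-side while the star at $y$ puts $z$ on the $y$-side. Whenever the two disagree, your rule pairs $e_3$ with the edge into the \emph{same} $F$-component as $z$; that window then has two edges from $v$ into one component of $F$ --- a cycle through $v$ --- while the other component of $F$ is not reached at all, so it is not a spanning tree. This is exactly why the paper determines the order of $e_1,e_2$ from the fundamental cycle closed by $e_3$ in the window's own edge set $\{e_1,e_2\} \cup F$, and why it treats the two admissible positions of $e_3$ (before or after $t'_i$) separately; it even remarks that the two resulting forced orders need not be the same or opposite, which rules out any rule, like yours, that depends on $S$ alone. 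The fix is local: keep your positional scheme, but choose $e'$ using $C_G(\{e_1,e_2\} \cup F, e_3)$ rather than $C_G(S,e_3)$; the rest of your verification then goes through as in the paper.
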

Before we prove the theorem by constructing such a base ordering, let us consider the relationship to paths in $\tau_2(G)$. A cyclic base ordering $\carr{ s_1, \ldots, s_m }{ t_1, \ldots, t_m }$ directly corresponds to the path $\arr{ (s_1,t_1), (s_2,t_2), \ldots, (s_m,t_m) }$ through the graph from $(S,T)$ to $(T,S)$. We call the second representation the corresponding \emph[edge swap sequence]{edge swap sequence}.

\def\myalt#1{\langle #1 \rangle}
\begin{proof}[of~\ref{thm:cbo-bispanning}]
  The proof uses induction over the number of vertices $n = |V|$. The trivial bispanning graph $B_1$ in figure~\ref{fig:small-bispanning} (page~\pageref{fig:small-bispanning}) with $|V| = 1$ has no edges, such that $\carr{\;}{\;}$ is a valid cyclic base ordering. The bispanning graph $B_2$ in figure~\ref{fig:small-bispanning} with $|V| = 2$ has two parallel edges $e_1 \in S$ and $e_2 \in T$, such that $\carr{e_1}{e_2}$ is a valid cyclic base ordering.

  Consider a bispanning graph with $n$ vertices, then due to theorem~\ref{thm:bispanning deg23} it either has a vertex of degree two or three. If it has a vertex $v$ of degree two, which is attached to the vertices $x$ and $y$ by the edges $e_x$ and $e_y$ (see figure~\ref{fig:double-attach}, page~\pageref{fig:double-attach}), then one can reduce the graph to $G' = G - v$, effectively cutting off the vertex and reversing a double-attach operation. Without loss of generality we can assume $e_x \in S$ and $e_y \in T$ as in the figure. The resulting graph $G'$ is bispanning, and by induction hypothesis, the bispanning graph $G'$ with disjoint spanning trees $S' = \{s'_1,\ldots,s'_{m-1}\}$ and $T' = \{t'_1,\ldots,t'_{m-1}\}$ has a cyclic base ordering $\carr{ s'_1,\ldots,s'_{m-1} }{ t'_{1},\ldots,t'_{m-1} }$. As $v$ is a leaf of both trees in $G$, we can insert $e_x$ and $e_y$ in front of $s'_i$ and $t'_i$ for any index $i \in \{ 1,\ldots,m-1 \}$, or after $s'_{m-1}$ and $t'_{m-1}$. In the resulting base ordering, all edges are considered exactly once, and every $m$ consecutive edges form a tree, because the $m-1$ edges of $G'$ form a non-spanning tree of $G$, which is completed to a spanning tree by either $e_x$ or $e_y$ as a leaf edge.

  Next consider the case where $G$ has a vertex $v$ of degree three, which is attached to the vertices $x$, $y$, and $z$ by the edges $e_x$, $e_y$, and $e_z$ (see figure~\ref{fig:edge-split-attach}, page~\pageref{fig:edge-split-attach}). Without loss of generality we can assume $e_x, e_y \in S$ and $e_z \in T$ as in the figure. Then one can reduce the graph to $G' = G - v + \{ x, y \}$, effectively cutting off the vertex and reversing an edge-split-attach operation. The resulting graph $G'$ is bispanning, and by induction hypothesis, the bispanning graph $G'$ with disjoint spanning trees $S' = \{s'_1,\ldots,s'_{m-1}\}$ and $T' = \{t'_1,\ldots,t'_{m-1}\}$ has a cyclic base ordering $\carr{ s'_1,\ldots,s'_{m-1} }{ t'_{1},\ldots,t'_{m-1} }$. The split edge $\bar{e} = \{x,y\} \in S'$ is located as $s'_i = \bar{e}$ at an index $i \in \{ 1,\ldots,m-1\}$ in the reduced graph. This edge $\bar{e}$ has to be replaced in the sequence with $\arr{ e_x, e_y }$ or $\arr{ e_y, e_x }$, and the edge $e_z$ inserted before or after $t'_i$, resulting in $\arr{ e_z, t'_i }$ or $\arr{ t'_i, e_z }$. We denote with $\myalt{e_x,e_y} \in \{ \arr{e_x,e_y}, \arr{e_y,e_x} \}$ a placeholder for either choice, and select the first edge with $\myalt{e_x,e_y}_1$ and the second with $\myalt{e_x,e_y}_2$, depending on the final choice of ordering $e_x$ and $e_y$. The resulting base ordering is
$$O = \carr{ s'_1,\ldots,s'_{i-1}, \myalt{e_x, e_y}, s'_{i+1},\ldots,s_{m-1} }{ t'_1,\ldots,t'_{i-1},\myalt{e_z, t'_i},t'_{i+1},\ldots,t'_{m-1} }\,,$$
where the order of the two pairs has yet to be determined. Of the four possible combinations it turns out that only exactly two form valid cyclic base orderings.

  First fix the order $\arr{e_z, t'_i}$ in the sequence $O$ above and consider the two possible edge subsets $X = \arr{\myalt{ e_x, e_y }_2, s'_{i+1},\ldots,s_{m-1}, t'_1,\ldots,t'_{i-1}, e_z}$ containing $m$ edges, where $\myalt{e_x,e_y}_2 \in \{ e_x,e_y \}$. By induction, we know $X - e_z - \myalt{e_x, e_y}_2 + \bar{e}$ is a spanning tree of $G'$. However, due to the splitting of $\bar{e}$, $X - \myalt{e_x, e_y}_2$ is no longer a tree of $G$: it contains the two components of $G'[S'] - \bar{e}$ and $v$ is attached to one of them via $e_z$. As $v$ is connected via $e_z$ either to $x$ or to $y$, the edge choice $\myalt{e_x, e_y}_2$ has to be the one $e_z$ is \emph{not} connected to. Formally this is the edge $\{e_x, e_y\} \setminus C(\{ e_x, e_y s'_{i+1},\ldots,s_{m-1}, t'_1,\ldots,t'_{i-1} \},e_z)$, hence the edge not contained in the fundamental cycle closed by $e_z$. Selecting this edge as $\myalt{e_x, e_y}_2$ makes $X$ a spanning tree of $G$.

  Let $\myalt{e_x, e_y}_1$ be the other edge, then the cyclic complement of $X$, $X' = \arr{t'_i, t'_{i+1},\ldots,t'_{m-1}, \linebreak s'_1,\ldots,s'_{i-1}, \myalt{e_x, e_y}_1}$ is also a spanning tree of $G$, since the $m-1$ edges of $G'$ connect all vertices of $G$ except $v$, and $v$ is connected by $\myalt{e_1, e_y}_1$.

  To complete the claim that proposed edge ordering $O$ is cyclic, consider any other $m$ consecutive elements $B$. If $B$ contains both $e_x$ and $e_y$ then it cannot contain $e_z$, and $B$ corresponds to the tree $B - e_x - e_y + \bar{e}$ of $G'$ with $\bar{e}$ split into $e_x$ and $e_y$. If $B$ contains neither $e_x$ and $e_y$ then it contains $e_z$, and $B$ corresponds to the tree $B - e_z$ of $G'$ with the additional leaf edge $e_z$. The cases where $B$ contains either $e_x$ or $e_y$, but not both have already been handled above.

  Hence, $O$ is a cyclic base ordering with $\arr{e_z, t'_i}$, where the order of $\myalt{e_x,e_y}$ is determined from the cycle structure of the graph. By selecting the order $\arr{t'_i, e_z}$ in the sequence $O$, another cyclic base order can be determined using the same arguments. In the second case, the resulting order of $\myalt{e_x, e_y}$ need not be the same or the opposite of the first choice $\arr{e_z, t'_i}$.
\end{proof}

As an example, consider the construction of cyclic base orderings for $W_5$ in figure~\ref{fig:calc-cbo}. A pair of disjoint bispanning trees of $W_5$ is given, and in the first row these are decomposed stepwise at vertices of degree two or three. The deleted vertices are circled. In the first step, the edge-split-attach operation involving edges $0$, $1$, and $7$ is reversed, and the split edge in the reduced graph $G_4$ is labeled with $\{1,7\}$. This special labeling helps later during the expansion, but in the context of $G_4$, $\{1,7\}$ represents a single edge. Then the edge-split-attach operation involving edges $2$, $3$, and $\{1,7\}$ is reversed. And then two double-attach operations are reversed, which reduces $G_3$ down to $G_1$.

Thereafter, the reduction steps are undone, and during recomposition one can calculate a set of cyclic base orderings. The last two double-attach operations yield two possible cyclic base orderings (in the figure listed below $G_3$), since the two edge pairs can be ordered arbitrarily. Then the edge-split-attach operation $G_3$ to $G_4$ is redone: the edge $\{2,3\}$ is split into $2$ and $3$, and the new vertex attached via $\{1,7\}$. The edge ``$t'_i$'' in the proof above is edge $4$ in this step of the example. This yields the two possible edge orderings $\arr{4, \{1,7\}}$ and $\arr{\{1,7\}, 4}$ for the edges labeled $\arr{t'_i,e_z}$ in the proof. The edges $\myalt{2,3}$ can only be correctly ordered as $\arr{2,3}$, since in the first case $\arr{\{1,7\}, 5, 3}$ and in the second case $\arr{2,6,\{1,7\}}$ are a cycle. This yields two cyclic base orderings based on $\carr{ 6,4 }{ 5, \{2,3\} }$ for $G_3$. Analogously, two more can be derived from $\carr{ 4,6 }{ \{2,3\}, 5 }$. In total, this yields four cyclic base ordering for $G_4$.

Then the first edge-split-attach $G_4$ to $G_5$ is redone: the edge $\{1,7\}$ is split into $1$ and $7$, where the new vertex is attached via edge $0$. Each of the known cyclic base orderings of $G_4$ can be expanded into two cyclic base orderings of $G_5$, wherein the partner of $\{1,7\}$ is either $2$ or $3$, depending on the position of $\{1,7\}$. The order of $\myalt{0,2}$ and $\myalt{0,3}$ can be prescribed, and the cycle structure determines the order of $\myalt{1,7}$ at the corresponding position in the cyclic base orderings.

\begin{figure}[t]\centering

  \def\myunderbraket#1{\underbracket[0.4pt][2.5pt]{
      \smash{#1}%
    }}


  \caption{Calculation of cyclic base orderings for a pair of bispanning trees of $W_5$.}\label{fig:calc-cbo}
\end{figure}

As the previous theorem~\ref{thm:cbo-bispanning} showed that one can construct a cyclic base ordering for any bispanning graph, we raise the question whether one can construct a cyclic base ordering such that each exchange step in the cycle is a unique exchange (see figure~\ref{fig:uecbo-w5} for two examples). We define a \emph{unique exchange} cyclic base ordering via an edge swap sequence instead of as an edge sequence, since it better exposes the unique exchanges.

\begin{definition}[unique exchange cyclic base ordering of bispanning graphs]\label{def:ucbo-bispanning}
For a bispanning graph $G = (V,E,\delta)$ and a pair of disjoint spanning trees $(S,T)$ of $G$ with $m = |S| = |T|$, a \emph{unique exchange cyclic base ordering} (UECBO) of $G$ and $(S,T)$ is an ordering of  $S = \{ s_1,\ldots,s_m \}$ and $T = \{ t_1,\ldots,t_m \}$ into an edge swap sequence $\arr{ (e_1,f_1), (e_2,f_2), \ldots, (e_m,f_m) }$, such that for $i=1,\ldots,m$ each edge swap $(e_i,f_i) \in \{ (s_i,t_i), (t_i,s_i) \}$ is a unique $S$ or $T$ exchange for $(\{ s_i,\ldots,s_m,t_1,\ldots,t_{i-1} \}, \{ t_i,\ldots,t_m,s_1,\ldots,s_{i-1} \})$.

The edge swap sequence corresponds to the cyclic base ordering $\carr{ s_1, \ldots, s_m }{ t_1, \ldots, t_m }$.
\end{definition}

Restricting cyclic base orderings to unique exchanges makes their construction much more difficult, and obviously equivalent to finding a path of length $\frac{|E|}{2} = |S| = |T|$ between complementary vertices in $\tau_3(G)$. This is a restriction from finding a path of any length as required in Alice and Bob's game.

Besides indirectly regarded in Andres et al.~\cite{andres2014base}, we found no authors considering the problem of cyclic base orderings with unique exchanges. Baumgart~\cite[ch.~5.3]{baumgart2009ranking} considers a special subclass of cyclic base orderings called \emph{subsequence-interchangeable} base orderings, but could not prove that such exist for all bispanning graphs. His proof construction lacks a similar final step as our discussion in section~\ref{sec:reduce vdeg3}.

Every path in $\tau_3(G)$ of length $\frac{|E|}{2}$ between complementary tree pairs can be rewritten as a unique exchange cyclic base ordering. With a computer program, we verified that such a path exists for every pair of disjoint trees in all bispanning graphs with up to 12 vertices. Hence, we give the following conjecture:
\begin{conjecture}[existence of unique exchange cyclic base ordering]
  For every bispanning graph $G = (V,E,\delta)$ and every pair of disjoint spanning trees $(S,T)$ of $G$, a \emph{unique exchange cyclic base ordering} exists.
\end{conjecture}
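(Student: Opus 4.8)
The plan is to prove the conjecture by induction on $|V|$, reusing the inductive construction of bispanning graphs (Theorem~\ref{thm:inductive construction}) and refining Baumgart's cyclic-base-ordering construction (Theorem~\ref{thm:cbo-bispanning}) so that every exchange step stays \emph{unique}. By the remark preceding the conjecture, a UECBO is exactly a path of length $m = |E|/2$ between the complementary tree pairs $(S,T)$ and $(T,S)$ in $\tau_3(G)$; since any path must flip all $2m$ edges and each step flips only two, length $m$ is minimal. So, equivalently, I would aim to show that these antipodal vertices are always joined by a \emph{shortest, monotone} path in $\tau_3(G)$ in which each edge changes trees exactly once. Note this is strictly stronger than connectivity of $\tau_3(G)$, so a pure connectivity argument does not suffice.

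The base cases $B_1$ and $B_2$ are immediate: the empty swap sequence, and the single swap of the two parallel edges, are trivially unique. For the inductive step I distinguish, as in Theorem~\ref{thm:inductive construction}, whether $G$ has a vertex of degree two or only vertices of degree at least three. The degree-two (double-attach) case should go through cleanly. If $v$ is attached by $e_x \in S$ and $e_y \in T$, I reduce to $G' := G - v$, obtain a UECBO by induction, and insert the swap of $(e_x,e_y)$ as a single block at an arbitrary position. Throughout the sequence $v$ remains a leaf in both trees, so its swap is a leafUE by Lemma~\ref{lem:leafUE}; and because $e_x, e_y$ form a \emph{fixed exchange pair} — their cut and cycle both have size two and never enter the fundamental cut or cycle of any other edge — the uniqueness of every other step transfers unchanged from $G'$. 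This is precisely the product structure underlying the remark before Theorem~\ref{thm:composite tau decompose}.

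The difficulty concentrates in the degree-three (edge-split-attach) case. Here $v$ is attached by $e_x, e_y \in S$ and $e_z \in T$, and the reduction $G' := G - v + \{x,y\}$ replaces the two $S$-edges $e_x, e_y$ by a single edge $\bar e \in S'$ while dropping the $T$-edge $e_z$. The construction in Theorem~\ref{thm:cbo-bispanning} already dictates how to split $\bar e$ back into $\langle e_x, e_y \rangle$ (with the order forced by the fundamental cycle closed by $e_z$) and where to insert $e_z$ relative to its partner $t'_i$. The new obstacle is that splitting $\bar e$ can \emph{change the sizes of fundamental cuts and cycles of unrelated edges}: a step that was unique in $G'$ because $\bar e$ was its sole partner may gain a second partner among $\{e_x, e_y\}$ in $G$, and conversely new unique exchanges may appear. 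I would try to localize this: show that the only steps whose uniqueness can change are those whose fundamental cut or cycle passed through $\bar e$, and control these using the degree-three composition of section~\ref{sec:reduce vdeg3}, which reconstructs $\tau_3(G)$ from the exchange graphs of the three reduced bispanning graphs obtained by suppressing $v$ in the three possible ways.

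The main obstacle — and the reason the statement is only conjectured — is that this degree-three composition, although it reconstructs $\tau_3(G)$, does not obviously transport a monotone length-$m$ path across the join seam: a shortest path lifted from one reduced graph may be forced to reroute at the three new edges in a way that either lengthens it or breaks uniqueness, and coordinating the lift across the three reduced graphs over many successive levels of the induction is exactly the ``conclusions over multiple levels are unclear'' gap flagged in the introduction. I expect that settling the conjecture requires an inductive hypothesis stronger than the bare existence of one UECBO for each $(S,T)$ — for instance, a UECBO that can be chosen to \emph{begin or end with a prescribed leafUE at} $v$, or one compatible with a fixed position of $\bar e$ — so that the hypothesis is robust enough to survive the edge split. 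Identifying such a strengthened, self-propagating invariant is where I would concentrate the effort, and it is the step I expect to be genuinely hard.
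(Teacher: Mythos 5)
You should be clear at the outset that this statement is not proved in the paper at all: it is stated as an open conjecture, supported only by computational verification (a length-$\frac{|E|}{2}$ path between complementary tree pairs was found by computer for all bispanning graphs with up to 12 vertices), so there is no proof of the paper's to compare yours against. Your proposal, by your own admission, is also not a proof. What it does contain is correct: the reformulation of a UECBO as a shortest, monotone path from $(S,T)$ to $(T,S)$ in $\tau_3(G)$ (strictly stronger than connectivity) matches the paper's remark preceding the conjecture; and your treatment of the double-attach case is sound, since the pair $e_x,e_y$ at a degree-two vertex is a fixed exchange partner whose swap can be inserted anywhere without disturbing the uniqueness of any other step --- this is exactly the Cartesian-product situation of theorem~\ref{thm:composite tau decompose}, because $G[V\setminus\{v\}]$ is a bispanning subgraph with $G \contr G[V\setminus\{v\}] \cong B_2$.

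The gap is the edge-split-attach case, and the paper shows concretely that it is not a bookkeeping issue that can be ``localized'' in the way you hope. Immediately after stating the conjecture, the paper gives a counterexample to precisely the lift you propose: for $W_5$ with $S=\{0,1,3,5\}$, $T=\{2,4,6,7\}$, reduced at a degree-three vertex to $K_4$, the reduced graph has the UECBO $[\,\{1,3\},0,5 \mid 7,4,6\,]$, and \emph{both} admissible expansions under the rules of theorem~\ref{thm:cbo-bispanning}, namely $[\,1,3,0,5 \mid 2,7,4,6\,]$ and $[\,1,3,0,5 \mid 7,2,4,6\,]$, fail to be UECBOs. So it can happen that every valid expansion of every UECBO of the reduced graph contains a step broken in the sense of lemma~\ref{lem:vdeg3 lift ue}; identifying which steps can break (those whose cut or cycle meets the split edge $\bar{e}$) does not by itself rescue the induction. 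Worse, figure~\ref{fig:vdeg3 large} exhibits a broken exchange that needs at least five additional exchanges to mend, which is incompatible with a monotone length-$m$ path, so any repair strategy that reroutes locally is ruled out: a successful strengthened hypothesis must guarantee that broken steps never occur on the chosen path. Your suggestion of a self-propagating invariant (e.g.\ a UECBO beginning with a prescribed leafUE at $v$) is exactly the direction the paper itself points to --- Baumgart's subsequence-interchangeable orderings are cited as lacking precisely this ``final step,'' and section~\ref{sec:mend-brokenUEs} explains why mending is as hard as the general problem --- but no such invariant is known, which is why the statement remains a conjecture.
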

A straight-forward attempt to prove the conjecture would be to consider whether always one of the two cyclic base orderings constructed in the proof of theorem~\ref{thm:cbo-bispanning} retains the unique exchange property. But this is not the case, and we only want to note a counter example here, and discuss the deeper problem in section~\ref{sec:reduce vdeg3}. If $G = W_5$ is numbered as in figure~\ref{fig:calc-cbo}, with the initial disjoint spanning trees $S = \{ 0,1,3,5 \}$ and $T = \{ 2,4,6,7 \}$, and is contracted at the degree vertex with edges $5$, $6$, and $7$ into the reduced graph $K_4$ with spanning trees $S = \{ 0,5,\{1,3\} \}$ and $T = \{ 4,6,7 \}$, then $K_4$ has the unique exchange cyclic base ordering $\carr{ \{1,3\},0,5 }{ 7,4,6 }$. The two valid expansions for the edge-split-attach operation are the cyclic base orderings $\carr{ 1,3,0,5 }{ 2,7,4,6 }$ and $\carr{ 1,3,0,5 }{ 7,2,4,6 }$, of which neither is a unique exchange cyclic base ordering.

We close this section with a simple theorem which implies that $\tau_3(G)$ graphs are highly ``symmetrical''.
\begin{theorem}[reversibility of unique exchange cyclic base orderings]\label{thm:uecbo-reversibility}
  If $G = (V,E,\delta)$ is a bispanning graph, $(S,T)$ a pair of disjoint spanning trees of $G$, and $\arr{ (e_1,f_1), \ldots, (e_m,f_m) }$ a unique exchange cyclic base ordering of $G$ and $(S,T)$, then $\arr{ (f_m,e_m), \ldots, (f_1,e_1) }$ is also a unique exchange cyclic base ordering of $G$ and $(S,T)$ (this is the \emph{same tree pair}), where $m = \frac{|E|}{2}$.
\end{theorem}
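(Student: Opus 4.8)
The plan is to run the given ordering \emph{backwards} and verify that uniqueness survives the reversal; Theorem~\ref{thm:reversibility unique exchange} will do the analytic work and the remainder is index bookkeeping. First I would name the tree pairs the original ordering passes through: for $i = 1,\ldots,m+1$ set
$$S^{(i)} := \{ s_i,\ldots,s_m,t_1,\ldots,t_{i-1} \}, \qquad T^{(i)} := \{ t_i,\ldots,t_m,s_1,\ldots,s_{i-1} \},$$
so that $(S^{(1)},T^{(1)}) = (S,T)$ and $(S^{(m+1)},T^{(m+1)}) = (T,S)$. A one-line set computation shows that the $i$-th swap $(e_i,f_i) \in \{ (s_i,t_i),(t_i,s_i) \}$ — which is a unique $S$ or $T$ exchange by hypothesis — carries $(S^{(i)},T^{(i)})$ to $(S^{(i+1)},T^{(i+1)})$, simply by moving $s_i$ and $t_i$ across the two trees.

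Next I would define the reversed ordering by $s'_j := s_{m+1-j}$ and $t'_j := t_{m+1-j}$; its swap sequence is precisely $\arr{ (f_m,e_m),\ldots,(f_1,e_1) }$ and its associated trees are again $S = \{ s'_1,\ldots,s'_m \}$ and $T = \{ t'_1,\ldots,t'_m \}$. Abbreviating $k := m+1-j$ and substituting into the analogous definition of its intermediate pairs $({S'}^{(j)},{T'}^{(j)})$, a direct computation yields the key identity ${S'}^{(j)} = T^{(k+1)}$ and ${T'}^{(j)} = S^{(k+1)}$; in particular $({S'}^{(1)},{T'}^{(1)}) = (S,T)$, so the reversed ordering starts at the required pair and ends at $(T,S)$. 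It then remains to check that the $j$-th reversed swap $(f_k,e_k)$ is a unique exchange for $({S'}^{(j)},{T'}^{(j)}) = (T^{(k+1)},S^{(k+1)})$. By Theorem~\ref{thm:reversibility unique exchange}, since the original step carries $(S^{(k)},T^{(k)})$ to $(S^{(k+1)},T^{(k+1)})$ as a unique exchange, its reverse $(f_k,e_k)$ is a unique exchange carrying $(S^{(k+1)},T^{(k+1)})$ back to $(S^{(k)},T^{(k)})$.

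The delicate point, and the step I expect to be the main obstacle, is that in the reversed ordering the same two trees appear \emph{transposed}, as $(T^{(k+1)},S^{(k+1)})$. I would settle it by unfolding the criterion of Definition~\ref{def:unique edge exchange}: uniqueness of $(f_k,e_k)$ is the single condition that the fundamental cut of the selected edge $f_k$, taken in the tree containing $f_k$, meets the fundamental cycle of $f_k$ in the other tree in exactly $\{ e_k,f_k \}$. This condition refers only to which tree supplies the cut and which supplies the cycle, not to their order inside the pair; hence ``$(f_k,e_k)$ is a unique $S$-exchange for $(S^{(k+1)},T^{(k+1)})$'' — the statement delivered by the reversibility theorem — and ``$(f_k,e_k)$ is a unique $T$-exchange for $(T^{(k+1)},S^{(k+1)})$'' — the statement needed here — are literally identical. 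Once this invariance is in place, the three defining requirements of a UECBO (matching edge sets, correct starting pair, and step-wise uniqueness) are all satisfied, and $\arr{ (f_m,e_m),\ldots,(f_1,e_1) }$ is a unique exchange cyclic base ordering of $G$ and $(S,T)$.
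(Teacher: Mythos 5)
Your proposal is correct and follows essentially the same route as the paper's proof: reverse each swap via Theorem~\ref{thm:reversibility unique exchange}, then transpose every intermediate tree pair, observing that a unique $S$ exchange on a pair is the very same cut/cycle condition as a unique $T$ exchange on the transposed pair. Your write-up merely makes explicit the index bookkeeping (the identity ${S'}^{(j)} = T^{(m+2-j)}$, ${T'}^{(j)} = S^{(m+2-j)}$) and the transposition-invariance of the uniqueness criterion, which the paper compresses into the phrase ``by switching the disjoint edge trees in all pairs along the path.''
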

\begin{proof}
  The given edge swap sequence $\arr{ (e_1,f_1), (e_2,f_2), \ldots, (e_m,f_m) }$ exchanges $(S,T)$ for $(T,S)$, wherein each swap is a unique exchange. Due to theorem~\ref{thm:reversibility unique exchange}, each unique exchange is individually reversible, hence we can reverse the whole path: $\arr{ (f_m,e_m), (f_{m-1},e_{m-1}), \ldots, (f_1,e_1) }$ is an edge swap sequence from $(T,S)$ to $(S,T)$ of unique exchanges. By switching the disjoint edge trees in all pairs along the path, we get the edge swap sequence $\arr{ (f_m,e_m), (f_{m-1},e_{m-1}), \ldots, (f_1,e_1) }$ from $(S,T)$ to $(T,S)$ wherein each unique $S$ exchange is now a unique $T$ exchange and vice versa.
\end{proof}
While the proof of the previous theorem is simple, it shows that for every unique exchange cyclic base ordering, there is a symmetric ``twin''. For example, figure~\ref{fig:uecbo-w5} shows two such twins: the first cyclic base ordering, $\carr{7,1,4,6}{0,3,2,5}$, can be reversed to yield the second one, $\carr{6,4,1,7}{5,2,3,0}$. Intuitively, this makes the $\tau_3(G)$ highly ``symmetrical'' in some sense. Furthermore, it is remarkable, that there seems to be no relationship between the first two unique exchanges in the twin sequence.

\begin{figure}
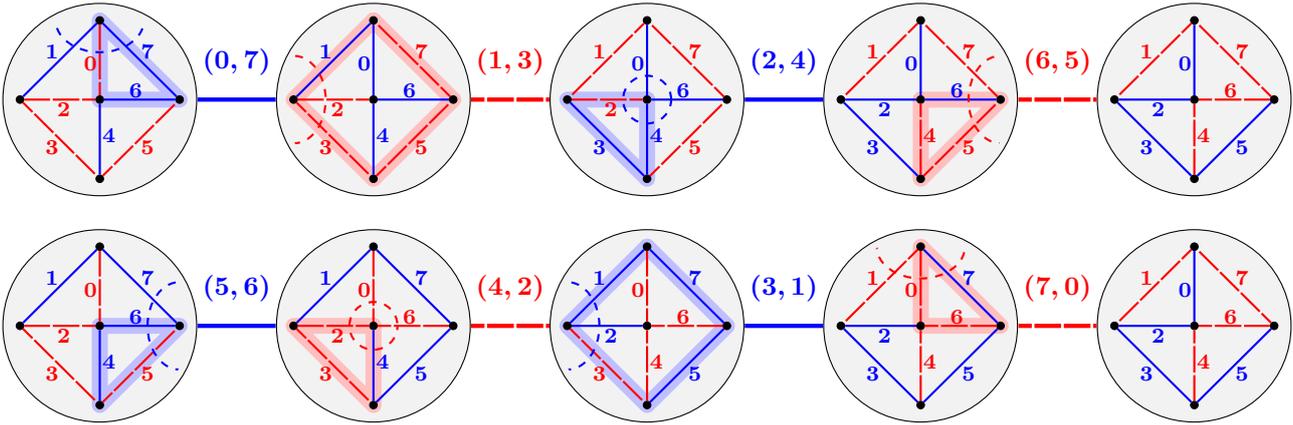
\centering


  \caption{Two unique exchange cyclic base orderings of $W_5$.}\label{fig:uecbo-w5}
\end{figure}

\clearpage

\section{Composing Exchange Graphs}\label{sec:decomposing}

In this section we develop a composition strategy to break bispanning graphs into smaller ones, and to combine the unique exchange graph of the parts back to gain the exchange graph of the whole. In this context we refer to the bispanning graph as the \emph{underlying} graph.


\subsection{Classifying by Vertex- and Edge-Connectivity}

Our first approach is to split the underlying graph at small vertex or edge cuts. The idea is that only few edges cross these cuts and hence the cycle and cuts passing the split become manageable. Hence, we are interesting in the connectivity of bispanning graphs.

\begin{definition}[$k$-vertex- and $k$-edge-connected and connectivity \R{\cites[ch.\,4]{west2001introduction}}]
  \begin{enumerate}
  \item A connected graph $G = (V,E,\delta)$ is called \emph{$k$-vertex-connected}\index{graph!vertex-connected@$k$-vertex-connected}\index{vertex-connected@$k$-vertex-connected}, if $|V| > k$ and $G \setminus V'$ remains connected for all vertex subsets $V' \subseteq V$ with $|V'| < k$.

  \item A connected graph $G = (V,E,\delta)$ is called \emph{$k$-edge-connected}\index{graph!edge-connected@$k$-edge-connected}\index{edge-connected@$k$-edge-connected}, if $|E| > k$ and $G \setminus E'$ remains connected for all edge subsets $E' \subseteq E$ with $|E'| < k$.

  \item The greatest integer $k$ such that a given graph $G$ is $k$-vertex-connected is called the \emph<vertex-connectivity!graph>{vertex-connectivity} $\operatorname{vconn}(G)$ of $G$.
    \symbol{vconn(G)}{$\operatorname{vconn}(G)$}{vertex-connectivity of $G$}

  \item The greatest integer $k$ such that a given graph $G$ is $k$-edge-connected is called the \emph<edge-connectivity!graph>{edge-connectivity} $\operatorname{econn}(G)$ of $G$.
    \symbol{econn(G)}{$\operatorname{econn}(G)$}{edge-connectivity of $G$}

  \end{enumerate}
\end{definition}

The previous definition may seem confusing due to quantification over all sets with $|V'| < k$ or $|E'| < k$, however, when considering the contrapositive things are clear: a graph is not $k$-vertex/edge-connected if a vertex/edge set exists such that removal of the set disconnects the graph.
The following relationship between vertex- and edge-connectivity helps reduce the number of combinations:
\begin{theorem}[relation of vertex- and edge-connectivity \R{\cites[thm.\,5]{whitney1932congruent}{west2001introduction}}]\label{thm:vconn<=econn}
In every graph $G = (V,E,\delta)$ with $|V| \geq 2$ the relation $\operatorname{vconn}(G) \leq \operatorname{econn}(G)$ holds.
\end{theorem}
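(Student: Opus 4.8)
The plan is to prove the classical inequality $\operatorname{vconn}(G) \leq \operatorname{econn}(G)$ by showing that from a minimum edge cut one can always extract a vertex cut of size at most $\operatorname{econn}(G)$, which then forces $\operatorname{vconn}(G) \leq \operatorname{econn}(G)$. Let $k := \operatorname{econn}(G)$, so that $G$ has a bond (minimal edge cut) $F$ with $|F| = k$, and $G \setminus F$ splits into two nonempty vertex sides. First I would dispose of the trivial boundary cases: if $G$ is complete, then $\operatorname{vconn}(K_n) = n-1$, and since $|V| \geq 2$ one checks $\operatorname{econn}(K_n) = n-1$ as well, so the inequality holds with equality; similarly if $k \geq |V|-1$ the inequality is immediate because $\operatorname{vconn}(G) \leq |V|-1$ always. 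So I would assume $G$ is not complete and $k \leq |V|-2$.

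Next I would carry out the main construction. Take a minimum edge cut $F$ of size $k$ separating $V$ into $S \dotcup (V \setminus S)$, where $F = [S, V \setminus S]$ as in the definition of induced edge cut. The key idea is to build a vertex cut with at most $k$ vertices by picking, for each edge of $F$, one of its two endpoints. Pick a vertex $x \in S$ and a vertex $y \in V \setminus S$ that are nonadjacent (such a pair exists because $G$ is not complete, after possibly reselecting the side; here one uses that every edge of $F$ has one end in $S$ and one in $V \setminus S$). Then either the $k$ endpoints of $F$ lying in $S$, or the $k$ endpoints lying in $V \setminus S$, form a set of at most $k$ vertices whose deletion separates $x$ from $y$. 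This is where the argument needs care: if deleting the chosen endpoints does not disconnect $x$ from $y$, one invokes Menger-type reasoning, namely that the number of internally disjoint $x$–$y$ paths is bounded by the edge cut size $k$, so there is a vertex cut of size at most $k$ separating $x$ and $y$. Hence $\operatorname{vconn}(G) \leq k = \operatorname{econn}(G)$.

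The step I expect to be the main obstacle is producing a genuine \emph{vertex} cut of size at most $k$ from the edge cut, because naively collecting one endpoint per cut edge can yield fewer than $k$ vertices (when edges of $F$ share endpoints) but might also fail to disconnect the graph if $G$ retains connectivity through the undeleted side. The clean way to handle this is to argue via internally vertex-disjoint paths: by the edge version of Menger's theorem there are exactly $k$ edge-disjoint $x$–$y$ paths and no more, and then the vertex version of Menger's theorem gives a vertex separator of size equal to the maximum number of internally vertex-disjoint $x$–$y$ paths, which is at most the edge connectivity $k$. Since the paper has not developed Menger's theorem, I would either cite it as a standard result (consistent with the paper's citation of West's textbook in the theorem header) or give the direct endpoint-selection argument handling the degenerate subcases explicitly. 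I would choose the direct argument: among the at most $k$ endpoints of $F$ inside $S$, their deletion isolates the $S$-side from $y$ unless some vertex of $S$ is incident to no cut edge yet still reaches $V \setminus S$, which contradicts $F$ being the full edge cut; a short case analysis closes this, yielding the desired vertex cut and completing the proof.
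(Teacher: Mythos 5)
There is a genuine gap, and it sits exactly where you predicted the main obstacle would be. Your central claim --- that \emph{either} the $S$-side endpoints of $F$ \emph{or} the $(V \setminus S)$-side endpoints of $F$ form a vertex set whose deletion separates $x$ from $y$ --- is false. Deleting the $S$-side endpoints kills every cut edge, so it does disconnect the graph \emph{provided some vertex of $S$ survives}, i.e., provided some vertex of $S$ is incident to no cut edge. If every vertex of $S$ is an endpoint of a cut edge, you have deleted all of $S$ and the remainder $G[V \setminus S]$ may be perfectly connected. A concrete counterexample is the prism: two triangles joined by a perfect matching of $k=3$ edges. There every vertex on both sides is a cut-edge endpoint, so each of your two candidate sets is an entire triangle, and deleting either one leaves the other triangle connected --- neither is a vertex cut. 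Your closing ``direct argument'' has this failure condition inverted: you treat ``some vertex of $S$ incident to no cut edge'' as the bad case to be contradicted, when it is in fact precisely the good case in which the one-sided deletion works. (Your Menger fallback is mathematically sound --- internally disjoint $x$--$y$ paths between non-adjacent vertices are edge-disjoint, hence at most $k$ of them, hence a separator of size at most $k$ --- but once you invoke Menger the endpoint selection does no work at all, and the paper deliberately avoids that machinery.)

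The repair is the mixed selection used in the paper's proof (Whitney's original argument): do not take all endpoints from one side, but choose, \emph{for each cut edge individually}, an endpoint that is neither $\bar{x}$ nor $\bar{y}$. This is possible exactly because $\bar{x}$ and $\bar{y}$ are non-adjacent, so no cut edge has both of them as its ends. The resulting transversal $T$ has $|T| \leq k$, avoids $\bar{x}$ and $\bar{y}$, and meets every cut edge, so every $\bar{x}$--$\bar{y}$ path passes through $T$; hence $T$ is a vertex cut and $\operatorname{vconn}(G) \leq k$. (In the prism this picks endpoints from both triangles, which is why it succeeds where your one-sided choice cannot.) One further small repair: the existence of a non-adjacent pair $\bar{x} \in S$, $\bar{y} \in V \setminus S$ does not follow from ``$G$ is not complete,'' since the non-adjacent pair could lie entirely inside one side; it follows from your standing assumption $k \leq |V|-2$, because if every vertex of $S$ were adjacent to every vertex of $V \setminus S$ then $k \geq |S| \cdot |V \setminus S| \geq |V|-1$. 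The paper phrases this case split directly on the cut (complete adjacency across $[A, V\setminus A]$ versus not), which avoids the detour through completeness of $G$ altogether.
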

\begin{proof}\cite[thm.\,5]{whitney1932congruent}
  Consider a minimal edge cut $[A,V \setminus A]$ of $k := \operatorname{econn}(G)$ edges. If every vertex in $A$ is adjacent to every vertex in $V \setminus A$, then $k = |\, [A,V \setminus A] \,| \geq |A| \cdot |V \setminus A| \geq |V| > \operatorname{vconn}(G)$. Otherwise let $x_1,\ldots,x_k \in A$ and $y_1,\ldots,y_k \in V \setminus A$ be the ends of the $k$ edges in $[A,V \setminus A]$, and $\bar{x} \in A$ and $\bar{y} \in V \setminus A$ be two non-adjacent vertices. In every pair $(x_i,y_i)$ either $x_i \neq \bar{x}$ or $y_i \neq \bar{y}$. Collect in $T$ either $x_i \neq \bar{x}$ or $y_i \neq \bar{y}$ from every pair $(x_i,y_i)$, where one can freely choose if both are unequal. As $T$ contains an end from every edge in $[A,V \setminus A]$, removing $T$ from $G$ also removes all edges in the cut. Thus $V \setminus T$ is not connected with $|T| \leq k$, and therefore $k \geq |T| \geq \operatorname{vconn}(G)$.
\end{proof}

\begin{figure}
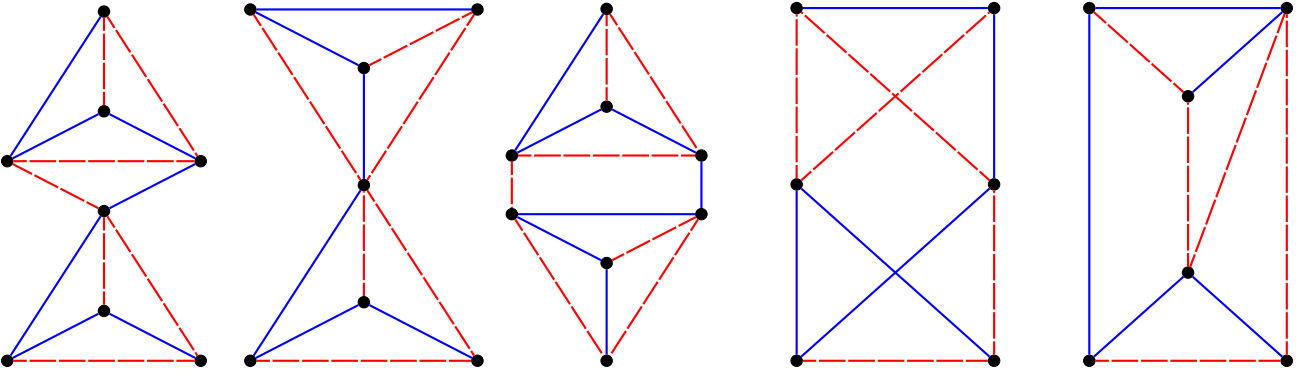
\centering
  \tikzset{every picture/.style={scale=1.5, yscale=0.9, graphfinal}}



  \caption{One example of a bispanning graph per connectivity class $(\operatorname{vconn}(G), \operatorname{econn}(G))$.}\label{fig:vconn-econn}
\end{figure}

With the last theorem, we can classify all bispanning graphs by connectivity.
\begin{theorem}[{\textls[-5]{vertex- and edge-connectivity of bispanning graphs}} \R{\cite[sect.~2.2]{baumgart2009ranking}}]\label{thm:bispanning-conn}
  If $G$ is a bispanning graph with $|V| \geq 2$, then
  $$(\operatorname{vconn}(G),\operatorname{econn}(G)) \in \{ (1,2), (1,3), (2,2), (2,3), (3,3) \} \,.$$
\end{theorem}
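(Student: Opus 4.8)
The plan is to pin both connectivity parameters between fixed lower and upper bounds and then simply enumerate the surviving pairs. Since a bispanning graph contains a spanning tree it is connected, and with $|V| \geq 2$ this already gives $\operatorname{vconn}(G) \geq 1$ and $\operatorname{econn}(G) \geq 1$. The two remaining ingredients are a sharper lower bound on the edge-connectivity and a common upper bound of $3$ for both parameters.

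First I would rule out bridges to obtain $\operatorname{econn}(G) \geq 2$. Write $E = S \dotcup T$ with $S$, $T$ disjoint spanning trees. Any edge $e$ lies in exactly one of them, say $e \in S$; then $T \subseteq E - e$ is still a spanning tree of $G$, so $G - e$ remains connected by theorem~\ref{thm:tree}. Hence no single edge is a bridge, $G$ is $2$-edge-connected, and $\operatorname{econn}(G) \geq 2$.

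Next comes the upper bound. By theorem~\ref{thm:bispanning deg23} the graph has a vertex $v$ with $\deg_G(v) \in \{2,3\}$. Since $|V| \geq 2$, the edges incident to $v$ form an edge cut that isolates $v$ and thus increases the number of components, giving an edge cut of size $\deg_G(v) \leq 3$; consequently $G$ is not $4$-edge-connected and $\operatorname{econn}(G) \leq 3$. Combining this with theorem~\ref{thm:vconn<=econn} yields $\operatorname{vconn}(G) \leq \operatorname{econn}(G) \leq 3$.

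Finally I would collect the inequalities $\operatorname{econn}(G) \in \{2,3\}$ and $1 \leq \operatorname{vconn}(G) \leq \operatorname{econn}(G)$. The case $\operatorname{econn}(G) = 2$ forces $\operatorname{vconn}(G) \in \{1,2\}$, and $\operatorname{econn}(G) = 3$ forces $\operatorname{vconn}(G) \in \{1,2,3\}$, which together are exactly the five listed pairs. There is no serious obstacle; the only points needing care are the boundary behaviour of the definitions on small graphs (checking, e.g., that $B_2$ realizes $(1,2)$ and that isolating a degree-$d$ vertex genuinely increases the component count when $|V| \geq 2$) and invoking the minimum-degree bound on edge-connectivity correctly from the stated definition. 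This argument establishes only containment in the five-element set; exhibiting a representative per class, as in figure~\ref{fig:vconn-econn}, confirms that all five cases actually occur.
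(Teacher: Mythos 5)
Your proposal is correct and follows essentially the same route as the paper: lower-bound $\operatorname{econn}(G) \geq 2$ from the two disjoint spanning trees, upper-bound both parameters by $3$ via the degree-$2$-or-$3$ vertex of theorem~\ref{thm:bispanning deg23}, apply theorem~\ref{thm:vconn<=econn} to get $\operatorname{vconn}(G) \leq \operatorname{econn}(G)$, and enumerate, with figure~\ref{fig:vconn-econn} certifying that all five pairs occur. Your explicit bridge argument (deleting $e \in S$ leaves the spanning tree $T$ intact) is a slightly cleaner justification of $2$-edge-connectedness than the paper's remark about two paths, but it is the same underlying idea.
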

\begin{proof}
  Every bispanning graph is by definition $1$-vertex-connected and $2$-edge-connected, since there are at least two paths between any pair of vertices in the disjoint spanning trees. Moreover, no bispanning graph can be $4$-vertex or $4$-edge-connected due to existence of a vertex of degree two or three. Moreover, due to theorem~\ref{thm:vconn<=econn}, $\operatorname{vconn}(G) \leq \operatorname{econn}(G)$, only the stated list of combinations remains. Figure~\ref{fig:vconn-econn} shows examples for all listed combinations.
\end{proof}


\subsection{Atomic and Composite Bispanning Graphs}\label{sec:decomposition composite}

In the following, we will focus on subgraphs of bispanning graphs that are themselves bispanning. In this context, a ``bispanning subgraph'' is not an ``twofold'' modification of a spanning subgraph as defined in \ref{def:spanning tree}, but merely a shortening of ``subgraph, which itself is bispanning''.

\begin{definition}[composite and atomic bispanning graph \R{\cite{baumgart2009ranking}}]\label{def:atomic bispanning}
  A bispanning graph $G$ is called \emph<composite!bispanning graph>{composite} if it contains a bispanning subgraph other than itself and $K_1$. Otherwise $G$ is called \emph<atomic!bispanning graph>{atomic}.
  The two bispanning subgraphs $G$ and $K_1$ of any bispanning graph $G$ are called \emph<trivial!bispanning graph>{trivial}.
\end{definition}

The remarkable property of bispanning subgraphs is that if they are contracted, then the resulting graph remains bispanning. Obviously, the number of edges connected to the rest of the graph has the ``right'' balance to those inside the bispanning subgraph.

\begin{lemma}[edge balance during contraction of bispanning subgraphs]\label{lem:edge balance contraction}
  If $G$ is a bispanning graph, $G' = (V',E',\delta') \subseteq G$ a bispanning subgraph of $G$, and $\overline{G} = G \contr G' =: (\overline{V},\overline{E},\overline{\delta})$, then
  $|\overline{E}| = 2 (|V| - |V'|) = 2 |\overline{V}| - 2$\,.
\end{lemma}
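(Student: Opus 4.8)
The plan is to count edges directly using the fundamental edge-count identity for bispanning graphs (theorem~\ref{thm:bispanning edge count}), which states that any bispanning graph on $k$ vertices has exactly $2k-2$ edges. The contraction $\overline{G} = G \contr G'$ collapses the vertex set $V'$ of the bispanning subgraph into a single vertex, so $|\overline{V}| = |V| - |V'| + 1$. This immediately gives the second equality I need, since $2|\overline{V}| - 2 = 2(|V| - |V'| + 1) - 2 = 2(|V| - |V'|)$.

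For the first equality, $|\overline{E}| = 2(|V| - |V'|)$, I would use the note recorded just after definition~\ref{def:contraction}: contracting a vertex set $X$ removes from $G$ exactly the edges of the induced subgraph $G[X]$, keeping all others. Here $X = V'$, and since $G'$ is itself a bispanning graph on vertex set $V'$, every edge of $G$ with both ends in $V'$ belongs to $G'$; that is, $G[V'] = G'$ as edge sets (the subgraph induced by $V'$ contains all edges with both ends in $V'$, which are precisely the edges of the bispanning subgraph $G'$). Hence $|\overline{E}| = |E| - |E'|$. Applying theorem~\ref{thm:bispanning edge count} to both $G$ and $G'$ gives $|E| = 2|V| - 2$ and $|E'| = 2|V'| - 2$, so
\[
|\overline{E}| = (2|V| - 2) - (2|V'| - 2) = 2(|V| - |V'|) \,.
\]

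The one point requiring a small argument is the claim $G[V'] = G'$, i.e.\ that the bispanning subgraph $G'$ is in fact the full vertex-induced subgraph on $V'$, with no ``extra'' edges of $G$ having both ends in $V'$ but lying outside $E'$. This follows from the edge-count identity itself: if $G'$ is bispanning on $V'$ it has $2|V'| - 2$ edges, which by theorem~\ref{thm:tutte bispanning} (or theorem~\ref{thm:nash-williams}) is the maximum possible for a bispanning graph on $V'$; any additional edge between vertices of $V'$ would give $G[V']$ more than $2|V'| - 2$ edges while spanning only $|V'|$ vertices, forcing a cycle within one of the two disjoint spanning trees of $G$ and contradicting that those are trees. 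Thus no such extra edge exists, $G[V'] = G'$, and the counting goes through.

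I expect the main (and only) obstacle to be exactly this verification that no edge of $G$ outside $E'$ has both ends in $V'$; the rest is a routine substitution of the two edge-count identities. Once that observation is in place, the lemma reduces to the single line of arithmetic above, so I would keep the proof short and lead with the counting, inserting the one-sentence justification for $G[V'] = G'$ where it is used.
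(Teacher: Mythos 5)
Your proof is correct and takes essentially the same route as the paper: $|\overline{V}| = |V| - |V'| + 1$ and $|\overline{E}| = |E| - |E'|$ from the contraction definition, followed by the substitution of theorem~\ref{thm:bispanning edge count} for both $G$ and $G'$. You are in fact more careful than the paper on one point: since $G \contr G'$ removes exactly the edges of the vertex-induced subgraph $G[V']$, the identity $|\overline{E}| = |E| - |E'|$ needs the observation that $G[V']$ has no edges beyond $E'$, which the paper tacitly assumes; your pigeonhole argument (an extra edge inside $V'$ would force more than $|V'|-1$ edges of one of the two disjoint spanning trees of $G$ into $V'$, hence a cycle) is exactly the right justification, although the appeal to theorems~\ref{thm:nash-williams} and \ref{thm:tutte bispanning} is superfluous there --- those characterize bispanning-ness and are not needed for the maximality claim.
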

\begin{proof}
  The definition of contraction~\ref{def:contraction} implies $|\overline{V}| = |V| - |V'| + 1$, and $|\overline{E}| = |E| - |E'|$, thus we have $|\overline{E}| = (2 |V| - 2) - (2 |V'| - 2) = 2 (|V| - |V'|) = 2 |\overline{V}| - 2$.
\end{proof}

But the previous lemma is only necessary for $\overline{G}$ to be bispanning, we have to use Nash-Williams' theorem~\ref{thm:nash-williams} to show that the inner structure of $G \contr G'$ remains bispanning. While the trivial bispanning subgraphs $K_1$ and $G$ are excluded from the definition of atomic above, for many of the following theorems they can be included as pathological cases. When this is possible, we simply omit the requirement of $G$ being composite, such as in the following:

\begin{theorem}[contracting bispanning subgraphs]\label{thm:contract bispanning subgraph}
  If $G$ is a bispanning graph and $G' \subseteq G$ is a bispanning subgraph of $G$, then $G \contr G'$ is bispanning.
\end{theorem}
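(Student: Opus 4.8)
The plan is to verify the two conditions of Nash-Williams' theorem~\ref{thm:nash-williams} for $\overline{G} := G \contr G'$. The edge-count condition $|\overline{E}| = 2|\overline{V}| - 2$ is already supplied by lemma~\ref{lem:edge balance contraction}, so the whole task reduces to establishing the partition inequality $|E_P| \geq 2(|P| - 1)$ for every partition $P$ of $\overline{V}$. The strategy is to lift each such partition back to $G$, where the inequality is guaranteed because $G$ itself is bispanning.

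First I would define the lift. Write $x$ for the vertex into which $G'$ is contracted, so that $\overline{V} = (V \setminus V') \cup \{x\}$. Given a partition $P$ of $\overline{V}$, exactly one member has the form $A \cup \{x\}$ with $A \subseteq V \setminus V'$. I replace this member by $A \cup V'$ and leave all other members unchanged, obtaining a family $\tilde{P}$. Since the remaining members are subsets of $V \setminus V'$ and $V'$ is disjoint from them, $\tilde{P}$ is a genuine partition of $V$ with $|\tilde{P}| = |P|$.

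Next I would show that the crossing edges match up exactly. Let $F$ denote the edges of $G$ with both ends in $V'$; by definition of contraction $\overline{E} = E \setminus F$, and the ends of an edge of $\overline{G}$ arise from its ends in $G$ by replacing any vertex of $V'$ with $x$. Every edge in $F$ lies inside the single block $A \cup V'$ of $\tilde{P}$ and therefore never crosses $\tilde{P}$. For an edge $e \in \overline{E}$, because $\tilde{P}$ sends the whole set $V'$ into precisely the block playing the role of $x$ in $P$, the two ends of $e$ lie in different members of $P$ if and only if they lie in different members of $\tilde{P}$. Consequently $E_{\tilde{P}}(G) = E_P(\overline{G})$ under the natural identification of edges, and in particular $|E_P(\overline{G})| = |E_{\tilde{P}}(G)|$.

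Finally, since $G$ is bispanning, Nash-Williams' theorem applied to $G$ gives $|E_{\tilde{P}}(G)| \geq 2(|\tilde{P}| - 1) = 2(|P| - 1)$, which is exactly the inequality required for $\overline{G}$. Combined with the edge count from lemma~\ref{lem:edge balance contraction}, Nash-Williams' theorem then certifies that $\overline{G} = G \contr G'$ is bispanning. I expect no genuine obstacle here; the only point demanding care is the bookkeeping of the lift $P \mapsto \tilde{P}$ and the claim that contraction neither creates nor destroys a crossing edge, which the edge-balance lemma already foreshadows. (One could equally run the argument through Tutte's edge-cut version~\ref{thm:tutte bispanning}, but the partition lift is the more transparent route.)
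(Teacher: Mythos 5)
Your proposal is correct and follows essentially the same route as the paper's own proof: lifting a partition of $\overline{V}$ to a partition of $V$ by expanding the block containing the contracted vertex $x$ into $V'$, observing that crossing edges are preserved, and then applying Nash-Williams' theorem to $G$ together with the edge-balance lemma. The only cosmetic difference is that you spell out the edge-matching argument in slightly more detail than the paper does.
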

\begin{proof}
  Let $G = (V,E,\delta)$ be a bispanning graph, $G' = (V',E',\delta') \subseteq G$ a bispanning subgraph, and $\overline{G} = (\overline{V}, \overline{E}, \overline{\delta}) := G \contr G'$ the graph within which $G'$ was contracted into $x \in \overline{V}$. We will apply theorem~\ref{thm:nash-williams} to $\overline{G}$ and consider a partition $\overline{P} = \{ \overline{V}_1, \ldots, \overline{V}_k \}$ of $\overline{V}$. The vertex $x$ is in exactly one member of the partition, say $\overline{V}_i$. Since all other vertices of $\overline{G}$ are also vertices of $G$, replacing only the member $\overline{V}_i$ with $V_i := (\overline{V}_i - x) \cup V'$ yields a partition $P$ of $V$ with the same number of members. Furthermore, $\overline{E}_{\overline{P}} = E_P$, as contraction (definition~\ref{def:contraction}) changes all edge ends in $V_i$ to $x$ and $E_P \cap E' = \emptyset$. As $G$ is bispanning, theorem~\ref{thm:nash-williams} implies $|E_P| \geq 2 (|P| - 1)$, and since $|\overline{P}| = |P|$, $|\overline{E}_{\overline{P}}| = |E_P|$, and lemma~\ref{lem:edge balance contraction}, the same theorem guarantees that $\overline{G}$ is bispanning.
\end{proof}

To show that a bispanning graph is atomic, the following modification of Nash-Williams' or Tutte's theorems are most useful:

\begin{theorem}[atomic bispanning graphs \R{\cite[thm.\,2.5]{baumgart2009ranking}}]\label{thm:atomic nash-williams}
  A bispanning graph $G=(V,E,\delta)$ is atomic, if and only if
  $$|E_P| > 2 (|P| - 1) \quad\text{for every non-trivial partition $P$ of $V$,}$$
  where $E_P$ is the set of edges with ends in different members of the partition $P$ and $|P|$ is the number of members. Any partition $P$ of $V$ with $|P| \neq 1$ and $|P| \neq |V|$ is called non-trivial.
\end{theorem}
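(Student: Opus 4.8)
The plan is to prove both implications by contraposition, using Nash-Williams' characterization (theorem~\ref{thm:nash-williams}) as the workhorse together with the edge count $|E| = 2|V| - 2$.

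For the direction that the strict inequality implies atomicity, I would assume the contrapositive: $G$ is composite, so by definition~\ref{def:atomic bispanning} it has a non-trivial bispanning subgraph $G' = (V',E')$ with $2 \le |V'| \le |V|-1$ and $|E'| = 2|V'|-2$. I would form the partition $P$ whose members are the single block $V'$ together with a singleton $\{v\}$ for each $v \in V \setminus V'$; then $|P| = |V| - |V'| + 1$, which is non-trivial. Since every edge of $E'$ lies inside the block $V'$, the number of edges internal to the blocks is at least $|E'| = 2|V'|-2$, so $|E_P| \le (2|V|-2) - (2|V'|-2) = 2(|V|-|V'|) = 2(|P|-1)$. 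This contradicts the assumed strict inequality, establishing this direction without even needing Nash-Williams.

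For the converse, I would again take the contrapositive: suppose some non-trivial partition $P = \{V_1,\dots,V_k\}$ violates strictness, i.e. $|E_P| \le 2(k-1)$. Nash-Williams gives $|E_P| \ge 2(k-1)$, forcing equality $|E_P| = 2(k-1)$. The heart of the argument is to show each block induces a bispanning subgraph. First I would bound the internal edge counts: refining $P$ by exploding a single block $V_i$ into singletons yields a partition $P'$ with $|E_{P'}| = |E_P| + |E(G[V_i])|$ and $|P'| = |V_i| + k - 1$; applying Nash-Williams to $P'$ and substituting $|E_P| = 2(k-1)$ gives $|E(G[V_i])| \ge 2|V_i|-2$. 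Summing over all blocks and comparing with the exact total of internal edges $|E| - |E_P| = 2(|V|-k) = \sum_i (2|V_i|-2)$ forces every inequality to be an equality, so $|E(G[V_i])| = 2|V_i|-2$ for each $i$.

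Next, to verify that each $G[V_i]$ is genuinely bispanning (not merely of the right edge count), I would run the same refinement with an arbitrary sub-partition $Q$ of $V_i$ in place of the singleton explosion: the resulting partition $P''$ of $V$ satisfies $|E_{P''}| = |E_P| + |E(G[V_i])_Q|$, and Nash-Williams on $P''$ delivers $|E(G[V_i])_Q| \ge 2(|Q|-1)$. Thus $G[V_i]$ meets both conditions of theorem~\ref{thm:nash-williams} and is bispanning. Finally, since $P$ is non-trivial we have $2 \le k \le |V|-1$, so some block has at least two vertices while no block equals all of $V$; the induced subgraph on such a block is a non-trivial bispanning subgraph, making $G$ composite. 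I expect the main obstacle to be this last refinement step: keeping careful track of which edges newly cross the partition, and confirming the additive identity $|E_{P''}| = |E_P| + |E(G[V_i])_Q|$, which holds precisely because edges internal to the other blocks and edges already crossing $P$ are unaffected by subdividing only $V_i$.
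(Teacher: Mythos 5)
Your proposal is correct and follows essentially the same route as the paper's own proof: both directions by contraposition, Nash-Williams (theorem~\ref{thm:nash-williams}) plus edge counting, the block-plus-singletons partition in the composite direction, and the singleton-explosion refinement in the converse. The one place you go beyond the printed proof is your final refinement step with an arbitrary sub-partition $Q$ of a block $V_i$: the paper concludes that $G[V_i]$ is bispanning directly from the exact edge count $|E(G[V_i])| = 2|V_i| - 2$, which by itself is not sufficient (the paper itself observes, in the block-matroid section, that the analogous counting condition does not characterize block matroids), so your extra check that $|E(G[V_i])_Q| \ge 2(|Q|-1)$ for every partition $Q$ of $V_i$ is exactly what is needed to legitimately invoke theorem~\ref{thm:nash-williams} inside the block, making your argument tighter than the paper's. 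Similarly, in the composite direction your inequality $|E_P| \le 2(|P|-1)$, rather than the paper's asserted equality, quietly covers the possibility that the bispanning subgraph is not induced, since a violation of strictness is all that is required.
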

\begin{proof}
  To show that the condition is sufficient, suppose there is a non-trivial partition $P = \{ V_1,\ldots,V_k \}$ of $V$ with $|E_P| = 2 (k - 1)$. Let $E_i$ be the edge set of $G[V_i]$ for $i = 1,\ldots,k$. Applying theorem~\ref{thm:nash-williams} to each $G[V_i]$ with the finer partition $P_i := (P - V_i) \cup \{ \{v\} \mid v \in V_i \}$ (isolating each vertex of $V_i$), we have $|E_{P_i}| = |E_P| + |E_i| = 2(k-1) + |E_i| \geq 2 (k - 1 + |V_i| - 1)$, from which follows $|E_i| \geq 2 (|V_i| - 1)$.
  So, in total, there are $\sum_{i=1}^k |E_i| \geq 2 (|V| - k)$ edges inside the induced subgraphs $G[V_i]$. On the other hand, since $|E_P| = 2 (k - 1)$ edges are outside the induced subgraphs, only exactly $2 (|V| - 1) - 2 (k - 1) = 2 (|V| - k)$ remain to be inside. Thus each subgraph $G[V_i]$ has exactly $2 (|V_i| - 1)$ edges. Since $P$ is non-trivial, there is a $V_i$ with $1 < |V_i| < |V|$ and hence $G[V_i]$ is a non-trivial bispanning subgraph. Thus $G$ is composite if such a partition $P$ exists.

  To show that the condition is necessary, assume $G$ is composite and $G[V']$ is a non-trivial bispanning subgraph with $V' \subseteq V$. As $G[V']$ is bispanning, the edge set $E'$ of $G[V']$ contains $2 |V'| - 2$ edges (theorem~\ref{thm:bispanning edge count}). Consider the partition $P = \{ V' \} \cup \{ \{v\} \mid v \in V \setminus V' \}$, which has $1 + |V| - |V'|$ members. $E_P$ are all edges other than $E'$, thus $|E_P| = |E| - |E'| = (2 |V| - 2) - (2 |V'| - 2) = 2 (|V| - |V'| + 1 - 1) = 2 (|P| - 1)$. Thus for every composite bispanning graph a partition with equality exists, matching the vertex set of the non-trivial bispanning subgraph.
\end{proof}

\begin{corollary}[atomic bispanning graphs]\label{thm:atomic tutte}
  A bispanning graph $G=(V,E,\delta)$ is atomic, if and only if
  $$|X| > 2 ( \operatorname{comp}(G \setminus X) - 1)$$
  for all edge subsets $X \subseteq E$ except $X = \emptyset$ and $X = E$.
\end{corollary}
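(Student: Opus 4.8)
The plan is to derive this edge-cut characterization from the already-established vertex-partition characterization in Theorem~\ref{thm:atomic nash-williams}, exactly paralleling the way Tutte's bispanning criterion (Theorem~\ref{thm:tutte bispanning}) mirrors the Nash-Williams criterion (Theorem~\ref{thm:nash-williams}). Since Theorem~\ref{thm:atomic nash-williams} already states that $G$ is atomic if and only if $|E_P| > 2(|P|-1)$ for every non-trivial partition $P$, it suffices to prove that this partition condition is equivalent to the stated edge-cut condition. I would set up the translation between an edge subset $X$ and the partition $P_X$ of $V$ into the connected components of $G\setminus X$, recording the two basic facts $|P_X| = \operatorname{comp}(G\setminus X)$ and $E_{P_X} \subseteq X$ (hence $|E_{P_X}| \le |X|$), together with the reverse estimate $\operatorname{comp}(G\setminus E_P) \ge |P|$ for any partition $P$, since deleting the crossing edges separates the members but may split them further.

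For the direction ``partition condition $\Rightarrow$ edge condition'', I would take any $X$ with $\emptyset \ne X \ne E$ and distinguish two cases. If $\operatorname{comp}(G\setminus X) = 1$ the inequality reads $|X| > 0$, which holds since $X \ne \emptyset$. Otherwise $P_X$ has at least two members, and it has fewer than $|V|$ members because $X \ne E$ (total disconnection would force $X \supseteq E$); thus $P_X$ is non-trivial, and the partition condition applied to $P_X$ yields $|X| \ge |E_{P_X}| > 2(|P_X|-1) = 2(\operatorname{comp}(G\setminus X)-1)$.

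For the converse I would argue by contraposition: if the partition condition fails there is a non-trivial $P$ with $|E_P| \le 2(|P|-1)$, and since $G$ is bispanning, Nash-Williams (Theorem~\ref{thm:nash-williams}) forces equality $|E_P| = 2(|P|-1)$. Setting $X := E_P$, the reverse estimate $\operatorname{comp}(G\setminus X) \ge |P|$ gives $2(\operatorname{comp}(G\setminus X)-1) \ge 2(|P|-1) = |X|$, so the strict edge inequality is violated at $X$ and the edge condition fails too.

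The one point that needs care, and the main obstacle, is verifying that the witness $X = E_P$ really lies in the admissible range $\emptyset \ne X \ne E$, since these two extreme edge sets are deliberately excluded from the statement (at $X = E$ the inequality $2|V|-2 > 2(|V|-1)$ is false, so they must be excluded). Connectivity of $G$ handles $X \ne \emptyset$, because $|P| \ge 2$ guarantees at least one crossing edge. For $X \ne E$ I would exploit the equality $|E_P| = 2(|P|-1)$ coming from Nash-Williams: were $E_P = E$, then $2|V|-2 = |E| = |E_P| = 2(|P|-1)$ would force $|P| = |V|$, contradicting the non-triviality of $P$. Thus a tight Nash-Williams bound for a non-trivial partition automatically keeps $E_P$ strictly inside $E$, which is the subtle step that makes the asymmetric exclusion of $\emptyset$ and $E$ fall out cleanly and closes the argument.
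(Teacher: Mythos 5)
Your proposal is correct and follows essentially the same route as the paper: the paper's proof is a one-line appeal to Theorem~\ref{thm:atomic nash-williams} together with the Nash-Williams/Tutte equivalence, and your argument is precisely that equivalence spelled out, translating between edge subsets $X$ and the component partition $P_X$ (and, conversely, between a tight non-trivial partition $P$ and the witness $X = E_P$). Your verification that $E_P \neq E$ via the tightness $|E_P| = 2(|P|-1)$ is exactly the detail the paper leaves implicit, so your write-up is a faithful, more self-contained version of the intended proof.
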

\begin{proof}
  The corollary follows from theorem~\ref{thm:atomic nash-williams} and the equivalence of theorems~\ref{thm:nash-williams} and \ref{thm:tutte bispanning}.
\end{proof}

\begin{figure}
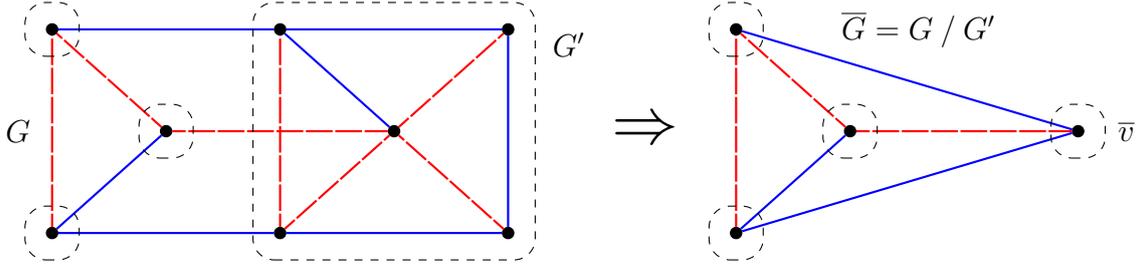
\centering
  \tikzset{every picture/.style={scale=3, graphfinal}}

  \caption{A composite bispanning graph $G$ with subgraph $G' \cong W_5$, contraction $\overline{G} = G \contr G' \cong K_4$, and a partition of $V$ with $|E_P| = 2(|P|-1)$.}\label{fig:composite-decompose}
\end{figure}

Consider the example bispanning graph $G$ in figure~\ref{fig:composite-decompose}. It contains $W_5$ as a bispanning subgraph and if we regard the partition $P$ containing the $W_5$ subgraph alongside all other vertices as singletons (see circled vertices), then there are exactly six edges in $E_P$: those outside of the $W_5$. These six edges are necessarily balanced among the disjoint trees $S$ and $T$ (see lemma~\ref{lem:bispanning subtrees}). Hence when contracting the $W_5$ subgraph into a single vertex $\overline{v}$, the resulting graph is bispanning.

In the following theorem we summarize many simple properties, which immediately make a bispanning graph composite. These properties are so common, that about 97\% of all simple bispanning graphs with $|V| \leq 12$, about 95\% with $|V| \leq 10$, and 88\% with $|V| \leq 8$ are composite (see table~\ref{tab:num bispanning}, page~\pageref{tab:num bispanning}).

\begin{theorem}[sufficient conditions for composite bispanning graphs]
  Let $G = (V,E,\delta)$ be a bispanning graph with $|V| \geq 3$.
  \begin{enumerate}
  \item If $G$ contains a pair of parallel edges, then $G$ is composite.\label{thm:parallel edge composite}

  \item All atomic bispanning graphs are simple.\label{thm:atomics are simple}

  \item If $G$ contains a cut-vertex ($\operatorname{vconn}(G) = 1$), then $G$ is composite.\label{thm:bispanning composite cut-vertex}

  \item If $\operatorname{econn}(G) = 2$, then $G$ is composite.\label{thm:bispanning composite econn2}

  \item If $G$ contains a vertex of degree $2$, then $G$ is composite.\label{thm:bispanning composite degree2}

  \end{enumerate}
\end{theorem}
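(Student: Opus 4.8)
The plan is to prove each of the five items by exhibiting a non-trivial bispanning subgraph, appealing to Theorem~\ref{thm:atomic nash-williams} or its corollary where a direct subgraph is awkward to name. Throughout I would fix disjoint spanning trees $S \dotcup T = E$ of $G$ and use the edge count $|E| = 2|V| - 2$ (Theorem~\ref{thm:bispanning edge count}) as the arithmetic backbone.

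For item~\ref{thm:parallel edge composite}, if $e_1, e_2$ are parallel with $\delta(e_1) = \delta(e_2) = \{u,w\}$, then the graph on vertices $\{u,w\}$ with edges $\{e_1,e_2\}$ is isomorphic to $B_2$, the two-vertex bispanning graph; since $|V| \geq 3$ this is a non-trivial bispanning subgraph, so $G$ is composite. Item~\ref{thm:atomics are simple} is then immediate as the contrapositive: an atomic graph has no parallel edges, hence is simple by Definition~\ref{def:simple graph}. Item~\ref{thm:bispanning composite degree2} follows the same template: a degree-$2$ vertex $v$ has one incident edge in $S$ and one in $T$ (by Theorem~\ref{thm:bispanning deg 2} its two edges cannot both lie in one tree, else that tree contains a length-two cycle), so these two edges are parallel after noting they share both ends only in the degenerate case — more robustly, I would reduce to item~\ref{thm:parallel edge composite} or invoke Theorem~\ref{thm:atomic nash-williams} with the bipartition separating $v$ from the rest, where $|E_P| = 2 = 2(|P|-1)$ witnesses a non-strict inequality.

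The connectivity items~\ref{thm:bispanning composite cut-vertex} and~\ref{thm:bispanning composite econn2} I would handle via the cut characterization. If $\operatorname{econn}(G) = 2$, pick a minimal edge cut $X = \{e_1, e_2\}$; then $G \setminus X$ has at least two components, so $\operatorname{comp}(G \setminus X) \geq 2$ and $|X| = 2 = 2(\operatorname{comp}(G \setminus X) - 1)$, violating the strict inequality of Corollary~\ref{thm:atomic tutte} (the cut is neither $\emptyset$ nor all of $E$ since $|E| = 2|V|-2 \geq 4$). Hence $G$ is composite. For the cut-vertex case, $\operatorname{vconn}(G) = 1$ means some $v$ disconnects $G$; the edges incident to $v$ that lie between the two sides form a small edge cut, and because each side must internally support two spanning forests one can show the induced subgraph on one side together with $v$ is itself bispanning — alternatively I would produce the violating partition $P = \{V_1 \cup \{v\}, \text{singletons of } V_2\}$ and count, again landing on equality in Theorem~\ref{thm:atomic nash-williams}.

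The main obstacle I anticipate is the cut-vertex item~\ref{thm:bispanning composite cut-vertex}: unlike the edge-cut case, a cut-vertex $v$ is shared by both sides, so I must argue carefully that the two ``halves'' $G[V_1 + v]$ and $G[V_2 + v]$ each inherit a valid pair of spanning trees, i.e.\ that the balance of $S$- and $T$-edges around $v$ splits correctly between the two sides. The cleanest route is probably to avoid naming the subgraph explicitly and instead verify the equality $|E_P| = 2(|P|-1)$ for the partition isolating one component from $v$ and the other vertices, which sidesteps the need to exhibit two disjoint spanning trees on each half and reduces everything uniformly to Theorem~\ref{thm:atomic nash-williams}.
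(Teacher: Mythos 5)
Your items (i), (ii), (iv) and (v) are correct and essentially coincide with the paper's own proof: the parallel pair spans a non-trivial bispanning subgraph isomorphic to $B_2$; (ii) is the contrapositive; for (iv) you use the edge-cut form (corollary~\ref{thm:atomic tutte}) where the paper uses the partition form with $P = \{V_1,V_2\}$; and for (v) you use the partition $\{\{v\}, V \setminus \{v\}\}$ directly where the paper reduces to (iv) -- these variants are interchangeable. The genuine problem is item (iii), which you rightly single out as the hard case but then leave unfinished on both of the routes you sketch. In your preferred route, the claim that the partition $P = \{V_1 \cup \{v\}\} \cup \{\{w\} \mid w \in V_2\}$ ``lands on equality'' cannot be verified by a direct count: $|E_P| = |E| - |E(G[V_1 \cup \{v\}])|$, and you have no a priori handle on the number of edges inside $G[V_1 \cup \{v\}]$. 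Theorem~\ref{thm:nash-williams} applied to this one partition gives only $|E_P| \geq 2(|P|-1)$, i.e.\ $|E(G[V_1 \cup \{v\}])| \leq 2|V_1|$, which by itself proves nothing about compositeness. To close the argument you must also apply the same inequality to the complementary partition ($V_2 \cup \{v\}$ as one block, singletons of $V_1$), note that no edge joins $V_1$ to $V_2$ -- so every edge lies inside exactly one of the two blocks and $|E(G[V_1 \cup \{v\}])| + |E(G[V_2 \cup \{v\}])| = |E| = 2|V_1| + 2|V_2|$ -- and conclude that both inequalities are forced to be tight. Only then does theorem~\ref{thm:atomic nash-williams} yield compositeness. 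This two-sided counting step is the missing idea; without it the asserted equality is unproven.

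Your stated obstacle for the other route -- that the ``balance of $S$- and $T$-edges around $v$'' might not split correctly between the two sides -- is a red herring, and that route (the paper's) is in fact the shortest. If $S \dotcup T = E$ are disjoint spanning trees of $G$ and $V' = V_i \cup \{v\}$ for a component $V_i$ of $G - v$, then the restriction of $S$ to $G[V']$ is \emph{automatically} a spanning tree of $G[V']$: the unique $S$-path between any two vertices of $V'$ cannot leave $V'$, since leaving and re-entering would force the path to pass through the cut-vertex $v$ twice, which a path cannot do. The same holds for $T$; the two restrictions are disjoint and together exhaust the edges of $G[V']$, so $G[V']$ is a non-trivial bispanning subgraph by definition~\ref{def:bispanning}, with no edge-count or balance condition to check at all.
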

\begin{proof}
  \begin{enumerate}
  \item Let $e_1,e_2 \in E$ be two parallel edges with $\delta(e_1) = \delta(e_2) = \{ v_1,v_2 \}$, then $G[\{v_1,v_2\}]$ is a non-trivial bispanning subgraph with two vertices and two edges. Thus $G$ is composite.

  \item This follows immediately from theorem~\ref{thm:parallel edge composite} and definition~\ref{def:simple graph}.

  \item Let $\tilde{v}$ be a cut-vertex and $V_1 \dotcup \cdots \dotcup V_k = V \setminus \{\tilde{v}\}$ the vertex sets of the $k > 1$ components of $G - v$. If we take $V' := V_i \cup \{\tilde{v}\}$ for an arbitrary $V_i$, then the subgraph $G[V']$ is by definition~\ref{def:bispanning} bispanning since any pair of disjoint trees of $G$ can be restricted to $G[V']$, yielding two disjoint trees.

  \item Let $\{ e_1, e_2 \}$ be an edge cut of $G$ and $V_1 \dotcup V_2 = V$ the vertex sets of the two components of $G \setminus \{ e_1, e_2 \}$. There cannot be more components, since $G - e_1$ and $G - e_2$ are connected. Consider the partition $P = \{ V_1, V_2 \}$. Since $|E_P| = 2 = 2 (|P| - 1)$, theorem~\ref{thm:atomic nash-williams} implies that $G$ is composite.

  \item Existence of a vertex of degree 2 implies $\operatorname{econn}(G) = 2$, so theorem~\ref{thm:bispanning composite econn2} applies.

  \end{enumerate}
\end{proof}

Due to the properties listed in the previous theorem, only a few connectivity classes remain for atomic bispanning graphs. In the next subsections, we will investigate them individually in detail.
\begin{corollary}[vertex- and edge-connectivity of atomic bispanning graphs]\label{cor:atomic-vconn-econn}
  Every atomic bispanning graph has $(\operatorname{vconn}(G),\operatorname{econn}(G)) \in \{ (2,3), (3,3) \}$.
\end{corollary}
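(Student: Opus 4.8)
The plan is to derive the corollary directly from Theorem~\ref{thm:bispanning-conn} together with parts~\ref{thm:bispanning composite cut-vertex} and \ref{thm:bispanning composite econn2} of the preceding theorem on sufficient conditions for compositeness. The idea is that Theorem~\ref{thm:bispanning-conn} already narrows the connectivity pair of any bispanning graph to five possibilities, and the two compositeness criteria let us eliminate exactly those three possibilities in which $G$ would necessarily be composite; the two survivors are the claimed pairs.

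Concretely, I would first recall that for a bispanning graph $G$ with $|V| \geq 2$, Theorem~\ref{thm:bispanning-conn} gives $(\operatorname{vconn}(G),\operatorname{econn}(G)) \in \{(1,2),(1,3),(2,2),(2,3),(3,3)\}$, so only these five cases require inspection. Then I would apply the contrapositives of the two relevant criteria to an atomic $G$. By part~\ref{thm:bispanning composite cut-vertex}, a cut-vertex (equivalently $\operatorname{vconn}(G)=1$) forces $G$ to be composite, so an atomic $G$ must have $\operatorname{vconn}(G)\geq 2$, which discards $(1,2)$ and $(1,3)$. By part~\ref{thm:bispanning composite econn2}, $\operatorname{econn}(G)=2$ forces $G$ to be composite, so an atomic $G$ must have $\operatorname{econn}(G)\geq 3$, which discards $(1,2)$ and $(2,2)$. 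The only pairs of the five-element list meeting both requirements are $(2,3)$ and $(3,3)$, which is the assertion. Note that no separate bound $\operatorname{econn}(G)\leq 3$ is needed, since it is already built into the list from Theorem~\ref{thm:bispanning-conn}.

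There is no genuinely hard step here; the corollary is purely a matter of intersecting the constraint from Theorem~\ref{thm:bispanning-conn} with the two compositeness criteria. The single point demanding care is the hypothesis $|V|\geq 3$ under which parts~\ref{thm:bispanning composite cut-vertex} and \ref{thm:bispanning composite econn2} are stated. Because Table~\ref{tab:num bispanning} records no atomic bispanning graph on exactly three vertices, every atomic bispanning graph with $|V|\geq 3$ in fact has $|V|\geq 4$, so both criteria apply and the elimination argument covers all of them. The sole graph outside this range is the degenerate two-vertex graph $B_2$, which must be read as the trivial base case (or excluded from the statement), since its connectivity is degenerate and does not fall under the $|V|\geq 3$ criteria used above.
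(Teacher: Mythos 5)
Your proposal is correct and takes essentially the same route as the paper: it intersects the five-pair list of Theorem~\ref{thm:bispanning-conn} with the contrapositives of the compositeness criteria~\ref{thm:bispanning composite cut-vertex} and \ref{thm:bispanning composite econn2}, eliminating $(1,2)$, $(1,3)$, and $(2,2)$. Your additional remarks on the $|V|\geq 3$ hypothesis and the degenerate graph $B_2$ are more explicit than the paper, which passes over these edge cases silently, but they do not change the argument.
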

\begin{proof}
  Of the possible combinations listed in theorem~\ref{thm:bispanning-conn}, $(1,2)$ and $(1,3)$ are ruled out by theorem~\ref{thm:bispanning composite cut-vertex}, and $(2,2)$ is ruled out by \ref{thm:bispanning composite econn2}.
\end{proof}

Intuitively, atomic bispanning graphs have a higher ``sparse internal connectivity'' than composite graphs, though of course they have the same vertex/edge ratio. This connectivity is strong enough that one can select an arbitrary pair of edges in an atomic bispanning graph, and simply contract one and delete the other. The resulting graph remains bispanning, though often it is no longer atomic. This arbitrary selection is a stronger reduction property than reversal of an ``edge-split-attach'' operation (see theorem~\ref{thm:inductive operations}), as any pair of edge can be picked.
\begin{lemma}[contract-deleting an edge pair in an atomic bispanning graph]\label{lem:contract-delete atomic bispanning}
  If $G = (V,E,\delta)$ is an atomic bispanning graph and $e,f \in E$ are any two edges with $e \neq f$, then $G \contr e - f$ is a bispanning graph.
\end{lemma}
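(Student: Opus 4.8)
The plan is to verify the two conditions of Nash-Williams' characterization (theorem~\ref{thm:nash-williams}) directly for $G' := G \contr e - f$, exploiting that atomicity provides a \emph{strict} version of the partition inequality (theorem~\ref{thm:atomic nash-williams}). The strictness gives exactly one edge of slack, and this is precisely what is needed to absorb the deletion of $f$.

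First I would set up the bookkeeping. Since there are no atomic bispanning graphs on three vertices (table~\ref{tab:num bispanning}) and $B_2$ can be checked by hand, I may assume $|V| \geq 4$, so that $G$ is simple by theorem~\ref{thm:atomics are simple}. Writing $e = \{u,w\}$, simplicity means $e$ is the only edge between $u$ and $w$, so contracting $e$ into a new vertex $x$ removes exactly the edge $e$; moreover $f \neq e$ is not parallel to $e$, hence $f$ survives as an edge of $G \contr e$. Counting with theorem~\ref{thm:bispanning edge count} then gives that $G'$ has $|V| - 1$ vertices and $(2|V|-2) - 1 - 1 = 2(|V|-1) - 2$ edges, the necessary edge balance.

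The heart of the argument is the partition inequality. Given a partition $P$ of $V(G') = (V \setminus \{u,w\}) \cup \{x\}$, I would lift it to a partition $\hat P$ of $V$ by replacing $x$ inside its member with the pair $u,w$, so that $|\hat P| = |P|$. A short case check on edges shows $E_{\hat P}(G) = E_P(G \contr e)$: the edge $e$ has both ends in the member containing $\{u,w\}$ and so does not cross $\hat P$, while every other edge crosses $\hat P$ in $G$ exactly when its image crosses $P$ in $G \contr e$. Since $G'$ differs from $G \contr e$ only by removal of $f$, we lose at most that one edge, giving $|E_P(G')| \geq |E_{\hat P}(G)| - 1$. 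For $|P| \geq 2$ the lifted partition $\hat P$ is non-trivial, because $|\hat P| = |P| \geq 2$ and $|\hat P| = |P| \leq |V(G')| = |V| - 1 < |V|$, so $\hat P$ is neither the one-block nor the all-singletons partition of $V$. Theorem~\ref{thm:atomic nash-williams} then yields the strict bound $|E_{\hat P}(G)| > 2(|\hat P| - 1)$, i.e.\ $|E_{\hat P}(G)| \geq 2(|\hat P|-1)+1$ over the integers, whence
$$|E_P(G')| \geq |E_{\hat P}(G)| - 1 \geq 2(|\hat P| - 1) = 2(|P| - 1).$$
The case $|P| = 1$ is trivial, as then $|E_P(G')| = 0 = 2(|P|-1)$. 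Together with the edge count, this verifies both hypotheses of theorem~\ref{thm:nash-williams}, so $G'$ is bispanning.

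I expect the main obstacle to be the clean handling of the lift $P \mapsto \hat P$ together with the verification that $\hat P$ is always non-trivial, since this is exactly where the strict inequality of atomicity enters. The conceptual point is that the extra $+1$ furnished by atomicity is what pays for deleting $f$: without it the bound would degrade to $2(|P|-1)-1$ and fail. Everything else reduces to routine counting of vertices, edges, and crossing edges.
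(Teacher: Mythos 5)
Your proposal is correct and follows essentially the same route as the paper's own proof: lift a partition of $G \contr e - f$ back to $V$ by un-contracting $e$, invoke the strict inequality of theorem~\ref{thm:atomic nash-williams} to absorb the deleted edge $f$, and conclude via theorem~\ref{thm:nash-williams}. Your explicit handling of the trivial cases ($|P|=1$, non-triviality of the lifted partition, simplicity and the small graph $B_2$) is slightly more careful than the paper's, but the core argument is identical.
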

\begin{proof}
  Let $\overline{G} = (\overline{V},\overline{E},\overline{\delta}) := G \contr e - f$ be the resulting smaller graph, within which $e$ was contracted into a new vertex $v_e$. We will apply theorem~\ref{thm:nash-williams} to $\overline{G}$, which obviously has the right vertex/edge balance, and consider a partition $\overline{P} = \{ \overline{V}_1, \ldots, \overline{V}_k \}$ of $\overline{V}$. The vertex $v_e$ is in exactly one member of the partition, say $\overline{V}_i$. Since all other vertices of $\overline{G}$ are also vertices of $G$ we can construct a partition $P$ of $G$ by taking $\overline{P}$, and replacing only $\overline{V}_i$ with $\overline{V}_i - v_e + x_e + y_e$, where $\delta(e) = \{ x_e, y_e \}$ are the ends of $e$. Intuitively, we undo the contraction of $e$ within the partition of $\overline{V}$, gaining a partition of $V$ of the same size. If $f \notin E_P$, then $|E_P| = |\overline{E}_{\overline{P}}|$, since the number of edges crossing the partitions is preserved during contraction; though some change their ends from $x_e$ or $y_e$ to $v_e$. If $f \in E_P$, then simply $|E_P| = |\overline{E}_{\overline{P}}| + 1$, since we have to account for the deleted edge. Due to theorem~\ref{thm:atomic nash-williams} we known $|E_P| \geq 2 (|P|-1)+1$ since $G$ is atomic. Thus we have $|\overline{E}_{\overline{P}}| \geq |E_P| - 1 \geq 2 (|P|-1) + 1 - 1 = 2 (|P|-1)$ for all partitions of $\overline{V}$ and hence theorem~\ref{thm:nash-williams} guarantees that $\overline{G}$ is bispanning.
\end{proof}

While theorem~\ref{thm:contract bispanning subgraph} states that one can contract a bispanning subgraph into a representative vertex, it does not immediately imply that if one has two specific disjoint spanning trees of $G$, that these remain valid after contraction (without the removed edges of course). Likewise, if we choose to expand a vertex into a bispanning subgraph, it is not obvious that \emph{any} pair of disjoint spanning trees of the expanded subgraph fit together with \emph{any} pair of spanning trees of the outside graph.  While the validity of these lemmata is intuitive, the proof is rather technical as one has to reach back to the spanning tree equivalences.

\begin{lemma}[projection and expansion of bispanning subgraphs]\label{lem:bispanning subtrees}
  Let $G = (V,E,\delta)$ be a bispanning graph, $G' = (V',E',\delta') \subseteq G$ a bispanning subgraph of $G$, and $\overline{G} = (\overline{V},\overline{E},\overline{\delta}) = G \contr G'$.
  \begin{enumerate}
  \item If $E = S \dotcup T$ are two disjoint spanning trees of $G$, then $S \cap E'$ and $T \cap E'$ are disjoint spanning trees of $G'$, and $S \cap \overline{E}$ and $T \cap \overline{E}$ are disjoint spanning trees of $\overline{G}$.\label{lem:bispanning subtrees split}

  \item If $S'$ and $T'$ are two disjoint spanning trees of $G'$ and $\overline{S}$ and $\overline{T}$ are two disjoint spanning trees of $\overline{G}$, then $S' \dotcup \overline{S}$ and $T' \dotcup \overline{T}$ are two disjoint spanning trees of $G$.\label{lem:bispanning subtrees combine}
  \end{enumerate}
\end{lemma}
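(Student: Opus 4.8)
The plan is to exploit that contraction removes exactly the internal edges, so that $E$ splits as $E = E' \dotcup \overline E$ with $\overline E = E \setminus E'$; this is the edge bookkeeping already used in lemma~\ref{lem:edge balance contraction} (and it relies on a bispanning subgraph being induced, which in turn follows from Nash-Williams' theorem~\ref{thm:nash-williams}). Each of the four trees in the statement is a subset of one block of this partition, so the whole argument reduces to counting edges inside $E'$ and inside $\overline E$ and invoking the spanning-tree equivalences of theorem~\ref{thm:tree}.

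For part~\ref{lem:bispanning subtrees split} I would first treat the $G'$ side. Since $S$ and $T$ are acyclic in $G$, the restrictions $S \cap E'$ and $T \cap E'$ are forests in $G'$, hence each has at most $|V'| - 1$ edges. Their sizes add up to $|E'| = 2|V'| - 2$ (theorem~\ref{thm:bispanning edge count} applied to the bispanning graph $G'$), so both bounds must be tight: $|S \cap E'| = |T \cap E'| = |V'| - 1$. A forest on $|V'|$ vertices with $|V'| - 1$ edges is a spanning tree by theorem~\ref{thm:tree acyclic}, so $S \cap E'$ and $T \cap E'$ are spanning trees of $G'$, and they are disjoint because $S \cap T = \emptyset$. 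The sizes on the other side are now forced: $|S \cap \overline E| = |S| - |S \cap E'| = (|V| - 1) - (|V'| - 1) = |\overline V| - 1$, and likewise for $T$.

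It remains to show $S \cap \overline E$ (and $T \cap \overline E$) is acyclic in $\overline G$; together with the correct edge count and theorem~\ref{thm:tree acyclic} this upgrades it to a spanning tree of the bispanning graph $\overline G$ (theorem~\ref{thm:contract bispanning subgraph}). This acyclicity is the main obstacle, because contraction can a priori create cycles. I would argue by contradiction: a cycle in $\overline G$ using only edges of $S \cap \overline E$ either avoids the contraction vertex $x$ — and then lifts verbatim to a cycle of $S$ in $G$, impossible — or passes through $x$. In the latter case its two edges at $x$ lift to edges of $G$ with ends $u_1, u_2 \in V'$, and the remainder lifts to an $S$-path between $u_1$ and $u_2$ whose interior lies in $V \setminus V'$. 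If $u_1 = u_2$ this already closes an $S$-cycle; if $u_1 \neq u_2$, I invoke the fact just proved that $S \cap E'$ is a spanning tree of $G'$ to obtain a second, edge-disjoint $u_1$–$u_2$ path inside $V'$. The two paths meet only at $u_1$ and $u_2$, so their union is a cycle lying entirely in $S$, contradicting that $S$ is acyclic. Disjointness of the two trees on the $\overline E$ side is again inherited from $S \cap T = \emptyset$, which finishes part~\ref{lem:bispanning subtrees split}.

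For part~\ref{lem:bispanning subtrees combine} I would run the counting in reverse. Since $S' \subseteq E'$ and $\overline S \subseteq \overline E$ are disjoint, $|S' \dotcup \overline S| = (|V'| - 1) + (|\overline V| - 1) = |V| - 1$, the right size for a spanning tree of $G$, so by theorem~\ref{thm:tree connected} it suffices to show $S' \dotcup \overline S$ is connected. Given $p, q \in V$, the spanning tree $\overline S$ of $\overline G$ connects their projections in $\overline V$; lifting this path to $G$ yields a walk whose only interruptions occur where the path passes through $x$, that is, where it enters and leaves $V'$, and each such gap is bridged using the spanning tree $S'$ of $G'$. Hence $p$ and $q$ are connected, so $S' \dotcup \overline S$ is a spanning tree of $G$, and symmetrically for $T' \dotcup \overline T$. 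Finally $S' \dotcup \overline S$ and $T' \dotcup \overline T$ are disjoint (the blocks $E'$, $\overline E$ are disjoint and $S' \cap T' = \overline S \cap \overline T = \emptyset$) and together comprise $2(|V|-1) = |E|$ edges, so they partition $E$ into two disjoint spanning trees. The delicate point here mirrors the obstacle in part~\ref{lem:bispanning subtrees split}: correctly lifting a path through the contraction vertex and splicing in the internal tree $S'$.
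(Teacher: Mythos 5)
Your proposal is correct and follows essentially the same route as the paper's own proof: the edge partition $E = E' \dotcup \overline{E}$, tightness of the forest counts via theorem~\ref{thm:tree}, and lifting/splicing of paths through the contraction vertex in both directions. If anything, your argument is more careful at the one point where the paper is terse --- the paper simply asserts that $G[S] \contr V'$ is acyclic because $S$ is, whereas (as your two-path cycle argument makes explicit, and as a path $a$--$b$--$c$ with $\{a,c\}$ contracted shows) this step genuinely requires the previously established fact that $S \cap E'$ is connected in $G'$; your remark that bispanning subgraphs are induced (via theorem~\ref{thm:nash-williams}) likewise fills in bookkeeping the paper leaves implicit.
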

\begin{proof}
  \begin{enumerate}
  \item Assume first that $S \cap E'$ is not a spanning tree of $G'$, then due to theorem~\ref{thm:tree acyclic}, $S \cap E'$ is either not acyclic or $|S \cap E'| \neq |V'| - 1$. However, $S \cap E'$ is certainly acyclic in $G'$, because $S$ is acyclic in $G$ and $G' \subseteq G$. So assume $|S \cap E'| = |V'| - 1 - d$ for some integer $d > 0$, then, however, $|T \cap E'| = |E'| - |V'| + 1 + d = (2 |V'| - 2) - |V'| + 1 + d = |V'| - 1 + d$, which requires that the spanning tree $T$ contains a cycle within the subgraph $G[T \cap E'] \subseteq G[T]$. As this is impossible, $S \cap E'$ is a spanning tree of $G'$, and the argument with $S$ and $T$ exchanged shows the same for $T \cap E'$.

    Assume second that $S \cap \overline{E}$ is not a spanning tree of $\overline{G}$, then due to theorem~\ref{thm:tree acyclic}, $S \cap \overline{E}$ is either acyclic or $|S \cap \overline{E}| \neq |\overline{V}| - 1$. As $S$ is acyclic in $G$, $G[S] \contr V'$ is also acyclic. Since $G[S] \contr V'$ has the edge set $S \cap \overline{E}$, $S \cap \overline{E}$ is acyclic in $\overline{G}$ and we have to assume $|S \cap \overline{E}| \neq |\overline{V}| - 1$. As in the previous paragraph one can use the edge balance arguments of the disjoint tree decompositions to show that if $S \cap \overline{E}$ is disconnected in $\overline{G}$, then $T \cap \overline{E}$ contains a cycle, and vice versa. As the same arguments apply to $T \cap \overline{E}$ as well, $S \cap \overline{E}$ and $T \cap \overline{E}$ are spanning trees of $\overline{G}$ that are obviously disjoint.

  \item We will first consider $S'$ and $\overline{S}$, and apply theorem~\ref{thm:tree connected} to $S' \dotcup \overline{S}$. Due to the definition of contraction (\ref{def:contraction}) and theorem~\ref{thm:tree connected}, we have $|\overline{V}| = |V| - |V'| + 1$ and $|S' \dotcup \overline{S}| = |S'| + |\overline{S}| = |V'| - 1 + |\overline{V}| - 1 = |V| - 1$, since both are spanning trees of disjoint edge sets. Next we show that $G[S' \dotcup \overline{S}]$ is connected: take any $v,w \in V$, then $v$ and $w$ are both either in $V'$ or $\overline{V} - \overline{v}$, so there are four cases. If $v,w \in V'$, then they are connected by a path in $G'[S'] \subseteq G[S' \dotcup \overline{S}]$. If $v \in V'$ and $w \in \overline{V} - \overline{v}$, then there exists a path $\overline{v},e_1,v_1,\ldots,e_k,w$ in $\overline{G}[\overline{S}]$ from $\overline{v}$ to $w$, which can be expanded to a path in $G[S' \dotcup \overline{S}]$ using a path in $G'[S']$ from $v$ to the other end $\delta(e_1) - v_1 \in V'$ of $e_1$. The case $v \in \overline{V} - \overline{v}$ and $w \in V'$ is handled symmetrically. If both $v,w \in \overline{V} - \overline{v}$, then there exists a path $v,e_1,v_1,\ldots,e_k,w$ in $\overline{G}[\overline{S}]$, which must be expanded to a path in $G[S' \dotcup \overline{S}]$ only if it contains $\overline{v}$: replace $\overline{v}$ with a path in $G'[V']$ from $\delta(e_i) - \overline{v} \in V'$ to $\delta(e_{i+1}) - \overline{v} \in V'$. So $G[S' \dotcup \overline{S}]$ is connected and $|S' \dotcup \overline{S}| = |V| - 1$, thus $S' \dotcup \overline{S}$ is a spanning tree of $G$ due to theorem~\ref{thm:tree connected}.

    The same argument analogously applies to $T'$ and $\overline{T}$, so $T' \dotcup \overline{T}$ is a spanning tree of $G$, and the disjoint union of a pair of disjoint trees obviously yields a pair of disjoint trees.
  \end{enumerate}
\end{proof}

The structure of composite bispanning graphs severely constrains where cycles and cuts of an edge can run. The following lemma makes these constraints clear, and, as a corollary, unique edge exchanges are restricted in composite graphs: they either stay within a bispanning subgraph or outside of it.
\begin{lemma}[containment of cycles and cuts in composite bispanning graphs]\label{lem:bispanning subcyclecut}
  Let $G = (V,E,\delta)$ be a bispanning graph with $E = S \dotcup T$, $G' = (V',E',\delta') \subseteq G$ a bispanning subgraph of $G$, and $\overline{E} = E \setminus E'$ the edges outside of $G'$.
  \begin{enumerate}
    \item If $e \in S \cap E'$ is an edge in the bispanning subgraph, then $C_G(T,e) \subseteq (T \cap E') + e$.
    \item If $e \in S \cap \overline{E}$ is an edge outside of the bispanning subgraph, then $D_G(S,e) \subseteq (T \cap \overline{E}) + e$.
  \end{enumerate}
  The two cases apply to $e \in T \cap E'$ and $e \in T \cap \overline{E}$ analogously.
\end{lemma}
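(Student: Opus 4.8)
The plan is to prove both containment claims by exploiting lemma~\ref{lem:bispanning subtrees}, which tells us that intersecting $S$ and $T$ with $E'$ yields disjoint spanning trees of the subgraph $G'$, and intersecting with $\overline{E}$ yields disjoint spanning trees of the contracted graph $\overline{G} = G \contr G'$. The key observation is that fundamental cycles and cuts, being unique, must ``respect'' this decomposition: a cycle formed inside a bispanning subgraph cannot leave it, and a cut formed outside cannot reach inside. I would handle the two cases separately but symmetrically.

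For the first case, let $e \in S \cap E'$. Since $S \cap E'$ is a spanning tree of $G'$ (lemma~\ref{lem:bispanning subtrees split}) and $T \cap E'$ is a disjoint spanning tree of $G'$, I would apply the fundamental cycle theorem~\ref{thm:fundamental cycle} \emph{within the subgraph $G'$}: the edge $e$ is a non-tree edge with respect to the spanning tree $T \cap E'$ of $G'$, so it closes a unique fundamental cycle $C_{G'}(T \cap E', e) \subseteq (T \cap E') + e$ entirely inside $G'$. The crux is then to argue that this cycle inside $G'$ coincides with the fundamental cycle $C_G(T, e)$ computed in the whole graph $G$. Because $C_G(T,e)$ is the \emph{unique} cycle contained in $G[T] + e$, and since $C_{G'}(T \cap E', e)$ is a cycle contained in $G[T] + e$ (as $T \cap E' \subseteq T$), uniqueness forces the two to be equal. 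Hence $C_G(T,e) = C_{G'}(T \cap E', e) \subseteq (T \cap E') + e$, as claimed.

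For the second case, let $e \in S \cap \overline{E}$. Here I would work in the contracted graph $\overline{G}$, where $S \cap \overline{E}$ and $T \cap \overline{E}$ are disjoint spanning trees (again by lemma~\ref{lem:bispanning subtrees split}). Now $e$ is a \emph{tree} edge of the spanning tree $S \cap \overline{E}$ in $\overline{G}$, so by the fundamental cut theorem~\ref{thm:fundamental cut} it defines a unique fundamental cut $D_{\overline{G}}(S \cap \overline{E}, e) \subseteq (T \cap \overline{E}) + e$ within $\overline{G}$. The analogous crux is to identify this cut with $D_G(S, e)$. Removing $e$ from $G[S]$ splits $V$ into two components $V_1 \dotcup V_2$; I would argue that the contraction of the connected subgraph $G'[S \cap E']$ to the single vertex $x$ keeps $V'$ entirely inside one of the two parts (since $G'$ is connected in $S$, it lies on one side of the cut induced by removing $e \notin E'$). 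Consequently the cut $[V_1, V_2]$ in $G$ maps bijectively onto the corresponding cut in $\overline{G}$, and by uniqueness of fundamental cuts (theorem~\ref{thm:fundamental cut}) we get $D_G(S,e) = D_{\overline{G}}(S \cap \overline{E}, e) \subseteq (T \cap \overline{E}) + e$. The analogous statements for $e \in T$ follow by swapping the roles of $S$ and $T$.

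The main obstacle I anticipate is the bookkeeping that matches the \emph{contracted} fundamental cut to the \emph{original} one in the second case: one must verify carefully that contracting $G'$ does not merge the two sides of the cut $D_G(S,e)$, i.e., that $V'$ sits wholly within one component of $G[S] - e$. This relies on $G'[S\cap E']$ being connected (a spanning tree of $G'$) together with $e$ lying outside $E'$, so that deleting $e$ cannot separate the vertices of $V'$ from each other. The first case is cleaner because uniqueness of the cycle in $G[T]+e$ does the work directly; the contraction argument in the second case is where the technical care is needed, but it is precisely the kind of edge-balance reasoning already deployed in the proof of lemma~\ref{lem:bispanning subtrees}.
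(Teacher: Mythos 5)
Your proof is correct, and case (i) is essentially identical to the paper's argument: the spanning tree $T \cap E'$ of $G'$ closes a cycle with $e$ inside $G'$, and uniqueness of the fundamental cycle in $G[T]+e$ forces $C_G(T,e)$ to coincide with it. For case (ii) the paper argues directly in $G$: since $e \notin E'$ and $S \cap E'$ is a connected spanning tree of $G'$, all of $V'$ stays in one component of $G[S]-e$, so no edge of $E'$ can cross the cut, giving $D_G(S,e) \subseteq (T \cap \overline{E}) + e$ immediately. You establish exactly this key fact but then route it through the contracted graph $\overline{G} = G \contr G'$, identifying $D_G(S,e)$ with the fundamental cut $D_{\overline{G}}(S \cap \overline{E}, e)$ via lemma~\ref{lem:bispanning subtrees} and the uniqueness in theorem~\ref{thm:fundamental cut}. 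That detour is sound (contraction of $V'$, which lies wholly on one side, leaves the crossing edge set untouched), and it buys you a slightly stronger statement --- the equality of the fundamental cut in $G$ with that in $\overline{G}$ --- which is the kind of identification the paper needs anyway in the proof of theorem~\ref{thm:composite tau decompose}; but for the lemma as stated it is more machinery than necessary, since the one-sidedness of $V'$ already finishes the proof without mentioning $\overline{G}$ at all.
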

\begin{proof}
  \begin{enumerate}
  \item If $e \in S \cap E'$, then the cycle $C_G(T,e)$ is fully contained in the bispanning subgraph $G'$, since $T \cap E'$ is a spanning tree within it, due to lemma~\ref{lem:bispanning subtrees}. Thus $C_G(T,e) \subseteq (T \cap E') + e$.

  \item If $e \in S \cap \overline{E}$, then the cut $D_G(S,e)$ contains no edge of the bispanning subgraph $G'$, since $S \cap E'$ remains connected in $S - e$ due to lemma~\ref{lem:bispanning subtrees}.
  \end{enumerate}
\end{proof}

\begin{corollary}[containment of unique edge exchanges]\label{lem:composite ue containment}
  Let $G = (V,E,\delta)$ be a bispanning graph with $E = S \dotcup T$, $G' = (V',E',\delta') \subseteq G$ a bispanning subgraph of $G$, $\overline{G} = (\overline{V},\overline{E},\overline{\delta}) = G \contr G'$, and $(e,f) \in S \times T$ a unique edge exchange.
  \begin{enumerate}
  \item If $e \in S \cap E'$, then $f \in T \cap E'$.
  \item If $e \in S \cap \overline{E}$, then $f \in T \cap \overline{E}$.
  \end{enumerate}
  As the theorem is stated without ordering $S \dotcup T$, it also applies if $(e,f) \in T \times S$.
\end{corollary}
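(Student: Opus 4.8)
The plan is to read off both cases directly from the containment Lemma~\ref{lem:bispanning subcyclecut}, using only the defining property of a unique edge exchange. By Theorem~\ref{thm:edge exchange} and Definition~\ref{def:unique edge exchange}, the exchange partner $f$ of $e$ satisfies $f \in D(S,e) \cap C(T,e)$ with $f \neq e$; in fact uniqueness gives $D(S,e) \cap C(T,e) = \{e,f\}$, but all I actually need is membership of $f$ in \emph{both} the fundamental cut $D(S,e)$ and the fundamental cycle $C(T,e)$. The whole argument then amounts to choosing, in each case, whichever of the two containments in Lemma~\ref{lem:bispanning subcyclecut} is available and intersecting it with the fact $f \neq e$.

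First I would treat case (i), where $e \in S \cap E'$ lies inside the bispanning subgraph. Here part (i) of Lemma~\ref{lem:bispanning subcyclecut} applies and yields $C_G(T,e) \subseteq (T \cap E') + e$. Since $f \in C_G(T,e)$ and $f \neq e$, I immediately conclude $f \in T \cap E'$, which is exactly the claim. For case (ii), where $e \in S \cap \overline{E}$ lies outside the subgraph, I would instead invoke part (ii) of the same lemma, giving $D_G(S,e) \subseteq (T \cap \overline{E}) + e$. Now using $f \in D_G(S,e)$ together with $f \neq e$ forces $f \in T \cap \overline{E}$, as required. The small but notable asymmetry is that the ``inside'' case is settled via the cycle of $e$ while the ``outside'' case is settled via the cut of $e$; both work only because the exchange partner $f$ sits simultaneously in the cut and the cycle.

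Finally I would dispatch the closing remark about dropping the ordering of $S \dotcup T$ by symmetry: since neither Lemma~\ref{lem:bispanning subcyclecut} nor the definition of a unique exchange privileges $S$ over $T$, the identical reasoning with the roles of $S$ and $T$ interchanged (using the ``analogous'' versions of the lemma for $e \in T \cap E'$ and $e \in T \cap \overline{E}$) handles the case $(e,f) \in T \times S$.

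I do not expect a genuine obstacle here: once Lemma~\ref{lem:bispanning subcyclecut} is in hand, this corollary is a one-line consequence in each case, and the only thing to be careful about is matching the correct part of the lemma to the location of $e$ (subgraph versus complement) and remembering to exclude $e$ itself from the relevant containment.
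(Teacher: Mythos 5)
Your proposal is correct and matches the paper's own proof essentially verbatim: both derive case (i) from $C_G(T,e) \subseteq (T \cap E') + e$ and case (ii) from $D_G(S,e) \subseteq (T \cap \overline{E}) + e$ via Lemma~\ref{lem:bispanning subcyclecut}, then use $f \neq e$ and membership of $f$ in the cut-cycle intersection. Your remark that only membership of $f$ in $D_G(S,e) \cap C_G(T,e)$ is needed (not the full uniqueness equality) is a minor sharpening, but the argument is the same.
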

\begin{proof}
  As $(e,f)$ is a unique edge exchange, $D_G(S,e) \cap C_G(T,e) = \{ e,f \}$ with $e \neq f$. Due to lemma~\ref{lem:bispanning subcyclecut}, in case (i), $C_G(T,e) \subseteq (T \cap E') + e$ and hence $f \in T \cap E'$, and in case (ii), $D_G(S,e) \subseteq (T \cap \overline{E}) + e$ and hence $f \in T \cap \overline{E}$.
\end{proof}

As preparation for our structure theorem of unique exchange graphs for composite bispanning graphs, we need to define the Cartesian graph product. As we consider both undirected and directed exchange graphs, we require both definitions.
In intuitive words, the Cartesian product contains a copy of the graph $G_1$ for every vertex of $G_2$, including all edges, and vice versa. See figure~\ref{fig:cartesian graph product} for an example.
\begin{definition}[Cartesian graph product $G_1 \times G_2$ \R{\cites[\S\,2.5]{halin1989graphentheorie}{harary1969graph}}]\label{def:cartesian graph product}
  The \emph{Cartesian graph product}\index{Cartesian graph product}\index{graph!Cartesian product} $G_1 \times G_2$\symbol{graph product}{$G_1 \times G_2$}{Cartesian graph product} of two graphs $G_1 = (V_1,E_1,\delta_1)$ and $G_2 = (V_2,E_2,\delta_2)$ is the graph $G = (V,E,\delta)$ such that both vertex set $V = V_1 \times V_2$ and edge set $E = (E_1 \times V_2) \cup (V_1 \times E_2)$ are Cartesian products, and $\delta( (e_1,v_2) ) = \delta(e_1) \times \{ v_2 \}$ and $\delta( (v_1,e_2) ) = \{ v_1 \} \times \delta(e_2)$ for all $v_1 \in V_1$, $v_2 \in V_2$, $e_1 \in E_1$, and $e_2 \in E_2$ appropriately.

  Likewise, the \emph{Cartesian graph product} $G_1 \times G_2$ of two directed graphs $G_1 = (V_1,E_1,\delta_1)$ and $G_2 = (V_2,E_2,\delta_2)$ is the directed graph $G = (V,E,\delta)$ such that both vertex set $V = V_1 \times V_2$ and edge set $E = (E_1 \times V_2) \cup (V_1 \times E_2)$ are Cartesian products, and $\delta( (e_1,v_2) ) = ((x_{e_1}, v_2), (y_{e_1},v_2))$ and $\delta( (v_1,e_2) ) = ( (v_1,x_{e_2}), (v_1,y_{e_2}) )$ for all $v_1 \in V_1$, $v_2 \in V_2$, $e_1 \in E_1$, $e_2 \in E_2$, and $\delta_1(e_1) = (x_{e_1},y_{e_1})$, $\delta_2(e_2) = (x_{e_2},y_{e_2})$ appropriately.
\end{definition}

Formally, the Cartesian graph product forms an Abelian half-group with neutral element $K_1$, which means that $(A \times B) \times C \cong A \times (B \times C)$, $A \times B = B \times A$, and $A \times K_1 = A$ for all graphs $A$, $B$, and $C$.
For our application it is more important, that paths ``\emph{multiply}'': given a path $P_1$ in $G_1$ and a path $P_2$ in $G_2$, starting at $v_1$ and $v_2$ and ending in $w_1$ and $w_2$, respectively, then these two paths can be used to identify paths from $(v_1,w_1)$ to $(v_2,w_2)$ in $G_1 \times G_2$ by taking one step at a time either in $P_1$ or in $P_2$ in \emph{any order}.

\begin{figure}
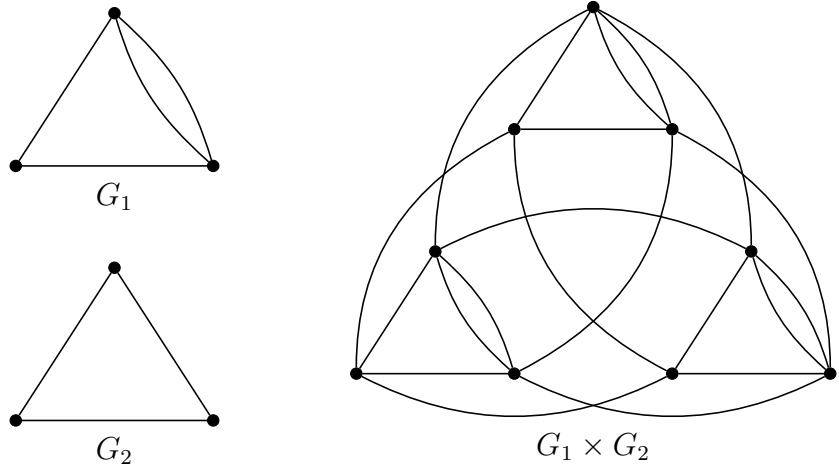
\centering
  \tikzset{every picture/.style={scale=1.5, graphfinal}}

  \caption{Example of a Cartesian graph product.}\label{fig:cartesian graph product}
\end{figure}

With the Cartesian graph product we can explicitly describe the structure of $\vec{\tau}_3$ and $\tau_3$ for all bispanning subgraphs in the next theorem. This is most useful for composite bispanning graphs, since it allows a reduction to smaller graphs. The trivial cases $K_1$ and $G$ also work, since $\tau_3(K_1) = (\{ (\emptyset, \emptyset) \}, \emptyset) \cong K_1$.
\begin{theorem}[decomposing $\tau_3$ of composite bispanning graphs]\label{thm:composite tau decompose}
  If $G$ is a bispanning graph and $G' \subseteq G$ a bispanning subgraph, then
  $$\vec{\tau}_3(G) \cong \vec{\tau}_3(G') \times \vec{\tau}_3(G \contr G') \,,$$
  and
  $$\tau_3(G) \cong \tau_3(G') \times \tau_3(G \contr G') \,.$$
\end{theorem}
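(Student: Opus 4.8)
The plan is to construct an explicit isomorphism between the directed graphs $\vec{\tau}_3(G)$ and $\vec{\tau}_3(G') \times \vec{\tau}_3(\overline{G})$, where $\overline{G} = G \contr G'$, and then argue that it descends to the undirected statement. Writing $E = E' \dotcup \overline{E}$ for the partition of the edges into those of the subgraph and those outside it, I would first define the vertex map $\varphi_v$ sending a tree pair $(S,T)$ of $G$ to $\bigl((S \cap E', T \cap E'),\,(S \cap \overline{E}, T \cap \overline{E})\bigr)$. Lemma~\ref{lem:bispanning subtrees split} guarantees that both components are genuine vertices of the respective factors, and Lemma~\ref{lem:bispanning subtrees combine} supplies the inverse $\bigl((S',T'),(\overline{S},\overline{T})\bigr) \mapsto (S' \dotcup \overline{S},\, T' \dotcup \overline{T})$; since $E' \dotcup \overline{E} = E$, these two maps are mutually inverse, so $\varphi_v$ is a bijection of the vertex sets.

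For the arcs, the structural fact that makes everything fit is Corollary~\ref{lem:composite ue containment}: every unique edge exchange $(e,f)$ of $G$ lies entirely inside $E'$ (when $e \in E'$) or entirely inside $\overline{E}$ (when $e \in \overline{E}$). This partitions the arcs incident to a vertex $(S,T)$ into those ``living in $G'$'' and those ``living in $\overline{G}$'', matching exactly the two summands $E_1 \times V_2$ and $V_1 \times E_2$ of Definition~\ref{def:cartesian graph product}. Accordingly I would define $\varphi_e$ to send an arc $(e,f,S,T)$ with $e \in E'$ to the arc $\bigl((e,f,S\cap E',T\cap E'),\,(S\cap\overline{E},T\cap\overline{E})\bigr)$ of the $G'$-factor at the fixed $\overline{G}$-coordinate, and symmetrically for $e \in \overline{E}$.

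The heart of the argument is verifying that $\varphi_e$ is a well-defined bijection preserving incidence, i.e.\ that a unique exchange of $G$ restricts to a unique exchange of the corresponding factor and vice versa. For $e \in S \cap E'$ I would use Lemma~\ref{lem:bispanning subcyclecut} to show that $C_G(T,e) = C_{G'}(T\cap E',e)$, since the fundamental cycle is trapped inside $G'$ and is the unique cycle in $G'[T\cap E']+e$. Alongside this I need the cut identity $D_G(S,e)\cap E' = D_{G'}(S\cap E',e)$, which I would prove at the level of the vertex bipartition: removing $e$ from the subtree $S\cap E'$ splits $V'$ into exactly the traces $V_1\cap V'$ and $V_2\cap V'$ of the two components $V_1,V_2$ of $G[S]-e$, so an edge of $G'$ crosses the $G$-cut iff it crosses the $G'$-cut. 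Combining these with $C_G(T,e)\subseteq E'$ yields
$$D_{G'}(S\cap E',e)\cap C_{G'}(T\cap E',e) = D_G(S,e)\cap C_G(T,e),$$
so uniqueness transfers in both directions. The transition check is then immediate: swapping $e,f \in E'$ alters only the $G'$-coordinate of $\varphi_v(S,T)$ and fixes the $\overline{G}$-coordinate, which is precisely how an edge of the $E_1\times V_2$ part acts. Applying the same reasoning to unique $T$ exchanges (Corollary~\ref{lem:composite ue containment} is stated without ordering $S$ and $T$) and to the $\overline{E}$-edges completes the incidence verification.

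I expect the cut-restriction identity $D_G(S,e)\cap E' = D_{G'}(S\cap E',e)$ to be the main obstacle, because, unlike the cycle, the full cut $D_G(S,e)$ generally leaves $G'$, so one cannot simply invoke containment and must instead argue via the induced bipartition of $V'$. Once the directed isomorphism is in place, the undirected case follows: each $\tau_3$ is obtained from its $\vec{\tau}_3$ by merging every arc with its reverse twin (paired by Theorem~\ref{thm:reversibility unique exchange}), and $\varphi$ sends twin pairs to twin pairs; since the Cartesian product commutes with this twin-merging, $\varphi$ induces the required isomorphism $\tau_3(G) \cong \tau_3(G') \times \tau_3(G \contr G')$. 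The degenerate cases $G' = K_1$ and $G' = G$ are covered automatically, as $\tau_3(K_1)\cong K_1$ is the neutral element of the product.
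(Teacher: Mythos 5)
Your proposal is correct and follows essentially the same route as the paper's proof: the same restriction/union vertex bijection $\varphi_v$ backed by Lemma~\ref{lem:bispanning subtrees}, the same classification of arcs via Corollary~\ref{lem:composite ue containment}, and the same descent from the directed to the undirected statement. The only difference is one of emphasis: you spell out the cut-restriction identity $D_{G'}(S\cap E',e) = D_G(S,e)\cap E'$ and the resulting equality of intersections (which the paper leaves implicit behind Lemma~\ref{lem:bispanning subcyclecut} and the containment corollary), whereas the paper instead devotes its detail to the case-by-case incidence verification $(\varphi_v\times\varphi_v)\circ\delta_{\vec{\tau}_3(G)} = \delta_\times\circ\varphi_e$ that you treat as immediate.
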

\begin{proof}
  Let $G = (V,E,\delta)$ and $G' = (V',E',\delta') \subseteq G$, and $\overline{G} = (\overline{V},\overline{E},\overline{\delta}) := G \contr G'$ the graph $G$ with $G'$ contracted into $\overline{v} \in \overline{V}$ (regard figure~\ref{fig:composite-decompose} again). Due to theorem~\ref{thm:contract bispanning subgraph}, $\overline{G}$ is bispanning, so $\tau_3(\overline{G})$ is well defined.  To show that $\vec{\tau}_3(G) \cong \vec{\tau}_3(G') \times \vec{\tau}_3(G \contr G')$, we have to provide two mappings on the vertex and arc sets, $\varphi_v$ and $\varphi_e$, that are bijections and together form an isomorphism (definition~\ref{def:isomorphism directed}).

  We first consider $\varphi_v : V_{\tau(G)} \rightarrow V_{\tau(G')} \times V_{\tau(\overline{G})}$, $(S,T) \mapsto ( (S \cap E', T \cap E') ,\, (S \cap \overline{E}, T \cap \overline{E}) )$, which is a bijection on the vertex set with inverse mapping $\psi_v : ( (S',T'), (\overline{S},\overline{T}) ) \mapsto (S' \dotcup \overline{S}, T' \dotcup \overline{T})$. The mapping $\varphi_v$ takes a pair of spanning trees of $G$ and projects them onto pairs of spanning trees of $G'$ and $\overline{G}$, while $\psi_v$ simply composes the two disjoint spanning trees to accommodate the expansion of $\overline{v}$.  Correctness of the forward mapping $\varphi_v$ was shown as lemma~\ref{lem:bispanning subtrees split}, and correctness of the reverse mapping $\psi_v$ as lemma~\ref{lem:bispanning subtrees combine}.

  Due to $E' \dotcup \overline{E} = E$, we have $(\psi_v \circ \varphi_v)(\, (S,T) \,)) = \psi_v(\, (S \cap E', T \cap E') ,\, (S \cap \overline{E}, T \cap \overline{E}) \,) = ( (S \cap E') \dotcup (S \cap \overline{E}), (T \cap E') \dotcup (T \cap \overline{E}) ) = (S,T)$ and $(\varphi_v \circ \psi_v)(\,(S',T'), (\overline{S},\overline{T}) \,) ) = \varphi_v(\, (S' \dotcup \overline{S}, T' \dotcup \overline{T}) \,) = ( ((S' \dotcup \overline{S}) \cap E', (T' \dotcup \overline{T}) \cap E' ), ((S' \dotcup \overline{S}) \cap \overline{E}, (T' \dotcup \overline{T}) \cap \overline{E}) ) = (S',T'), (\overline{S},\overline{T})$. So $\varphi_v$ is a bijection on the vertex sets of $\vec{\tau}_3(G)$ and $\vec{\tau}_3(G') \times \vec{\tau}_3(G \contr G')$, and $\tau_3(G)$ and $\tau_3(G') \times \tau_3(G \contr G')$.

  The second mapping, $\varphi_e : E_{\vec{\tau}_3(G)} \rightarrow (E_{\vec{\tau}_3(G')} \times V_{\tau(\overline{G})}) \cup (V_{\tau(G')} \times E_{\vec{\tau}_3(\overline{G})})$, is formally more complicated, but mostly already handled in lemma~\ref{lem:composite ue containment}.

  Let $(e,f,S,T) \in E_{\vec{\tau}_3(G)}$ be a $(e,f) \in S \times T$ unique $S$ edge exchange for $(S,T)$ to $(S - e + f, T + e - f)$, then there are two cases for $e$: it is either inside the bispanning subgraph, with $e \in S \cap E'$, or outside of it, with $e \in S \cap \overline{E}$. Due to lemma~\ref{lem:composite ue containment}, we known $f \in S \cap E'$ in the first case and $f \in S \cap \overline{E}$ in the second, hence, the unique exchange stays either within $G'$ or within $\overline{G}$.

  The two cases above also apply analogously to a $(e,f) \in T \times S$ unique $T$ edge exchange. We can thus formally define the mapping of unique exchange arcs as
  \[
  \varphi_e( (e,f,S,T) ) \!=\!
  \left\{
    \begin{array}{@{}l@{}}
      ( (e,f,S \cap E', T \cap E'), (S \cap \overline{E}, T \cap \overline{E}) ) \in E_{\vec{\tau}_3(G')} \times V_{\tau(\overline{G})} \\
      \qquad\text{if $(e,f) \in S \times T$ is a unique $S$ edge exchange in $(S,T)$ and $e \in S \cap E'$,} \\
      ( (S \cap E', T \cap E'), (e,f,S \cap \overline{E}, T \cap \overline{E}) ) \in V_{\tau(G')} \times E_{\vec{\tau}_3(\overline{G})} \\
      \qquad\text{if $(e,f) \in S \times T$ is a unique $S$ edge exchange in $(S,T)$ and $e \in S \cap \overline{E}$,} \\
      ( (e,f,S \cap E', T \cap E'), (S \cap \overline{E}, T \cap \overline{E}) ) \in E_{\vec{\tau}_3(G')} \times V_{\tau(\overline{G})} \\
      \qquad\text{if $(e,f) \in T \times S$ is a unique $T$ edge exchange in $(S,T)$ and $e \in T \cap E'$,} \\
      ( (S \cap E', T \cap E'), (e,f,S \cap \overline{E}, T \cap \overline{E}) ) \in V_{\tau(G')} \times E_{\vec{\tau}_3(\overline{G})} \\
      \qquad\text{if $(e,f) \in T \times S$ is a unique $T$ edge exchange in $(S,T)$ and $e \in T \cap \overline{E}$,}
    \end{array}
  \right.\!
  \]
  which maps every unique exchange edge from $\vec{\tau}_3(G)$ to $\vec{\tau}_3(G') \times \vec{\tau}_3(\overline{G})$.

  The reverse arc mapping \( \psi_e : (E_{\vec{\tau}_3(G')} \times V_{\tau(\overline{G})}) \cup (V_{\tau(G')} \times E_{\vec{\tau}_3(\overline{G})}) \rightarrow E_{\vec{\tau}_3}(G) \) is straight-forward, since the decision which subtree the unique exchange applies to can be implicitly encoded in $e \in S'$, $e \in T'$, $e \in \overline{S}$, or $e \in \overline{T}$. So we just map \( \psi_e ( (e,f,S',T'), (\overline{S},\overline{T}) ) = (e,f,S' \dotcup \overline{S}, T' \dotcup \overline{T}) \), and \( \psi_e ( (S',T'), (e,f,\overline{S},\overline{T}) ) = (e,f,S' \dotcup \overline{S}, T' \dotcup \overline{T}) \).

  To complete the isomorphism we have to show \( (\varphi_v \times \varphi_v)(\delta_{\vec{\tau}_3(G)}(e)) = \delta_{\times}(\varphi_e(e)) \) for all $e \in E_{\vec{\tau}_3(G)}$, where $\delta_\times$ is the incidence function of $\vec{\tau}_3(G') \times \vec{\tau}_3(\overline{G})$ as given by definition~\ref{def:cartesian graph product}.  So let $(e,f,S,T) \in E_{\vec{\tau}_3(G)}$ be an arc in $\vec{\tau}_3(G)$, then either $(e,f) \in (S,T)$ or $(e,f) \in (T,S)$.

  If $(e,f) \in (S,T)$ then we have a $S$ unique edge exchange for $(S,T)$, so immediately \( (\varphi_v \times \varphi_v)(\delta_{\vec{\tau}_3(G)}( (e,f,S,T) )) = (\varphi_v \times \varphi_v)( [ (S,T), (S - e + f, T + e - f) ] ) = ( [ (S \cap E', T \cap E'), (S \cap \overline{E}, T \cap \overline{E}) ], [ ((S - e + f) \cap E', (T + e - f) \cap E'), ((S - e + f) \cap \overline{E}, (T + e - f) \cap \overline{E}) ] ) \). This can simplified to $(\varphi_v \times \varphi_v)(\delta_{\vec{\tau}_3(G)}( (e,f,S,T) ))$
  \[
  = \begin{cases}
    \begin{aligned}[t]
      ( & [ (S \cap E', T \cap E'), (S \cap \overline{E}, T \cap \overline{E}) ], \\
      & [ ((S - e + f) \cap E', (T + e - f) \cap E'), (S \cap \overline{E}, T \cap \overline{E}) ] )
    \end{aligned}
    & \text{if } (e,f) \in (S \cap E',T \cap E') \,, \\
    \begin{aligned}[t]
      ( & [ (S \cap E', T \cap E'), (S \cap \overline{E}, T \cap \overline{E}) ], \\
      & [ (S \cap E', T \cap E'), ((S - e + f) \cap \overline{E}, (T + e - f) \cap \overline{E}) ] )
    \end{aligned}
    & \text{if } (e,f) \in (S \cap \overline{E},T \cap \overline{E}) \,.
  \end{cases}
  \]
  Similarly if $(e,f) \in (T,S)$, we have a $T$ unique edge exchange for $(S,T)$, and it follows $(\varphi_v \times \varphi_v)(\delta_{\vec{\tau}_3(G)}( (e,f,S,T) ))$
  \[
  = \begin{cases}
    \begin{aligned}[t]
      ( & [ (S \cap E', T \cap E'), (S \cap \overline{E}, T \cap \overline{E}) ], \\
        & [ ((S + e - f) \cap E', (T - e + f) \cap E'), (S \cap \overline{E}, T \cap \overline{E}) ] )
    \end{aligned}
    & \text{if } (e,f) \in (T \cap E', S \cap E') \,, \\
    \begin{aligned}[t]
      ( & [ (S \cap E', T \cap E'), (S \cap \overline{E}, T \cap \overline{E}) ], \\
        & [ (S \cap E', T \cap E'), ((S + e - f) \cap \overline{E}, (T - e + f) \cap \overline{E}) ] )
    \end{aligned}
    & \text{if } (e,f) \in (T \cap \overline{E}, S \cap \overline{E}) \,.
  \end{cases}
  \]
  To show the equality, we apply $\delta_\times$ as defined by \ref{def:cartesian graph product} to the four cases of $\varphi_e((e,f,S,T))$. As this is rather tedious and the cases highly symmetrical, we describe only the first case, verbosely: consider $\delta_\times ( (e,f,S \cap E', T \cap E'), (S \cap \overline{E}, T \cap \overline{E}) )$ where $(e,f) \in S \times T$ is a unique $S$ edge exchange and $e \in S \cap E'$. This is the case ``$\delta((e_1,v_2)) = ([x_{e_1},v_2],[y_{e_1},v_2])$'' from the definition, which expands in this case to \(( [ (S \cap E', T \cap E'), (S \cap \overline{E}, T \cap \overline{E}) ], [ ((S \cap E') - e + f, (T \cap E') + e - f), (S \cap \overline{E}, T \cap \overline{E}) ] )\), because $\delta'( (S \cap E', T \cap E') ) = [ (S \cap E', T \cap E'), ((S \cap E') - e + f, (T \cap E') + e - f) ]$. As $e,f \in E'$, we have $(S \cap E') - e + f = (S - e + f) \cap E'$ and similar equations, and thus the result matches the first case from $(\varphi_v \times \varphi_v)(\delta_{\vec{\tau}_3(G)}( (e,f,S,T) ))$.

  Above, we showed the isomorphism for $\vec{\tau}_3$, but due to the simple reduction to bidirectional unique edge exchanges, it is clear that the same applies for $\tau_3$.
\end{proof}

The last theorem can be used to decompose $\tau_3$ of composite bispanning graphs. As an example, consider $\tau_3(B_{4,3})$ as shown in figure~\ref{fig:tau-x4}, page~\pageref{fig:tau-x4}. It is composite with three non-trivial bispanning subgraphs: the parallel edge pair $B_2$, and two subgraphs $B_{3,2}$, which are the complements of the two degree two vertices. The $\tau_3$ graph of these three bispanning subgraphs has exactly two vertices and two parallel edges between them. The remaining graph of $B_{4,3}$ after contracting $B_2$ or $B_{3,2}$ has an exchange graph with four vertices and eight edges (see figure~\ref{fig:tau-x3}). Using theorem~\ref{thm:composite tau decompose}, we can deduce why the exchange graph in figure~\ref{fig:tau-x4} has such a regular structure:
\begin{align*}
  \tau_3(B_{4,3}) &\cong \tau_3(B_2) \times \tau_3(B_{3,1}) && \text{contract parallel edge pair $B_2$,} \\
                  &\cong \tau_3(B_{3,2}) \times \tau_3(B_2) && \text{or contract a subgraph $B_{3,2}$, and } \\
                  &\cong \tau_3(B_2) \times \tau_3(B_2) \times \tau_3(B_2) && \text{reapply \ref{thm:composite tau decompose} to the other $B_{3,2}$.}
\end{align*}

We close this section with a simple corollary to theorem~\ref{thm:composite tau decompose}:
\begin{corollary}[connectivity of $\tau_3$ of composite bispanning graphs]
  If $G$ is a bispanning graph, $G' \subseteq G$ is a bispanning subgraph, and both $\vec{\tau}_3(G')$ and $\vec{\tau}_3(G \contr G')$ are connected, then $\vec{\tau}_3(G)$ is connected. The same is true for $\tau_3$.
\end{corollary}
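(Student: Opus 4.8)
The plan is to lean almost entirely on the structure theorem~\ref{thm:composite tau decompose}, which already identifies $\vec{\tau}_3(G)$ with the Cartesian product $\vec{\tau}_3(G') \times \vec{\tau}_3(G \contr G')$ up to isomorphism. Since graph isomorphism preserves connectivity, the whole corollary reduces to the elementary fact that the Cartesian product of two connected graphs is again connected. First I would invoke theorem~\ref{thm:composite tau decompose} to replace $\vec{\tau}_3(G)$ by the product, and then argue connectivity of the product directly.

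For the product argument I would use exactly the ``paths multiply'' observation recorded after definition~\ref{def:cartesian graph product}. Given two vertices $(u_1,u_2)$ and $(w_1,w_2)$ of $\vec{\tau}_3(G') \times \vec{\tau}_3(G \contr G')$, connectivity of the factors supplies a path $P_1$ from $u_1$ to $w_1$ in $\vec{\tau}_3(G')$ and a path $P_2$ from $u_2$ to $w_2$ in $\vec{\tau}_3(G \contr G')$. Traversing $P_1$ with the second coordinate held fixed at $u_2$ reaches $(w_1,u_2)$, and then traversing $P_2$ with the first coordinate held fixed at $w_1$ reaches $(w_1,w_2)$; concatenating the two partial walks yields a path between the originally chosen vertices. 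Hence the product is connected, and so, via the isomorphism, is $\vec{\tau}_3(G)$.

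The undirected case $\tau_3$ is handled identically, using the undirected half of theorem~\ref{thm:composite tau decompose}. In the directed case I would additionally remark that by reversibility of unique exchanges (theorem~\ref{thm:reversibility unique exchange}) every arc of $\vec{\tau}_3$ possesses a reverse twin, so the factor graphs are symmetric and the notion of (directed) connectivity is unambiguous; the same coordinate-by-coordinate construction then produces a genuine directed walk at every step.

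I expect no substantive obstacle here, since the statement is a corollary. The only point requiring a little care is the bookkeeping that the transitions used in the product really are edges of the product: moving along $P_1$ in the first coordinate corresponds to edges of the form $(e_1,v_2)$ and moving along $P_2$ to edges $(v_1,e_2)$, exactly as prescribed by definition~\ref{def:cartesian graph product}. This is immediate from the definition, so the proof can be kept very short.
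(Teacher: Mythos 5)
Your proposal is correct and is exactly the argument the paper intends: the paper states this as an immediate (unproved) consequence of theorem~\ref{thm:composite tau decompose}, relying on the isomorphism to the Cartesian product and the ``paths multiply'' observation following definition~\ref{def:cartesian graph product}. Your extra remark that reversibility of unique exchanges (theorem~\ref{thm:reversibility unique exchange}) makes directed connectivity unambiguous is a sound and welcome piece of bookkeeping, but does not change the substance.
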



\subsection{Decomposing Bispanning Graphs with Vertex-Connectivity Two}\label{sec:decompose 2vconn}

With theorem~\ref{thm:composite tau decompose} of the last section, one can decompose the unique exchange graph of all composite bispanning graphs. Hence, it remains to find a way to decompose atomic bispanning graphs, which turns out to be much more difficult. In this section we focus on the case $(\operatorname{vconn}(G),\operatorname{econn}(G)) = (2,3)$ of corollary~\ref{cor:atomic-vconn-econn}, and describe a method to decompose a bispanning graph at a vertex cut of size 2. Small cuts are a natural place for such separations, however, we need to elaborate in detail how to compose both unique exchange graphs at such a cut.

For this purpose we consider a less known method to combine two graphs: the $k$-clique sum operation, which joins two graphs at two $k$-cliques. We define this operation using a graph union followed by contraction. See figure~\ref{fig:example clique sums} for an example of the $1$-, $2$-, and $3$-clique sums of two graphs, and figure~\ref{fig:2-clique composition} for a $2$-clique sum composition of two bispanning graphs.

\begin{definition}[graph union $G_1 \cup G_2$]
  The \emph<union!graph>{graph union}\symbol{graph union}{$G_1 \cup G_2$}{graph union} $G_1 \cup G_2$ of two graphs $G_1 = (V_1,E_1,\delta_1)$ and $G_2 = (V_2,E_2,\delta_2)$ with disjoint vertex and edge sets is the graph $G = (V_1 \cup V_2, E_1 \cup E_2, \delta)$ such that $\delta|_{E_1} = \delta_1$ and $\delta|_{E_2} = \delta_2$. It contains independent copies of $G_1$ and $G_2$.
\end{definition}

\begin{figure}
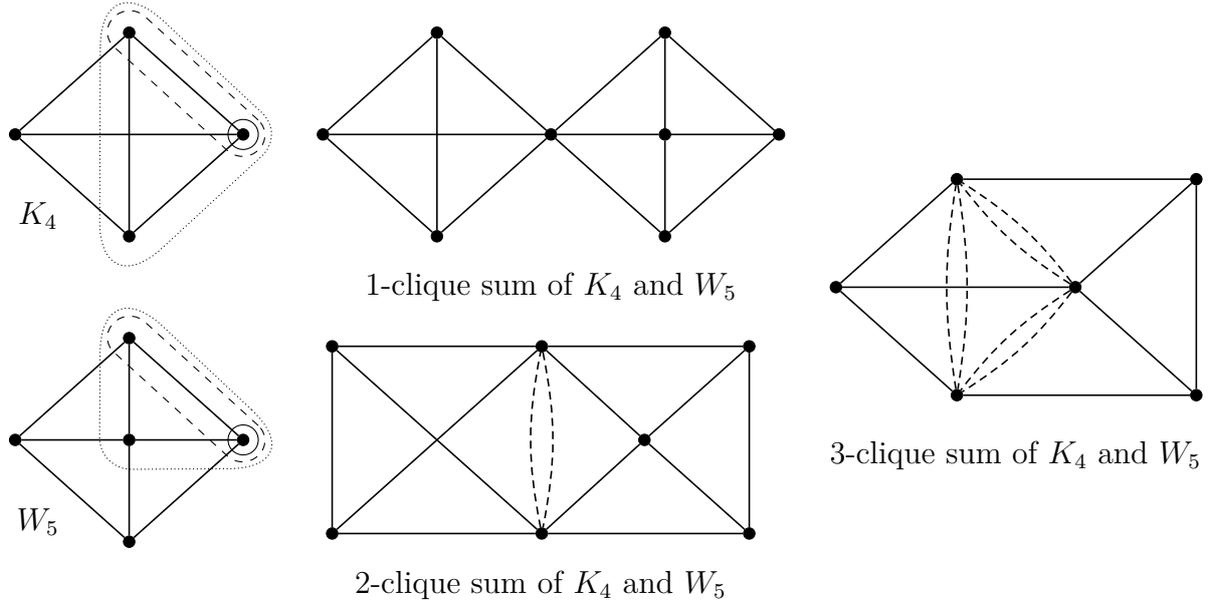
\centering
  \tikzset{every picture/.style={scale=1.5, graphfinal}}

  \caption[Examples of $1$-, $2$-, and $3$-clique sums of the graphs $K_4$ and $W_5$.]
  {Examples of $1$-, $2$-, and $3$-clique sums of the graphs $K_4$ and $W_5$ with vertices of the clique marked with solid, dashed, and dotted lines, where $E'$ can be selected from the dashed edges in the resulting clique sum.}\label{fig:example clique sums}
\end{figure}

\begin{definition}[$k$-clique sum $G_1 \oplus_k G_2$]\label{def:clique-sum}
  Given two graphs $G_1 = (V_1,E_1,\delta_1)$ and $G_2 = (V_2,E_2,\delta_2)$, each containing a $k$-clique $(V'_1,E'_1) \subseteq G_1$ and $(V'_2,E'_2) \subseteq G_2$, together with a bijection $\sigma : V'_1 \rightarrow V'_2$ and remaining edge subset $E' \subseteq E'_1 \cup E'_2$, then we call the graph
$$
G = ((G_1 \cup G_2) \contr X) \setminus \overline{E}
\quad\text{with}\quad X = \{ \{ v', \sigma(v') \} \mid v' \in V'_1 \}
\quad\text{and}\quad \overline{E} = (E'_1 \cup E'_2) \setminus E'
$$ the \emph{$k$-clique sum}\index{clique sum@$k$-clique sum}\symbol{graph sum}{$G_1 \oplus_k G_2$}{$k$-clique sum} of $G_1$ and $G_2$ with $\sigma$ and $E'$. In this operation the two $k$-cliques are contracted in the graph union, after which all edges of the joined cliques are deleted, except the $E'$ remaining edges.

  The $k$-clique sum is also simply called the \emph{$k$-sum}\index{graph!sum@$k$-sum}\index{sum!graph@graph ($k$-sum)} and written as $G_1 \oplus_k G_2$, where $\sigma$ and $E'$ are given by the context.
\end{definition}

The $k$-clique sum operation is commonly used in graph minor theory~\cite{demaine2005algorithmic,lovasz2006graph} to describe inductive graph constructions from basic building blocks. In these constructions, $E'$ is often left open and can be chosen arbitrarily, as little is said about the edges of the clique afterwards.

However, in our bispanning graph scenario the challenge when using clique sums is to keep the \emph{right edge balance}. As $1$-clique~sums are already handled by theorem~\ref{thm:bispanning composite cut-vertex}, we are mainly interested in $2$-clique sums in this section, and later in $3$-clique sum, due to their importance in matroid construction. In the first theorem, we combine bispanning graphs using a $2$-clique sum.

\begin{theorem}[$2$-clique sums of bispanning graphs are bispanning]\label{thm:2-sum bispanning}
  Any $2$-clique sum of two bispanning graphs with remaining edges $E' = \emptyset$ is bispanning.
\end{theorem}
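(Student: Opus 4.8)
The plan is to construct two disjoint spanning trees of $G = G_1 \oplus_2 G_2$ directly from disjoint spanning trees of the summands, rather than verifying the Nash--Williams condition. First I would fix notation: the two $2$-cliques are single edges $e_1 = \{a_1,b_1\}$ in $G_1$ and $e_2 = \{a_2,b_2\}$ in $G_2$, and the bijection $\sigma$ identifies $a_1$ with $a_2$ (call the result $a$) and $b_1$ with $b_2$ (call it $b$). Since $E' = \emptyset$ we have $\overline{E} = E'_1 \cup E'_2 = \{e_1,e_2\}$, so both clique edges are deleted, and the contraction merges no edges because $a_1,a_2$ (and $b_1,b_2$) lie in different components of the disjoint union $G_1 \cup G_2$. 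Hence $G$ has vertex set of size $|V_1|+|V_2|-2$ and edge set $(E_1 - e_1) \dotcup (E_2 - e_2)$. Fixing any decompositions $E_1 = S_1 \dotcup T_1$ and $E_2 = S_2 \dotcup T_2$ and using theorem~\ref{thm:bispanning edge count}, the edge count is $(2|V_1|-2)+(2|V_2|-2)-2 = 2|V|-2$, which is the right balance.

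For the structural part, note the clique edge $e_i$ lies in exactly one of $S_i,T_i$; relabelling $S_i \leftrightarrow T_i$ independently in each summand, I may assume $e_1 \in S_1$ and $e_2 \in S_2$. By the bridge characterization of tree edges (theorem~\ref{thm:tree bridge}, equivalently the fundamental cut of theorem~\ref{thm:fundamental cut}), $G_1[S_1] - e_1$ splits into exactly two components, one containing $a$ and one containing $b$, and likewise $G_2[S_2] - e_2$ separates $a$ from $b$; meanwhile $T_1$ and $T_2$ remain spanning trees of $G_1$ and $G_2$ not using $e_1,e_2$. I would then define
\[ S := (S_1 - e_1) \cup T_2, \qquad T := T_1 \cup (S_2 - e_2), \]
pairing each broken forest with the \emph{other} summand's intact tree.

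Next I would verify that $S$ and $T$ are the desired trees. For $T$: the tree $T_1$ already spans $V_1$ and connects $a$ and $b$; the forest $S_2 - e_2$ has an $a$-component and a $b$-component, whose only vertices in common with $V_1$ are $a$ and $b$ respectively. Attaching each component to $T_1$ therefore glues two trees at a single shared vertex, which keeps the result acyclic and spanning, with $(|V_1|-1)+(|V_2|-2) = |V|-1$ edges; the argument for $S$ is symmetric. Finally, $S \cap T = \emptyset$ because $S_1 \cap T_1 = \emptyset$, $S_2 \cap T_2 = \emptyset$, and the summands contribute disjoint edges, while $S \cup T = (S_1 - e_1) \cup T_1 \cup (S_2 - e_2) \cup T_2 = (E_1 - e_1) \cup (E_2 - e_2) = E(G)$. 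Hence $G$ is bispanning. (No special treatment of a parallel mate of $e_i$ is needed: at most one of two parallel edges can lie in a tree, so such an edge simply travels with whichever of $S_i - e_i$, $T_i$ it belongs to.)

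The main obstacle is resisting the natural but wrong pairing $S = (S_1 - e_1) \cup (S_2 - e_2)$, $T = T_1 \cup T_2$: the union $T_1 \cup T_2$ contains a cycle through $a$ and $b$ (the $a$--$b$ path in $T_1$ together with the one in $T_2$), while $(S_1 - e_1) \cup (S_2 - e_2)$ stays disconnected into an $a$-part and a $b$-part. The key point is that one must \emph{cross} the pairing so that each graph's broken forest is completed by the other graph's spanning tree, and the only delicate verification is that this crossing gives acyclicity and spanning simultaneously, which rests precisely on the fact that the two components of each broken forest meet the opposing tree exactly at $a$ and at $b$.
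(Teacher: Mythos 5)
Your proposal is correct and takes essentially the same approach as the paper: after relabeling so the two clique edges lie in opposite trees, the paper forms exactly your crossed pairing $S := (S_1 - d_1) \dotcup S_2$ and $T := T_1 \dotcup (T_2 - d_2)$, checks the edge balance, and uses the bridge property of the deleted clique edge together with the other summand's intact tree to get connectivity. The only cosmetic difference is that the paper certifies treeness via connectivity plus the count $|S| = |V| - 1$ (theorem~\ref{thm:tree connected}), whereas you argue acyclicity directly by gluing the two forest components to the intact tree at the identified vertices.
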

\begin{proof}
  Let $G_1 = (V_1,E_1,\delta_1)$ and $G_2 = (V_2,E_2,\delta_2)$ be two bispanning graphs, $d_1 \in E_1$ and $d_2 \in E_2$ a $2$-clique (an edge) in $G_1$ and $G_2$ with the ends $\delta(d_1) = \{ x_1,y_1 \}$ and $\delta(d_2) = \{ x_2,y_2 \}$, $\sigma : \{ x_1, y_1 \} \rightarrow \{ x_2, y_2 \}$ one of the two possible bijections, and $S_1 \dotcup T_1$ and $S_2 \dotcup T_2$ two disjoint spanning trees of $G_1$ and $G_2$, respectively.

  Consider the graph $G = (V,E,\delta) = G_1 \oplus_2 G_2 = ((G_1 \cup G_2) \contr \{ \{ x_1, \sigma(x_1) \}, \{ x_2, \sigma(x_2) \} \} - d_1 - d_2$, which is the $2$-clique sum of $G_1$ and $G_2$ with $\sigma$ and $E' = \emptyset$, and within which $\{ x_1,\sigma(x_1) \}$ and $\{ y_1,\sigma(y_1) \}$ were contracted into $\overline{x}$ and $\overline{y}$ (see figure~\ref{fig:2-clique composition}). Obviously, $|V| = |V_1| + |V_2| - 2$ and $|E| = |E_1| + |E_2| - 2$, which yields the edge balance required by theorem~\ref{thm:bispanning edge count}, as $|E_i| = |V_i| - 2$. Let $\overline{V}_1 = V_1 - x_1 - y_1 + \overline{x} + \overline{y}$ and $\overline{V}_2 = V_2 - x_2 - y_2 + \overline{x} + \overline{y}$ be the remaining vertex sets of $G_1$ and $G_2$ in $G$ plus the contracted vertices.

Using the existing spanning trees we can directly construct two disjoint spanning trees of $G$: let without loss of generality $S_1$ contain $d_1$, and $T_2$ contain $d_2$, otherwise switch labels. We then claim that $S := (S_1 - d_1) \dotcup S_2$ and $T := T_1 \dotcup (T_2 - d_2)$ are two disjoint spanning trees of $G$. Due to $d_1$ and $d_2$ being removed, $S \cup T = E$  is clear, and $S \cap T = \emptyset$, since the spanning trees were initially disjoint. We also have $|S| = |S_1| - 1 + |S_2| = |V_1| - 2 + |V_2| - 1 = |V| - 1$ and $|T| = |T_1| + |T_2| - 1 = |V_1| - 1 + |V_2| - 2 = |V| - 1$, so due to theorem~\ref{thm:tree connected} it remains to show that the two edge sets are connected. Since $d_1$ is a bridge in $G_1[S_1]$ (theorem~\ref{thm:tree bridge}), either $\overline{x}$ or $\overline{y}$ cannot be connected to any other vertex in $G[\overline{V}_1]$ using edges only from $S_1 - d_1$. However, since $S_2$ connects any pair of vertices in $G[\overline{V}_2]$, including $\overline{x}$ and $\overline{y}$, any pair of vertices in $G$ can be connected using $S$. The argument analogously applies to $d_2 \in T_2$, $G_2[T_2]$, and $T_1$, and shows that $T$ spans $G$. Thus $S$ and $T$ are disjoint spanning trees of $G$.
\end{proof}

\begin{figure}\centering
  \begin{tikzpicture}[graphfinal,scale=0.9]

    \begin{scope}
      \node (x1) [odot] at (-1,-1) {1};
      \node (x2) [odot] at (-1,+1) {2};
      \node (x3) [odot] at (+1,-1) {3};
      \node (x4) [odot] at (+1,+1) {4};

      \draw[B] (x1) -- (x2);
      \draw[R] (x1) -- (x3);
      \draw[R] (x1) -- (x4);
      \draw[B] (x2) -- (x3);
      \draw[R] (x2) -- (x4);
      \draw[B] (x3) -- node[right, inner sep=1pt] {$d_1$} (x4);

      \node at (0,-1.4) {$G_1$};
    \end{scope}

    \begin{scope}[xshift=5cm,scale={1/sin(180/5)}]
      \node (y0) [odot] at (0,0) {0};
      \node (y1) [odot] at (0:1) {1};
      \node (y2) [odot] at (1*72:1) {2};
      \node (y3) [odot] at (2*72:1) {3};
      \node (y4) [odot] at (3*72:1) {4};
      \node (y5) [odot] at (4*72:1) {5};

      \draw[R] (y0) -- (y1);
      \draw[B] (y0) -- (y2);
      \draw[R] (y0) -- (y3);
      \draw[B] (y0) -- (y4);
      \draw[R] (y0) -- (y5);
      \draw[B] (y0) -- (y1);
      \draw[R] (y1) -- (y2);
      \draw[B] (y2) -- (y3);
      \draw[R] (y3) -- node[left, inner sep=1pt] {$d_2$} (y4);
      \draw[B] (y4) -- (y5);
      \draw[R] (y5) -- (y1);

      \node at (0.9,-0.8) {$G_2$};
    \end{scope}

    \foreach \i in {1,...,4} {
      \node (nx\i) at (x\i) [circle,inner sep=6pt] {};
    }
    \foreach \i in {0,...,5} {
      \node (ny\i) at (y\i) [circle,inner sep=6pt] {};
    }

    \draw [dashed,rounded corners=6pt] (nx3.135) rectangle (ny4.315);
    \draw [dashed,rounded corners=6pt] (nx4.135) rectangle (ny3.315);

    \node [above=1mm of x4] {$x_1$};
    \node [above=1mm of y3] {$\sigma(x_1)$};

    \node [below=1mm of x3] {$y_1$};
    \node [below=1mm of y4] {$\sigma(y_1)$};

    \node at ($(nx4)!0.5!(ny3)$) {$\overline{x}$};
    \node at ($(nx3)!0.5!(ny4)$) {$\overline{y}$};

  \end{tikzpicture}
  \caption{Example composition scheme of a $2$-clique sum of $G_1 = K_4$ and $G_2 = W_6$.}\label{fig:2-clique composition}
\end{figure}
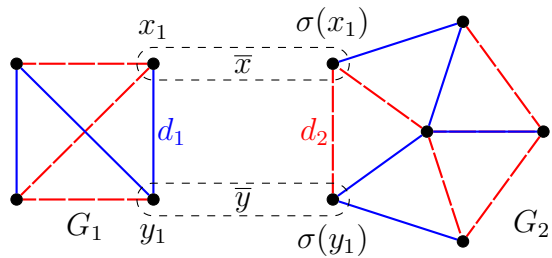

After the previous theorem it is clear that one can combine two bispanning graphs at any edges using a $2$-clique sum operation. However, can two known disjoint spanning tree pairs be kept during the operation? The previous proof answers this affirmatively, if $d_1 \in S_1$ and $d_2 \in T_2$. It uses the trick ``otherwise switch labels'' to gain generality. Hence, if one wants to combine two given bispanning graphs, the joined edges have to be of \emph{different colors}: otherwise one has to invert one of the tree's colors.

After composing two bispanning graphs in the previous theorem, we now focus on \emph{decomposing} bispanning graphs with vertex-connectivity two. Obviously, any general graph with $\operatorname{vconn}(G) = 2$ can be represented as a $2$-clique sum of two subgraphs. However, for bispanning graphs we have to decide how to handle the edges at the \emph{join seam} (see figure~\ref{fig:2-vconn decomposition}) to retrain the property of being bispanning.

This question is surprisingly difficult, because in general it depends on the separated components which the $k \in \{ 0,1,2 \}$ edges at the seam belong to, since the components have to maintain edge balance. If there are $k = 2$ edges at the seam, then the situation is clear: the graph can be composed via $2$-clique sum from two graphs at the two parallel edge pairs. The case $k = 1$ is most difficult, and we will only give a short proof later. Due to the difficulty of $k = 1$, it is curiously coincidental that only $k = 0$ is possible for \emph{atomic} bispanning graphs with vertex-connectivity two, which is the class we are most interested in.

\begin{figure}
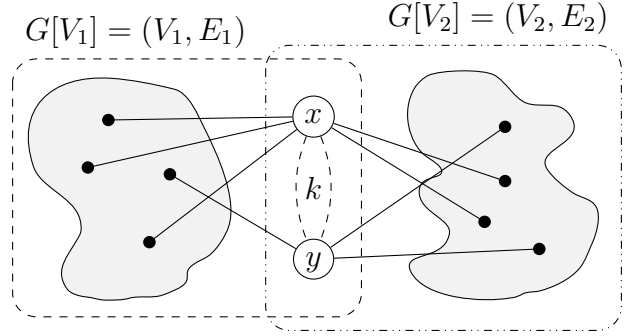
\centering

  \caption[Decomposition scheme of a bispanning graph with vertex-connectivity two.]{Decomposition scheme of a bispanning graph with vertex-connectivity two at the vertex-cut $\{x,y\}$.}\label{fig:2-vconn decomposition}
\end{figure}

\begin{theorem}[{\textls*[-15]{decomposing atomic bispanning graphs with vertex-connectivity two}}]\label{thm:2-vconn decompose}
  If $G = (V,E)$ is an atomic bispanning graph $G$ with $\operatorname{vconn}(G) = 2$ and vertex-cut $\{x,y\} \subseteq V$, then $G$ contains no edge $\{x,y\}$ and $G$ is the 2-clique sum of two simple bispanning graphs.
\end{theorem}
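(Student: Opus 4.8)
The plan is to fix a vertex cut $\{x,y\}$ of size two, split the graph along it, count edges on each side using the \emph{atomic} characterization of Nash-Williams (theorem~\ref{thm:atomic nash-williams}), and then recognise the two halves-plus-an-edge as bispanning graphs whose $2$-clique sum is $G$. Concretely, since $\{x,y\}$ is a vertex cut, $G - \{x,y\}$ is disconnected; I group its components into two nonempty collections, obtaining $V_1, V_2 \subseteq V$ with $V_1 \cup V_2 = V$, $V_1 \cap V_2 = \{x,y\}$, and \emph{no} edges joining $V_1 \setminus \{x,y\}$ to $V_2 \setminus \{x,y\}$. Write $A$ for the edges of $G[V_1]$ other than $\{x,y\}$-edges, $B$ for those of $G[V_2]$, and let $k$ be the number of parallel edges between $x$ and $y$, so that $|E| = |A| + |B| + k$ and, by theorem~\ref{thm:bispanning edge count}, $|E| = 2|V_1| + 2|V_2| - 6$.

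First I would rule out the edge $\{x,y\}$, i.e.\ show $k = 0$. Apply theorem~\ref{thm:atomic nash-williams} to the partition $P_1 = \{V_2\} \cup \{\{v\} \mid v \in V_1 \setminus \{x,y\}\}$, which is non-trivial whenever both sides are nonempty. Every edge except those inside $G[V_2]$ crosses $P_1$, so $|E_{P_1}| = |A|$ and $|P_1| = |V_1| - 1$; atomicity gives $|A| > 2(|V_1| - 2)$, hence $|A| \geq 2|V_1| - 3$. The symmetric partition yields $|B| \geq 2|V_2| - 3$. Adding these, $|E| = |A| + |B| + k \geq 2|V_1| + 2|V_2| - 6 + k$, while $|E| = 2|V_1| + 2|V_2| - 6$; therefore $k \le 0$, so $k = 0$ and no $\{x,y\}$ edge exists. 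The same inequality chain forces equality, giving the exact counts $|A| = 2|V_1| - 3$ and $|B| = 2|V_2| - 3$, which is exactly what is needed next.

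For the second claim I set $G_1 := G[V_1] + \{x,y\}$ and $G_2 := G[V_2] + \{x,y\}$; each has the right edge count ($2|V_i| - 2$) and is simple, since $G$ is simple (theorem~\ref{thm:atomics are simple}) and contains no $\{x,y\}$ edge, so the single added edge creates no parallel pair. To verify $G_1$ is bispanning I would use Nash-Williams (theorem~\ref{thm:nash-williams}) with a two-case analysis on a partition $P$ of $V_1$. If $x,y$ lie in the same member of $P$, I extend $P$ to a partition $P'$ of $V$ by absorbing $V_2 \setminus \{x,y\}$ into that member; then $E^{G_1}_P = E^{G}_{P'}$ (the added edge does not cross and no interior edges of $G[V_2]$ cross), and the bound $|E^{G}_{P'}| \geq 2(|P'|-1) = 2(|P|-1)$ transfers directly. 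If $x,y$ lie in different members, I extend $P$ by making every vertex of $V_2 \setminus \{x,y\}$ a singleton; then all $|B| = 2|V_2| - 3$ edges of $G[V_2]$ cross, $|P'| = |P| + |V_2| - 2$, and Nash-Williams on $G$ gives $|E^{G[V_1]}_P| \geq 2(|P|-1) - 1$, so the extra crossing from the added edge $\{x,y\}$ restores $|E^{G_1}_P| \geq 2(|P|-1)$. Hence $G_1$, and symmetrically $G_2$, is bispanning. Finally, the $2$-clique sum $G_1 \oplus_2 G_2$ taken along the two edges $\{x,y\}$, with identity bijection and $E' = \emptyset$ (definition~\ref{def:clique-sum}), contracts the two copies of $x$ and of $y$ and deletes the two added edges, reproducing precisely the vertex set $V$ and edge set $A \cup B = E$ of $G$.

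The main obstacle is the Nash-Williams verification that the two augmented halves $G_1, G_2$ are genuinely bispanning: the delicate point is the case where the cut vertices land in different members of a partition, where the single added edge must exactly compensate a deficit of one in the induced-subgraph cut, and this only works because atomicity pins down $|A|$ and $|B|$ to their minimal values. The suppression of the $\{x,y\}$ edge ($k=0$) is by contrast a short counting argument, and the clique-sum reassembly is then purely bookkeeping.
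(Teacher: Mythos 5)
Your proof is correct and follows the paper's strategy almost step for step: the same split $V = V_1 \cup V_2$ at the cut $\{x,y\}$, the same application of the atomic Nash--Williams bound (theorem~\ref{thm:atomic nash-williams}) to the partitions that isolate one side and clump the other, the same counting that forces $k = 0$ together with the exact values $|A| = 2|V_1|-3$ and $|B| = 2|V_2|-3$, and the same candidates $G_i = G[V_i] + \{x,y\}$ for the clique-sum factors. The one place you genuinely diverge is the verification that the $G_i$ are bispanning. The paper extends an arbitrary partition $P_1$ of $V_1$ by singletons of $V_2 \setminus \{x,y\}$ and invokes the \emph{strict} atomic inequality on $G$ to obtain $|E_{P_1}| \geq 2(|P_1|-1)$ in one shot, never needing the added edge to cross. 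You instead use only the non-strict bound of theorem~\ref{thm:nash-williams}, split on whether $x$ and $y$ share a member of the partition, and let the added edge $\{x,y\}$ absorb the deficit of one in the separated case. Your variant is in fact slightly more careful: the paper's strict inequality is unavailable precisely when $P_1$ is the all-singleton partition of $V_1$, since the extended partition of $V$ is then trivial and theorem~\ref{thm:atomic nash-williams} excludes it; in that case the crossing count of $G[V_1]$ is exactly $2(|P_1|-1)-1$ and only the compensating edge $\{x,y\}$ --- which your argument tracks explicitly and the paper's does not --- restores the required bound. So your two-case analysis costs a little length but closes an edge case the paper glosses over, while the paper's argument is shorter wherever the strict bound genuinely applies.
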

\begin{proof}
  Let $V_1,V_2 \subseteq V$ be two subsets with $V_1 \cup V_2 = V$, $V_1 \cap V_2 = \{ x,y \}$, and $V_1 \neq V$ or $V_2 \neq V$, such that $G[V_1] = (V_1,E_1)$ and $G[V_2] = (V_2,E_2)$ are connected (see figure~\ref{fig:2-vconn decomposition}, which shows the structure of this decomposition). These two subgraphs contain the two or more components $G \setminus \{ x,y \}$ decomposes into, together with $x$, $y$, and the connecting edges.

  We now have to consider the edges between $x$ and $y$: since $G$ is simple, there can be zero or one edges with incidence $\{x,y\}$, let $k \in \{ 0,1 \}$ be this number. As we include $x$, $y$ and the $k$ edges between them in both subgraphs, we have $|V_1| + |V_2| = |V| + 2$ and $|E_1| + |E_2| = |E| + k$, from which easily $|E| = 2 |V| - 2 = 2 (|V_1| + |V_2| - 2) - 2$ follows.

  As $G$ is atomic, theorem~\ref{thm:atomic nash-williams} can be applied to the partitions $P_1 = \{ \{v\} \mid v \in V_1 \setminus \{ x,y \} \} \cup \{ V_2 \}$ and $P_2 = \{ V_1 \} \cup \{ \{v\} \mid v \in V_2 \setminus \{ x,y \} \}$, which isolate each vertex of the corresponding subgraph except $x$ and $y$, and group the other subgraph with the vertex-cut. The theorem yields $|E_1| - k \geq 2 (|V_1| - 2) + 1$ and $|E_2| - k \geq 2 (|V_2| - 2) + 1$ for the two subgraphs, since all edges except the $k$ edges $\{x,y\}$ are counted. From $|E_2| - k \geq 2 (|V_2| - 2) + 1$, and substitutions with the equations between vertex and edge sets above, follows $|E_1| \leq 2 (|V_1| - 2) + 1$, and analogously $|E_2| \leq 2 (|V_2| - 2) + 1$ (see the proof of theorem~\ref{thm:2-vconn decompose more} for details). Combined, we thus have $|E_i| \leq 2 (|V_i| - 2) + 1$ and $|E_i| \geq 2 (|V_i| - 2) + 1 + k$ for both $i = 1,2$.

  Obviously, $k = 1$ is impossible, thus $k = 0$ and $G$ contains no edge $\{x,y\}$. Hence, we get the equations $|E_1| = 2 |V_1| - 3$ and $|E_2| = 2 |V_2| - 3$. It remains to show that $G_1 := G[V_1] + \{x,y\}$ and $G_2 := G[V_2] + \{x,y\}$ are bispanning graphs. Let $P_1$ be a partition of $V_1$, then we can extend $P_1$ to the whole of $V$ as $P = P_1 \cup \{ \{v\} \mid v \in V \setminus V_1 \}$, which has $|P| = |P_1| + |V_2| - 2$ members. Combining theorem~\ref{thm:atomic nash-williams} for $G$ with $|E_P| = |E_{P_1}| + |E_2|$, we get $|E_{P_1}| = |E_P| - |E_2| \geq 2 (|P| - 1) + 1 - |E_2| = 2 (|P_1| - 1) + 2 (|V_2| - 2) + 1 - |E_2| = 2 (|P_1| - 1)$, as $|E_2| = 2 |V_2| - 3$, and thus theorem~\ref{thm:nash-williams} guarantees that $G[V_1] + \{x,y\}$ is a bispanning subgraph. The same applies to $G[V_1] + \{x,y\}$. Combining $G_1$ and $G_2$ at their $2$-cliques $\{x,y\}$ the $2$-clique sum with $\sigma = \operatorname{id}_{\{x,y\}}$ and $E' = 0$ yields exactly $G$.
\end{proof}

Decomposing \emph{composite} bispanning graphs with vertex-connectivity two is much more difficult. Even worse: it is not always possible. The smallest counter example is $B_{4,4}$ from figure~\ref{fig:small-bispanning}, as a plain triangle is obviously not a bispanning graph. $B_{4,4}$ contains parallel edges, but counter examples without parallel edges exist as well. Remarkably, one \emph{can} always decompose a composite bispanning graph with vertex-connectivity two, if at least one edge is in the vertex cut. As this result is not as important as the same one for atomic graphs, we give only a short proof.
\begin{theorem}[{\textls[-5]{decomposing more bispanning graphs with vertex-connectivity two}}]\label{thm:2-vconn decompose more}
  Given a bispanning graph $G$ with $\operatorname{vconn}(G) = 2$ where $\{x,y\}$ is a vertex-cut and $G$ contains at least one edge incident to $x$ and $y$, then $G$ is the 2-clique sum of two bispanning graphs.
\end{theorem}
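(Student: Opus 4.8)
The plan is to follow the decomposition of the atomic case (theorem~\ref{thm:2-vconn decompose}) as closely as possible, but to absorb the edge(s) lying in the vertex-cut by letting the clique sum \emph{delete} helper edges instead of keeping them, so that the edge balance of both pieces still comes out right. First I would fix subsets $V_1, V_2 \subseteq V$ with $V_1 \cup V_2 = V$, $V_1 \cap V_2 = \{x,y\}$, both $G[V_1]$ and $G[V_2]$ connected and proper, realizing the two sides of the cut. Writing $E_0$ for the set of edges incident to both $x$ and $y$, theorem~\ref{thm:bispanning no three parallel} gives $k := |E_0| \in \{1,2\}$, and I would partition $E = A \dotcup B \dotcup E_0$ into the edges interior to $V_1$, interior to $V_2$, and between $x$ and $y$. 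The target is to exhibit $G$ as a $2$-clique sum $G_1 \oplus_2 G_2$ (definition~\ref{def:clique-sum}) with $E' = \emptyset$, so that both clique edges are deleted; this is in fact forced, since two bispanning pieces satisfy $|E_1| + |E_2| = 2|V|$, and recovering a graph with $2|V|-2$ edges then leaves no choice. Consequently each real edge of $E_0$ must survive as an \emph{interior} edge of one piece, sitting parallel to that piece's deleted clique edge.

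Second, I would pin down how the $2|V|-2$ edges of $G$ split across the cut by applying Nash--Williams (theorem~\ref{thm:nash-williams}) to the two partitions $P_1 = \{\{v\} \mid v \in V_1 \setminus \{x,y\}\} \cup \{V_2\}$ and $P_2 = \{\{v\} \mid v \in V_2 \setminus \{x,y\}\} \cup \{V_1\}$, which isolate the interior of one side and lump the other. These yield $|A| \geq 2|V_1| - 4$ and $|B| \geq 2|V_2| - 4$, while $|A| + |B| = (2|V|-2) - k = 2|V_1| + 2|V_2| - 6 - k$. For $k = 1$ the slack over the two lower bounds is exactly one, so (after possibly swapping $V_1 \leftrightarrow V_2$) I may assume $|A| = 2|V_1| - 4$ and $|B| = 2|V_2| - 3$; then $G[V_2]$ already has the bispanning edge count $2|V_2| - 2$ while $G[V_1]$ is one edge short. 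I would set $G_2 := G[V_2]$ and $G_1 := G[V_1] + \{x,y\}$, the latter carrying the two parallel edges $e_0 \in E_0$ and the new helper edge; the $2$-clique sum whose clique edges are the helper edge of $G_1$ and the copy of $e_0$ in $G_2$, taken with $E' = \emptyset$, then reconstructs $G$ exactly. For $k = 2$ both lower bounds are forced to equality, the split is symmetric, each piece is isomorphic to the corresponding $G[V_i]$, and that case is routine.

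The heart of the proof, and the step I expect to be delicate, is verifying that $G_1 = G[V_1] + \{x,y\}$ is bispanning, since $G[V_1]$ itself is not. Both pieces already have the correct edge count, so by theorem~\ref{thm:nash-williams} it suffices to check $|E_Q| \geq 2(|Q|-1)$ for every partition $Q$ of $V_1$ (and, routinely, of $V_2$). For $G_2 = G[V_2]$ one extends $Q$ by singletons on $V_1 \setminus \{x,y\}$ and reads the bound off Nash--Williams for $G$, using $|A| = 2|V_1|-4$. For $G_1$ the verification must split into two subcases according to whether $x$ and $y$ lie in the same member of $Q$ or in different members: in the same-member case I would \emph{lump} all of $V_2$ into that member (keeping $|P| = |Q|$), so that no interior edge of $V_2$ and neither copy of the cut edge crosses, giving the bound at once; in the different-member case I would instead extend by $V_2$-interior singletons, where the Nash--Williams bound for $G$ falls short of the target by two, but the two parallel cut edges $e_0$ and the helper edge both cross $Q$ and make up exactly that deficit. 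This tight cancellation --- the helper edge contributing precisely the crossing that the one-edge-short subgraph is missing --- is the crux, and it is the detail deliberately omitted from the atomic proof of theorem~\ref{thm:2-vconn decompose}. Once both pieces are shown bispanning, the clique-sum identity assembled in the second step finishes the argument; note that $G_1$ is necessarily non-simple, so, unlike in the atomic case, the two factors need not be simple bispanning graphs.
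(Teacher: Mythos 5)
Your proposal is correct and follows essentially the same route as the paper's proof: the same Nash--Williams bounds obtained from the partitions isolating one side of the cut and lumping the other, the same case analysis on $k \in \{1,2\}$, and the same choice of pieces $G[V_1]+\{x,y\}$ (with the parallel pair) and, effectively, $G[V_2]$ with its cut edge serving as the clique edge. The only difference is that you actually carry out the verification that both pieces are bispanning --- correctly identifying the two-edge cancellation when $x$ and $y$ lie in different partition members --- a step the paper deliberately omits as being ``very similar to the proof of theorem~\ref{thm:2-vconn decompose}''.
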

\begin{proof}
  Let $G = (V,E,\delta)$ be a bispanning graph with $\operatorname{vconn}(G) = 2$, and $\{ x,y \} \subseteq V$ a vertex set for which $G - x - y$ is not connected (this requires $x \neq y$). Let $V_1,V_2 \subseteq V$ be two subsets with $V_1 \cup V_2 = V$, $V_1 \cap V_2 = \{ x,y \}$, and $V_1 \neq V$ or $V_2 \neq V$, such that $G[V_1] = (V_1,E_1,\delta_1)$ and $G[V_2] = (V_2,E_2,\delta_2)$ are connected (see figure~\ref{fig:2-vconn decomposition}).  Let $k$ be the number of edges $e$ with $\delta(e) = \{x,y\}$. As we include $x$, $y$ and the $k$ edges between them in both subgraphs, we have $|V_1| + |V_2| = |V| + 2$ and $|E_1| + |E_2| = |E| + k$, from which easily $|E| = 2 |V| - 2 = 2 (|V_1| + |V_2| - 2) - 2$ follows.

  As $G$ is a bispanning graph, theorem~\ref{thm:nash-williams} can be applied to the partitions $P_1 = \{ \{x\} \mid x \in V_1 \setminus \{ v,w \} \} \cup \{ V_2 \}$ and $P_2 = \{ V_1 \} \cup \{ \{v\} \mid v \in V_2 \setminus \{ v,w \} \}$, which isolate each vertex of a subgraph except $v$ and $w$, and clump the other subgraph with the vertex-cut. The theorem this time yields only $|E_1| - k \geq 2 (|V_1| - 2)$ and $|E_2| - k \geq 2 (|V_2| - 2)$ for the two subgraphs, as they are not necessarily atomic. Using the equations for vertex and edge balance we get
  \begin{align*}
    &|E_2| - k \geq 2 (|V_2| - 2) \quad\Rightarrow\quad |E| + k - |E_1| - k \geq 2 (|V_2| - 2) \\
    &\Rightarrow\quad 2 (|V_1| + |V_2| - 2) - 2 - |E_1| \geq 2 (|V_2| - 2) \quad \Rightarrow\quad 2 |V_1| - 2 - |E_1| \geq 0 \\
    &\Rightarrow\quad |E_1| \leq 2|V_1| - 2 \,, \qquad\text{so we have}\qquad 2|V_1| - 4 + k \leq |E_1| \leq 2|V_1| - 2 \,.
  \end{align*}
  Analogously, we get the same result for $|E_2|$ and $|V_2|$. Next we need to find out which cases provide valid edge balances for $G[V_1] + \{x,y\}$ and $G[V_2] + \{x,y\}$, which are the prospective components for the $2$-clique sum. Due to $E_1$ and $E_2$ both including the $k$ edges, we have to remove $k$ edges and reach $|E_i| = 2|V_i| - 3$ prior to adding the two edges which are used to join the $2$-clique sum.

  One can now see that for $k = 0$, we have three cases $(|E_1|,|E_2|) \in \{ (2|V_1| - 4, 2|V_2| - 2),\; (2|V_1| - 3, 2|V_2| - 3),\; (2|V_1| - 2, 2|V_2| - 4) \}$, though the first and last cases can be handled symmetrically. While the case $(|E_1|,|E_2|) = (2|V_1| - 3, 2|V_2| - 3)$ leads to a valid edge balance for $G[V_1] + \{x,y\}$ and $G[V_2] + \{x,y\}$, the other cases requires $(|E_1|,|E_2|) = (2|V_1| - 4, 2|V_2| - 2)$, which prohibits that $G[V_1] + \{x,y\}$ and $G[V_2] + \{x,y\}$ are bispanning graphs. These two subcases for $k = 0$ are the reason why only some composite bispanning graphs with vertex-connectivity two and no edge between $x$ and $y$ can be decomposed.

  For $k = 1$, we have two symmetric cases $(|E_1|,|E_2|) \in \{ (2|V_1| - 3, 2|V_2| - 2),\; (2|V_1| - 2, 2|V_2| - 3)$. The one extra edge, hence has to be removed from the larger edge set to correct the edge balance in both graphs $G[V_1] + \{x,y\}$ and $G[V_2] + \{x,y\}$. We omit the proof that both graphs are bispanning, as it is very similar to the proof of theorem~\ref{thm:2-vconn decompose}.

  For $k = 2$, there is only the case $(|E_1|,|E_2|) = (2|V_1| - 2, 2|V_2| - 2)$, which requires one of the two additional edges to be removed from each of the graphs $G[V_1] + \{x,y\}$ and $G[V_2] + \{x,y\}$.
\end{proof}

\begin{figure}
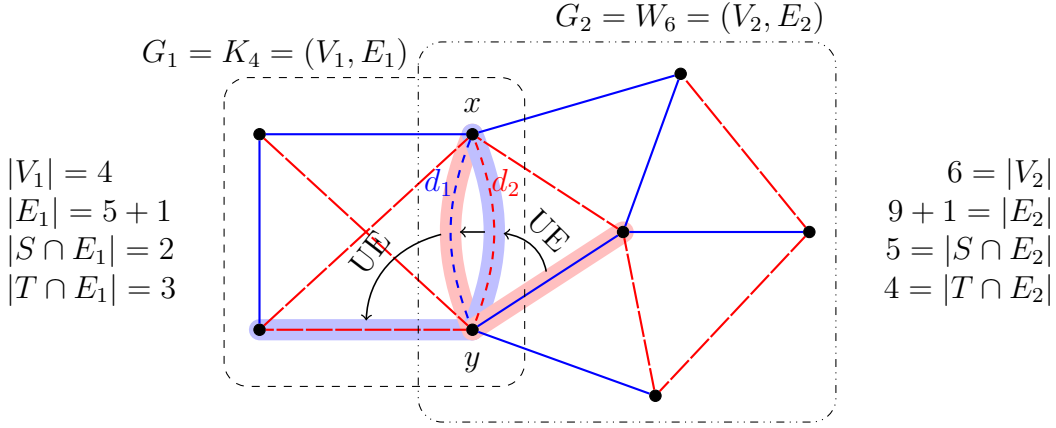
\centering
  \tikzset{every picture/.style={scale=3.5}}

  \caption[Composition of a unique exchange in $G = (V,E) = K_4 \oplus_2 W_6$.]
  {Composition of a unique exchange in $G = (V,E) = K_4 \oplus_2 W_6$ ($|V| = 8$, $|E| = 14$).}\label{fig:2-clique UE-decompose}
\end{figure}

Consider the atomic bispanning graph $G$ with $\operatorname{vconn}(G) = 2$ in figure~\ref{fig:2-clique UE-decompose}, which is a $2$-clique sum of $K_4$ and $W_6$. Of the $6$ edges in $K_4$ only $5$ remain in $G$, and of the $10$ edges in $W_6$ only $9$ remain. Due to theorem~\ref{thm:2-sum bispanning}, the $2$-clique sum composition of the two graphs is a bispanning graph. On the other hand, since $G$ is atomic, there is no edge at a vertex cut of size two, and $G$ can be decomposed at the cut, as proved in theorem~\ref{thm:2-vconn decompose}.

All previous theorems only considered whether the composition or decomposition remains bispanning. However, the goal of this section is to consider how to construct the unique exchange graph $\vec{\tau}_3(G)$ from a $2$-clique sum decomposition $G = G_1 \oplus_2 G_2$. Hence, we have to map pairs of disjoint bispanning trees $(S_1,T_1)$ from $G_1$ and $(S_2,T_2)$ from $G_2$ to $G$, and vice versa. In the proof of theorem~\ref{thm:2-vconn decompose}, we already showed that two pairs of disjoint trees can be combined, if and only if the edges in the join seam are in different sets. In the following theorem we extend this to show a full bijection from $V_{\tau(G)}$ to this restricted set of pairs of $(S_1,T_1)$ and $(S_2,T_2)$, which will be called $V_{\eta_{d_1,d_2}(G_1,G_2)}$.

Consider figure~\ref{fig:2-clique UE-decompose} again for the intuition behind the mapping. In a sense, the edge in the join seam $\{x,y\}$ is expanded by the $2$-clique sum into the adjoined graph. For example the red edge in $W_6$ in the figure is expanded into the red tree of $K_4$. This tree must connect $x$ and $y$ inside $K_4$, otherwise the join result is no longer a tree. Correspondingly, the blue tree must not connect the vertices $x$ and $y$ in $K_4$, otherwise it creates a cycle. The difficulty in the following theorem is to show that this also works when \emph{decomposing} any pair of disjoint spanning trees of $G$ into disjoint spanning trees of the subgraphs.

\begin{lemma}[bijection of disjoint spanning trees in $2$-clique sum decomposition]\label{lem:2-vconn tree mapping}
  If $G = (V,E)$ is an atomic bispanning graph with $\operatorname{vconn}(G) = 2$, which due to theorem~\ref{thm:2-vconn decompose} can be decomposed into the $2$-clique sum $G = G_1 \oplus_2 G_2$ of two simple bispanning graphs $G_1 = (V_1,E_1)$ and $G_2 = (V_2,E_2)$ at the edges $d_1 \in E_1$ and $d_2 \in E_2$ with $\sigma = \operatorname{id}$ and $E' = \emptyset$, then there is a bijection for the the following mapping:
  $$
  \varphi_v : V_{\tau(G)} \rightarrow V_{\eta_{d_1,d_2}(G_1,G_2)} :=
  \begin{aligned}[t]
    \{ & ((S_1,T_1),(S_2,T_2)) \in V_{\tau(G_1)} \times V_{\tau(G_2)} \mid \\
       & \text{not } ( (d_1 \in S_1 \text{ and } d_2 \in S_2) \text{ or } (d_1 \in T_1 \text{ and } d_2 \in T_2) ) \} \,.
  \end{aligned}
  $$
\end{lemma}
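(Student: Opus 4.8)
The plan is to construct the forward map $\varphi_v$ explicitly, exhibit a candidate inverse, and then verify that both maps land in the correct sets and compose to the identity in both directions. The natural forward map restricts a pair of disjoint spanning trees of $G$ to each subgraph, much as in lemma~\ref{lem:bispanning subtrees split} for the composite case. Concretely, given $(S,T) \in V_{\tau(G)}$, the edges of $G$ split as $E = (E_1 \setminus d_1) \dotcup (E_2 \setminus d_2)$ after the $2$-clique sum, so I would set $\varphi_v((S,T)) := ((S_1,T_1),(S_2,T_2))$, where the trees of $G_1$ are obtained by taking $S \cap (E_1 \setminus d_1)$ and $T \cap (E_1 \setminus d_1)$ and then adding the reinstated seam edge $d_1$ to whichever of $S_1,T_1$ needs it to become spanning; symmetrically for $G_2$ with $d_2$. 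The key structural fact is that in $G$ exactly one of $S,T$ connects $x$ to $y$ within the $G_1$-part and the other connects them within the $G_2$-part, since the vertex-cut $\{x,y\}$ forces the two components to be linked through $x,y$ in complementary colors.

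The main steps, in order, are as follows. First I would show that $S \cap (E_1 \setminus d_1)$ is a forest in $G_1 - d_1$ spanning $V_1 - \{x,y\}$ but possibly leaving $x,y$ in separate components, and that adding $d_1$ to exactly one of the two colors yields a genuine spanning tree of $G_1$; the edge-counting identities $|E_1| = 2|V_1|-3$ and $|E_2| = 2|V_2|-3$ from theorem~\ref{thm:2-vconn decompose} are exactly what make the arithmetic balance. This uses the spanning-tree equivalences of theorem~\ref{thm:tree}, particularly theorem~\ref{thm:tree connected} and theorem~\ref{thm:tree acyclic}, applied to the induced subgraph. Second, I would verify the image lies in $V_{\eta_{d_1,d_2}(G_1,G_2)}$: because in $G$ the color connecting $x,y$ on the $G_1$-side is the complement of the color connecting them on the $G_2$-side, the reinstated seam edges $d_1$ and $d_2$ land in opposite-colored trees, so we never have $d_1 \in S_1$ with $d_2 \in S_2$ nor $d_1 \in T_1$ with $d_2 \in T_2$. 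Third, I would define the inverse $\psi_v$ by gluing: given an admissible pair $((S_1,T_1),(S_2,T_2))$, remove $d_1$ and $d_2$ and take $S := (S_1 \setminus d_1) \cup (S_2 \setminus d_2)$ and $T := (T_1 \setminus d_1) \cup (T_2 \setminus d_2)$ after relabeling colors so that $d_1 \in S_1$ forces $d_2 \in T_2$; this is precisely the construction validated in the proof of theorem~\ref{thm:2-sum bispanning}, which already shows the result is a disjoint spanning tree pair of $G$. Fourth, I would check $\psi_v \circ \varphi_v = \operatorname{id}$ and $\varphi_v \circ \psi_v = \operatorname{id}$ by tracking the seam edges through both operations, as in the bijection verification in theorem~\ref{thm:composite tau decompose}.

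The main obstacle I anticipate is the color-bookkeeping at the seam, specifically the ``relabel colors'' step in defining $\psi_v$. The admissibility condition excludes two of the four seam-color combinations, leaving the two ``crossed'' configurations $(d_1 \in S_1, d_2 \in T_2)$ and $(d_1 \in T_1, d_2 \in S_2)$; I must confirm that each crossed configuration glues to a well-defined pair $(S,T)$ of $G$ and, crucially, that $\varphi_v$ sends $(S,T)$ back to \emph{that same} crossed configuration rather than its color-swap. The subtlety is that $\varphi_v$ must deterministically recover which color was on $d_1$ in the subgraph, and this is pinned down by asking which color connects $x$ to $y$ inside each part of $G$; since the two subgraphs demand complementary connecting colors, the recovery is forced and unambiguous. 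Establishing this determinism carefully — that the connectivity of $x,y$ in each color within each induced subgraph is complementary and hence selects a unique admissible preimage — is the heart of the argument, whereas the edge counts and the tree/forest checks are routine consequences of theorems~\ref{thm:tree}, \ref{thm:2-vconn decompose}, and~\ref{thm:2-sum bispanning}.
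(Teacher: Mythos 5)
Your proposal is correct and follows essentially the same route as the paper's proof: restrict $(S,T)$ to the two sides, reinstate $d_1$ and $d_2$ in the deficient tree, glue (removing the seam edges) for the inverse, and verify the two compositions. The only cosmetic difference is that the paper decides which tree receives the seam edge via the parity count $|S \cap E_1| + |T \cap E_1| = 2|V_1| - 3$ (adding $d_1$ to the strictly smaller restriction), whereas you decide it by which color connects $x$ to $y$ inside each side --- two equivalent formulations of the same fact.
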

\begin{proof}
  We regard how to map vertices $\varphi_v : V_{\tau(G)} \rightarrow V_\eta \subseteq V_{\tau(G_1)} \times V_{\tau(G_2)}$, where $V_\eta$ is short for $V_{\eta_{d_1,d_2}(G_1,G_2)}$. To map a pair of disjoint spanning trees $(S,T)$ from $G$ to $G_1$ and $G_2$, we obviously have to restrict them to the corresponding subgraph and add $d_1$ and $d_2$ to the appropriate tree, as already discussed in the proof of theorem~\ref{thm:2-sum bispanning}.  However, as before in the proof to~\ref{thm:composite tau decompose}, it is not immediately clear that for \emph{every pair} $(S,T) \in V_{\tau(G)}$ the restrictions of $S$ and $T$ to $G_1$ and $G_2$ are spanning trees and how $d_1$ and $d_2$ are added appropriately.

  Let $(S,T) \in V_{\tau(G)}$ be two disjoint spanning trees. Due to the $2$-clique sum, $E = (E_1 - d_1) \dotcup (E_2 - d_2)$, and we have $|E_1 - d_1| = 2 |V_1| - 3 = |S \cap E_1| + |T \cap E_1|$, which is an uneven integer. Thus either $|S \cap E_1|$ or $|T \cap E_1|$ is smaller, and we add $d_1$ to this smaller intersection: let $(S',T') \in \{ ((S \cap E_1) + d_1, T \cap E_1), (S \cap E_1, (T \cap E_1) + d_1) \}$ as appropriate. As $S$ and $T$ are disjoint spanning trees, the sum $|S'| + |T'| = |E_1| = 2 |V_1| - 2$, since they contain all edges of $G_1$. With the same edge balance argument as in the proof of $\varphi_v$ in theorem~\ref{thm:composite tau decompose}, we gain $|S'| = |V_1| - 1 = |T'|$, since the lack of an edge in one supposed spanning tree set implies a cycle in the other edge set in the original graph, where they are disjoint spanning trees. Thus $(S',T')$ are disjoint spanning trees of $G_1$. Analogously, the same argument applies to $E_2$ and $G_2$.

  As $|S \cap E_1| + |S \cap E_2| = |V| - 1 = |T \cap E_1| + |T \cap E_2|$, it is impossible that both intersections on $S$, $|S \cap E_1|$ and $|S \cap E_2|$, are smaller than the corresponding ones on $T$, $|T \cap E_1|$ and $|T \cap E_2|$, and vice versa. So only one of the two restrictions of $S$ is smaller, never both, and we can formally define the vertex mapping
  $$
  \varphi_v( (S,T) ) =
  \begin{cases}
    \begin{aligned} ( & [(S \cap E_1) + d_1, (T \cap E_1)], \\[-2pt] & [(S \cap E_2), (T \cap E_2) + d_2] ) \end{aligned} &
    \begin{aligned} \text{if } ( & |S \cap E_1| = |V_1| - 2 \text{\,, } |T \cap E_1| = |V_1| - 1 \text{\,,} \\[-2pt]
                                 & |S \cap E_2| = |V_2| - 1 \text{\,, } |T \cap E_2| = |V_2| - 2 ) \text{\,, or} \end{aligned} \\[3\medskipamount]
    \begin{aligned} ( & [(S \cap E_1), (T \cap E_1) + d_1], \\[-2pt] & [(S \cap E_2) + d_2, (T \cap E_2)] ) \end{aligned} &
    \begin{aligned} \text{if } ( & |S \cap E_1| = |V_1| - 1 \text{\,, } |T \cap E_1| = |V_1| - 2 \text{\,,} \\[-2pt]
                                 & |S \cap E_2| = |V_2| - 2 \text{\,, } |T \cap E_2| = |V_2| - 1) \,. \end{aligned}
  \end{cases}
  $$
  The reverse vertex mapping $\psi_v : V_\eta \rightarrow V_{\tau(G)}$ is easier, since the definition of $V_\eta$ forbids $(d_1 \in S_1 \text{ and } d_2 \in S_1)$ or $(d_1 \in T_2 \text{ and } d_2 \in T_2)$, so either ($d_1 \in S_1$ and $d_2 \in T_2$) or ($d_1 \in T_2$ and $d_2 \in S_1$). In both cases the mapping
  $$\psi_v( [(S_1,T_1), (S_2,T_2)] ) = ((S_1 \dotcup S_2) \setminus \{ d_1,d_2 \}, (T_1 \dotcup T_2) \setminus \{ d_1,d_2 \})$$
  yields a pair of disjoint spanning trees of $G$, since even though $d_1$ or $d_2$ is a bridge in the corresponding tree-graph, deletion of $d_1$ or $d_2$ does not disconnect the union, because the other union-joined tree cannot contain $d_1$ or $d_2$, and thus connects the ends of $d_1$ or $d_2$.  Clearly, $\varphi_v$ and $\psi_v$ are inverse to each other.
\end{proof}

After having shown a bijection of disjoint spanning trees of $G$ to a subset of trees of $G_1 \oplus_2 G_2$, we can consider in detail what happens with fundamental cycles and cuts during this composition. In the next theorem we will show how cycles/cuts of the composed graph $G$ can be restricted (or projected) down to cycles/cuts of $G_1$ or $G_2$. This is easier than showing how cycles/cuts can be expanded at the join seam, and sufficient for the following theorems.

\begin{lemma}[projection of cut and cycle in $2$-clique sum decomposition]\label{lem:2-vconn cutcycle mapping}
  Let $G = (V,E)$ be an atomic bispanning graph with $\operatorname{vconn}(G) = 2$, which due to theorem~\ref{thm:2-vconn decompose} can be decomposed into the $2$-clique sum $G = G_1 \oplus_2 G_2$ of two simple bispanning graphs $G_1 = (V_1,E_1)$ and $G_2 = (V_2,E_2)$ at the edges $d_1 \in E_1$ and $d_2 \in E_2$ with $\sigma = \operatorname{id}$ and $E' = \emptyset$. If $S \dotcup T = E$ is a pair of disjoint spanning trees, and $e \in S \cap E_1$ an edge, then
  \begin{enumerate}
  \item $D_{G_1}(S,e) = (D_G(S,e) \cap E_1) \;\,\phantom{+ d_1}$ if $D_G(S,e) \cap E_2 = \emptyset$, and
  \item $D_{G_1}(S,e) = (D_G(S,e) \cap E_1) + d_1$ if $D_G(S,e) \cap E_2 \neq \emptyset$; likewise
  \item $C_{G_1}(T,e) = (C_G(T,e) \cap E_1) \;\,\phantom{+ d_1}$ if $C_G(T,e) \cap E_2 = \emptyset$, and
  \item $C_{G_1}(T,e) = (C_G(T,e) \cap E_1) + d_1$ if $C_G(T,e) \cap E_2 \neq \emptyset$.
  \end{enumerate}
  The same holds analogously for $e \in T \cap E_1$, and for $e \in S \cap E_2$ or $e \in T \cap E_2$ with $G_1$ and $G_2$, $E_1$ and $E_2$, and $d_1$ and $d_2$ interchanged.
\end{lemma}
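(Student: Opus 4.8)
The plan is to reduce everything to a single geometric picture of where the seam vertices $x,y$ land when the relevant tree edge is removed, and then read off all four cases uniformly. First I would fix notation via lemma~\ref{lem:2-vconn tree mapping}: the edge $e \in S \cap E_1$ sits in the projected spanning tree $S_1$ of $G_1$, where $S_1 \in \{\, S \cap E_1,\ (S \cap E_1)+d_1 \,\}$, and the symbol $D_{G_1}(S,e)$ in the statement is understood as $D_{G_1}(S_1,e)$ (well defined, since $e \in S_1$ and $e \neq d_1$). The decisive preliminary observation I would establish is a containment sub-lemma: since $S$ is a spanning tree of $G$ and $\{x,y\}$ is a vertex cut separating $V_1$ from $V_2$, every connected component of the forest $S \cap E_2$ on $V_2$ must contain $x$ or $y$, for otherwise such a component would be a nonempty subset of $V_2 \setminus \{x,y\}$ with no edge of $G[S]$ leaving it, contradicting connectedness of $S$. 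Consequently $S \cap E_2$ has either one component (and is a spanning tree of $G_2$, so $x,y$ are joined inside $V_2$ by $S$-edges) or exactly two components $A \ni x$ and $B \ni y$; by lemma~\ref{lem:2-vconn tree mapping} these two alternatives are precisely the cases $d_1 \notin S_1$ and $d_1 \in S_1$.

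Second, for items (i) and (ii) I would analyse the partition $V = V' \dotcup V''$ that removing $e$ from $S$ induces (theorem~\ref{thm:fundamental cut}), with the ends of $e$ separated. Because deleting $e \in E_1$ leaves the forest $S \cap E_2$ untouched, each of $A,B$ stays attached to $x$, respectively $y$; so $V_2$ is split by $(V',V'')$ if and only if $x$ and $y$ lie on opposite sides, and this happens if and only if $S \cap E_2$ was disconnected, i.e.\ $d_1 \in S_1$. The only way a detour through $V_2$ can change connectivity among $V_1$-vertices in $G[S]-e$ is by joining $x$ to $y$, which occurs exactly when $S \cap E_2$ is connected; in that case $S_1 = (S \cap E_1)+d_1$ re-joins $x,y$ through $d_1$, and otherwise no re-joining happens and $S_1 = S\cap E_1$. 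Hence the partition of $V_1$ cut out by $S_1 - e$ in $G_1$ coincides with the restriction $(V'\cap V_1,\,V''\cap V_1)$. It then remains to compare edge sets: $D_{G_1}(S_1,e)$ is the set of $E_1$-edges crossing this $V_1$-partition, while $D_G(S,e)\cap E_1$ is the same set minus $d_1$ (which is absent from $E$). The edge $d_1$ crosses if and only if $x,y$ are separated, so $d_1 \in D_{G_1}(S_1,e)$ if and only if $D_G(S,e)\cap E_2 \neq \emptyset$, which yields (i) when the intersection is empty and (ii) when it is nonempty.

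Third, I would treat items (iii) and (iv) by the mirror argument, replacing the fundamental cut by the fundamental cycle $C_G(T,e)$, which by theorem~\ref{thm:fundamental cycle} and theorem~\ref{thm:tree unique path} is the unique $T$-path $P$ between the ends of $e$ together with $e$. The same containment sub-lemma applied to $T \cap E_2$ controls whether $P$ detours through $V_2$: the path $P$ uses $E_2$-edges if and only if it routes from $x$ to $y$ inside $V_2$, which is possible if and only if $T \cap E_2$ is connected, i.e.\ $d_1 \in T_1$. In $G_1$ this detour is exactly the single edge $d_1$ occurring in $C_{G_1}(T_1,e)$, so again $d_1 \in C_{G_1}(T_1,e)$ if and only if $C_G(T,e)\cap E_2 \neq \emptyset$, while the $E_1$-portion of the cycle is preserved, giving (iii) and (iv); alternatively one can deduce the cycle cases from the cut cases through the duality of fundamental cycle and cut (theorem~\ref{thm:duality cycle cut}) applied inside $G_1$. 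The statements for $e \in T \cap E_1$ and for edges of $E_2$ then follow verbatim after interchanging the roles of $S,T$ and of $G_1,G_2$.

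I expect the main obstacle to be the bookkeeping in the second paragraph: rigorously proving that the $V_1$-restriction of the $G$-partition agrees with the $G_1$-partition, i.e.\ that the only connectivity change a $V_2$-detour can cause is the merging of $x$ and $y$, and that this merging is mirrored precisely by the presence or absence of $d_1$ in $S_1$ as dictated by lemma~\ref{lem:2-vconn tree mapping}. Once this correspondence between ``the seam vertices are separated'' and ``$d_1 \in D_{G_1}(S_1,e)$'' is pinned down, matching it to the $E_2$-emptiness dichotomy is immediate and all four cases close uniformly.
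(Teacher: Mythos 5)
Your overall strategy (project the cut/cycle, control the seam by tracking where $x$ and $y$ land) is the same as the paper's, but two of your pivotal ``if and only if'' claims are false, and they are exactly the claims your argument turns on. You assert that $x$ and $y$ lie on opposite sides of the cut of $e$ if and only if $S \cap E_2$ is disconnected, and dually that the fundamental cycle of $e$ detours through $V_2$ if and only if $T \cap E_2$ is connected. In each case only one direction holds: connectivity of $S \cap E_2$ is a property of the seam configuration alone, whereas whether deleting $e$ actually separates $x$ from $y$ depends on whether $e$ lies on the (then necessarily $E_1$-contained) $S$-path from $x$ to $y$. If $e$ is a tree edge off that path, then $S \cap E_2$ is disconnected, yet $x,y$ stay on the same side, every vertex of $V_2$ hangs off $x$ or $y$ and hence stays on that side too, so $D_G(S,e)\cap E_2 = \emptyset$; your chain of equivalences would nevertheless place $d_1$ in $D_{G_1}(S_1,e)$, i.e.\ it ``proves'' conclusion (ii) in a situation where (i) is the truth. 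The correct pivot, which the paper uses, is the equivalence between ``$x,y$ separated'' and ``$D_G(S,e)\cap E_2 \neq \emptyset$'' itself: a crossing $E_2$-edge forces $x,y$ apart because each of its ends attaches to $x$ or to $y$ in the forest $S\cap E_2$ (your sub-lemma, used correctly), while the converse --- separation of $x,y$ forces a crossing $E_2$-edge --- needs an argument you never supply, namely that $G[V_2] = G_2 - d_2$ is connected (bispanning graphs are $2$-edge-connected), or equivalently that $T\cap E_2$ is then a spanning tree of $G[V_2]$ whose $x$--$y$ path must cross the cut. Without it, your cases (i) and (iii) (the cases where $d_1$ must \emph{not} appear) are not closed.

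Separately, your bookkeeping of where $d_1$ goes is internally inconsistent. At the end of the first paragraph, and again in the first sentence of the second, you state that one component of $S\cap E_2$ corresponds to $d_1 \notin S_1$ and two components to $d_1 \in S_1$; this is inverted. If $S\cap E_2$ already spans $V_2$ then $|S\cap E_2| = |V_2|-1$, hence $|S\cap E_1| = |V_1|-2$ and the projection of lemma~\ref{lem:2-vconn tree mapping} must add $d_1$ to $S_1$; indeed later in your second paragraph you silently use this correct (opposite) correspondence. The slip is not merely cosmetic: with the inverted version, the edge $d_1$ you want to exhibit in $D_{G_1}(S_1,e)$ would be a tree edge of $S_1$, and a tree edge distinct from $e$ can never lie in the fundamental cut $D_{G_1}(S_1,e) \subseteq (E_1 \setminus S_1) + e$. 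The partition-coincidence idea itself is sound, and once the two false equivalences are replaced by the two correct one-directional implications plus the $G_2 - d_2$ connectivity argument, your proof closes along essentially the same route as the paper's.
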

\begin{proof}
  Clearly, $E_1 \cap E_2 = \emptyset$, and $E_1 \cup E_2 = E \dotcup \{ d_1, d_2 \}$. Let $d_1 = d_2 = \{ x,y \}$ be the two vertices of the $2$-vertex cut.
  Cases (i) and (iii) are trivial: if the cut $D_G(S,e)$ or cycle $C_G(T,e)$ contains no edges from $E_2$, then it is fully contained in the edge set $E_1 \setminus \{ d_1 \}$. Hence, the cut or cycle remains fully in $G_1$ and restriction to $E_1$ removes no edges. In cases (ii) and (iv), intuitively, one has to replace the cycle and cut edges outside of $G_1$ by the edge $d_1$ as representative: in case (ii) it represents all cut edges in $E_2$ and in (iv) it represents the remaining cycle edges in $E_2$.

  As $S$ is a spanning tree, $G[S] - e$ contains the edge cut $D_G(S,e)$. If there exists an edge $c \in D_G(S,e) \cap E_2 \neq \emptyset$, then the two vertices of the vertex cut $x$ and $y$ are disconnected in $G[S] - e$, since one end of $c$ (and likewise $e$) is connected to $x$ and the other to $y$, but neither to both. Consequentially, $d_1 \in T$, since if $d_1 = \{x,y\} \in S$ were in the other tree, then there would be a cycle in $G[S \cap E_1] + d_1$ containing $d_1$ and $e$. As $x$ and $y$ are disconnected in $G[S] - e$, they also are disconnected in $G[S \cap E_1] - e$. Hence, $d_1 \in D_{G_1}(S,e)$, and in total $D_{G_1}(S,e) = (D_G(S,e) \cap E_1) + d_1$.

  Likewise, if $c \in C_G(T,e) \cap E_2 \neq \emptyset$, then the cycle in $G[T] + e$ has to contain both $x$ and $y$ to reach $c$, as it can pass every vertex at most once. Consequentially, $d_2 \in S$, as otherwise there would be a cycle in $G[T \cap E_2] + d_2$ which runs though $d_2$ and $e$. As $d_2$ and $d_1$ must be from different trees, $d_1 \in T$. Hence, $d_1 \in C_{G_1}(T,e)$, as $d_1 \in T$ and both ends $x$ and $y$ are connected in $G[T \cap E_1] + e$, and in total $C_{G_1}(T,e) = (C_G(T,e) \cap E_1) + d_1$.
\end{proof}

Goal of this section is to compose the unique exchange graph $\tau_3(G)$ for $G = G_1 \oplus_2 G_2$ from the unique exchange graphs $\tau_3(G_1)$ and $\tau_3(G_2)$. This will be shown in the following main theorem. Intuitively, we can start constructing $\tau_3(G)$ from $\tau_3(G_1) \times \tau_3(G_2)$, since unique exchanges in either subgraph $G_1$ and $G_2$ of $G$ are independent (as shown by theorem~\ref{lem:2-vconn cutcycle mapping}), \emph{unless} they involve the edges $d_1$ and $d_2$ at the join seam. The independent unique exchanges can be taken in any order, which is expressed in the Cartesian graph product $\times$ by the way it multiplies possible paths in the exchange graphs. It only remains to determine what to do with unique exchanges involving $d_1$ and $d_2$. Consider again figure~\ref{fig:2-clique UE-decompose}: the idea to handle unique exchanges involving the join seam is to \emph{link} these together whenever possible: if a unique edge exchange $(e,d_2)$ forces $d_2$ to swap, then $d_1$ must be swapped and this $(d_1,f)$ must itself be a unique exchange. One can regard $(d_2,d_1)$ as an implicit third edge exchange inside the join seam, which is always forcible. It is important to note, that only when both $(e,d_2)$ and $(d_1,f)$ with $e \in E_2$ and $f \in E_1$ (or analogously $(e,d_1)$ and $(d_2,f)$ with $e \in E_1$ and $f \in E_2$) are unique exchanges then they can be linked to a unique exchange in $G$. If either is not, then the link must not be made. This removes many possible transitions, e.g., where $d_1$ is not a unique exchange candidate.

\begin{theorem}[composing $\vec{\tau}_3(G)$ of atomic bispanning graphs with $\operatorname{vconn}(G) = 2$]\label{thm:2-vconn tau decompose}
  If $G$ is an atomic bispanning graph with $\operatorname{vconn}(G) = 2$, which due to theorem~\ref{thm:2-vconn decompose} can be decomposed into the $2$-clique sum $G = G_1 \oplus_2 G_2$ of the two simple bispanning graphs $G_1 = (V_1,E_1)$ and $G_2 = (V_2,E_2)$ at the edges $d_1 \in E_1$ and $d_2 \in E_2$ with $\sigma = \operatorname{id}$ and $E' = \emptyset$, then
  $$\vec{\tau}_3(G) \cong \eta_{d_1,d_2}(\, \vec{\tau}_3(G_1), \vec{\tau}_3(G_2) \,) \,,$$
  where $\eta_{d_1,d_2}$ joins the two exchange graphs $\vec{\tau}_3(G_1) = (V_{\tau(G_1)}, E_{\vec{\tau}_3(G_1)}, \delta_{\vec{\tau}_3(G_1)})$ and $\vec{\tau}_3(G_2) = (V_{\tau(G_2)}, E_{\vec{\tau}_3(G_2)}, \delta_{\vec{\tau}_3(G_2)})$ into the exchange graph $(V_\eta,E_\eta,\delta_\eta)$ with
  \begin{enumerate}
  \item vertex set
    \(
    \begin{aligned}[t]
      V_\eta = \{ & ((S_1,T_1),(S_2,T_2)) \in V_{\tau(G_1)} \times V_{\tau(G_2)} \mid \\
      & \text{not } ( (d_1 \in S_1 \text{ and } d_2 \in S_2) \text{ or } (d_1 \in T_1 \text{ and } d_2 \in T_2) ) \} \,,
    \end{aligned}
    \)
  \item
    \(
    \begin{aligned}[t]
      \text{arc set }
      E_\eta = \{ &(e,f,(S_1,T_1),(S_2,T_2)) \mid \\
      \text{[case (a)]\qquad} & (e,f,S_1,T_1) \in E_{\vec{\tau}_3(G_1)} \,, (S_2,T_2) \in V_{\tau(G_2)} \text{ and not } (e = d_1 \text{ or } f = d_1) \,, \\
      \text{[case (b)]\qquad} & (e,f,S_2,T_2) \in E_{\vec{\tau}_3(G_2)} \,, (S_1,T_1) \in V_{\tau(G_1)} \text{ and not } (e = d_2 \text{ or } f = d_2) \,, \\
      \text{[case (c)]\qquad} & (\, (e,d_1,S_1,T_1) \in E_{\vec{\tau}_3(G_1)} \text{ and } (d_2,f,S_2,T_2) \in E_{\vec{\tau}_3(G_2)} \,) \,, \text{ or} \\
      \text{[case (d)]\qquad} & (\, (e,d_2,S_2,T_2) \in E_{\vec{\tau}_3(G_2)} \text{ and } (d_1,f,S_1,T_1) \in E_{\vec{\tau}_3(G_1)} \,) \, \} \,, \\
    \end{aligned}
    \)
  \item and incidence $\delta_\eta(\, (e,f,(S_1,T_1),(S_2,T_2)) \,)$ \\
    \( \null\hspace{-6mm}
    = \begin{cases}
      (\, [(S_1,T_1),(S_2,T_2)] ,\, [(S_1 - e + f,T_1 + e - f),(S_2,T_2)] \,) & \text{if } (e,f) \in (S_1,T_1) \text{ [case (a)]} \,, \\
      (\, [(S_1,T_1),(S_2,T_2)] ,\, [(S_1 + e - f,T_1 - e + f),(S_2,T_2)] \,) & \text{if } (e,f) \in (T_1,S_1) \text{ [case (a)]} \,, \\
      (\, [(S_1,T_1),(S_2,T_2)] ,\, [(S_1,T_1),(S_2 - e + f,T_2 + e - f)] \,) & \text{if } (e,f) \in (S_2,T_2) \text{ [case (b)]} \,, \\
      (\, [(S_1,T_1),(S_2,T_2)] ,\, [(S_1,T_1),(S_2 + e - f,T_2 - e + f)] \,) & \text{if } (e,f) \in (T_2,S_2) \text{ [case (b)]} \,, \\
      (\, [(S_1,T_1),(S_2,T_2)] ,\, [(S_1 - e,T_1 + e),(S_2 + f,T_2 - f)] \,) & \text{if } (e,f) \in (S_1,T_2) \text{ [case (c)]} \,, \\
      (\, [(S_1,T_1),(S_2,T_2)] ,\, [(S_1 + e,T_1 - e),(S_2 - f,T_2 + f)] \,) & \text{if } (e,f) \in (T_1,S_2) \text{ [case (c)]} \,, \\
      (\, [(S_1,T_1),(S_2,T_2)] ,\, [(S_1 + f,T_1 - f),(S_2 - e,T_2 + e)] \,) & \text{if } (e,f) \in (S_2,T_1) \text{ [case (d)]} \,, \\
      (\, [(S_1,T_1),(S_2,T_2)] ,\, [(S_1 - f,T_1 + f),(S_2 + e,T_2 - e)] \,) & \text{if } (e,f) \in (T_2,S_1) \text{ [case (d)]} \,. \\
    \end{cases}
    \)
  \end{enumerate}
\end{theorem}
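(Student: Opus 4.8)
The plan is to construct an isomorphism of directed graphs in the same two-map style as the proof of Theorem~\ref{thm:composite tau decompose}: give a bijection $\varphi_v$ on the vertex sets and a bijection $\varphi_e$ on the arc sets, and then verify the compatibility condition $(\varphi_v\times\varphi_v)(\delta_{\vec{\tau}_3(G)}(a))=\delta_\eta(\varphi_e(a))$ for every arc $a$ (definition~\ref{def:isomorphism directed}). Half of this is already done for free: the vertex set $V_\eta$ in the statement is verbatim the set $V_{\eta_{d_1,d_2}(G_1,G_2)}$, and Lemma~\ref{lem:2-vconn tree mapping} already supplies a bijection $\varphi_v\colon V_{\tau(G)}\to V_\eta$ with an explicit inverse $\psi_v$ that restricts a tree pair of $G$ to $G_1$ and $G_2$ and inserts $d_1,d_2$ into the appropriate trees. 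Thus the real content lies entirely in the arcs, i.e.\ in understanding how a single unique exchange of $G$ is distributed across $G_1$ and $G_2$.

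First I would classify each arc $(e,f,S,T)$ of $\vec{\tau}_3(G)$ by where its exchange pair sits relative to the seam: both edges in $E_1\setminus\{d_1\}$, both in $E_2\setminus\{d_2\}$, or one in each (a \emph{seam-crossing} exchange). The engine throughout is Lemma~\ref{lem:2-vconn cutcycle mapping}, which describes exactly how $D_G(S,e)$ and $C_G(T,e)$ project to the factor graphs, acquiring the representative $d_1$ (resp.\ $d_2$) precisely when they reach across the cut $\{x,y\}$. A key auxiliary fact is that $\lvert D_G(S,e)\cap C_G(T,e)\rvert$ is always even (the cycle crosses the cut an even number of times, as in the proof of theorem~\ref{thm:edge exchange}); this parity is what forces the cases to be disjoint. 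Concretely, if $f\in E_1$ then the cut and cycle cannot \emph{both} cross the seam, for otherwise $\{e,f,d_1\}$ would lie in the $G_1$-intersection while no further common $E_1$-edge exists, contradicting evenness. Hence $d_1$ enters at most one projection, the $G_1$-intersection reduces to $\{e,f\}$, and $(e,f)$ is a unique exchange of $G_1$ avoiding $d_1$ --- case~(a), with case~(b) symmetric. If instead $f\in E_2$, then $f$ lies in both $D_G(S,e)\cap E_2$ and $C_G(T,e)\cap E_2$, so both projections acquire $d_1$ and $D_{G_1}(S_1,e)\cap C_{G_1}(T_1,e)=\{e,d_1\}$; thus $(e,d_1)$ is a unique exchange of $G_1$, which is cases~(c)/(d), and this also pins down that $d_1,d_2$ occupy opposite trees, so the configuration is admissible for $V_\eta$.

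The converse direction is where I expect the genuine difficulty, because Lemma~\ref{lem:2-vconn cutcycle mapping} only \emph{projects} cuts and cycles downward, whereas surjectivity of $\varphi_e$ (equivalently, well-definedness of the inverse arc map $\psi_e$) requires \emph{expanding} subgraph exchanges back into a bona fide unique exchange of $G$. For case~(a) I must lift a $d_1$-free unique exchange of $G_1$ and argue that the $G$-intersection does not grow: the only way $G_2$-edges can enter the cut or cycle of $G$ is through the seam, which is summarised by $d_1$ in $G_1$, and since $d_1$ is not the partner it contributes no second common edge. For cases~(c)/(d) I must show that whenever $(e,d_1)$ is unique in $G_1$ \emph{and} $(d_2,f)$ is unique in $G_2$ (with $d_1,d_2$ in opposite trees), the spliced pair $(e,f)$ is unique in $G$ --- the $G$-cut of $e$ is obtained by gluing the $G_1$-cut and the $G_2$-cut along $\{x,y\}$, likewise the cycle, so their common edges are exactly $e$ and $f$. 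The hardest sub-point is the $G_2$-side uniqueness of $(d_2,f)$: since $d_2\notin E$, it is not directly covered by the projection lemma, and I would obtain it by a symmetric application of the lemma to $f\in T\cap E_2$ combined with the duality theorem~\ref{thm:duality cycle cut}, matching the $E_2$-parts of the $G$-cut and $G$-cycle to the fundamental cut and cycle of the seam edge $d_2$ in $G_2$. Crucially, this argument also explains why \emph{both} linked exchanges must be unique --- a single non-unique side destroys the gluing --- which is why the seam cannot be handled by a mere Cartesian product as in Theorem~\ref{thm:composite tau decompose}.

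Finally, with $\varphi_e$ and $\psi_e$ in hand, the incidence check is routine bookkeeping but must track the seam edges carefully. In cases~(a)/(b) the exchange acts inside a single factor and leaves the other vertex fixed, reproducing the two product-type rows of $\delta_\eta$. In cases~(c)/(d) the linked exchange toggles the tree membership of both $d_1$ and $d_2$, moving the vertex from one admissible seam-assignment to the other; after applying $\psi_v$, which discards $d_1$ and $d_2$, only $e$ and $f$ change trees in $G$, and one checks that $\psi_v$ of both endpoints recovers the endpoints of the original $G$-arc, reproducing the four seam rows of $\delta_\eta$. Verifying that $\varphi_e$ and $\psi_e$ are mutually inverse then completes the isomorphism; the reversibility of unique exchanges (theorem~\ref{thm:reversibility unique exchange}) guarantees that this is consistent with the twin-arc structure of $\vec{\tau}_3$.
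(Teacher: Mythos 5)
Your proposal is correct and follows essentially the same route as the paper's proof: the vertex bijection supplied by Lemma~\ref{lem:2-vconn tree mapping}, classification of the arcs of $\vec{\tau}_3(G)$ into the four cases (a)--(d) via the cut/cycle projection Lemma~\ref{lem:2-vconn cutcycle mapping}, an explicit inverse arc map, and the final incidence bookkeeping. The only notable variation is minor: where you rule out the case that both $D_G(S,e)$ and $C_G(T,e)$ cross the seam by a parity argument on the projected $G_1$-intersection, the paper argues directly that such a double crossing would force an $E_2$-edge into $D_G(S,e)\cap C_G(T,e)$, contradicting uniqueness of the exchange in $G$.
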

\begin{proof}
  To show the isomorphism, we need to show that bijections exist for the two mappings $\varphi_v : V_{\tau(G)} \rightarrow V_\eta$ and $\varphi_e : E_{\vec{\tau}(G)} \rightarrow E_\eta$, and that they are compatible to each other as required by definition~\ref{def:isomorphism directed}.
  Let $\{x,y\} \subseteq V$ be the vertex-cut, hence $d_1 = d_2 = \{x,y\}$ in the graph $G_1$ and $G_2$.
  The previous lemma~\ref{lem:2-vconn tree mapping} already showed the necessary bijection for $\varphi_v$, which splits or joins disjoint spanning trees at $d_1$ and $d_2$, provided they are in appropriately different spanning trees of $V_{\tau(G_1)} \times V_{\tau(G_2)}$.

  We thus regard the forward edge mapping $\varphi_e : E_{\vec{\tau}(G)} \rightarrow E_\eta$. Let $(e,f,S,T) \in E_{\vec{\tau}(G)}$ be an edge in the unique exchange graph of $G$, then we can consider which of the two subgraphs $G_1$ and $G_2$ the exchanged edges $e$ and $f$ belong to once mapped to $\varphi_v( (S,T) )$.

  There are two easy cases (a) and (b), where both exchanged edges are exclusively in either $G_1$ or $G_2$, so either $(e,f) \in \{ (S \cap E_1, T \cap E_1), (T \cap E_1, S \cap E_1) \}$ or $(e,f) \in \{ (S \cap E_2, T \cap E_2), (T \cap E_2, S \cap E_2) \}$. We consider prototypically $(e,f) \in (S \cap E_1, T \cap E_1)$, as the other three cases are symmetrical. As $(e,f)$ is a unique $S$ edge exchange in $G$, $D_G(S,e) \cap C_G(T,e) = \{e,f\}$. If both cut and cycle are fully in $E_1$, $D_G(S,e) \cap C_G(T,e) \subseteq E_1$, then obviously also $D_{G_1}(S,e) \cap C_{G_1}(T,e) = \{e,f\}$ (cases (i) and (iii) of lemma~\ref{lem:2-vconn cutcycle mapping}), and we can simply map the unique exchange to the corresponding one in $G_1$. If either cut $D_G(S,e)$ or cycle $C_G(T,e)$ is not fully contained in $E_1$, but not both, then case (ii) or (iv) of lemma~\ref{lem:2-vconn cutcycle mapping} occurs, but not both. If just one case occurs, we still have $D_{G_1}(S,e) \cap C_{G_1}(T,e) = \{e,f\} \subseteq E_1$, and we can still map the unique exchange to the corresponding on in $G_1$. The fourth case, where both cut $D_G(S,e)$ and cycle $C_G(T,e)$ are not fully contained in $E_1$, is impossible: as both cut and cycle contain an edge of $G_2$, at least one edge of $G_2$ exists in their intersection. This can be verified by considering that the two vertices $x$ and $y$ are in different components of $G[S] - e$, while the cycle $C_{G}(T,e)$ contains a path from $x$ to $y$ in $G_2[T]$. The edge of $G_2$ in the intersection contradicts the presumption $D_G(S,e) \cap C_G(T,e) = \{e,f\}$, namely that the exchanged edges are in the same subgraph. Hence, we have mapped all unique edge exchanges of cases (a) and (b) to the corresponding subgraph and can now consider unique edge exchanges of $G$ which cross the vertex cut boundary.

  If the exchanged edges are in different subgraphs, say prototypically $(e,f) \in (S \cap E_1, T \cap E_2)$ is a unique $S$ edge exchange of $(S,T)$ in $G$, then we have to show that this unique exchange can be decomposed into a unique exchange in $G_1$ and one in $G_2$. By definition we have $D_G(S,e) \cap C_G(T,e) = \{e,f\}$. As $e$ and $f$ are in different subgraphs, the cycle $C_G(T,e)$ passes both $x$ and $y$ (the ends of $d_1$ and $d_2$), and the cut $D_G(S,e)$ disconnects $x$ and $y$ in $G[S] - e$. Due to these facts we can decompose cycle and cut into the parts contained in $G_1$ and $G_2$. We have $D_{G_1}(S \cap E_1,e) = (D_G(S,e) \cap E_1) + d_1$ and $C_{G_1}(T \cap E_1,e) = (C_G(T,e) \cap E_1) + d_1$ (see lemma~\ref{lem:2-vconn cutcycle mapping}), in which $d_1$ basically replaces all edges in the subgraph $G_2$. For $G_2$, however, we get $D_{G_2}(S \cap E_2, d_2) = (D_G(S,e) \cap E_2) + d_2$ and $C_{G_2}(T \cap E_2, d_2) = (C_G(T,e) \cap E_2) + d_2$, since in $G_2$ we have to add $d_2$ to gain the same (remainder) cycle and cut, again basically $d_2$ replaces all edges in $G_1$.  With these equations, we have
  \begin{align*}
    & D_{G_1}(S \cap E_1,e) \cap C_{G_1}(T \cap E_1,e) = ([D_G(S,e) \cap E_1] + d_1) \cap ([C_G(T,e) \cap E_1] + d_1) \\
    & \quad = [D_G(S,e) \cap E_1] \cap [C_G(T,e) \cap E_1] + d_1 = [ D_G(S,e) \cap C_G(T,e) ] \cap E_1 + d_1 = \{ e,d_1 \} \,,
  \end{align*}
  which guarantees that $(e,d_1) \in (S \cap E_1, (T \cap E_1) + d_1)$ is a $(S \cap E_1)$ unique exchange for $(S \cap E_1, (T \cap E_1) + d_1)$ in $G_1$. And in $G_2$,
  \begin{align*}
    & D_{G_2}(S \cap E_2,d_2) \cap C_{G_2}(T \cap E_2,d_2) = ([D_G(S,e) \cap E_2] + d_2) \cap ([C_G(T,e) \cap E_2] + d_2) \\
    & \quad = [D_G(S,e) \cap E_2] \cap [C_G(T,e) \cap E_2] + d_2 = [ D_G(S,e) \cap C_G(T,e) ] \cap E_2 + d_2 = \{ d_2,f \} \,,
  \end{align*}
  which guarantees that $(d_2,f) \in ((S \cap E_2) + d_2, T \cap E_2)$ is a $((S \cap E_2) + d_2)$ unique exchange for $((S \cap E_2) + d_2, T \cap E_2)$ in $G_2$.
  The argument can be applied symmetrically to a unique $T$ edge exchange with $(e,f) \in (T \cap E_1, S \cap E_2)$, and analogously to unique $S$ and $T$ edge exchanges with $(e,f) \in (S \cap E_2, T \cap E_1)$ and $(e,f) \in (T \cap E_2, S \cap E_1)$. Formally, these four combinations are mapped to edges of $E_\eta$ given by the cases (c) and (d).

  In summary, we can define the following forward edge mapping $\varphi_e$, wherein the eight different unique edge exchange types are implicitly encoded in the way $e$ and $f$ are drawn from the restricted spanning trees:
  \[
  \varphi_e( (e,f,S,T) ) =
  \begin{cases}
    (e,f, \varphi_v(S,T))
    & \hspace{-27mm} \begin{aligned}[t] \text{if } &(e,f) \in (S \cap E_1, T \cap E_1) \,, &&(e,f) \in (T \cap E_1, S \cap E_1) \,, \\
                                                   &(e,f) \in (S \cap E_2, T \cap E_2) \,\text{, or}\!\!\!&&(e,f) \in (T \cap E_2, S \cap E_2) \,, \end{aligned} \\
    \begin{aligned}[t] (e,f, &(S \cap E_1, (T \cap E_1) + d_1), \\ & ((S \cap E_2) + d_2, T \cap E_2)) \end{aligned}
    & \begin{aligned}[t] \text{if } &(e,f) \in (S \cap E_1, T \cap E_2) \text{ or} \\ &(e,f) \in (T \cap E_2, S \cap E_1) \,, \end{aligned} \\
    \begin{aligned}[t] (e,f, &((S \cap E_1) + d_1, T \cap E_1), \\ & (S \cap E_2, (T \cap E_2) + d_2)) \end{aligned}
    & \begin{aligned}[t] \text{if } &(e,f) \in (T \cap E_1, S \cap E_2) \text{ or} \\ &(e,f) \in (S \cap E_2, T \cap E_1) \,. \end{aligned} \\
  \end{cases}
  \]
  As before, the inverse mapping $\psi_e : E_\eta \rightarrow E_{\vec{\tau}(G)}$ is easier, let $(e,f,(S_1,T_1),(S_2,T_2)) \in E_\eta$ be of one of the four cases (a)--(d).

  Unique exchange edges of type (a), with $(e,f,S_1,T_1) \in E_{\vec{\tau}_3(G_1)}$ and $(S_2,T_2) \in V_{\tau(G_2)}$, as well as type (b), with $(e,f,S_2,T_2) \in E_{\vec{\tau}_3(G_2)}$ and $(S_1,T_1) \in V_{\tau(G_1)}$, can directly be applied to $G$, since $\{ e,f \} \cap \{ d_1,d_2 \} = \emptyset$ and $d_1$ or $d_2$ in the cut and cycle can be replaced with the expanded cut or cycle in the other subgraph without possibly effecting the intersection.

  An edge of type (c), which exists if $(e,d_1,S_1,T_1) \in E_{\vec{\tau}_3(G_1)}$ and $(d_2,f,S_2,T_2) \in E_{\vec{\tau}_3(G_2)}$, can be mapped directly to the combined unique exchange  $(e,f,\psi_v[(S_1,T_1),(S_2,T_2)])$ in $G$. The chaining on swaps on $d_1$ and $d_2$, as illustrated in figure~\ref{fig:2-clique UE-decompose}, ensures that the combined trees $S' := S_1 \dotcup S_2$ and $T' := T_1 \dotcup T_2$ of $G_1 \oplus G_2$ do not contain both $d_1$ and $d_2$ (as required by $V_\eta$). The same applies to edges of type (d), where $e$ is in $G_2$ and $f$ in $G_1$. In the end, the inverse mapping is as easy as:
  \begin{align*}
    \psi_e( (e,f,S_1,T_1,S_2,T_2) ) &= (e,f,\psi_v[(S_1,T_1),(S_2,T_2)]) \\
                                   &= (e,f, (S_1 \dotcup S_2) \setminus \{d_1,d_2\}, (T_1 \dotcup T_2) \setminus \{d_1,d_2\}) \,.
  \end{align*}
  Again, one can clearly see that $\varphi_e$ and $\psi_e$ are inverse to each other, since they mainly compose or decompose into the subgraphs and add or remove $d_1$ and $d_2$.

  To complete the graph isomorphism, it remains to show $(\varphi_v \times \varphi_v)( \delta_{\vec{\tau}_3(G)}((e,f,S,T)) ) = \delta_\eta( \varphi_e((e,f,S,T)) )$ for all edges $(e,f,S,T) \in E_{\vec{\tau}(G)}$. This is intuitively obvious, but rather tedious to spell out in detail due to the eight cases, but we expanded some below for reference.

  In the case $(e,f) \in (S,T)$, we have $(\varphi_v \times \varphi_v)( \delta_{\vec{\tau}_3(G)}((e,f,S,T)) ) = (\varphi_v \times \varphi_v)( (S,T), (S - e + f, T + e - f) ) = (\varphi_v(S,T), \varphi_v(S - e + f, T + e - f))$, as of definition~\ref{def:directed tau}. This result will match the first of each of the four cases in $\delta_\eta$, as we now have to distinguish which subgraphs $e$ and $f$ belong to. If $(e,f) \in (S \cap E_1, T \cap E_1)$, we have case (a) and $\delta_\eta( \varphi_e((e,f,S,T)) ) = \delta_\eta( (e,f,\varphi_v(S,T)) ) = ( \varphi_v(S,T), (S_1 - e + f,T_1 + e - f), (S_2,T_2) ) = ( \varphi_v(S,T), \varphi(S - e + f, T + e - f) )$, due to where $e$ and $f$ are in $G$. If $(e,f) \in (S \cap E_1, T \cap E_2)$, we have case (c) and $\delta_\eta( \varphi_e((e,f,S,T)) ) = \delta_\eta( (e,f,\varphi_v(S,T)) ) = ( \varphi_v(S,T), (S_1 - e,T_1 + e), (S_2 + f,T_2 - f) ) = ( \varphi_v(S,T), \varphi(S - e + f, T + e - f) )$, due to where $e$ and $f$ are in $G$. If $(e,f) \in (S \cap E_2, T \cap E_1)$, we have case (d) and $\delta_\eta( \varphi_e((e,f,S,T)) ) = \delta_\eta( (e,f,\varphi_v(S,T)) ) = ( \varphi_v(S,T), (S_1 + f,T_1 - f), (S_2 - e,T_2 + e) ) = ( \varphi_v(S,T), \varphi(S - e + f, T + e - f) )$, due to where $e$ and $f$ are in $G$.  If $(e,f) \in (S \cap E_2, T \cap E_2)$, we have case (b) and $\delta_\eta( \varphi_e((e,f,S,T)) ) = \delta_\eta( (e,f,\varphi_v(S,T)) ) = ( \varphi_v(S,T), (S_1,T_1), (S_2 - e + f,T_2 + e - f) ) = ( \varphi_v(S,T), \varphi(S - e + f, T + e - f) )$, due to where $e$ and $f$ are in $G$.

  In the case $(e,f) \in (T,S)$, we have $(\varphi_v \times \varphi_v)( \delta_{\vec{\tau}_3(G)}((e,f,S,T)) ) = (\varphi_v \times \varphi_v)( (S,T), (S + e - f, T - e + f) ) = (\varphi_v(S,T), \varphi_v(S + e - f, T - e + f))$, as of definition~\ref{def:directed tau}. This result matches the second of each of the four cases in $\delta_\eta$, if one distinguishes which subgraphs $e$ and $f$ belong to, as above. We omit these fours cases, as they are symmetrical to the four cases in the last paragraph.
\end{proof}

\begin{figure}[p]
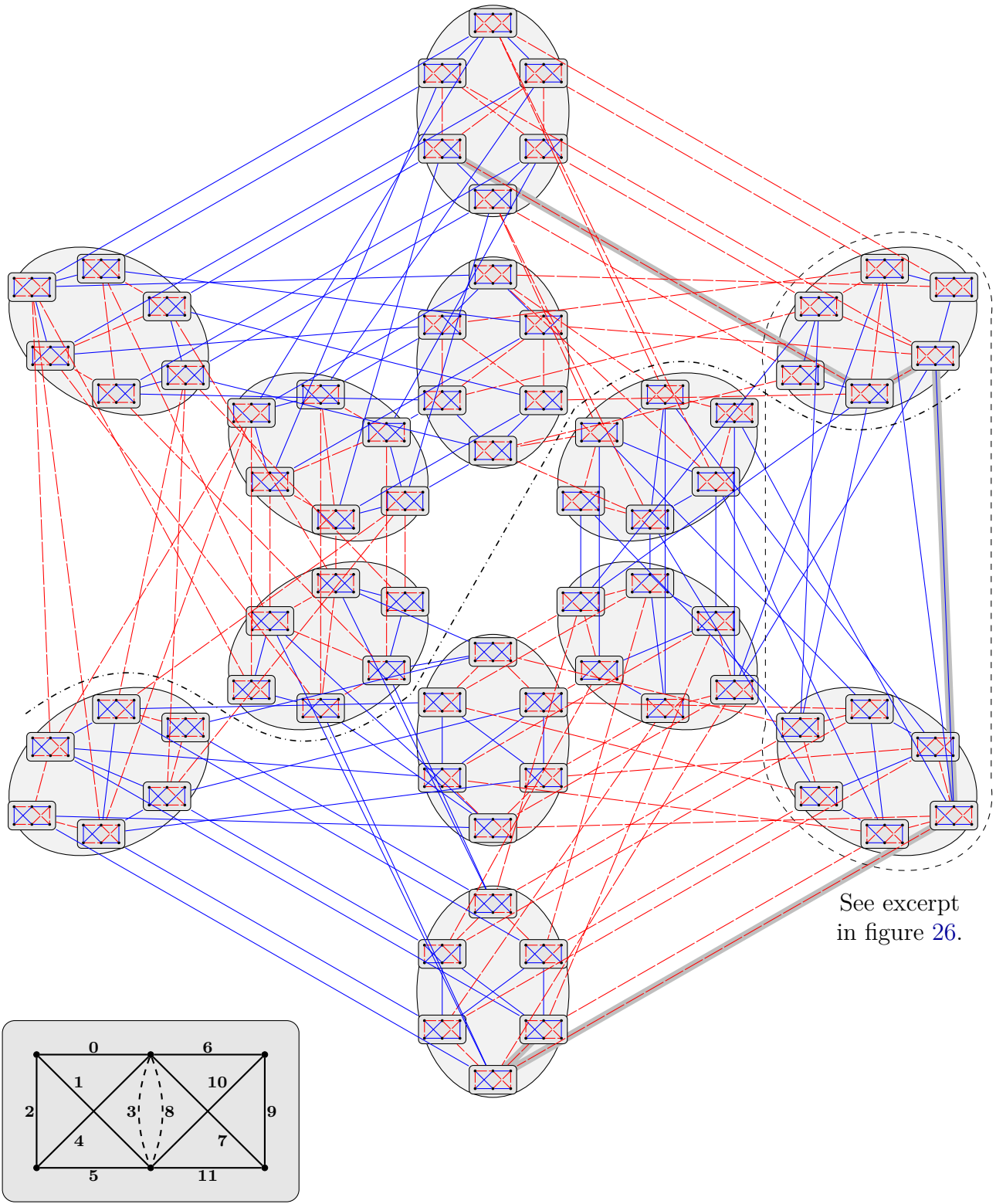
\centering
  \begin{leftfullpage}

  \caption{The unique exchange graph $\tau_3(K_4 \oplus_2 K_4)$.}\label{fig:tau-k4k4}
  \end{leftfullpage}
\end{figure}

\def\CompleteKFourKFour#1#2{
  \begin{scope}[
    shift={#1},
    scale=1.05,
    SmallNode/.append style={inner sep=0.8pt},
    LineStyle0/.style={R},
    LineStyle1/.style={R},
    LineStyle2/.style={R},
    LineStyle3/.style={R},
    LineStyle4/.style={R},
    LineStyle5/.style={R},
    LineStyle6/.style={R},
    LineStyle7/.style={R},
    LineStyle8/.style={R},
    LineStyle9/.style={R},
    LineStyle10/.style={R},
    LineStyle11/.style={R},
    SmallLabel0/.style={SmallLabel},
    SmallLabel1/.style={SmallLabel, pos=0.3},
    SmallLabel2/.style={SmallLabel, swap},
    SmallLabel4/.style={SmallLabel, pos=0.7},
    SmallLabel5/.style={SmallLabel, swap},
    SmallLabel6/.style={SmallLabel},
    SmallLabel7/.style={SmallLabel, pos=0.7, swap},
    SmallLabel9/.style={SmallLabel},
    SmallLabel10/.style={SmallLabel, pos=0.7},
    SmallLabel11/.style={SmallLabel, swap},
    SmallLabel3/.style={SmallLabel, swap},
    SmallLabel8/.style={SmallLabel},
    BendStyle0/.style={},
    BendStyle1/.style={},
    BendStyle2/.style={},
    BendStyle3/.style={bend left=-15, dashed},
    BendStyle4/.style={},
    BendStyle5/.style={},
    BendStyle6/.style={},
    BendStyle7/.style={},
    BendStyle8/.style={bend left=15, dashed},
    BendStyle9/.style={},
    BendStyle10/.style={},
    BendStyle11/.style={},
    ]

    \pgfmathtruncatemacro{\x}{{#2}[0]}
    \tikzset{LineStyle\x/.append style={B}}
    \pgfmathtruncatemacro{\x}{{#2}[1]}
    \tikzset{LineStyle\x/.append style={B}}
    \pgfmathtruncatemacro{\x}{{#2}[2]}
    \tikzset{LineStyle\x/.append style={B}}
    \pgfmathtruncatemacro{\x}{{#2}[3]}
    \tikzset{LineStyle\x/.append style={B}}
    \pgfmathtruncatemacro{\x}{{#2}[4]}
    \tikzset{LineStyle\x/.append style={B}}
    \pgfmathtruncatemacro{\x}{{#2}[5]}
    \tikzset{LineStyle\x/.append style={B}}

    \tikzset{LineStyle3/.append style={dashed}}
    \tikzset{LineStyle8/.append style={dashed}}

    \node (0) [at={(-1,0.5)}, SmallNode] {0};
    \node (1) [at={(0,0.5)}, SmallNode] {1};
    \node (2) [at={(-1,-0.5)}, SmallNode] {2};
    \node (3) [at={(0,-0.5)}, SmallNode] {3};
    \node (4) [at={(1,0.5)}, SmallNode] {4};
    \node (5) [at={(1,-0.5)}, SmallNode] {5};

    \def\sA{{0,0,0,1,1,2,1,1,1,4,3,3}}
    \def\tA{{1,3,2,3,2,3,4,5,3,5,4,5}}

    \foreach \x [count=\i from 0] in {0,1,2,3,4,5,6,7,8,9,10,11} {
      \pgfmathtruncatemacro{\s}{\sA[\i]}
      \pgfmathtruncatemacro{\t}{\tA[\i]}
      \draw[LineStyle\x] (\s) to[BendStyle\x] node[SmallLabel\x] {\i} (\t);
    }

  \end{scope}
}

\begin{figure}
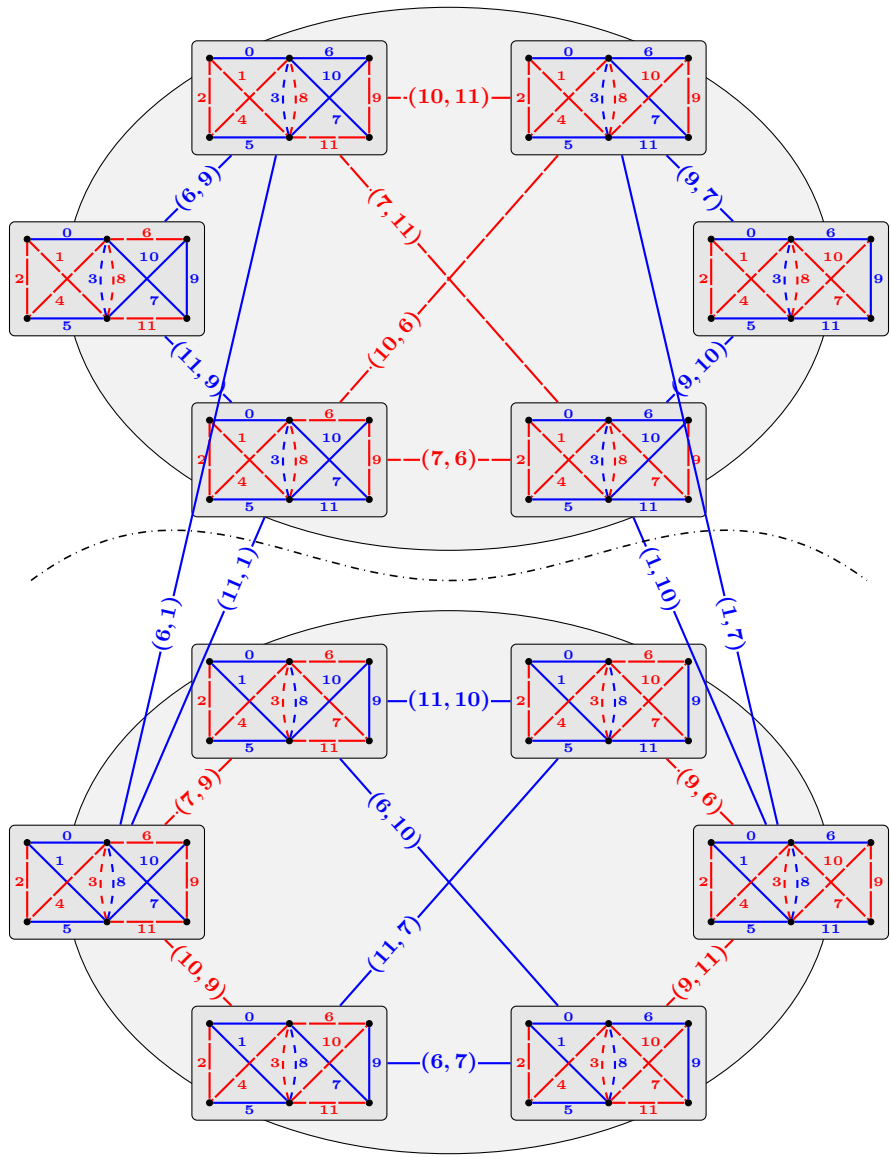
\centering
  \begin{fullpage}

  \caption{Excerpt of two groups of the unique exchange graph $\tau_3(K_4 \oplus_2 K_4)$.}\label{fig:tau-k4k4 zoom}
  \end{fullpage}
\end{figure}

The smallest atomic bispanning graph with vertex-connectivity two is $B_{6,3} \cong K_4 \oplus_2 K_4$. The unique exchange graph $\tau_3(K_4 \oplus_2 K_4)$ has 72 vertices and 224 undirected exchange edges, and is shown in figure~\ref{fig:tau-k4k4} with a layout that corresponds to the view taken by theorem~\ref{thm:2-vconn tau decompose}. The vertex set $V_\eta \subseteq V_{\tau(G_1)} \times V_{\tau(G_2)}$ is drawn such that the left part of the graph $K_4 \oplus_2 K_4$ is fixed within each of the circled groups, and these groups are arranged such that the left part corresponds to the vertices in $\tau_3(K_4)$ in figure~\ref{fig:tau-k4}. Two of the groups are enlarged in figure~\ref{fig:tau-k4k4 zoom} for more details.

Within each group, the left subgraph determines the colors of the two virtual edges $3$ and $8$ in the join seam: for example, in the enlarged figure~\ref{fig:tau-k4k4 zoom}, the top graphs' implicit join edge $3$ is blue, and the right one's $8$ red. By fixing this edge's color, only $6$ of the $12$ possible vertices in $V_{\tau(G_2)}$ remain valid: those shown in the circled groups. In figure~\ref{fig:tau-k4}, these halves of $V_{\tau(K_4)}$ are separated by the dash-dotted line which splits the graph. Within each circled group the vertices are connected by unique exchanges as in $G_2 = K_4$, since these edges do not include any unique exchanges involving the join seam's edges.

Unique exchange edges between the groups correspond to exchanges in $G_1$ (also $= K_4$), where \emph{all} edges \emph{not} involving the join seam edges $3$ and $8$ are retained. In $\tau_3(K_4)$ of figure~\ref{fig:tau-k4} the dash-dotted line crosses exactly all unique edge exchanges involving edge $3$. The dash-dotted line is mapped to the same division in $\tau_3(K_4 \oplus_2 K_4)$ of figure~\ref{fig:tau-k4k4}: it crosses exactly all unique exchange edges involving the join seam edges $3$ and $8$. All unique exchange edge outside the groups and not crossed by the dash-dotted line are retained (which can be seen by the parallel edges). Those crossed are retained if the three-hop unique edge exchanges involving $3$ and $8$ are valid in both $G_1$ and $G_2$. The two groups shown in the enlarged figure~\ref{fig:tau-k4k4 zoom} are selected such that they are connected only by three-hop unique edge exchanges which swap the two virtual edges in the seam.

Regarding the question of whether the graph $\tau_3(G_1 \oplus_2 G_2)$ remains connected if $\tau_3(G_1)$ and $\tau_3(G_2)$ are connected, these three-hop edges are the most important ones.

\begin{theorem}[joining unique exchange cyclic base orderings at $2$-clique sums]\label{thm:join-2sum}
  Let $G_1 = (V_1,E_1,\delta_1)$ and $G_2 = (V_2,E_2,\delta_2)$ be two bispanning graphs, $G = G_1 \oplus_2 G_2$ a $2$-clique sum with join edges $d_1 \in E_1$ and $d_2 \in E_2$, and $(S,T)$ a pair of disjoint spanning trees of $G$.

  If $\varphi_v((S,T)) = ((S_1,T_1),(S_2,T_2))$ are $(S,T)$ restricted to pairs of disjoint trees in $G_1$ and $G_2$, and if unique exchange cyclic base orderings are given from $(S_1,T_1)$ to $(T_1,S_1)$ in $G_1$ and from $(S_2,T_2)$ to $(T_2,S_2)$ in $G_2$, then one can construct a unique exchange cyclic base ordering from $(S,T)$ to $(T,S)$ in $G$.
\end{theorem}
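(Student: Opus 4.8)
The plan is to carry out the construction entirely inside the isomorphism $\vec{\tau}_3(G) \cong \eta_{d_1,d_2}(\vec{\tau}_3(G_1),\vec{\tau}_3(G_2))$ of Theorem~\ref{thm:2-vconn tau decompose}, reading the two given unique exchange cyclic base orderings as paths of lengths $m_1 = |V_1|-1$ and $m_2 = |V_2|-1$ between the complementary vertices of $\tau_3(G_1)$ and $\tau_3(G_2)$. By Definition~\ref{def:ucbo-bispanning} a UECBO of $G$ is exactly a path of length $m = |V|-1 = m_1+m_2-1$ from $\varphi_v(S,T)$ to $\varphi_v(T,S)$; since $m_1+m_2$ swaps are available but only $m_1+m_2-1$ are needed, the whole task is to splice the two subgraph paths together while fusing precisely one step from each into a single combined step.

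First I would locate in each subgraph ordering the unique step that touches its seam edge: the $G_1$-ordering swaps $d_1$ exactly once, at some index $k_1$, and the $G_2$-ordering swaps $d_2$ exactly once, at some index $k_2$ (every edge of a UECBO is swapped exactly once). Every other step of the $G_1$-ordering avoids $d_1$ and hence lifts to a type-(a) arc of $\eta_{d_1,d_2}$, and every non-$d_2$ step of the $G_2$-ordering lifts to a type-(b) arc; by Theorem~\ref{thm:2-vconn tau decompose} such arcs alter only one component of the product vertex and are valid independently of the other subgraph's state. Being decoupled, the pre-seam steps $1,\dots,k_1-1$ of $G_1$ and $1,\dots,k_2-1$ of $G_2$ may be interleaved in any order, carrying us to the product vertex whose two components sit immediately before their seam swaps. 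Throughout this phase $d_1$ and $d_2$ remain in their initial trees, so the vertex stays in $V_\eta$; symmetrically, the post-seam steps are interleaved afterwards and drive the two components to $(T_1,S_1)$ and $(T_2,S_2)$, that is, to $\varphi_v(T,S)$.

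The crux is the single combined step replacing the two seam swaps. By the projection Lemma~\ref{lem:2-vconn cutcycle mapping} the $d_1$-swap in $G_1$ and the $d_2$-swap in $G_2$ fuse into one three-hop exchange of $G$ (case~(c) or~(d) of Theorem~\ref{thm:2-vconn tau decompose}), but \emph{only} when one seam edge is the \emph{forcing} edge of its swap while the other is the \emph{forced} partner. A given pair of orderings need not meet this condition: both seam edges could be forcing, or both forced. This is the step I expect to be the main obstacle, and I would resolve it with Theorem~\ref{thm:uecbo-reversibility}: reversing, say, the $G_2$-ordering produces another UECBO of the same pair $(S_2,T_2)$ in which the swap $(e_{k_2},f_{k_2})$ becomes $(f_{k_2},e_{k_2})$, flipping the forcing/forced role of $d_2$ while leaving the tree $d_2$ starts in unchanged and keeping it swapped exactly once. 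Hence, after possibly reversing one of the two orderings, I can always realize the complementary pairing demanded by case~(c) or~(d); verifying that the reversal preserves the starting pair and seam side-positions is where the genuine checking lies, and it follows directly from Theorem~\ref{thm:uecbo-reversibility}.

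Finally I would close the bookkeeping. The combined step swaps the $G_1$-partner of $d_1$ against the $G_2$-partner of $d_2$, so across the whole spliced sequence every edge of $E = (E_1 - d_1)\,\dotcup\,(E_2 - d_2)$ is swapped exactly once, the length is $m_1+m_2-1 = m$, and each intermediate step is a unique exchange of $G$ by Theorem~\ref{thm:2-vconn tau decompose}, with the $V_\eta$ constraint maintained at every vertex. Transporting the resulting path back through the isomorphism and the tree-bijection of Lemma~\ref{lem:2-vconn tree mapping} then yields a unique exchange cyclic base ordering from $(S,T)$ to $(T,S)$ in $G$, as required.
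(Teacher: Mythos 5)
Your proposal is correct and follows essentially the same route as the paper's proof: locate the unique step touching each seam edge, use Theorem~\ref{thm:uecbo-reversibility} to reverse one ordering when $d_1$ and $d_2$ sit on the same side of their exchanges, then merge the sequences with pre-seam swaps first, the two seam swaps fused into a single unique exchange of $G$ (cases (c)/(d) of Theorem~\ref{thm:2-vconn tau decompose}), and post-seam swaps after. The only difference is presentational: you justify each splice step explicitly through the isomorphism $\eta_{d_1,d_2}$, whereas the paper argues the validity of the merged sequence directly; the substance is identical.
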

\begin{proof}
  Let $\arr{ (e_1,f_1), \ldots, (e_m,f_m) }$ be a unique exchange cyclic base ordering of $G_1$ from $(S_1,T_1)$ to $(T_1,S_1)$, and $\arr{ (g_1,h_1), \ldots, (g_n,h_n) }$ of $G_2$ from $(S_2,T_2)$ to $(T_2,S_2)$ where $2 m = |E_1|$ and $2 n = |E_2|$.

  The join edge $d_1$ is some $e_i$ or $f_i$; let $i$ be the correct index. Likewise, $d_2$ is either a $g_j$ or a $h_j$, where $j$ is the corresponding index. To join the two edge swap sequences at the seam, we must have $d_1$ and $d_2$ on opposite ``sides'' of a unique edge exchange. Hence, if $d_1 = e_i$ and $d_2 = g_j$, or some $d_1 = f_i$ and $d_2 = h_j$, then we have to take the reverse of one of the unique exchange cyclic base orderings as described in theorem~\ref{thm:uecbo-reversibility}. We will chose to reverse the second, $\arr{ (h_n,g_n), \ldots, (h_1,g_1) }$,  This reversed UECBO still applies to the path $(S_2,T_2)$ to $(T_2,S_2)$.

  Having two UECBOs wherein the edges $d_1$ and $d_2$ appear on opposite sides of a unique edge exchange, one can merge the two edge swap sequences by selecting edge swaps from the sequences in any order, provided the swaps at index $i$ (involving $d_1$) and $j$ (involving $d_2)$ are joined into one unique exchange, and that all swaps with index $< i$ and $< j$ are done before the join and all $> i$ and $> j$ are done afterwards.

  The resulting CBO is a UECBO, since all swaps $< i$ and $< j$ do not contain $d_1$ or $d_2$ and hence are applicable only to the corresponding subgraph \emph{before} $d_1$ and $d_2$ are swapped, and all swaps $> i$ and $> j$ are applicable only \emph{after} the join seam edges have been swapped.
\end{proof}

\begin{corollary}[connectivity of $\tau_3$ of $2$-clique sums]
  If $G = G_1 \oplus_2 G_2$ and $\tau_3(G_1)$ and $\tau_3(G_2)$ are connected, then $\tau_3(G)$ is connected.
\end{corollary}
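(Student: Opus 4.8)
The plan is to reduce the statement to a purely structural connectivity question using the decomposition theorem~\ref{thm:2-vconn tau decompose}, which gives $\tau_3(G) \cong \eta_{d_1,d_2}(\tau_3(G_1),\tau_3(G_2))$. So it suffices to prove that $\eta := \eta_{d_1,d_2}(H_1,H_2)$ is connected whenever $H_1 := \tau_3(G_1)$ and $H_2 := \tau_3(G_2)$ are. First I would record the coarse anatomy of $V_\eta$: by the constraint defining $V_\eta$, every vertex $((S_1,T_1),(S_2,T_2))$ lies in one of two \emph{seam states}, namely $d_1\in S_1,\,d_2\in T_2$ (call it $\mathrm{I}$) or $d_1\in T_1,\,d_2\in S_2$ (call it $\mathrm{II}$). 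The edge cases of the theorem split accordingly: cases~(a) and~(b) are exchanges inside a single factor that avoid the join edge and therefore \emph{preserve} the seam state, whereas the coupled cases~(c) and~(d) flip $d_1$ in $G_1$ and $d_2$ in $G_2$ simultaneously and therefore \emph{swap} it.

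Next I would localise within a fixed seam state. Writing $E_i^{0}$ for the exchanges of $H_i$ not involving $d_i$ and $E_i^{d}$ for those that do, the case~(a) edges let the first coordinate move along $E_1^{0}$-edges while the second is held fixed, and symmetrically for case~(b). Hence, inside one seam state, the region reachable from $(a,b)$ is the full product $A\times B$, where $A$ is the $E_1^{0}$-component of $a$ and $B$ the $E_2^{0}$-component of $b$. The decisive observation is that contracting each $E_i^{0}$-component turns $H_i$ into a \emph{connected, bipartite} graph $\Gamma_i$: it is connected because it is the image of the connected graph $H_i$, and bipartite with the two seam states as colour classes because every surviving edge is an $E_i^{d}$-edge, which always changes the seam state. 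In particular every $E_i^{0}$-component carries an incident seam-flip (otherwise it would be an isolated component of $H_i$, impossible since both seam states are populated, e.g.\ by applying theorem~\ref{thm:edge exchange} to $d_i$).

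I would then pass to a coarse graph $\Gamma$ whose vertices are the state-compatible pairs $(A,B)$ of components and whose edges are the admissible seam flips. Here I would exploit that in a case~(c)/(d) edge the $G_1$-part $(d_1,f)$ and the $G_2$-part $(e,d_2)$ are \emph{independent}: after repositioning $a$ freely inside $A$ and $b$ freely inside $B$, any $d_1$-flip leaving $A$ may be combined with any $d_2$-flip leaving $B$. Thus $(\mathrm{I},A,B)$ and $(\mathrm{II},A',B')$ are $\Gamma$-adjacent exactly when $A,A'$ are $\Gamma_1$-adjacent and $B,B'$ are $\Gamma_2$-adjacent. Since each coarse vertex unfolds to the connected region $A\times B$ and flips realise the $\Gamma$-edges, connectivity of $\eta$ follows once any two vertices of this \emph{synchronised} product of $\Gamma_1$ and $\Gamma_2$ are joined.

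The main obstacle is precisely this synchronisation: unlike a free Cartesian product, a seam flip must advance \emph{both} coordinates at once, so a walk in $\Gamma_1$ of length $\ell_1$ must be matched by a walk in $\Gamma_2$ of the \emph{same} length. I expect to resolve it using that $\Gamma_1,\Gamma_2$ are connected and bipartite: a walk between prescribed endpoints can be lengthened by any even number of steps by inserting back-and-forth traversals of a single edge (each $\Gamma_i$ has at least one edge, as both seam states are nonempty), while bipartiteness forces the parity of every walk to agree with the seam-state change demanded by the endpoints. Choosing a common length for the two walks and interleaving them step by step then yields the desired $\Gamma$-walk, proving $\tau_3(G)$ connected; because all unique exchanges are reversible (theorem~\ref{thm:reversibility unique exchange}), the identical argument settles $\vec{\tau}_3(G)$ as well.
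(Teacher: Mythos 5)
Your setup is sound as far as it goes: the reduction to theorem~\ref{thm:2-vconn tau decompose}, the two seam states, the identification of the within-state regions as products $A\times B$ of components of the non-seam exchanges, and the observation that the quotients $\Gamma_i$ are connected and bipartite are all correct. The gap is the claim that ``any $d_1$-flip leaving $A$ may be combined with any $d_2$-flip leaving $B$.'' The linked cases (c) and (d) of theorem~\ref{thm:2-vconn tau decompose} are not polarity-free: case (c) demands $(e,d_1,S_1,T_1)\in E_{\vec{\tau}_3(G_1)}$, i.e.\ the seam edge is the \emph{forced} (second) edge of the $G_1$-arc, together with $(d_2,f,S_2,T_2)\in E_{\vec{\tau}_3(G_2)}$, i.e.\ the seam edge is the \emph{chosen} (first) edge of the $G_2$-arc; case (d) is the mirror image. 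This is forced by the geometry: in a unique exchange of $G$ crossing the seam, Alice's edge lies in one factor and Bob's in the other, so the virtual seam edge must be forced in Alice's factor and chosen in Bob's. Two seam flips in which $d_1$ and $d_2$ are both chosen edges (or both forced edges) of their arcs cannot be linked into an edge of $\eta_{d_1,d_2}$. Since a single undirected seam edge of $\tau_3(G_i)$ fixes one polarity per direction of traversal (its twin arc has the opposite polarity, by theorem~\ref{thm:reversibility unique exchange}), your ``exactly when'' characterization of $\Gamma$-adjacency is only correct in the ``only if'' direction: the true coarse graph is a polarity-constrained subgraph of the synchronized product of $\Gamma_1$ and $\Gamma_2$. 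Your bipartite/padding argument proves connectivity of the full synchronized product, which says nothing about connectivity of that subgraph; indeed, for abstract graphs with this structure the subgraph can be disconnected (e.g.\ one seam edge per factor, with $d_1$ forced and $d_2$ forced in the state-compatible directions), so the hypothesis that $H_1,H_2$ are connected cannot, by itself, finish your argument.

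This polarity matching is precisely the crux that the paper's theorem~\ref{thm:join-2sum} has to confront: when the two sequences present their seam edges on the same side of their exchanges, one of them is \emph{reversed} via theorem~\ref{thm:uecbo-reversibility}, which flips every polarity while preserving the endpoints $(S_i,T_i)\to(T_i,S_i)$ --- a repair available only because a unique exchange cyclic base ordering runs between a tree pair and its complement. In your setting, abstract connectivity only supplies walks between arbitrary vertices, and reversing such a walk exchanges its endpoints, so no analogous repair exists. To close the gap you would need to show that between any two $\Gamma_i$-adjacent components seam crossings of both polarities are available, or give a rerouting argument around a crossing of the wrong polarity; nothing in your proposal (or in the connectivity hypothesis alone) provides this.
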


To illustrate theorem~\ref{thm:join-2sum} by example, we now describe how to join two UECBOs of $K_4$ (figure~\ref{fig:tau-k4}, page~\pageref{fig:tau-k4}) to calculate UECBOs for $K_4 \oplus_2 K_4$ (figure~\ref{fig:tau-k4k4}). For the example, we first arbitrarily select as source tree pair the one at the top of figure~\ref{fig:tau-k4}, to which the inverted target tree pair is located at the bottom of the figure. Figure~\ref{fig:example join-2sum} shows the selected tree pair and how it will be combined. Next, we arbitrarily select two UECBOs from the top vertex to the bottom: the first corresponds to the right-most path along the ``outer rim'' and the second the path meeting the ``left inner rim'' of the graph. These are $A := \arr{ (2,5), (1,3), (0,4) } = \carr{ 2,3,0 }{ 5,1,4 }$ (right outer rim) and $B := \arr{ (1,0), (2,4), (5,3) } = \carr{ 0,2,3 }{ 1,4,5 }$ (left inner rim).

To join the two instances of $K_4$, we have to renumber of the second instances (by adding $6$ to all edges), and invert the colors such that the join seam $3$ and $8$ are unequal. The translation yields $\arr{ (7,6), (8,10), (11,9) } = \carr{ 6,8,9 }{ 7,10,11 }$, and the inversion $\arr{ (9,11), (10,8), (6,7) } = \carr{ 11,10,7 }{ 9,8,6 } := B'$.

The join edges $3$ and $8$ are locates on the same side of their unique exchanges in the two UECBOs, $A$ and $B'$, hence, we have to reverse one of them as described in theorem~\ref{thm:uecbo-reversibility}. We reverse the second one to $\arr{ (7,6), (8,10), (11,9) } = \carr{ 6,8,9 }{ 7,10,11 } := B^r$, which still applies to the same tree pair.

We can now join the UECBOs $A = \arr{ (2,5), (1,3), (0,4) }$ and $B^r = \arr{ (7,6), (8,10), (11,9) }$ as described in theorem~\ref{thm:join-2sum}: the UEs involving the join seam edges must be matched, and all UEs before and after can be ordered arbitrarily. This delivers the following four UECBOs for $K_4 \oplus_2 K_4$:
\begin{align*}
  \arr{ (2,5), (7,6), (1,10), (0,4), (11,9) }\,, &&
  \arr{ (7,6), (2,5), (1,10), (0,4), (11,9) }\,, \\
  \arr{ (2,5), (7,6), (1,10), (11,9), (0,4) }\,, &&
  \arr{ (7,6), (2,5), (1,10), (11,9), (0,4) }\,.
\end{align*}
By reversing the first UECBOs instead of the second, we could calculate four more UECBOs with $(10,1)$, which are reversed versions of the four above.  The first of the four UECBOs above is marked in figure~\ref{fig:tau-k4k4} as the light gray path from the top to the bottom vertex.

\begin{figure}
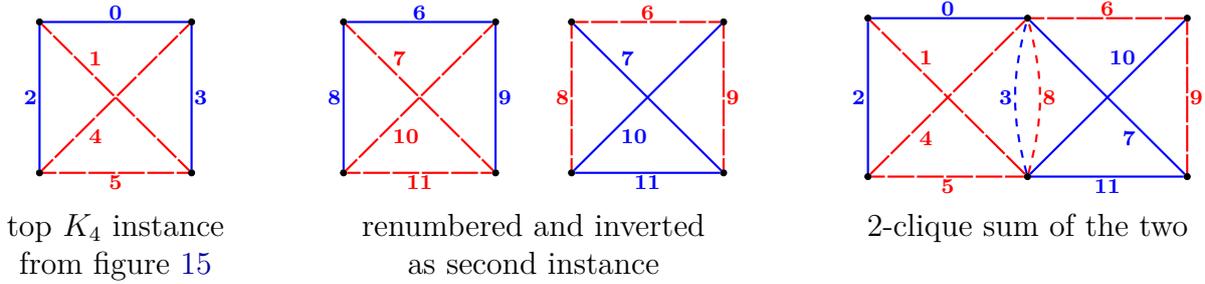
\centering

  \caption{The two $K_4$ tree pair instances and their $2$-clique sum used in the example to theorem~\ref{thm:join-2sum}.}\label{fig:example join-2sum}
\end{figure}


\subsection{Reducing Atomic Bispanning Graphs at a Vertex of Degree Three}\label{sec:reduce vdeg3}

Due to the previous two sections, the only class of bispanning graphs for which $\tau_3$ remains to be composed are atomic, and have vertex- and edge-connectivity three. Obviously, such graphs have no degree two vertex, hence there are at least four vertices with degree three (theorem~\ref{thm:bispanning deg23}).  In this section we commonly refer to such a degree three vertex as $v$ and to its adjacent vertices with $x$, $y$, and $z$ (as illustrated, e.g., in figure~\ref{fig:reducing w5}). The goal of this section is to describe how $\vec{\tau}_3(G)$ of an atomic bispanning graph can be derived from $\vec{\tau}_3(G')$ of smaller bispanning graphs.

There are three ways such a vertex $v$ could have resulted from an edge-split-attach operation during an inductive construction: the split edge must have originally been between two of the three vertices $\{x,y,z\}$, and the attachment must have been done at the remaining vertex. In the following theorem, we name the three possible graphs from which the operation could have originated the \emph{reduction} graphs $G_{x,y}$, $G_{x,z}$, and $G_{y,z}$. Note that this is not the only way a degree three vertex could be been constructed, because the ``attach'' of the edge-split-attach operation can freely select a vertex and increase its adjacency. For the reduction to work, however, it suffices that these are \emph{possible} constructions.
\begin{theorem}[{\textls[-8]{reduction of an atomic bispanning graph at a degree three vertex}}]\label{thm:reducing degree three}
  If $G = (V,E)$ is an atomic bispanning graph and $v \in V$ a vertex with $\deg(v) = 3$ and the three adjacent vertices $x,y,z \in V$, then the three graphs
  \begin{align*}
    \begin{aligned}
      G_{x,y} :=&\; G \contr \{v,x\} - \{x,z\} \\
              =&\; G \contr \{v,y\} - \{y,z\} \,,
    \end{aligned} &&
    \begin{aligned}
      G_{x,z} :=&\; G \contr \{v,x\} - \{x,y\} \\
              =&\; G \contr \{v,z\} - \{z,y\} \,, \\
    \end{aligned} &&
    \begin{aligned}
      G_{y,z} :=&\; G \contr \{v,y\} - \{y,x\} \\
              =&\; G \contr \{v,z\} - \{z,x\} \\
    \end{aligned}
  \end{align*}
  are bispanning. We call them the three \emph{reduction} graphs of $G$ at $v$, or say that $G$ is \emph{reduced} at $v$. For $(a,b) \in \{ (x,y), (x,z), (y,z) \}$, one edge with ends $a$ and $b$ remains in $G_{a,b}$ after the contraction, and we name this edge $e_{a,b}$ with $\delta_{G_{a,b}}(e_{a,b}) = \{a,b\}$.
\end{theorem}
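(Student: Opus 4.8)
The plan is to reduce the claim to the familiar reversal of an edge-split-attach operation and then verify bispanningness directly through Nash-Williams' criterion, letting atomicity supply exactly the slack that is needed. Since $G$ is atomic it is simple (Theorem~\ref{thm:atomics are simple}), so the three edges at the degree-three vertex $v$ run to three \emph{distinct} neighbours $x,y,z$. First I would unwind the notation with a short bookkeeping check against Definition~\ref{def:contraction}: for a pair $(a,b)$ with third neighbour $c$, contracting $\{v,a\}$ merges $v$ into $a$, turns the edge $\{v,b\}$ into $\{a,b\}$ and the edge $\{v,c\}$ into $\{a,c\}$, and the subsequent deletion of the image of $\{v,c\}$ removes that edge again. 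This shows that the two contraction-and-deletion expressions for each pair coincide, and that in every case $G_{a,b} = G - v + \{a,b\}$ for $(a,b) \in \{(x,y),(x,z),(y,z)\}$. The theorem thus amounts to showing that $G - v + \{a,b\}$ is bispanning for each pair, and since the argument never uses \emph{which} pair is chosen, proving one case proves all three by symmetry.

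Next I would check the counts. Deleting $v$ removes its three incident edges and adding $\{a,b\}$ restores one, so $G - v + \{a,b\}$ has $|V| - 1$ vertices and $(2|V| - 2) - 3 + 1 = 2(|V| - 1) - 2$ edges (Theorem~\ref{thm:bispanning edge count}), the correct balance. It then remains to verify the partition inequality of Theorem~\ref{thm:nash-williams} for $G_{a,b}$ on the vertex set $V' = V \setminus \{v\}$. Given a partition $P$ of $V'$, I would lift it to the partition $\hat P = P \cup \{\{v\}\}$ of $V$, which has $|P| + 1$ members. Because $v$ is isolated in $\hat P$ and $x,y,z$ are distinct from $v$, all three edges at $v$ cross $\hat P$, while an edge of $G$ not incident to $v$ crosses $\hat P$ exactly when it crosses $P$; hence $|E_{\hat P}|$ equals $3$ plus the number of $v$-avoiding edges that cross $P$. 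As long as $\hat P$ is non-trivial, atomicity (Theorem~\ref{thm:atomic nash-williams}) gives the \emph{strict} bound $|E_{\hat P}| > 2(|\hat P| - 1)$, i.e.\ $|E_{\hat P}| \geq 2|P| + 1$; subtracting the three edges at $v$ shows that at least $2|P| - 2 = 2(|P| - 1)$ edges of $G_{a,b}$ cross $P$, even before counting the new edge $\{a,b\}$, which can only help.

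The one gap is the boundary case where $\hat P$ is trivial, which occurs exactly when $P$ is the discrete partition of $V'$ (then $|\hat P| = |V|$ and the non-triviality hypothesis of Theorem~\ref{thm:atomic nash-williams} fails). Here I would avoid atomicity entirely: for the discrete partition every edge of $G_{a,b}$ crosses $P$, so $|E_P| = |E(G_{a,b})| = 2(|V| - 1) - 2 = 2(|P| - 1)$ by the edge count above, and the inequality holds with equality. With both the edge count and the partition inequality established for every partition, Theorem~\ref{thm:nash-williams} certifies that $G_{a,b}$ is bispanning, and the symmetry in $(a,b)$ finishes all three reduction graphs. I expect the main obstacle to be purely presentational — keeping the contraction-versus-deletion identification and the discrete-partition boundary case clean — since the decisive quantitative step, the atomic margin of one extra crossing edge absorbing the net loss of two edges at $v$, is short once the lifted partition $\hat P$ is in hand.
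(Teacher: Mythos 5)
Your proof is correct, but it is routed differently from the paper's. The paper disposes of this theorem in two sentences: after invoking Theorem~\ref{thm:atomics are simple} for simplicity, it observes that each $G_{a,b}$ has the form $G \contr e - f$ for two distinct edges $e,f$ (namely two of $e_x,e_y,e_z$) and cites Lemma~\ref{lem:contract-delete atomic bispanning}, which states that contract-deleting \emph{any} pair of distinct edges in an atomic bispanning graph yields a bispanning graph. The real work thus sits in that lemma, whose proof also runs through Theorem~\ref{thm:nash-williams} but with a different partition lifting: a partition of the contracted vertex set is pulled back to $V$ by splitting the contracted vertex into the two ends of $e$ (keeping the number of members fixed), the deleted edge $f$ costs at most one crossing edge, and the strict inequality of Theorem~\ref{thm:atomic nash-williams} absorbs that loss. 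You instead work directly with the description $G_{a,b} = G - v + \{a,b\}$ and lift a partition of $V \setminus \{v\}$ by adjoining $\{v\}$ as a singleton, so the member count rises by one, exactly three crossing edges are lost at $v$, and the atomic margin of $+1$ again closes the resulting net deficit of one. Both accountings are sound; what the paper's route buys is a reusable, strictly more general lemma (arbitrary edge pairs, not just edges at a degree-three vertex), while your route is self-contained and handles the degenerate case explicitly and cleanly — your discrete-partition boundary case (where the lifted partition is trivial and equality holds by the edge count) is the analogue of a case the paper's lemma proof silently glosses over (there, the lifted partition can have a single member, where the quoted atomic bound fails but the desired inequality is vacuous). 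One presentational caveat: your identification of the two contraction-and-deletion expressions with $G - v + \{a,b\}$ tacitly reads $-\{x,z\}$ as deleting the image of the edge $\{v,z\}$ rather than a possible pre-existing edge between $x$ and $z$; this matches the paper's intent (its later description of the edge set of $G_{a,b}$ as $E - e_x - e_y - e_z + e_{a,b}$ confirms it), but it is worth stating when you make the bookkeeping check.
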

\begin{proof}
  Due to theorem~\ref{thm:atomics are simple} considering simple graphs is sufficient, though the resulting $G_{x,y}$, $G_{x,z}$, and $G_{y,z}$ may not be simple anymore. Since all three graphs are constructed using a contraction and a deletion of distinct edges in an atomic bispanning graph, lemma~\ref{lem:contract-delete atomic bispanning} guarantees that the resulting graphs are bispanning.
\end{proof}

Due to the definition by contraction, the resulting graphs $G_{x,y}$, $G_{x,z}$, and $G_{y,z}$ have similar vertex and edges sets: the vertex set of $G_{a,b}$ is clearly $V - v$ and the edge set $E - e_x - e_y - e_z + e_{a,b}$. Furthermore, we want to point out that the premise that the bispanning graph must be atomic is necessary. For example $B_{6,6}$ (figure~\ref{fig:small simple bispanning}, page~\pageref{fig:small simple bispanning}) is a simple counterexample: reduction of the top degree three vertex results in two bispanning graphs and one which is not bispanning.

In figure~\ref{fig:reducing w5}, the three reduction graphs of a degree three vertex from theorem~\ref{thm:reducing degree three} in $W_5$ are shown. In a sense, this example can seen as prototypical, since only the labeled vertices matter for the reduction. Though, in general, the three vertices $x$, $y$, and $z$ may or may not be adjacent in $G$ (in $W_5$ two of the three combinations are adjacent).

In the example in the figure, we already show the reduction for specific pairs of disjoint trees, even though theorem~\ref{thm:reducing degree three} does not consider the disjoint trees. This is the topic of the next theorem, but is better first explained with an example. Consider how many possible colorings of the three edges $0$, $1$, $7$, or in general $e_x$, $e_y$, and $e_z$ exist. One edge must have a color different from the other two, while the other two must have the same color. Combinatorially, we can freely pick one edge and its color, thereby determining the others. This yields in total six possible combinations. Three of these are shown in figure~\ref{fig:reducing w5}, the other three have inverted colors.

During the reduction, the two edges of the same color are replaced by (or contracted to) the former split edge $e_{a,b}$. For example, in the figure's center column, $e_{x,z} = \{1,7\}$, since $e_x = 1$ and $e_z = 7$ are in $S$ (blue). As this undoes the edge-split-attach, the reduced pair of trees are a valid pair of disjoint spanning trees of $G_{a,b}$. In the figure $G_{x,z} = G_{1,7} \cong K_4$, where $(S,T)$ from $G = W_5$ is reduced to $(S - 1 - 7 + \{1,7\}, T - 0)$. This mapping of pairs of disjoint spanning trees will be called $\rho_{e_{a,b},c}$ and $\rho^{-1}_{e_{a,b},c}$ in the following theorem. The mapping $\rho^{-1}_{e_{a,b},c}$ takes $(S,T)$ from $G$ to a pair of disjoint spanning trees of \emph{one} of the reduction graphs $G_{x,y}$, $G_{x,z}$, or $G_{x,y}$, where the destination depends on the three edges adjacent to $v$. For example, if $0$ and $1$ are blue, and $7$ red, as in the left column in the figure~\ref{fig:reducing w5}, then the tree pairs are reduced as shown in $G_{x,y} = G_{1,0}$. Likewise, if $0$ and $7$ are blue, and $1$ red (right column), then they are reduced to $G_{y,z} = G_{0,7}$. While $G_{1,0}$ and $G_{0,7}$ are isomorphic in this example, they need not be in general, and in the context of our recombination they are considered different graphs, since the have different edge sets.

Hence, one can map any pair of disjoint trees to a pair of disjoint trees in exactly one of the reduction graphs, and vice versa. We prove this bijection in the following theorem, written as a bijection of the vertex sets of exchange graphs. In this context $\rho_{e_{a,b},c}$ maps a pair of disjoint spanning trees from the reduction graph $G_{a,b}$ to $G$, and $\rho^{-1}_{e_{a,b},c}$ takes a pair of spanning trees from $G$ and maps it to one of the reduction graphs. It is helpful to read $\rho_{e_{a,b},c}$ as ``add $v$ by splitting $e_{a,b} = \{ a,b \}$ and attaching to $c$'', and $\rho^{-1}_{e_{a,b},c}$ as ``remove $v$, which is attached to $a$ and $b$ with the same color and to $c$ by the other, and replace $v$ with $e_{a,b} = \{ a,b \}$''.

\begin{figure}
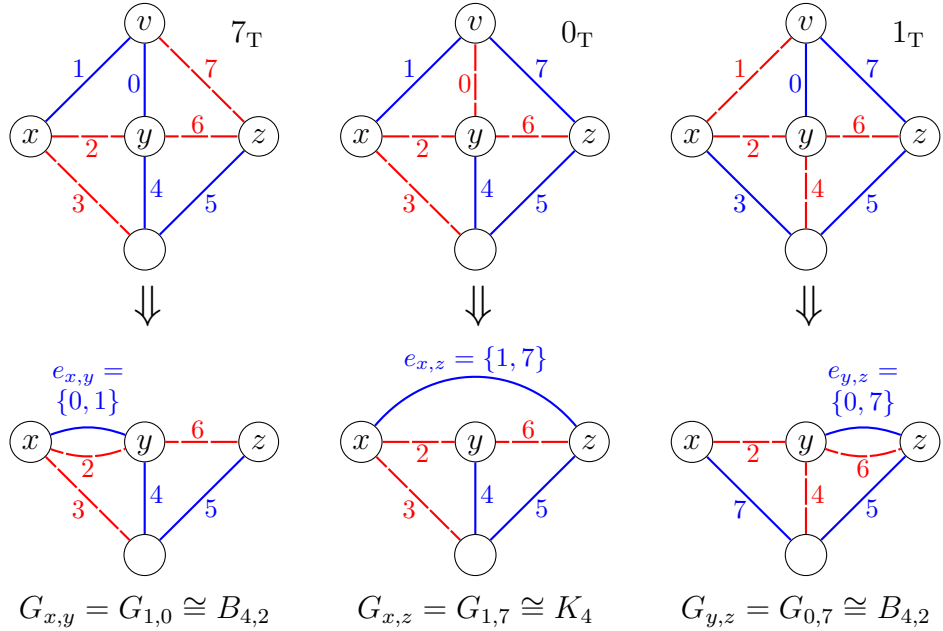
\centering

  \caption{Reducing $W_5$ at the vertex $v$ of degree three into the graph $G_{x,y}$, $G_{x,z}$, and $G_{y,z}$.}\label{fig:reducing w5}
\end{figure}

\begin{theorem}[mapping vertices of the exchange graph at a vertex of degree three]\label{thm:decompose rho3}
  Let $G = (V,E)$ be an atomic bispanning graph, $v \in V$ a vertex with $\deg(v) = 3$ and the three adjacent vertices $x,y,z \in V$, and $G_{x,y}$, $G_{x,z}$, $G_{y,z}$ the three reduction graphs as defined in theorem~\ref{thm:reducing degree three}. If $V_{\tau(G)}$, $V_{\tau(G_{x,y})}$, $V_{\tau(G_{x,z})}$, and $V_{\tau(G_{y,z})}$ are the vertex sets of an exchange graph on $G$, $G_{x,y}$, $G_{x,z}$, and $G_{y,z}$, then
  \[
  V_{\tau(G)} = \rho_{e_{x,y},z}(V_{\tau(G_{x,y})}) \dotcup \rho_{e_{x,z},y}(V_{\tau(G_{x,z})}) \dotcup \rho_{e_{y,z},x}(V_{\tau(G_{y,z})}) \,,
  \]
  where $\rho_{e_{ab},c} : V_{\tau(G_{a,b})} \rightarrow V_{\tau(G)}$\symbol{rho}{$\rho_{e_{ab},c}(S,T)$}{edge-split-attach operation} with
  \[
  (S,T) \mapsto
  \begin{cases}
    (S - e_{a,b} + \{v,a\} + \{v,b\}, T + \{v,c\}) & \text{if } e_{a,b} \in S \,, \\
    (S + \{v,c\}, T - e_{a,b} + \{v,a\} + \{v,b\}) & \text{if } e_{a,b} \in T \,, \\
  \end{cases}
  \]
  for any $(a,b) \in \{ (x,y), (x,z), (y,z) \}$, which basically performs the ``edge-split-attach'' operation on $(S,T)$ (see theorem~\ref{thm:edge-split-attach}).
\end{theorem}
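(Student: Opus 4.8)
The plan is to establish the set equality by showing that each map $\rho_{e_{a,b},c}$ is a well-defined injection into $V_{\tau(G)}$, that the three images are pairwise disjoint, and that together they exhaust $V_{\tau(G)}$. First I would verify that $\rho_{e_{a,b},c}$ lands in $V_{\tau(G)}$: given a pair of disjoint spanning trees $(S,T)$ of $G_{a,b}$, exactly one of $S$ or $T$ contains the edge $e_{a,b}$ (since both are spanning trees and $e_{a,b} \in E_{G_{a,b}}$). The map then performs precisely an edge-split-attach operation at $e_{a,b}$ with the new vertex $v$ attached to $c$, and theorem~\ref{thm:edge-split-attach} guarantees the result is again a pair of disjoint spanning trees of $G$. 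This also shows $\rho_{e_{a,b},c}$ is injective, because the inverse simply contracts $v$ back onto $e_{a,b}$: the three edges $\{v,a\}$, $\{v,b\}$, $\{v,c\}$ at $v$ determine $(a,b)$ and $c$ uniquely from the coloring (the two same-colored edges collapse to $e_{a,b}$, the third is $e_{v,c}$), so no information is lost.

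The disjointness of the three images is the combinatorial heart of the argument. Every $(S',T') \in V_{\tau(G)}$ restricts the three edges $e_x = \{v,x\}$, $e_y = \{v,y\}$, $e_z = \{v,z\}$ incident to $v$ into the two trees. Since $v$ has degree three, and $v$ must be connected in both $G[S']$ and $G[T']$ by theorem~\ref{thm:bispanning deg 2}, neither tree can omit all three edges, and neither can contain all three (else one tree would close a cycle through $v$, contradicting theorem~\ref{thm:tree}). Hence, by the pigeonhole principle, exactly two of these edges lie in one tree and the remaining one in the other. The two same-colored edges pin down which pair $(a,b)$ is involved, and the image of $\rho_{e_{a,b},c}$ consists exactly of those $(S',T')$ whose same-colored pair at $v$ is $\{e_a,e_b\}$. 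Since the three pairs $\{e_x,e_y\}$, $\{e_x,e_z\}$, $\{e_y,e_z\}$ are mutually exclusive, the three images are pairwise disjoint.

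Surjectivity then follows by reversing the same observation: given any $(S',T') \in V_{\tau(G)}$, the pigeonhole argument identifies the unique pair $(a,b)$ whose edges $e_a,e_b$ share a color at $v$, and I would define $(S,T) := \rho^{-1}_{e_{a,b},c}(S',T')$ by contracting $v$ and replacing the same-colored pair with $e_{a,b}$, deleting the now-merged third edge appropriately. Theorem~\ref{thm:reducing degree three} ensures $G_{a,b}$ is bispanning, and the reverse of the edge-split-attach construction in theorem~\ref{thm:edge-split-attach} (applied with the roles of split and attach undone) shows $(S,T)$ is a disjoint pair of spanning trees of $G_{a,b}$, so $(S',T') = \rho_{e_{a,b},c}(S,T)$ lies in the corresponding image.

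The main obstacle I anticipate is the careful bookkeeping in the surjectivity step, specifically confirming that the reduced pair $(S,T)$ really is a valid pair of disjoint spanning trees of $G_{a,b}$ rather than merely of the right cardinality. The subtlety is that contracting $\{v,a\}$ and deleting the parallel leftover edge must be shown to preserve both the acyclicity and the spanning property of each tree; this mirrors the leaf-edge and edge-split reasoning in the proof of theorem~\ref{thm:inductive construction}, and I would lean on lemma~\ref{lem:contract-delete atomic bispanning} together with the spanning tree equivalences of theorem~\ref{thm:tree} to discharge it cleanly. Once the single tree-preservation lemma is in hand, the three-way partition assembles immediately from the pigeonhole case distinction.
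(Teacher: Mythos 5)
Your proposal is correct and follows essentially the same route as the paper's own proof: the pigeonhole classification of the three edges at $v$ into a same-colored pair and a single attachment edge yields the partition of $V_{\tau(G)}$ into the three images, the forward inclusion is exactly theorem~\ref{thm:edge-split-attach}, and the reverse inclusion is the explicit contraction map $\rho^{-1}_{e_{a,b},c}$ that the paper also writes down. One small repair: a tree containing all three edges at $v$ would \emph{not} close a cycle (a star is acyclic), so your parenthetical justification is off; the correct reason this cannot happen is the one you already give in the same sentence, namely that the complementary tree would then contain none of the three edges and could not reach $v$.
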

\begin{proof}
  To show that $V_{\tau(G)} \subseteq \rho_{e_{x,y},z}(V_{\tau(G_{x,y})}) \dotcup \rho_{e_{x,z},y}(V_{\tau(G_{x,z})}) \dotcup \rho_{e_{y,z},x}(V_{\tau(G_{y,z})})$, let $(S,T)$ be a pair of disjoint spanning trees of $G$. Then of the three edges $\{v,x\}$, $\{v,y\}$, and $\{v,z\}$ one edge is in one tree, say $S'$, and the other two in the other tree, say $T'$, with $\{ S', T' \} = \{ S, T \}$. Thus there are six configurations the three edges can have in a vertex of $V_{\tau(G)}$.

  If $\{v,x\}$ is a leaf edge in $S$, then we map $(S,T)$ to the vertex $(S - \{v,x\}, T - \{v,y\} - \{v,z\} + e_{y,z})$ in $V_{\tau(G_{y,z})}$, since clearly this is a pair of disjoint spanning trees of $G_{y,z}$. Likewise, if $\{v,x\}$ is a leaf edge in $T$, we can map symmetrically to $(S - \{v,y\} - \{v,z\} + e_{y,z}, T - \{v,x\})$, also in $V_{\tau(G_{y,z})}$. Likewise, we can map the pair of trees to $G_{x,z}$ if $\{v,y\}$ is a leaf edge and to $G_{x,y}$ if $\{v,z\}$ is a leaf edge. These individual mappings are in fact the inverse of the operation $\rho_{e_{a,b},c}$:
  \begin{align*}
  \rho^{-1}_{e_{a,b},c} &: \{ (S,T) \in V_{\tau(G)} \mid \{v,a\}, \{v,b\} \in S \text{ or } \{v,a\}, \{v,b\} \in T \} \rightarrow V_{\tau(G_{a,b})}, \\
    (S,T) &\mapsto \begin{cases}
      (S - \{v,c\}, T - \{v,a\} - \{v,b\} + e_{a,b}) & \text{if } \{v,a\}, \{v,b\} \in T \text{ and } \{v,c\} \in S \,, \\
      (S - \{v,a\} - \{v,b\} + e_{a,b}, T - \{v,c\}) & \text{if } \{v,a\}, \{v,b\} \in S \text{ and } \{v,c\} \in T \,,
    \end{cases}
  \end{align*}
  So each vertex of $V_{\tau(G)}$ has a specific configuration of the three edges, which corresponds to a vertex in exactly one of the three reduction graphs. Regarding the opposite inclusion, $\rho_{e_{x,y},z}(V_{\tau(G_{x,y})}) \dotcup \rho_{e_{x,z},y}(V_{\tau(G_{x,z})}) \dotcup \rho_{e_{y,z},x}(V_{\tau(G_{y,z})}) \subseteq V_{\tau(G)}$, one can take any pair of trees from the three reduction graphs, say from $G_{a,b}$, and apply $\rho_{e_{a,b},c}$ to map the trees to a pair of disjoint spanning trees in $G$, as already stated in the theorem. Thus both vertex sets are equal.
\end{proof}

The previous theorem yields an elegant way to calculate the number of vertices in an exchange graph of an atomic bispanning graph.

\begin{corollary}[number of vertices in exchange graph of atomic bispanning graph]
  Let $G = (V,E)$ be an atomic bispanning graph, $v \in V$ a vertex with $\deg(v) = 3$ and the three adjacent vertices $x,y,z \in V$, and $G_{x,y}$, $G_{x,z}$, $G_{y,z}$ as defined in theorem~\ref{thm:reducing degree three}. If $V_{\tau(G)}$, $V_{\tau(G_{x,y})}$, $V_{\tau(G_{x,z})}$, and $V_{\tau(G_{y,z})}$ are the vertex sets of an exchange graph on $G$, $G_{x,y}$, $G_{x,z}$, and $G_{y,z}$, then
  \[
  |V_{\tau(G)}| = |V_{\tau(G_{x,y})}| + |V_{\tau(G_{x,z})}| + |V_{\tau(G_{y,z})}| \,.
  \]
\end{corollary}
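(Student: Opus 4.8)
The plan is to read off the result directly from the set-level decomposition just established in theorem~\ref{thm:decompose rho3}, by passing to cardinalities. That theorem gives the disjoint union
\[
V_{\tau(G)} = \rho_{e_{x,y},z}(V_{\tau(G_{x,y})}) \dotcup \rho_{e_{x,z},y}(V_{\tau(G_{x,z})}) \dotcup \rho_{e_{y,z},x}(V_{\tau(G_{y,z})}) \,.
\]
Because the union on the right is disjoint, the cardinality of $V_{\tau(G)}$ is the sum of the cardinalities of the three image sets. So the only thing left to verify is that each map $\rho_{e_{a,b},c}$ does not collapse distinct tree pairs, i.e. that $|\rho_{e_{a,b},c}(V_{\tau(G_{a,b})})| = |V_{\tau(G_{a,b})}|$ for each $(a,b) \in \{(x,y),(x,z),(y,z)\}$.

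For this injectivity I would appeal to the explicit inverse $\rho^{-1}_{e_{a,b},c}$ constructed in the proof of theorem~\ref{thm:decompose rho3}: since $\rho_{e_{a,b},c}$ admits a two-sided inverse on its image (the ``remove $v$ and replace it by $e_{a,b}$'' operation), it is a bijection from $V_{\tau(G_{a,b})}$ onto its image, and hence cardinality-preserving. First I would note that $\rho_{e_{a,b},c}$ is well-defined as the edge-split-attach operation of theorem~\ref{thm:edge-split-attach}, which sends a pair of disjoint spanning trees of $G_{a,b}$ to a pair of disjoint spanning trees of $G$; then I would point to $\rho^{-1}_{e_{a,b},c}$ to conclude bijectivity. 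Combining the three equalities $|\rho_{e_{a,b},c}(V_{\tau(G_{a,b})})| = |V_{\tau(G_{a,b})}|$ with the disjointness of the decomposition immediately yields
\[
|V_{\tau(G)}| = |V_{\tau(G_{x,y})}| + |V_{\tau(G_{x,z})}| + |V_{\tau(G_{y,z})}| \,.
\]

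I do not expect any genuine obstacle here: all the substance lives in theorem~\ref{thm:decompose rho3}, and this corollary is purely an accounting consequence of a disjoint union of sets in bijection with the three summands. The only mild care needed is to make explicit that the three image sets are pairwise disjoint (which is exactly the $\dotcup$ asserted in the cited theorem, reflecting that each tree pair of $G$ has a unique leaf-edge configuration at $v$ and thus lands in exactly one reduction graph), so that no vertex is double-counted.
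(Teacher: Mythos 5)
Your proposal is correct and matches the paper's intent exactly: the paper states this corollary without proof, treating it as an immediate accounting consequence of the disjoint union in theorem~\ref{thm:decompose rho3}, and your argument simply makes explicit the two facts needed (pairwise disjointness of the three images and injectivity of each $\rho_{e_{a,b},c}$ via its inverse $\rho^{-1}_{e_{a,b},c}$ from the theorem's proof). Nothing is missing.
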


To test the theorems, we can apply the reduction to $K_4$, since it is atomic. The three reduction graphs are all isomorphic to $B_{3,2}$ (see figure~\ref{fig:small-bispanning}, page~\pageref{fig:small-bispanning}), regardless of the vertex at which one reduces. While they are isomorphic, during the reduction the labeling must be retained. Consider in figure~\ref{fig:tau-k4} (page~\pageref{fig:tau-k4}) the vertex $v$ of $K_4$ in the top left corner, which is incident to $x$, $y$, and $z$ by the edges $0$, $1$, and $2$. If we apply the reduction to this vertex, we get $V_{\tau(G_{x,y})}$, $V_{\tau(G_{x,z})}$, and $V_{\tau(G_{y,z})}$, each containing four distinct spanning trees of $B_{3,2}$. These three subsets are marked in figure~\ref{fig:tau-k4} with $0_{\text{S}}$, $0_{\text{T}}$, $1_{\text{S}}$, $1_{\text{T}}$, $2_{\text{S}}$, $2_{\text{T}}$, where $\rho_{e_{0,1},2}(V_{\tau(G_{0,1})})$ is marked with $2_{\text{S}}$ and $2_{\text{T}}$, $\rho_{e_{0,2},1}(V_{\tau(G_{0,2})})$ is marked with $1_{\text{S}}$ and $1_{\text{T}}$, and $\rho_{e_{1,2},0}(V_{\tau(G_{1,2})})$ is marked with $0_{\text{S}}$ and $0_{\text{T}}$. The number in the markers identifies the edge in the reduction, whose color is different from the other two (the single color).

As a less pathological example, consider the reduction of $W_5$, of which the exchange graph is illustrated in figure~\ref{fig:tau-w5} (page~\pageref{fig:tau-w5}). We can calculate $|V_{\tau(W_5)}|$ by taking $|V_{\tau(B_{4,2})}| + |V_{\tau(K_4)}| + |V_{\tau(B_{4,2})}|$. From figure~\ref{fig:tau-x4} (page~\pageref{fig:tau-x4}) we get $|V_{\tau(B_{4,2})}| = 8$ and from figure~\ref{fig:tau-k4} $|V_{\tau(K_4)}| = 12$, so in total $|V_{\tau(W_5)}| = 28$. These $28$ possible disjoint spanning trees are shown in figure~\ref{fig:tau-w5}. As with $K_4$, we labeled and grouped the vertices of $\tau_3(W_5)$ according to the color combination of the three edges, as shown in the reduction in figure~\ref{fig:reducing w5}. The combinations are marked with $0_{\text{S}}$, $0_{\text{T}}$, $1_{\text{S}}$, $1_{\text{T}}$, $7_{\text{S}}$, and $7_{\text{T}}$, and suggestively grouped. Note that this grouping is somewhat arbitrary, since if we choose to reduce $W_5$ at a different vertex of degree three, then the grouping is different, even though the exchange graph itself is isomorphic. Another observation from figure~\ref{fig:tau-w5} is that vertices from ``opposite'' sets $a_{\text{S}}$ and $a_{\text{T}}$, like $0_{\text{S}}$ and $0_{\text{T}}$, are never adjacent.

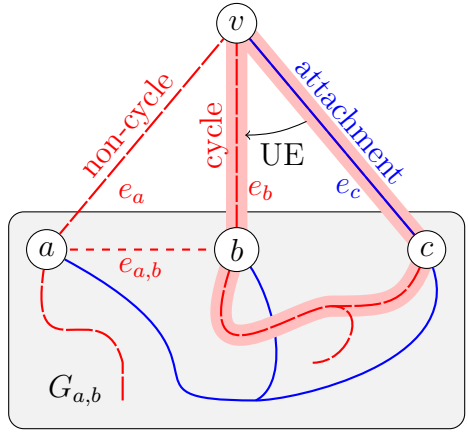
\begin{figure}\centering
  \begin{tikzpicture}[graphfinal,
    scale=2.5,
    elabel/.style={sloped, inner sep=1pt},
    ]

    \draw[fill=black!5,rounded corners=2mm] (-1.2,0.2) rectangle (1.2,-0.95);
    \node at (-0.85,-0.75) {$G_{a,b}$};

    \coordinate (x) at (-1,0);
    \coordinate (y) at (0,0);
    \coordinate (z) at (1,0);
    \coordinate (v) at (0,1.2);

    \coordinate (w) at (-0.6,-0.8);
    \coordinate (e1) at (0.4,-0.6);
    \coordinate (e2) at (0.5,-0.3);
    \coordinate (e3) at (0.1,-0.8);

    \draw[B] (e3) .. controls +(0.2,0.1) and +(0.2,-0.2) .. (y);

    \draw[Rthick, postaction={draw, R}] (z)
    .. controls +(-0.1,-0.3) and +(0.3,0.0)
    .. (e2)
    .. controls +(-0.2, 0.0) and +(-0.3,-0.8)
    .. (y) -- (v) -- (z);

    \node (x) [vdot] at (x) {$a$};
    \node (y) [vdot] at (y) {$b$};
    \node (z) [vdot] at (z) {$c$};
    \node (v) [vdot] at (v) {$v$};


    \draw[R,dashed] (x) -- node [vlabel,below,inner sep=3pt] {$e_{a,b}$} (y);
    \draw[R] (x) --
        node [vlabel,pos=0.3,inner sep=3pt,swap] {$e_a$}
        node [elabel,above] {non-cycle}
        (v);
    \path (y) --
        node [red,vlabel,pos=0.2,inner sep=4pt,swap] {$e_b$}
        node [pos=0.5,inner sep=3pt] (j2) {}
        node [red,elabel,above,inner sep=3pt] {cycle}
        (v);
    \draw[B] (z) --
        node [pos=0.3, vlabel] {$e_c$}
        node [pos=0.6,inner sep=3pt] (j1) {}
        node [elabel,above,inner sep=5pt] {attachment}
        (v);

    \draw[R] (x) .. controls +(-0.1,-0.7) and +(-0.1,0.4)
    .. (-0.6,-0.6) .. controls +(0.0,-0.1) and +(0.0,0.1)
    .. (w);

    \draw[R] (e2) .. controls +(0.2,0.0) and +(0.2,0.0) .. (e1);

    \draw[B] (z)
    .. controls +(0.2,-0.6) and +(0.2,0.0) ..
    (e3)
    .. controls +(-0.8,0.0) and +(1.1,-0.6) ..
    (x);

    \draw[->] (j1) to [bend left=12] node[below,xshift=2pt] {UE} (j2);

  \end{tikzpicture}
  \caption{Sketch of attachment, cycle, and non-cycle edges at a vertex of degree three.}\label{fig:vdeg3 labeling}

\end{figure}

Before considering unique exchanges, let us summarize the previous definitions and theorems: given a graph $G$ with a vertex $v$ of degree three, we labeled the adjacent vertices $x$, $y$, and $z$, and the corresponding edges $e_x$, $e_y$, and $e_z$. We then showed that the vertices of $\vec{\tau}_3(G)$ can be mapped bijectively to one of the vertices in either $\vec{\tau}_3(G_{x,y})$, $\vec{\tau}_3(G_{x,z})$, or $\vec{\tau}_3(G_{y,z})$. These graphs are called the three \emph{reduction graphs} of $G$, since they are defined by reducing $G$ at $v$ in one of the three possible ways to reverse an edge-split-attach operation.

In the following, we will consider a \emph{particular} pair of disjoint spanning trees $(S,T)$ of $G$. As there are six possible ways the edges $\{ e_x, e_y, e_z \}$ are contained in $S$ and $T$, we define $(a,b) \in \{ (x,y), (x,z), (y,z) \}$ in the following definition to correspond to the reduction graph $G_{a,b}$ from which $(S,T)$ can be obtained via an edge-split-attach operation.

\begin{definition}[attachment, cycle, and non-cycle edges at a vertex of degree three]\label{def:vdeg3 edge names}
  Let $G = (V,E)$ be an atomic bispanning graph, $v \in V$ a vertex with $\deg(v) = 3$, the three adjacent vertices $x,y,z \in V$, the incident edges $e_x, e_y, e_z \in E$, and the three reduction graphs $G_{x,y}$, $G_{x,z}$, $G_{y,z}$ as defined in theorem~\ref{thm:reducing degree three}.

  If $(S,T) \in V_{\tau(G)}$ are two disjoint spanning trees of $G$, then one can select $(a,b) \in \{ (x,y), (x,z), (y,z) \}$ such that $e_a$ and $e_b$ are the pair of equally colored edges in either $S$ or $T$ ($\{e_a,e_b\} \subseteq S$ or $\{e_a,e_b\} \subseteq T$). Hence, $G_{a,b}$ is the one reduction graph where the edge-split-attach operation, which splits the edge $e_{a,b}$ into $e_a$ and $e_b$, can also be applied to the ``lift'' a pair of disjoint spanning trees $(S',T')$ from $G_{a,b}$ to $(S,T)$ in $G$, as described by $\rho_{e_{a,b},c}(S,T)$ in theorem~\ref{thm:decompose rho3}.

  We refer to the remaining edge $e_c$ with $c \in \{x,y,z\} \setminus \{a,b\}$, as the \emph{attachment} edge of $v$ and $(S,T)$, or sometimes as the \emph{single colored} edge.

  The edges $e_a$ and $e_b$ are hence the \emph{double colored} edges. Furthermore, exactly one of $e_a$ and $e_b$ is contained in $C_G(\cdot,e_c)$, the cycle closed by $e_c$ in the other tree. We call this edge the \emph{cycle} edge of $v$ and $(S,T)$, and the other the \emph{non-cycle} edge of $v$ and $(S,T)$ (see figure~\ref{fig:vdeg3 labeling}).
\end{definition}

As we already have a bijective decomposition of the vertices in $\vec{\tau}_3(G)$ to the exchange graphs $\vec{\tau}_3(G_{x,y})$, $\vec{\tau}_3(G_{x,z})$, and $\vec{\tau}_3(G_{y,z})$, we are now interested in identifying all unique exchanges of $\vec{\tau}_3(G)$ provided those of the reduction graphs. For this we will show that all unique exchanges of $\vec{\tau}_3(G)$ fall into one of the following four categories:
\begin{enumerate}
\item Unique exchanges \emph{lifted} from a reduction graph $G_{x,y}$, $G_{x,z}$, or $G_{y,z}$ to $G$. \\ We will show that most, but not all, unique exchanges can be lifted from $G_{x,y}$, $G_{x,z}$, and $G_{y,z}$, and that these deliver \emph{all} unique exchange of $G$ except those involving the three edges $e_x$, $e_y$, and $e_z$.

\item The leaf unique exchanges guaranteed by the single colored (attachment) edge among $\{ e_x, e_y, e_z \}$ for each tree pair $(S,T)$.

\item Unique exchanges \emph{forwarded} for the split edge $e_{a,b}$ in $G_{a,b}$ to either $e_a$ or $e_b$ in $G$ for each tree pair $(S,T)$. Both unique exchanges from and to the split edge $e_{a,b}$ are retained.

\item An additional unique exchange from the cycle edge to the attachment edge among $\{ e_x, e_y, e_z \}$, which only occurs under specific circumstances.

\end{enumerate}

To prove this classification as theorem~\ref{thm:vdeg3 classify}, we first have to establish conditions for the four classes of unique exchanges in $\vec{\tau}_3(G)$. We begin with the most broad of these four types: which of the unique exchanges from a reduction graph $\vec{\tau}_3(G_{a,b})$ can be lifted to $\vec{\tau}_3(G)$?
\begin{lemma}[lifting of unique exchanges from reduced graphs]\label{lem:vdeg3 lift ue}
  Let $G = (V,E)$ be an atomic bispanning graph, $v \in V$ a vertex with $\deg(v) = 3$, the three adjacent vertices $x,y,z \in V$, the incident edges $e_x, e_y, e_z \in E$, and the three reduction graphs $G_{x,y}$, $G_{x,z}$, $G_{y,z}$ as defined in theorem~\ref{thm:reducing degree three}. Furthermore, let $(a,b) \in \{ (x,y), (x,z), (y,z) \}$ select one of the reduction graph $G_{a,b}$.

  If $(e,f,S,T) \in E_{\vec{\tau}_3(G_{a,b})}$ is a unique edge exchange for a pair of disjoint spanning trees $(S,T) \in V_{\tau(G_{a,b})}$ in $G_{a,b}$ that does not involve the split edge $e_{a,b}$ ($e,f \neq e_{a,b}$), then
the unique edge exchange can be \emph{lifted} to $(e,f,\rho_{e_{a,b},c}(S,T)) \in E_{\vec{\tau}_3(G)}$, if
  \begin{equation}\label{eq:brokenUE S}
    e_{a,b} \in S \text{ and } e \notin D_{G_{a,b}}(S,e_{a,b}) \cap C_G(T + e_c, e_a) \cap C_G(T + e_c, e_b) \,,
  \end{equation}
  or
  \begin{equation}\label{eq:brokenUE T}
    e_{a,b} \in T \text{ and } e \notin D_{G_{a,b}}(T,e_{a,b}) \cap C_G(S + e_c, e_a) \cap C_G(S + e_c, e_b) \,.
  \end{equation}

  We call all unique exchange excluded by conditions~\eqref{eq:brokenUE S} and \eqref{eq:brokenUE T} \emph<broken!unique exchange>{broken} by $\rho_{e_{a,b},c}(S,T)$.

  Furthermore, for all spanning trees $(S,T) \in V_{\tau(G_{a,b})}$ and edge pairs $e,f \neq e_{a,b}$, the inverse is true as well: if $(e,f,S,T) \notin E_{\vec{\tau}_3(G_{a,b})}$ or if $(e,f,S,T) \in E_{\vec{\tau}_3(G_{a,b})}$ but conditions \eqref{eq:brokenUE S} and \eqref{eq:brokenUE T} are false, then $(e,f,\rho_{e_{a,b},c}(S,T)) \notin E_{\vec{\tau}_3(G)}$.
\end{lemma}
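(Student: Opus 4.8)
The plan is to reduce everything to a comparison between the fundamental cut/cycle intersection $D_{G_{a,b}}(\cdot,e)\cap C_{G_{a,b}}(\cdot,e)$ that certifies uniqueness in the reduction graph and the analogous intersection in $G$ computed for the lifted tree pair $(\hat S,\hat T):=\rho_{e_{a,b},c}(S,T)$. I would treat the case $e_{a,b}\in S$ in detail, where $\hat S=S-e_{a,b}+e_a+e_b$ and $\hat T=T+e_c$; the case $e_{a,b}\in T$ then follows verbatim by exchanging the roles of $S$ and $T$, yielding condition \eqref{eq:brokenUE T} in place of \eqref{eq:brokenUE S}. Since an arc $(e,f,S,T)$ is either a unique $S$-exchange ($(e,f)\in S\times T$) or a unique $T$-exchange ($(e,f)\in T\times S$), both subcases must be examined, and throughout $e,f\neq e_{a,b}$ are common edges of $G$ and $G_{a,b}$.

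First I would record how the split-and-attach $\rho_{e_{a,b},c}$ transforms fundamental cuts and cycles of a common edge $e$. Attaching $e_c$ as a pendant leaf to $T$ leaves every fundamental cycle of an edge not incident to $v$ unchanged, so $C_G(\hat T,e)=C_{G_{a,b}}(T,e)$, and it can only add the single edge $e_c$ to a fundamental cut $D_G(\hat S,e)$, never $e_a$ or $e_b$. Splitting $e_{a,b}$ into the path $a$--$v$--$b$ leaves the fundamental cycle $C_G(\hat S,e)$ unchanged unless the cycle previously ran through $e_{a,b}$, in which case it is rerouted through $v$ and $e_{a,b}$ is replaced by the pair $\{e_a,e_b\}$; by the duality of fundamental cycle and cut (theorem~\ref{thm:duality cycle cut}) this occurs exactly when $e\in D_{G_{a,b}}(S,e_{a,b})$, the first factor of the triple intersection in \eqref{eq:brokenUE S}.

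For a unique $S$-exchange, $e\in S$, and since $D_{G_{a,b}}(S,e_{a,b})\subseteq(E\setminus S)+e_{a,b}$ this forces $e\notin D_{G_{a,b}}(S,e_{a,b})$; hence $e$ is never in the triple intersection, and the transformation rules give $D_G(\hat S,e)\cap C_G(\hat T,e)=\{e,f\}$ (the only new cut edge $e_c$ is absent from the unchanged cycle), so such exchanges always lift, consistent with the stated criterion. The substance lies in the unique $T$-exchange subcase, $e\in T$, which uses $D_G(\hat T,e)\cap C_G(\hat S,e)$. Here $C_G(\hat S,e)$ acquires $\{e_a,e_b\}$ exactly when $e\in D_{G_{a,b}}(S,e_{a,b})$, while $D_G(\hat T,e)$ contains $e_a$ (resp.\ $e_b$) exactly when the end $a$ (resp.\ $b$) is separated from $c$ by removing $e$ from $\hat T$, which by duality is $e\in C_G(T+e_c,e_a)$ (resp.\ $e\in C_G(T+e_c,e_b)$) --- the remaining two factors of \eqref{eq:brokenUE S}.

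The key step, and the one I expect to be the main obstacle, is to argue that $e_a$ and $e_b$ can only enter the intersection $D_G(\hat T,e)\cap C_G(\hat S,e)$ \emph{together}. This follows from the parity fact already used in theorem~\ref{thm:edge exchange}, that a fundamental cut and a fundamental cycle meet in an even number of edges: since $\{e,f\}$ is always present, the intersection cannot pick up exactly one of $e_a,e_b$, so the ``mixed'' separation of $a,b$ relative to $c$ is impossible. Consequently the intersection exceeds $\{e,f\}$ --- making the lifted arc fail to be a unique exchange --- precisely when all three factors of \eqref{eq:brokenUE S} hold, i.e.\ precisely when $e$ lies in the triple intersection. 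This simultaneously proves the forward implication (if $e$ avoids the triple intersection the arc lifts) and the ``broken'' half of the inverse (if $e$ lies in it, the $G$-intersection is $\{e,f,e_a,e_b\}\neq\{e,f\}$). Finally, for the remaining inverse case where $(e,f,S,T)\notin E_{\vec{\tau}_3(G_{a,b})}$, the same transformation rules show that the $G$-intersection agrees with the $G_{a,b}$-intersection on all common edges (with $e_{a,b}$ accounted for by $\{e_a,e_b\}$), so if the latter is not $\{e,f\}$ --- because $(e,f)$ is not an exchange, or not unique, in $G_{a,b}$ --- the former cannot be $\{e,f\}$ either, and no arc $(e,f,\rho_{e_{a,b},c}(S,T))$ appears in $\vec{\tau}_3(G)$.
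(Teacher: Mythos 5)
Your proposal is correct, and its core is the same as the paper's proof: track how the edge-split-attach operation $\rho_{e_{a,b},c}$ transforms fundamental cuts and cycles of a common edge $e$, and use the duality of fundamental cycle and cut (theorem~\ref{thm:duality cycle cut}) to translate membership of $e_a$ and $e_b$ in the new cut and cycle into the three factors of the triple intersection in \eqref{eq:brokenUE S}/\eqref{eq:brokenUE T}. Your case structure (fix where $e_{a,b}$ lies, then split by exchange type) is merely a transposition of the paper's (fix the exchange type, then split by where $e_{a,b}$ lies), and your treatment of the inverse direction matches the paper's observation that the operation can only enlarge cuts and cycles.

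Where you genuinely add something is the explicit parity step. The paper asserts that the three conditions must hold \emph{simultaneously} for the exchange to break, and that ``if any condition is not met, the intersection remains $\{e,f\}$''; taken literally this needs justification, since a priori the cycle could acquire both $e_a$ and $e_b$ while the cut acquires only one of them, giving an intersection $\{e,f,e_a\}$ of size three --- the exchange would then break even though the conjunction of all three factors fails, contradicting the conjunctive form of the criterion. Your observation that a cut and a cycle always meet in an even number of edges (the same fact invoked in theorem~\ref{thm:edge exchange}) rules out exactly this mixed case and shows that $e_a$ and $e_b$ enter the intersection together; this is what makes the lemma's condition a genuine triple intersection rather than ``first factor and (second or third).'' The paper leaves this implicit, so your proof closes a small gap. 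You are also more precise in the easy subcase of a unique $S$ exchange with $e_{a,b}\in S$: the paper claims both cut and cycle are unchanged, whereas the cut $D_G(\hat S,e)$ may in fact gain the pendant edge $e_c$; as you note, this is harmless because $e_c$ never lies on the (unchanged) fundamental cycle, so the intersection is still $\{e,f\}$.
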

\begin{proof}
  Given $(e,f,S,T) \in E_{\vec{\tau}_3(G_{a,b})}$, we first assume that $(e,f) \in S \times T$ is a unique $S$ edge exchange in $G_{a,b}$, so we have $D_{G_{a,b}}(S,e) \cap C_{G_{a,b}}(T,e) = \{e,f\}$. To determine whether $(e,f)$ remains a unique exchange in $G$, we have to consider how the cut $D_{G_{a,b}}(S,e)$, the cycle $C_{G_{a,b}}(T,e)$, and their intersection change when performing the edge-split-attach operation $\rho_{e_{a,b},c}(S,T)$ to $(\overline{S},\overline{T})$ in $G$. The edge-split-attach operation goes from $G_{a,b}$ to $G$ by splitting $e_{a,b}$, adding $v$, and attaching $v$ to $c$ (see figure~\ref{fig:vdeg3 cut expansion}, page~\pageref{fig:vdeg3 cut expansion}). We consider all edges other than $e_{a,b}$ to remain identical under $\rho_{e_{a,b},c}(S,T)$.

  Both $D_{G_{a,b}}(S,e) \subseteq T + e$ and $C_{G_{a,b}}(T,e) \subseteq T + e$ (see remark~\ref{rem:cycle cut sets}, page~\pageref{rem:cycle cut sets}), hence if $e_{a,b} \in S$, then neither cut nor cycle can change, as $e_c \in \overline{T}$ is a leaf edge. Thus, we have $D_{G_{a,b}}(S,e) = D_G(\overline{S},e)$ and $C_{G_{a,b}}(T,e) = C_G(\overline{T},e)$. Hence, if $e_{a,b} \in S$, all unique $S$ edge exchanges (other than $e = e_{a,b}$) are retrained. These are included by the right hand side of condition~\eqref{eq:brokenUE S}, because $D_{G_{a,b}}(S,e_{a,b}) \cap C_G(T + e_c, e_a) \cap C_G(T + e_c, e_b)$ is a subset of $T + e_c$.

  So consider what happens when $e_{a,b} \in T$ is split into $e_a, e_b \in \overline{T}$. Any cycle $C_{G_{a,b}}(T,e) \subseteq T + e$ of $G_{a,b}$ containing $e_{a,b}$ will be expanded to a cycle of $G$ containing $e_a$ and $e_b$. All other cycles remain unchanged. Due to theorem~\ref{thm:duality cycle cut}, the split edge $e_{a,b} \in C_{G_{a,b}}(T,e)$, if and only if $e \in D_{G_{a,b}}(T, e_{a,b})$. Hence, $e \in D_{G_{a,b}}(T, e_{a,b})$ is an equivalent criterion for the cases where the cycle $C_{G_{a,b}}(T,e)$ expands to $C_G(\overline{T},e) = C_{G_{a,b}}(T,e) - e_{a,b} + e_a + e_b$. For the unique exchange to break, however, the two edges $e_a,e_b$ must \emph{additionally} be in the cut $D_G(\overline{S},e)$.

  Next consider how cuts $D_{G_{a,b}}(S,e)$ change due to the edge-split-attach operation. Again from theorem~\ref{thm:duality cycle cut}, we have $e_a \in D_G(\overline{S},e)$, if and only if $e \in C_G(\overline{S}, e_a)$, and likewise for $e_b$. Hence, we have three conditions, which need to occur simultaneously such that $D_G(\overline{S},e) \cap C_G(\overline{T},e) = \{e_a,e_b,f,f'\}$, which are $e \in D_{G_{a,b}}(T, e_{a,b})$, $e \in C_G(\overline{S}, e_a) = C_G(S + e_c, e_a)$, and $e \in C_G(\overline{S}, e_b) = C_G(S + e_c, e_b)$. If any condition is not met, then $D_G(\overline{S},e) \cap C_G(\overline{T},e)$ remains $\{e,f\}$ and the unique exchange can be lifted from $G_{a,b}$ to $G$. Condition \eqref{eq:brokenUE T} presents the three conditions as an edge intersection. The inverse is true as well: if $e$ is excluded by the conditions, then the intersection of cut and cycle is expanded and the unique exchange is broken.

  The case if $(e,f) \in T \times S$ is a unique $T$ edge exchange in $G_{a,b}$ is symmetrically. We have $D_{G_{a,b}}(T,e) \cap C_{G_{a,b}}(S,e) = \{e,f\}$. If $e_{a,b} \in T$, then the unique edge exchange can always be lifted to $G$, as the edge split operation cannot effect the intersection. If $e_{a,b} \in S$, then condition~\eqref{eq:brokenUE S} assures that the intersection remains size two, and the unique $T$ edge exchange be lifted to $G$.

  Finally, considering the inverse: if $(e,f,S,T) \notin E_{\vec{\tau}_3(G_{a,b})}$, then $|D_{G_{a,b}}(S,e) \cap C_{G_{a,b}}(T,e)| > 2$. As the edge split operation can only expand cycles and cuts by replacing $e_{a,b}$ with $e_a$ and $e_b$ in cycles, and $e_{a,b}$ with $e_a$, $e_b$, or both in cuts, the intersections remains $> 2$. Hence, $(e,f)$ cannot become a unique exchange in $G$.
\end{proof}

\begin{figure}\centering
  \tikzset{every picture/.style={scale=2.5, graphfinal}}


  \captionsetup{justification=centering}
  \caption[The unique exchange graph $\tau_3(W_5)$.]{%
    The unique exchange graph $\tau_3(W_5)$.\\%
    Parallel $S$ and $T$ edge exchanges $(e,f)$ and $(f,e)$ are marked as $\{e,f\}$.}\label{fig:tau-w5}
\end{figure}

For an example of unique exchanges that can be lifted from reduction graphs consider $\tau_3(W_4)$ in figure~\ref{fig:tau-w5}.  The vertices are visually group to correspond to the reduction graph depicted in figure~\ref{fig:reducing w5}. First focus on the eight vertices marked with $7_\text{S}$ and $7_\text{T}$.  These correspond to the left-most reduction graph $B_{4,2}$ in figure~\ref{fig:reducing w5} with the split edge $\{0,1\}$. The full exchange graph $\tau_3(B_{4,2})$ is shown in figure~\ref{fig:tau-x4} (page~\pageref{fig:tau-x4}). Without the conditions \eqref{eq:brokenUE S} and \eqref{eq:brokenUE T} in theorem~\ref{lem:vdeg3 lift ue}, all unique exchanges would be lifted. But this is clearly not the case.

The split edge $\{0,1\}$ is explicitly excluded in theorem~\ref{lem:vdeg3 lift ue}. This exclusion breaks $\tau_3(B_{4,2})$ into two components: those where the split edge $\{0,1\}$ is blue and those where is it is red. The components contain four vertices and eight edges each, and (bijectively) map into $\tau_3(W_4)$ as the eight vertices marked with $7_\text{S}$ ($0$ and $1$ are red, $7$ is blue) and $7_\text{T}$ ($0$ and $1$ are blue, $7$ is red). Notice that no unique exchange edges go from $7_\text{S}$ to $7_\text{T}$, because these would require four edges to change color.

Of the eight edges in the two split components of $\tau_3(B_{4,2})$, only six are lifted into $\tau_3(W_5)$, the remaining two are excluded by \eqref{eq:brokenUE S} or \eqref{eq:brokenUE T}. While these conditions are stated rather technically, they have a surprisingly nice visual correspondence. This correspondence is better explained using the graph in figure~\ref{fig:vdeg3 large}, as $W_4$ only yields pathological examples. In this example, $e_{a,b} \in S$ is blue, hence only \eqref{eq:brokenUE S} applies. The cycle ``$C_G(T + e_c, e_a)$'' from \eqref{eq:brokenUE S} corresponds to the red cycle closed by coloring $e_a$ red. Hence, the term ``$C_G(T + e_c, e_a) \cap C_G(T + e_c, e_b)$'' is the intersection of the red cycle closed by $e_a$ and the red cycle closed by $e_b$. This intersection can be seen as the red path starting in $v$, going to $c$ via $e_c$, and onward up to the vertex where both cycles depart from another (which is the vertex $b$ in the example). For \eqref{eq:brokenUE S} to be fulfilled, the edges in this red path must \emph{also} be in the cut $D_{G_{a,b}}(S,e_{a,b})$, which can be calculated in the reduction graph $G_{a,b}$ on the left (or in $G$ on the right and excluding $e_c$). In the example, this leaves only one edge $e_1$ which is excluded by \eqref{eq:brokenUE S}, and hence the unique exchange $(e_1,e_2)$ is broken.

\begin{figure}
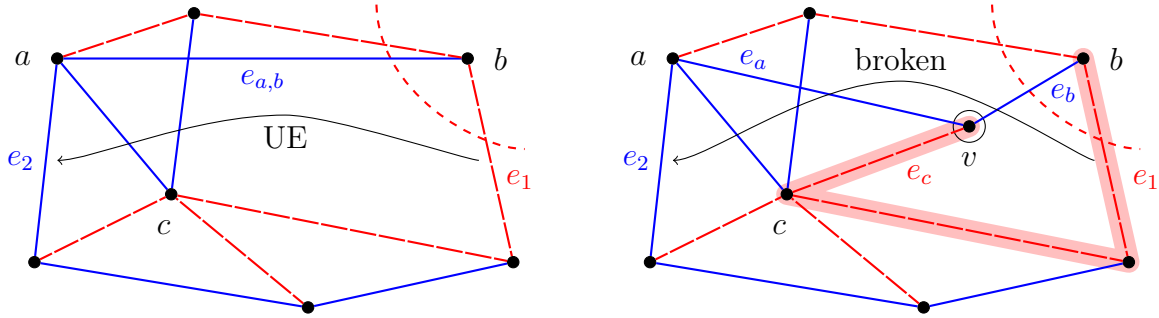
\centering
  \def\mygraph{

    \coordinate (c0) at (0,0);
    \coordinate (c1) at (-1.3,0.3);
    \coordinate (c2) at (-0.7,0.5);
    \coordinate (c3) at (0.5,0.3);
    \coordinate (c4) at (0.7,-0.6);
    \coordinate (c5) at (-0.2,-0.8);
    \coordinate (c6) at (-1.4,-0.6);
    \coordinate (c7) at (-0.8,-0.3);
  }


  \caption{The conditions of lemma~\ref{lem:vdeg3 lift ue} visualized and a broken unique exchange that needs five steps to mend.}\label{fig:vdeg3 large}

\end{figure}

The unique exchanges lifted from reduction graphs encompass all exchanges, except those involving the edges $e_x$, $e_y$, and $e_z$ at the vertex $v$ of degree three. Among these edges there are three classes of unique exchanges, which will be considered in the next three lemmata. The first is straight-forward: the single colored edge always delivers a leaf unique exchange.

\begin{lemma}[the attachment leaf unique exchanges of a degree three vertex]\label{lem:vdeg3 singleUE}
  Let $G = (V,E)$ be an atomic bispanning graph, $v \in V$ a vertex with $\deg(v) = 3$, the three adjacent vertices $x,y,z \in V$, and $(a,b) \in \{ (x,y), (x,z), (y,z) \}$ such that $G_{a,b}$ is one of the reduction graphs as defined in theorem~\ref{thm:reducing degree three}.

  If $(S,T) \in V_{\tau(G)}$ and $e_c$ is the attachment edge, $e_a$ the cycle edge, and $e_b$ the non-cycle edge of $v$ and $(S,T)$, then $(e_c,e_a,S,T) \in E_{\vec{\tau}_3(G)}$ is a leaf unique edge exchange.
\end{lemma}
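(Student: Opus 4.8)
The plan is to recognize this lemma as a direct instance of the leafUE lemma (Lemma~\ref{lem:leafUE}), combined with the identification of the exchange partner with the cycle edge as fixed in Definition~\ref{def:vdeg3 edge names}. No genuinely new argument is needed; the work is purely in the bookkeeping of which edges lie in which tree.

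First I would fix the orientation. By Definition~\ref{def:vdeg3 edge names}, the two double colored edges $e_a, e_b$ lie in a common tree while the attachment edge $e_c$ lies in the other; without loss of generality take $e_a, e_b \in S$ and $e_c \in T$ (the case $e_c \in S$ is symmetric, swapping the roles of $S$ and $T$). Since $\deg_G(v) = 3$, the vertex $v$ is incident only to $e_x, e_y, e_z = e_a, e_b, e_c$, and of these exactly $e_c$ lies in $T$. Hence $v$ is a leaf in the tree-graph $G[T]$, incident there only to $e_c \in T$. This is precisely the hypothesis of the $T$-version of Lemma~\ref{lem:leafUE}, which yields a unique $T$ edge exchange $(e_c, f) \in T \times S$ with $\{ e_c, f \} = D_G(T, e_c) \cap C_G(S, e_c)$.

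It remains to identify $f$ with the cycle edge $e_a$, and for this I would compute the fundamental cut explicitly. As $v$ is isolated in $G[T] - e_c$, the fundamental cut is exactly the set of all edges incident to $v$, so $D_G(T, e_c) = \{ e_a, e_b, e_c \}$ (the same computation as in the proof of Lemma~\ref{lem:leafUE}). The fundamental cycle $C_G(S, e_c)$ passes through $v$ exactly once and therefore uses exactly two of the three edges at $v$: one of them is $e_c$ itself (which always lies in its own fundamental cycle), and by Definition~\ref{def:vdeg3 edge names} the other edge at $v$ that lies on this cycle is precisely the cycle edge $e_a$, while the non-cycle edge $e_b$ is not on $C_G(S, e_c)$. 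Intersecting, $D_G(T, e_c) \cap C_G(S, e_c) = \{ e_c, e_a \}$, so $f = e_a$ and $(e_c, e_a, S, T) \in E_{\vec{\tau}_3(G)}$ is the unique $T$ edge exchange, which is a leafUE since it arose from Lemma~\ref{lem:leafUE}.

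I expect no serious obstacle. The only points requiring care are to confirm that the degree-three hypothesis forces $v$ to be a leaf in whichever single tree contains $e_c$, and to match the combinatorial definition of the ``cycle edge'' from Definition~\ref{def:vdeg3 edge names} with the intersection of fundamental cut and cycle appearing in Lemma~\ref{lem:leafUE}. Both are immediate once it is recorded that $D_G(T, e_c)$ is exactly the three-edge star at $v$ and that $C_G(S, e_c)$ meets that star in $e_c$ together with the designated cycle edge. Thus the lemma is really a specialization of Lemma~\ref{lem:leafUE} to the attachment edge.
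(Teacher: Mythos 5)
Your proof is correct and follows essentially the same route as the paper: both use that the attachment edge makes $v$ a leaf in its tree, so the fundamental cut is the three-edge star at $v$, and by Definition~\ref{def:vdeg3 edge names} the cycle edge $e_a$ is the unique other star edge on the fundamental cycle, giving intersection $\{e_c,e_a\}$. The paper simply performs this computation directly (it is the same computation as inside the proof of Lemma~\ref{lem:leafUE}) rather than citing Lemma~\ref{lem:leafUE} as scaffolding, and uses the opposite but symmetric labeling ($e_c \in S$, yielding a unique $S$ exchange).
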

\begin{proof}
  Without loss of generality, let $e_c \in S$ and $e_a \in T$. The edge $e_c$ is a leaf edge in $S$, hence $D_G(S,e_c) = \{ e_a, e_b, e_c \}$. As $e_c, e_a \in C_G(T,e_c)$, we have $D_G(S,e_c) \cap C_G(T,e_c) = \{ e_c, e_a \}$, and $(e_c,e_a) \in S \times T$ is a leaf unique $S$ edge exchange.
\end{proof}

Next consider unique edge exchanges involving the split edge $e_{a,b}$ of a reduction graph $G_{a,b}$ for a pair of trees $(S,T)$. We have two directions: unique exchanges $(e_{a,b},f)$ and unique exchanges $(f,e_{a,b})$, where $f$ is some other edge in the reduction graph (see figure~\ref{fig:vdeg3 forward ue}). Any unique edge exchange $(f,e_{a,b})$ \emph{can be forwarded} from $G_{a,b}$, but it remains unclear whether it becomes $(f,e_a)$ or $(f,e_b)$ in $G$. It turns out, that both possibilities occur, and to which unique exchange $(f,e_{a,b})$ is \emph{forwarded} in $G$ depends completely on how the attachment edge $e_c$ is connected to $f$, which determines how the cut $D_{G_{a,b}}(\cdot,f)$ changes to $D_G(\cdot,f)$; hence on the overall structure of the graph.

Any unique edge exchange $(e_{a,b},f)$ is \emph{also forwarded} from the reduction graph $G_{a,b}$, even though one can only color either $e_a$ or $e_b$ in one step. Instead of directly checking whether $(e_a,f)$ or $(e_b,f)$ are valid, one can determine these unique exchanges using theorem~\ref{thm:reversibility unique exchange}: for every unique exchange $(f,e_a)$ or $(f,e_b)$ from $(S,T)$ to $(S',T')$, we automatically know that $(e_a,f)$ or $(e_b,f)$ exists for $(S',T')$ to $(S,T)$, such that determining only one direction suffices.

\begin{figure}
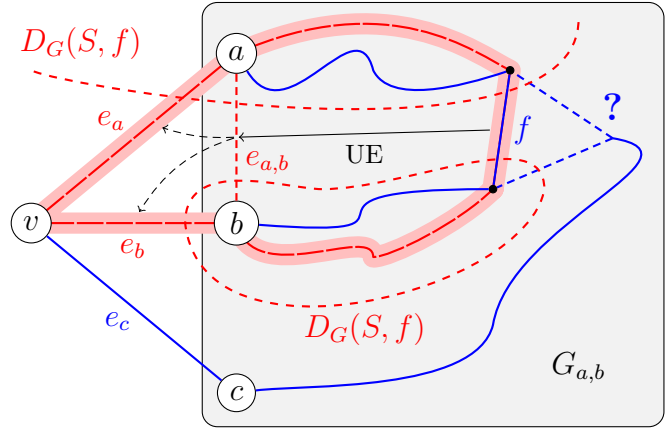
\centering
  \tikzset{every picture/.style={scale=2.5, graphfinal}}


  \caption{Sketch of how a unique exchange targeting the split edge $e_{a,b}$ in a reduction graph is forwarded to either $e_a$ or $e_b$ in $G$.}\label{fig:vdeg3 forward ue}

\end{figure}

\begin{lemma}[unique exchanges forwarded for the split edge of reduction graphs]\label{lem:vdeg3 forward ue}
  Let $G = (V,E)$ be an atomic bispanning graph, $v \in V$ a vertex with $\deg(v) = 3$, the three adjacent vertices $x,y,z \in V$, and $(a,b) \in \{ (x,y), (x,z), (y,z) \}$ such that $G_{a,b}$ is one of the reduction graphs as defined in theorem~\ref{thm:reducing degree three}.

  If $(f,e_{a,b},S,T) \in E_{\vec{\tau}_3(G_{a,b})}$ is a unique exchange in $G_{a,b}$ which targets the split edge $e_{a,b}$, then
  \begin{enumerate}
    \item $(f,e_2,\rho_{e_{a,b},c}(S,T)) \in E_{\vec{\tau}_3(G)}$ is a unique exchange, and
    \item $(e_2,f,S',T') \in E_{\vec{\tau}_3(G)}$ is a unique exchange in $G$,
  \end{enumerate}
  where $e_2 \in \{ e_a, e_b \}$ depends on how the attachment edge changes the cut $D_{G_{a,b}}(\cdot,e_{a,b})$, and $(S',T') \in V_{\tau(G)}$ is the resulting pair of tree in (i).
\end{lemma}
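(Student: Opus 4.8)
The plan is to track how the fundamental cut and cycle defining the given unique exchange $(f,e_{a,b})$ in $G_{a,b}$ transform under the edge-split-attach map $\rho_{e_{a,b},c}$, and then read off the new intersection directly. First I would fix the orientation: assume without loss of generality that $(f,e_{a,b}) \in S \times T$, the case $(f,e_{a,b}) \in T \times S$ being symmetric with the roles of $S$ and $T$ swapped. Then $\rho_{e_{a,b},c}(S,T) = (\overline S,\overline T) := (S + e_c,\; T - e_{a,b} + e_a + e_b)$, so $e_a,e_b \in \overline T$, $e_c \in \overline S$, and $f \in \overline S$. Uniqueness in $G_{a,b}$ means $D_{G_{a,b}}(S,f) \cap C_{G_{a,b}}(T,f) = \{f,e_{a,b}\}$; in particular $e_{a,b}$ lies in both the cut and the cycle.

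Next I would handle the cycle. Since $e_{a,b} \in C_{G_{a,b}}(T,f)$ and $\overline T$ is obtained from $T$ by replacing $e_{a,b}$ with the two-edge path $a$--$v$--$b$, the unique cycle merely reroutes through the new vertex, so $C_G(\overline T,f) = (C_{G_{a,b}}(T,f) - e_{a,b}) + e_a + e_b$; it contains both $e_a$ and $e_b$ and no other new edge, as $e_c \in \overline S$ cannot lie on a cycle of $\overline T$. The cut transformation is the crux. Removing $f$ from $G_{a,b}[S]$ splits the original vertex set into two components $V_1 \ni a$ and $V_2 \ni b$, which are separated precisely because $e_{a,b}$, with ends $a,b$, belongs to $D_{G_{a,b}}(S,f)$. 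Passing to $G$, the new vertex $v$ joins the component of its only $\overline S$-neighbour $c$, so the partition of the original vertices is unchanged and every old edge retains its cut-membership. Hence exactly one of $e_a = \{v,a\}$ and $e_b = \{v,b\}$ crosses the cut --- namely the one joining $v$ to the component \emph{not} containing $c$ --- and I name this edge $e_2$. This gives $D_G(\overline S,f) = (D_{G_{a,b}}(S,f) - e_{a,b}) + e_2$ and makes explicit the claimed dependence of $e_2$ on the side of the fundamental cut on which the attachment vertex $c$ lies.

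Intersecting the two sets then finishes part (i): the old edges contribute $(D_{G_{a,b}}(S,f) \cap C_{G_{a,b}}(T,f)) - e_{a,b} = \{f\}$, and the new edges contribute $\{e_a,e_b\} \cap \{e_2\} = \{e_2\}$, so $D_G(\overline S,f) \cap C_G(\overline T,f) = \{f,e_2\}$, proving that $(f,e_2,\overline S,\overline T)$ is a unique $S$ edge exchange of $G$. Part (ii) is then immediate from reversibility of unique exchanges (theorem~\ref{thm:reversibility unique exchange}): applying $(f,e_2)$ yields $(S',T')$, and that theorem guarantees the reverse $(e_2,f,S',T')$ is again a unique exchange of $G$.

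The step I expect to be the main obstacle is the cut bookkeeping in the second paragraph: rigorously justifying that old edges keep their cut-membership under $\rho$ and that exactly one of $e_a,e_b$ enters the new cut. Both hinge on the single observation that the attach vertex $v$ merges only with $c$'s side, leaving the partition of the original vertices intact while separating the twin ends $a,b$; once this is stated carefully, the remaining intersection computation and the appeal to reversibility are routine.
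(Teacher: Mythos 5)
Your proof is correct and follows essentially the same route as the paper's: track how the fundamental cycle and cut of $f$ transform under the edge-split-attach operation $\rho_{e_{a,b},c}$, conclude $D_G(\overline{S},f) \cap C_G(\overline{T},f) = \{f,e_2\}$, and obtain (ii) from reversibility (theorem~\ref{thm:reversibility unique exchange}). If anything, your cut bookkeeping is sharper than the paper's: where the paper only asserts that the new cut gains one of $e_a,e_b$ ``and possibly many additional edges,'' you prove the exact identity $D_G(\overline{S},f) = (D_{G_{a,b}}(S,f) - e_{a,b}) + e_2$ (via the observation that $v$ merges into $c$'s component while the partition of the old vertices is unchanged), which is precisely what rules out any old edge entering the new intersection and thus closes the small logical jump in the paper's version of the argument.
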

\begin{proof}
  Without loss of generality, let $e_c \in S$ and $e_a, e_b \in T$ in $G_{a,b}$, as also illustrated in figure~\ref{fig:vdeg3 forward ue}. Since $(f,e_{a,b})$ is a unique edge exchange in $G_{a,b}$, we have $D_{G_{a,b}}(S,f) \cap C_{G_{a,b}}(T,f) = \{ f, e_{a,b} \}$. The edge-split-attach takes $(S,T)$ to $(\overline{S},\overline{T}) := (S + e_c, T - e_{a,b} + e_a + e_b)$. It is clear that this extends the cycle such that $C_G(\overline{T}, f) = C_{G_{a,b}}(T,f) - e_{a,b} + e_a + e_b$. The cut is also extended, however, $D_{G}(\overline{S},f)$ cannot contain both $e_a$ and $e_b$, since $e_{a,b} \in D_{G_{a,b}}(S,f)$, which implies that $a$ and $b$ are in different components of $G_{a,b}[S] - f$. The attachment edge $e_c$ connects $v$ in $G[\overline{S}] - f$ to exactly one of the ends of $f$. This attachment enlarges $D_G(\overline{S},f)$ to include either $e_a$ or $e_b$, and possibly many additional edges branching off the path from $v$ to the end of $f$. However, since the cycle only increases by $e_a$ and $e_b$, $D_G(\overline{S},f) \cap C_G(\overline{S},f) = \{ f, e_a \}$ or $= \{ f, e_b \}$, hence, either $(f,e_a)$ or $(f,e_b)$ is a unique $S'$ edge exchange in $G$. Figure~\ref{fig:vdeg3 forward ue} further illustrates the proof, and shows both possible cuts $D_G(\overline{S},f)$. The second type of unique exchanges in (ii) immediately results from theorem~\ref{thm:reversibility unique exchange}.
\end{proof}

We have now established three classes of unique exchanges in $\vec{\tau}_3(G)$. The remaining class goes exclusively from the \emph{cycle edge} to the \emph{attachment edge} (in the reverse direction of the leaf unique exchange in lemma~\ref{lem:vdeg3 singleUE}). This unique exchange occurs only for some tree pairs $(S,T)$ of $G$, and we found no correspondence with the edge-split-attachment operation. It apparently appears solely due to the original unique exchange definition begin fulfilled for the two edges.

Figure~\ref{fig:vdeg3 cycle-attach} shows two example graphs and tree pairs, which show this extra unique exchange. If the two graphs are reduced at the circled vertex for this particular tree pair, then the reduction results in the indicated cycle and non-cycle edges, we left the attachment edge unlabeled. Since the ``cycle'' edge closes a cycle containing the attachment edge, for the extra unique exchange to occur, its intersection with the corresponding cut is decisive. This cut is related to the cut in $G_{a,b}$ in the same way as described in the proof of lemma~\ref{lem:vdeg3 forward ue}. However, both cycle and cut are selected directly by the parameters of the edge-split-attach operation, and apparently cannot be derived from the reduction graph.

\begin{figure}
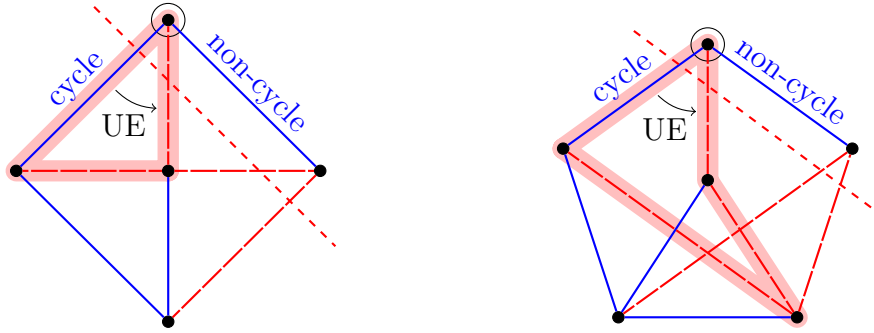


  \hfill%

  \hfill\null%

  \caption{Two examples of unique edge exchanges from the cycle edge to the attachment edge.}\label{fig:vdeg3 cycle-attach}
\end{figure}

\begin{lemma}[extra unique exchange from cycle edge to attachment edge]\label{lem:vdeg3 extra ue}
  Let $G = (V,E)$ be an atomic bispanning graph, $v \in V$ a vertex with $\deg(v) = 3$, the three adjacent vertices $x,y,z \in V$, and $(a,b) \in \{ (x,y), (x,z), (y,z) \}$ such that $G_{a,b}$ is one of the reduction graphs as defined in theorem~\ref{thm:reducing degree three}.

  If $(S,T) \in V_{\tau(G)}$ and $e_c$ is the attachment edge, $e_a$ the cycle edge, and $e_b$ the non-cycle edge of $v$ and $(S,T)$, then $(e_a,e_c,S,T) \in E_{\vec{\tau}_3(G)}$ is a unique edge exchange if $D_G(\cdot, e_a) \cap C_G(\cdot, e_a) = \{ e_a, e_c \}$.
\end{lemma}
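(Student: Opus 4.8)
The plan is to reduce the claim directly to the definition of a unique edge exchange (definition~\ref{def:unique edge exchange}), after fixing orientations and confirming that $\{e_a, e_c\}$ always lies in the relevant cut--cycle intersection. First I would fix notation without loss of generality: assume $e_a, e_b \in T$ and $e_c \in S$, so that the degree-three vertex $v$ is a leaf in the tree-graph $G[S]$, incident there only to $e_c$ (the other orientation is symmetric, interchanging $S$ and $T$). Since $e_a \in T$ and $e_c \in S$ lie in opposite trees, $(e_a, e_c) \in T \times S$ is a legitimate $T$ edge exchange candidate, and by definition~\ref{def:unique edge exchange} it is a unique $T$ edge exchange for $(S,T)$ precisely when $D_G(T, e_a) \cap C_G(S, e_a) = \{e_a, e_c\}$. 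This is exactly the hypothesis $D_G(\cdot, e_a) \cap C_G(\cdot, e_a) = \{e_a, e_c\}$, the placeholder $\cdot$ standing for $T$ in the cut and $S$ in the cycle.

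To make the hypothesis natural---and to see that uniqueness is only a statement about the intersection carrying \emph{no further} edges---I would verify the containment $\{e_a, e_c\} \subseteq D_G(T, e_a) \cap C_G(S, e_a)$ outright. The edge $e_a$ lies in both $D_G(T, e_a)$ and $C_G(S, e_a)$ trivially. For $e_c$ I would apply the duality of fundamental cycle and cut (theorem~\ref{thm:duality cycle cut}) twice. Since $e_a$ is the cycle edge, by definition~\ref{def:vdeg3 edge names} we have $e_a \in C_G(T, e_c)$, whence duality gives $e_c \in D_G(T, e_a)$. Since $v$ is a leaf of $G[S]$ incident only to $e_c$, the fundamental cut $D_G(S, e_c)$ consists of exactly the three edges at $v$ (as in the proof of lemma~\ref{lem:leafUE}), so in particular $e_a \in D_G(S, e_c)$; duality again yields $e_c \in C_G(S, e_a)$. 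Thus $e_c$ lies in both the cut and the cycle, establishing the containment.

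With the containment in hand, the stated hypothesis that $D_G(T, e_a) \cap C_G(S, e_a)$ equals $\{e_a, e_c\}$ asserts that no additional edge lies in the intersection, which is precisely the uniqueness criterion of definition~\ref{def:unique edge exchange}. Hence $(e_a, e_c, S, T) \in E_{\vec{\tau}_3(G)}$, as claimed.

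I do not expect a genuine obstacle here: the content is essentially a careful translation of the hypothesis into the unique-exchange definition, together with two bookkeeping applications of the duality theorem. The only point requiring care is keeping the trees straight in the two duality invocations, and observing that---in contrast with the forwarded and lifted exchanges of the preceding lemmata---the condition $D_G(\cdot, e_a) \cap C_G(\cdot, e_a) = \{e_a, e_c\}$ is a genuine assumption about the ambient graph $G$ that cannot be read off from the reduction graph $G_{a,b}$, which is exactly why this exchange appears only as an ``extra'' one and under a side condition.
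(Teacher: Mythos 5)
Your proposal is correct and takes essentially the same approach as the paper, whose entire proof is the single remark that the claim follows immediately from definition~\ref{def:unique edge exchange}, since the stated hypothesis $D_G(\cdot,e_a) \cap C_G(\cdot,e_a) = \{e_a,e_c\}$ is literally the uniqueness criterion for the $T$ edge exchange $(e_a,e_c)$. Your additional duality-based check that $\{e_a,e_c\}$ always lies in the cut--cycle intersection is correct and clarifies why the side condition only excludes extra edges, but it is not needed for the conclusion.
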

\begin{proof}
  The unique exchange $(e_a,e_c,S,T) \in E_{\vec{\tau}_3(G)}$ is due immediately to definition~\ref{def:unique edge exchange}.
\end{proof}

We are now prepared to prove theorem~\ref{thm:vdeg3 classify}, since all possible unique exchanges in $\vec{\tau}_3(G)$ have been classified. We verified this classification using our computer program by composing the unique exchange graph $\vec{\tau}_3(G)$ from those of the three reduction graphs for all atomic bispanning graphs with at most ten vertices. This classification can also be seen as a \emph{composition} method to construct $\vec{\tau}_3(G)$ from the $\vec{\tau}_3$ of the reduction graphs and a few additional unique exchange checks at the vertex of degree three.

\begin{theorem}[classification of unique exchanges by reduction]\label{thm:vdeg3 classify}
  Let $G = (V,E)$ be an atomic bispanning graph, $v \in V$ a vertex with $\deg(v) = 3$, the three adjacent vertices $x,y,z \in V$, and $G_{x,y}$, $G_{x,z}$, $G_{y,z}$ the reduction graphs as defined in theorem~\ref{thm:reducing degree three}.

All unique exchanges in $\vec{\tau}_3(G)$ can be classified into exactly one of the four categories defined by lemmata~\ref{lem:vdeg3 lift ue}--\ref{lem:vdeg3 extra ue}.

\begin{enumerate}
\item Unique exchanges \emph{lifted} from a reduction graph $G_{x,y}$, $G_{x,z}$, or $G_{y,z}$ to $G$.

\item The leaf unique exchanges due to the single colored (attachment) edge among $\{ e_x, e_y, e_z \}$ for each tree pair $(S,T)$.

\item Unique exchanges \emph{forwarded} for the split edge $e_{a,b}$ in $G_{a,b}$ to either $e_a$ or $e_b$ in $G$ for each tree pair $(S,T)$.

\item An additional unique exchange from the cycle edge to the attachment edge among $\{ e_x, e_y, e_z \}$, which only occurs under specific circumstances.

\end{enumerate}
\end{theorem}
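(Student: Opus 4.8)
The plan is to prove the theorem as a pure exhaustiveness argument: the four preceding lemmata already \emph{produce} the unique exchanges of each category, so what remains is to show that every arc of $\vec{\tau}_3(G)$ lands in exactly one of them. I would fix a single tree pair $(S,T) \in V_{\tau(G)}$ together with its naming of the three incident edges $e_a$, $e_b$, $e_c$ from definition~\ref{def:vdeg3 edge names}. By theorem~\ref{thm:decompose rho3}, $(S,T)$ corresponds, via $\rho^{-1}_{e_{a,b},c}$, to a unique tree pair $(S',T')$ in exactly one reduction graph $G_{a,b}$, and the edges split as $E = (E \setminus \{e_a,e_b,e_c\}) \dotcup \{e_a,e_b,e_c\}$, where the ordinary edges $E \setminus \{e_a,e_b,e_c\}$ are exactly those common to $G$ and $G_{a,b}$, while $e_{a,b}$ stands in for the pair $\{e_a,e_b\}$ and $e_c$ is the attachment edge newly created by the edge-split-attach. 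First I would take an arbitrary unique exchange $(e,f,S,T) \in E_{\vec{\tau}_3(G)}$ and split into cases by how many of $e$ and $f$ lie in the special set $\{e_a,e_b,e_c\}$.

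The backbone is the case that neither $e$ nor $f$ is special. Then both are ordinary edges shared with $G_{a,b}$, and the inverse statement of lemma~\ref{lem:vdeg3 lift ue} applies verbatim: $(e,f,\rho_{e_{a,b},c}(S',T')) \in E_{\vec{\tau}_3(G)}$ forces $(e,f,S',T') \in E_{\vec{\tau}_3(G_{a,b})}$ to be a unique exchange satisfying~\eqref{eq:brokenUE S} or~\eqref{eq:brokenUE T}, so the arc is a lifted exchange (category (i)). For the case that exactly one of $e,f$ is special, I would first rule out that this edge is $e_c$: since the fundamental cut of $e_c$ at $v$ equals $\{e_a,e_b,e_c\}$, any exchange touching $e_c$ must pair it with an edge of $\{e_a,e_b\}$, never with an ordinary edge. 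Hence the lone special edge is $e_a$ or $e_b$, and under $\rho^{-1}_{e_{a,b},c}$ the exchange corresponds to one involving the split edge $e_{a,b}$ together with the ordinary partner $f$; lemma~\ref{lem:vdeg3 forward ue} combined with reversibility (theorem~\ref{thm:reversibility unique exchange}) then identifies it as a forwarded exchange (category (iii)).

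The remaining case, that both $e$ and $f$ are special, is where the duality theorem~\ref{thm:duality cycle cut} does the real filtering. Assuming $e_c \in S$ (so $e_a,e_b \in T$), the leaf property of $v$ in $G[S]$ gives $D_G(S,e_c) = \{e_a,e_b,e_c\}$, while the cycle/non-cycle distinction of definition~\ref{def:vdeg3 edge names} gives $e_a \in C_G(T,e_c)$ and $e_b \notin C_G(T,e_c)$; dualizing with respect to the spanning tree $T$ yields $e_c \in D_G(T,e_a)$ but $e_c \notin D_G(T,e_b)$. I would use these four facts to eliminate $(e_c,e_b)$, $(e_b,e_c)$, $(e_a,e_b)$, and $(e_b,e_a)$ as even being valid exchanges, leaving only the attachment leaf exchange $(e_c,e_a)$ of lemma~\ref{lem:vdeg3 singleUE} (category (ii)) and its distinct opposite arc $(e_a,e_c)$, the extra exchange of lemma~\ref{lem:vdeg3 extra ue} (category (iv)). I would then record disjointness: the four categories are separated purely by the edge-type signature of $(e,f)$ (both ordinary; $(e_c,e_a)$; one of $\{e_a,e_b\}$ plus ordinary; $(e_a,e_c)$), so membership is unique, giving the ``exactly one'' of the statement.

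I expect the main obstacle to be the converse half of the one-special-edge case: verifying that \emph{every} unique exchange of $G$ with one endpoint in $\{e_a,e_b\}$ and an ordinary partner really descends to a split-edge exchange of $G_{a,b}$, and not merely that split-edge exchanges forward upward. Lemma~\ref{lem:vdeg3 forward ue} is phrased in the forward direction only, so closing this gap means re-running its cut/cycle bookkeeping under $\rho^{-1}_{e_{a,b},c}$ — tracking how $D_G(\cdot,e_a)$ contracts back to $D_{G_{a,b}}(\cdot,e_{a,b})$ and how the cycle shrinks when $e_a$ and $e_b$ are merged — and invoking theorem~\ref{thm:reversibility unique exchange} to recover both arc directions. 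This step is routine given the lemmata, but it is the one place where a careless argument could leave an unaccounted-for exchange and thereby break the exhaustiveness that the whole theorem asserts.
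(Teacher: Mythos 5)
Your proposal is correct and takes essentially the same route as the paper's proof: a case analysis over how many of the exchanged edges lie in $\{e_a,e_b,e_c\}$, with each case delegated to lemmata~\ref{lem:vdeg3 lift ue}--\ref{lem:vdeg3 extra ue} (the paper organizes the cases by the \emph{starting} edge of the exchange rather than by the number of special edges, but the content is identical, down to the duality argument that kills $(e_b,e_c)$ and the leaf-cut argument that kills ordinary partners for $e_c$). The converse gap you flag in the forwarded case is precisely the step the paper glosses over with ``handled implicitly by lemma~\ref{lem:vdeg3 forward ue}'' via theorem~\ref{thm:reversibility unique exchange}, so your explicit acknowledgment of it is, if anything, slightly more careful than the original.
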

\begin{proof}
  To prove that all unique exchanges in $\vec{\tau}_3(G)$ can be classified using lemmata~\ref{lem:vdeg3 lift ue}--\ref{lem:vdeg3 extra ue}, consider any pair of disjoint spanning trees $(S,T) \in V_{\tau(G)}$. Let $e_a$ be the cycle edge, $e_b$ the non-cycle edge, $e_c$ the attachment edge, and $G_{a,b}$ the appropriate reduction graph as defined in definition~\ref{def:vdeg3 edge names}. In the following, we will regard all edges $e$ of $G$, and check that a unique exchange $(e,f)$ exists, if and only if it is classified according to the list in theorem~\ref{thm:vdeg3 classify}.

  First consider all edges $e \in S \dotcup T$ with $e \notin \{ e_a, e_b, e_c \}$. If $(e,f)$ is \emph{not} a unique exchange in $G_{a,b}$ for any $f$, then $(e,f)$ is not a unique exchange in $G$ for any $f$, because during an split-edge-attach operation cycles and cuts can only grow. Hence, unique exchanges cannot be created from the reduction graph $G_{a,b}$ by chance. All edges $e,f \in E[G_{a,b}]$, provided $e,f \neq e_{a,b}$, are handled by lemma~\ref{lem:vdeg3 lift ue}: they can be \emph{lifted} from $G_{a,b}$ if and only if they adhere to the conditions of the lemma, otherwise we call the unique exchange \emph{broken} by the edge-split-attach operation. Unique exchanges $(e,f)$ of $G_{a,b}$ where $e = e_{a,b}$ or $f = e_{a,b}$, are handled by lemma~\ref{lem:vdeg3 forward ue} and forwarded to $e_a$ or $e_b$ (on both sides of the unique exchange).

  The attachment edge $e_c$ always yields a unique exchange $(e_c,e_a,S,T)$ (by lemma~\ref{lem:vdeg3 singleUE}).

  Thus remains to prove that we handled all unique exchanges from $e_a$ and $e_b$ correctly. If $(e_a,f)$ is a unique exchange for any $f$, then it is handled implicitly by lemma~\ref{lem:vdeg3 forward ue} if $f \neq e_c$ and explicitly by lemma~\ref{lem:vdeg3 extra ue} if $f = e_c$. Similarly, if $(e_b,f)$ is a unique exchange for any $f$, then it is handled by lemma~\ref{lem:vdeg3 forward ue}, since $(e_b, e_c)$ is impossible.

  Hence, all unique exchanges of $\vec{\tau}_3(G)$ are classified by lemmata~\ref{lem:vdeg3 lift ue}--\ref{lem:vdeg3 extra ue}.
\end{proof}

\begin{figure}
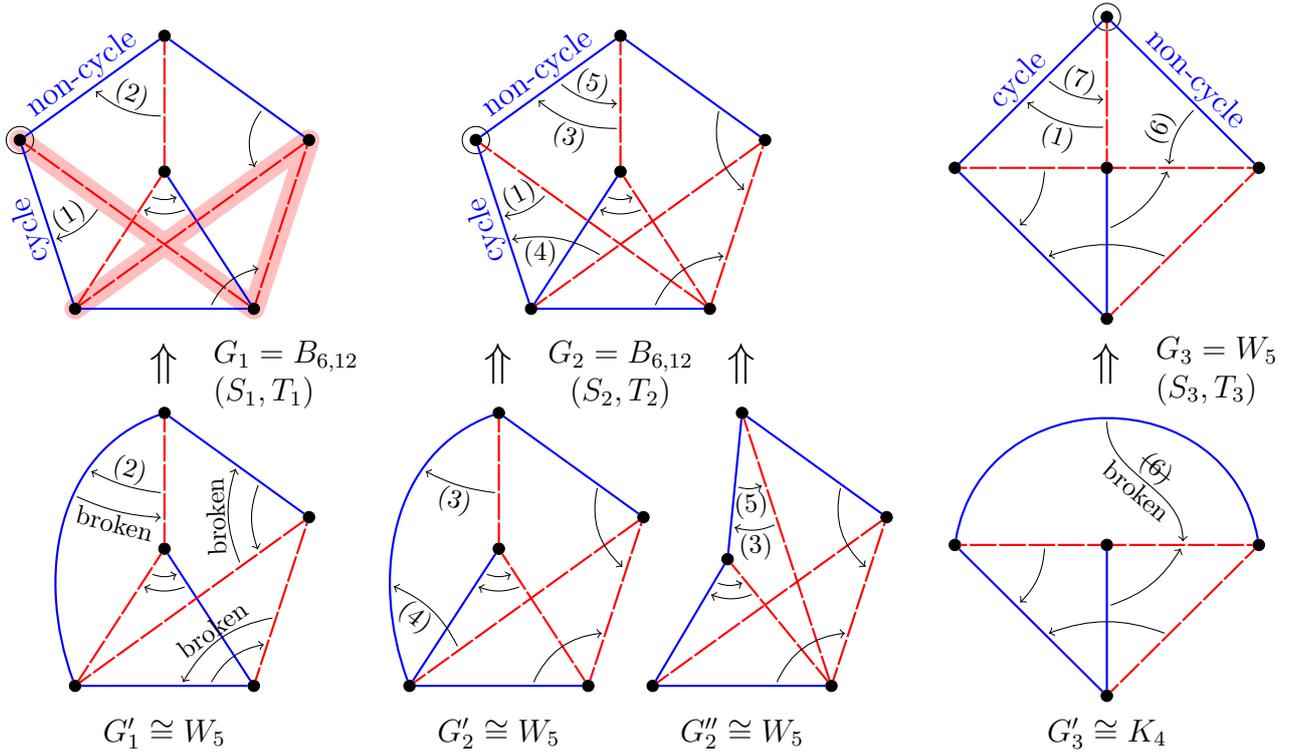


  \hfill%

  \hfill\null%

  \caption{Three examples of the classification of unique edge exchanges in $\vec{\tau}_3(G)$ graphs.}\label{fig:vdeg3 complete}
\end{figure}

To illustrate the classification method of unique exchanges in $\vec{\tau}_3(G)$ regard the three examples in figure~\ref{fig:vdeg3 complete}. The top row contains three atomic bispanning graphs $G_1$, $G_2$, and $G_3$, each shown with one particular pair of disjoint spanning trees. These are to be reduced at the circled degree three vertex, and the bottom row contains the three corresponding reduction graphs $G'_1$, $G'_2$, and $G'_3$, together with a fourth reduction graph $G''_2$ needed for an explanation later. Each small black arrow indicates a possible unique edge exchange for the tree pair and graph. We will discuss those exchanges marked with numbers in more detail.

All unique exchange arrows without number in the top row of graph are lifted directly from the reduction graphs $G'_1$, $G'_2$, and $G'_3$ in the bottom row according to lemma~\ref{lem:vdeg3 lift ue}. Reduction graphs $G'_2$ and $G'_3$ contain no broken unique exchanges, while $G'_1$ contains three broken unique exchanges which are not lifted to $G_1$. For two of these this is due to the long intersection of $C_{G_1}(S + e_c,e_a) \cap C_{G_1}(S + e_c,e_b)$. The third is due to exchanges $(e_{a,b},f)$ being broken, unless implied by lemma~\ref{lem:vdeg3 forward ue}.

The unique exchange marked with (1) in each graph is due to the leaf unique exchange from the attachment edge to the cycle edge, as described in lemma~\ref{lem:vdeg3 singleUE}.

Unique exchange (2) in $G'_1$ is an instance of the form $(f,e_{a,b})$ where $e_{a,b}$ is the split edge: it is forwarded into $G_1$ by determining the target, the non-cycle edge, as described in lemma~\ref{lem:vdeg3 forward ue}.

Unique exchanges (3) and (4) in $G'_2$ are also instances of the form $(f,e_{a,b})$ where $e_{a,b}$ is the split edge: these are forwarded into $G_2$, one targeting the cycle edge and the other the non-cycle edge due to the way the cut is changed (lemma~\ref{lem:vdeg3 forward ue}).

Unique exchange (5) in $G_2$ is of the form $(e_a,f)$ or $(e_b,f)$, which is an exchange that can only be implied by lemma~\ref{lem:vdeg3 singleUE} as the reverse of a unique exchange of the form $(f,e_{a,b})$ from a different reduction graph. This other reduction graph is $G''_2$, which shows the unique exchange (5) of the form $(f,e_{a,b})$ which implies the exchange (5) in $G_2$.

Unique exchange (6) in $G_3$ is another exchange implied by lemma~\ref{lem:vdeg3 singleUE} as the reverse of a unique exchange from a different reduction graph. Curiously, (6) does not correspond to the unique exchange marked with \sout{(6)} in $G'_3$. This type of unique exchange cannot be lifted, since it would require coloring two edges in $G_3$.

Unique exchange (7) is of the extra type from cycle edge to attachment edge as described in lemma~\ref{lem:vdeg3 extra ue}, which apparently can only be determined directly.

The examples in figure~\ref{fig:vdeg3 complete} were chosen to exhibit all classes of unique exchanges described by theorem~\ref{thm:vdeg3 classify}. As mentioned above, the decomposition was tested using a computer program for all atomic bispanning graphs with at most ten vertices.

Of the classes described in theorem~\ref{thm:vdeg3 classify}, those ``lifted'' from the reduction graphs are the most frequent as the regarded atomic bispanning graphs increase in size. Consequently, the conditions for broken unique exchanges, excluded in the lifting theorem~\ref{lem:vdeg3 lift ue} are probably most important when regarding larger graphs.


\subsection{Approaches to Proving Connectivity of \texorpdfstring{$\tau_3(G)$}{tau3(G)}}\label{sec:connectivity ideas}

In this section we summarize a number of attempted approaches to show that $\tau_3(G)$ is connected. None of them form a full proof, many are mere ideas that one could follow in future work. Alongside these attempts, we report computational evidence for the connectivity of $\tau_3(G)$, which provides hints about the discussed proof ideas.  We then close this section by presenting the most ``difficult'' small bispanning graphs.


\subsubsection{Peeling Vertices with Degree Three}

The first approach coming to mind is to recursively ``peel'' vertices of degree three by first taking the leaf unique exchange guaranteed by the single color attachment edge, and then reversing an edge-split-attach operation to gain a smaller graph. Due to the discussion in section~\ref{sec:reduce vdeg3}, the reader may already suspect that this approach is too naive to be correct. We discuss it here despite of this, since it is the most intuitive inductive approach.

First, let us clarify this naive \emph{peeling algorithm}. The following procedure calculates a sequence of unique exchanges:
\begin{enumerate}
\item Given an atomic bispanning graph $G$ and a pair of disjoint spanning trees $(S,T)$, select a vertex $v$ with degree three.

\item Determine the cycle edge $e_a$, the non-cycle edge $e_b$, and attachment edge $e_c$ at $v$ in $(S,T)$. We will assume, without loss of generality, $e_a, e_b \in S$ and $e_c \in T$.

\item Append the leaf unique exchange $(e_c,e_a)$ to the output sequence, and let $(S',T') := (S - e_a - e_b + \{ e_a,e_c \}, T - e_c)$ be the pair of disjoint spanning trees in $G'$, \emph{reduced} at the vertex $v$ \emph{after} the unique exchange.

\item If the reduced bispanning graph $G'$ is still atomic, repeat, otherwise perform a different decomposition.

\end{enumerate}
The procedure leaves open how the calculated unique exchange sequence in the reduced graphs is lifted back to the graph $G$. It also leaves open how to manage non-atomic graphs. Instead of discussing these open questions, we will consider an example, in which there is no correct solution to these questions.

In figure~\ref{fig:vdeg3 peel}, we performed the naive peeling algorithm on the triangle-free atomic bispanning graph with seven vertices. The algorithm delivers the unique exchange sequence $\arr{ (2,0), (4,5), (7,\{3,4\}), (\{6,7\},9) }$ until it reduced $G$ to the composite bispanning graph $B_{3,2}$, which contains four possible unique exchanges.

Now, we can try to reapply this sequence to the base graph in figure~\ref{fig:vdeg3 peel apply}: $(2,0)$, $(4,5)$ can be applied straight-forwardly. All edges already swapped are marked with check marks. The unique exchange $(7,\{3,4\})$ also maps to $G$ as desired: $(7,3)$ is valid. The first difficulty arises when trying to apply $(\{6,7\},9)$: is $6$ or $7$ the edge to swap? In our example, $7$ has already been swapped, so we'll assume that $6$ is the correct choice. This leads to the unique exchange $(6,9)$, which nicely matches $(\{6,7\},9)$.

But then two edge pairs remain unswapped: $(1,8)$ and $(11,9)$ are the only viable next unique exchanges (excluding steps to undo previous ones). But these do not match those delivered by the peeling algorithm: $(11,\{1,2\})$, $(9,\{8,\{6,7\}\})$ or one these reversed. They are explicitly misordered: no reordered or swapping can fix the sequence.

This example uncovers the problems of the peeling approach: the further a graph is reduced, the less the unique exchanges can be mapped back to the original graph.

One possible modification to the peeling algorithm is to first recurse into the reduced graph, and then append the unique exchange to the sequence. This reverses the order they are applied to the original graph: the deepest ones first, then the outer ones. But this approach is just as problematic, because it completely ignores the broken unique exchanges described in theorem~\ref{lem:vdeg3 lift ue}.

\begin{figure}
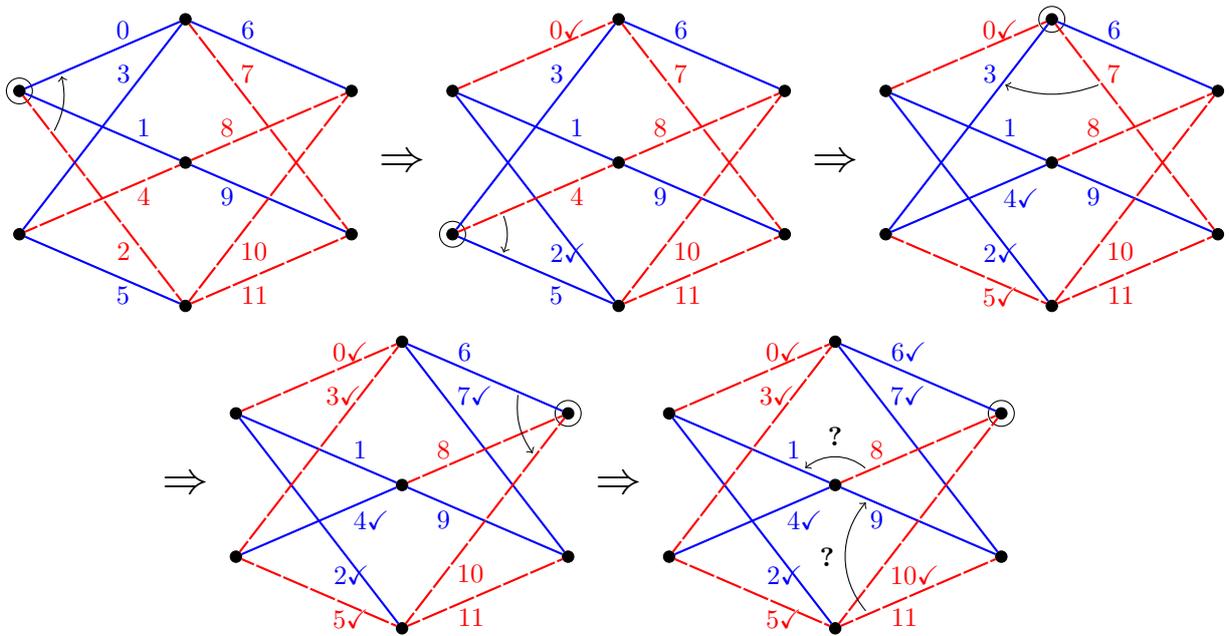
\centering


  \caption{Applying the unique exchanges from the ``peeling'' sequence in figure~\ref{fig:vdeg3 peel} leads to mismatching exchanges.}\label{fig:vdeg3 peel apply}

\end{figure}

\subsubsection{Attempting to Mend Broken Unique Exchanges}\label{sec:mend-brokenUEs}

Another approach picks up the ideas successfully applied to construct UECBOs for atomic bispanning graphs with vertex-connectivity two in theorem~\ref{thm:join-2sum} (page~\pageref{thm:join-2sum}).

Given an atomic bispanning graph $G$ and a pair of disjoint spanning trees $(S,T)$, one can assume to have a UECBO for the corresponding reduction graph(s) $G_{a,b}$. To lift this UECBO back to $G$ one has to insert the additional unique exchange involving the edges $\{ e_a, e_b, e_c \}$. Since we know the structure of these exchanges, this is probably easily doable by replacing the split edge $e_{a,b}$ by a sequence containing the attachment leaf unique exchange.

However, as described in theorem~\ref{lem:vdeg3 lift ue}, some of the unique exchanges in the UECBO sequence may be broken. Hence, this approach hinges on finding a method to \emph{mend} broken unique exchanges in the UECBO by finding an alternative swap sequence.

This appears simple, however, the number of steps in an alternative path through $\tau_3(G)$ is not necessarily small. The broken unique exchange in figure~\ref{fig:vdeg3 large} (page~\pageref{fig:vdeg3 large}) is an example which needs \emph{at least five} additional unique exchange steps to mend. This shows that mending broken unique edge exchanges is equivalent to finding a unique exchange swap sequence for any pair of non-uniquely exchangeable edges, which is probably an even more difficult problem than proving the connectivity of $\tau_3(G)$.

\subsubsection{Inverting Branches of a Tree using Leaf-UEs}

The third approach is based on the idea that every atomic bispanning graph has at least four leaf unique exchanges at the four vertices of degree three. Extending this idea to ``branches'' in the spanning tree seems natural: these branches yield sequences of leaf unique exchanges. And since the spanning tree can be decomposed into branches, this may lead to a UECBO without recursion.

Using our computer program, we experimentally tested that $\tau_3(G)$, if restricted to only leaf unique edge exchanges, remains connected for all atomic bispanning graphs with vertex- and edge-connectivity and at most 16 vertices. At the same time, examples of bispanning graphs exist such that the diameter of $\tau_3(G)$ restricted to leaf unique exchanges is larger than $\frac{|E|}{2}$ (tree pairs exist which cannot be inverted using only $\frac{|E|}{2}$ leaf unique exchanges). Hence, we cannot hope to find UECBOs consisting of only leaf unique exchanges in general.

The problem with following branches using leaf unique exchanges is that they tend to cancel each other out: if $\arr{ e_1,e_2,e_3 }$ is a branch in one of the trees, then $(e_1,f_1)$ is a unique exchange which makes $e_2$ a leaf edge. One would then continue with $(e_2,f_2)$, etc., but very often $f_2 = e_1$, hence undoing progress. If one ignores this, and continues finishing this branch, and thereafter using another branch, then this algorithm can go into loops.

Connected with the idea of recursively taking UECBOs in the previous section, and also with looking at branches in spanning trees, is the following general theorem by Dirac, which applies well to atomic bispanning graphs:
\begin{theorem}[atomic bispanning graphs have a minor isomorphic to $K_4$ \R{\cite{dirac1952property}}]\label{thm:dirac}
  A $2$-vertex-connected simple graph in which the degree of every vertex is at least three has a minor isomorphic to $K_4$.
\end{theorem}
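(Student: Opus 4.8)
The plan is to prove the contrapositive. I will assume that the $2$-vertex-connected simple graph $G=(V,E,\delta)$ contains \emph{no} minor isomorphic to $K_4$, and deduce that $G$ must then contain a vertex of degree at most two, contradicting the hypothesis $\deg_G(v) \geq 3$ for all $v \in V$. Since $2$-vertex-connectivity requires $|V| \geq 3$ and minimum degree three forces $|V| \geq 4$, there is no degenerate small case to worry about.

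The key structural input is the classical characterization of $K_4$-minor-free graphs as exactly the \emph{series-parallel networks}~\cite{duffin1965topology}, equivalently the graphs of tree-width at most two. First I would invoke this characterization to conclude that the hypothetical $K_4$-minor-free graph $G$ is series-parallel. The decisive consequence I need is that every such graph is \emph{$2$-degenerate}: because tree-width is monotone under taking subgraphs, and a graph of tree-width at most two always possesses a vertex of degree at most two (take a vertex appearing only in a leaf bag of a width-$2$ tree decomposition, whose neighbours all lie inside that bag), every subgraph of $G$ — and in particular $G$ itself — has a vertex of degree at most two. This immediately contradicts the minimum-degree-three assumption, and the contrapositive, hence the theorem, follows.

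The delicate point, and what I would expect to be the \textbf{main obstacle} in a self-contained treatment that avoids simply citing the characterization, is to produce the $K_4$-minor explicitly. Here I would work with an open ear decomposition $C = H_0 \subset H_1 \subset \dots \subset H_k = G$ of the $2$-vertex-connected graph $G$. Minimum degree three forces $G$ to be more than a single cycle, so $k \geq 1$, and simplicity makes the first ear create a genuine theta subgraph $\Theta$: two branch vertices $a,b$ joined by three internally disjoint paths. The favourable situation is a subsequent ear whose endpoints $x,y$ lie in the interiors of two \emph{distinct} paths of $\Theta$; then $a,b,x,y$ together with the six path segments between them form a subdivision of $K_4$, which is a $K_4$-minor because $K_4$ has maximum degree three. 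The obstacle is that an ear may instead attach with both endpoints on a single path of $\Theta$, or at the branch vertices, producing no immediate $K_4$. Resolving this requires an inductive argument showing that the degree-three condition eventually forces a favourable attachment — precisely the bookkeeping that the series-parallel reduction packages cleanly — so I would present the contrapositive argument as the main proof and treat the ear-decomposition construction as the illustrative but more laborious alternative.
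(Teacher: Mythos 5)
Your argument is correct, but note that the paper itself offers \emph{no} proof of this statement: it is quoted as a classical theorem of Dirac~\cite{dirac1952property} and used purely as a black box in the discussion of decomposition ideas, so there is no ``paper proof'' to match against. Your contrapositive route is sound: $K_4$-minor-freeness is equivalent to tree-width at most two, tree-width is monotone under subgraphs, and your leaf-bag argument correctly extracts a vertex of degree at most two from any width-two tree decomposition, contradicting the minimum-degree hypothesis (your argument in fact never uses $2$-vertex-connectivity, which is consistent with Dirac's theorem holding under the degree hypothesis alone). The one point you should make explicit is the risk of circularity: all the real work has been outsourced to Duffin's characterization~\cite{duffin1965topology}, and in several standard developments of that result (e.g.\ the proof that edge-maximal $K_4$-minor-free graphs are $2$-trees) the key inductive step is precisely ``a $2$-connected $K_4$-minor-free graph has a vertex of degree at most two'' --- the contrapositive of the theorem you are proving. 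Your proof is therefore valid only relative to a treatment of series-parallel graphs that does not itself pass through Dirac's theorem; such treatments exist, but the dependency deserves a sentence. Your ear-decomposition sketch in the second half is honestly flagged as incomplete (the case of an ear attaching inside a single path of the theta graph is exactly the hard case) and is fine as an aside, but it is not a proof and should not be presented as one.
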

Using this theorem it might be possible to partition an atomic bispanning graph into four ``regions'', which are separated by the paths forming the ``skeleton'' of $K_4$ between the four vertices of degree three. One could then find unique exchange sequences recursively for the four piece, and combine then while following the unique exchange rules given by $\tau_3(K_4)$.

\subsubsection{3-Clique Sum Decomposition}

In section~\ref{sec:decompose 2vconn} we considered $2$-clique sums of bispanning graphs, and decompositions at vertex cuts of size two. The success of that section suggests looking into $3$-clique sums, with which graphs can be joined at triangles.

Figure~\ref{fig:example clique sums} (page~\pageref{fig:example clique sums}) already showed a $3$-clique sum of $K_4$ and $W_5$. When considering the resulting sum, we have to immediately note that of the six edges in the two $3$-cliques, \emph{exactly two} have to be retained for the edge balance $|V| = 2 |E| - 2$ of bispanning graphs to hold.

This brings up the first problem of joining two arbitrary bispanning graphs at a triangle using a $3$-clique sum: $|E'| = 2$ is required in definition~\ref{def:clique-sum}, but how should the two edges be selected? It is easy to construct examples where selecting the edges arbitrarily leads to invalid bispanning graphs.

\begin{remark}[validity of $3$-clique sum of two small bispanning graphs]
  Using our computer program, we verified that for any pair of simple bispanning graphs with at most five vertices, and any configuration of triangles in the two graphs, at least \emph{two} different selections of two edges $E'$ exist such that the resulting $3$-clique sum is a valid bispanning graph.
\end{remark}

Hence, bispanning graphs can apparently always be joined, but it is completely unclear how to determine the two edges without checking each possible choice. The same is not true if \emph{two pairs of spanning trees} are given for the two bispanning graphs: it is not always possible to keep the spanning trees (excluding the four removed edges) during the $3$-clique sum. This situation is similar to the join conditions for $2$-clique sums, where the two edges need to have unequal colors, but in the $3$-clique sum case it is unknown what these conditions are.

Regarding the opposite, decomposition of atomic bispanning graphs with vertex-connectivity three, we also found the following:
\begin{remark}[number of edges inside a vertex cut of size three]
  No vertex cut of size three in an atomic bispanning graphs with at most eleven vertices has more than two edges incident solely to vertices in the cut.
\end{remark}
Hence, there are always at most two edge ``inside'' the vertex cut. The remark can probably be proven using Nash-Williams' theorem~\ref{thm:atomic nash-williams} on atomic bispanning graphs.  This low number of edges makes it possible to decompose all atomic bispanning graphs with vertex-connectivity three using a $3$-clique sum.

However, it is unclear how to join the two $\tau_3(G)$ graphs at the $3$-clique sum, since one would have to consider how the cuts and cycles change. This then depends on the two edges in the vertex cut, but also on those incident to the vertices. It is unclear how to prove something for this large number of combinations.

\clearpage

\section{Conclusion: Empirical Evidence for Connectivity of \texorpdfstring{$\tau_3(G)$}{tau3(G)}}

As stated above, none of the approaches in this section could be developed to a full proof of the connectivity of $\tau_3(G)$ for all bispanning graphs in this thesis. Using our computer program, we verified that $\tau_3(G)$ is connected for all simple bispanning graphs with up to 20 vertices.

The approach to mend broken unique exchanges discussed in subsection~\ref{sec:mend-brokenUEs} hinges on finding an alternative path. We verified empirically that this alternative path can be found \emph{even inside} the subcomponent of $\tau_3(G)$ lifted from $\tau_3(G_{a,b})$ within which the broken unique exchange occurs: these six subcomponents of $\tau_3(G)$ themselves are already connected for all atomic bispanning graphs with at most twelve vertices.  One can see this for example in $\tau_3(W_5)$ in figure~\ref{fig:tau-w5} (page~\pageref{fig:tau-w5}): the six marked groups are themselves connected.

However, we did not find a way to prove this. In future, one could try to use the conditions in lemma~\ref{lem:vdeg3 lift ue} to prove the connectivity of these subcomponents. The trick may be that broken unique exchanges are very rare, they only have one color, and are located along a path starting at the attachment edge. Hence, there cannot be ``too many'' of them. We empirically verified this: the larger the bispanning graphs grow, the less broken unique exchanges occur. Together with the theorems from section~\ref{sec:reduce vdeg3}, it is then easy to connect the subcomponents with the attachment edges.

Another untouched aspect of the composition at vertices of degree three may further help: that the resulting exchange graph of the composition must be equal \emph{regardless} at which of the four degree three vertices one decomposed the atomic bispanning graph. We did not find any way to use this fact, but believe that it may be crucial.

Since we could not find any counter examples for connectivity of $\tau_3(G)$, we calculated the most ``difficult'' instances of the problem for small numbers of vertices. For this we determined for each graph $G$ the \emph{minimum number of different paths} from $(S,T)$ to $(T,S)$ over all pairs of spanning trees $(S,T)$ in $\tau_3(G)$. We call this number $\nu(G)$ and assume that the less paths there are, the more ``difficult'' the instance is.

Figure~\ref{fig:bispanning difficult} in the appendix shows the most difficult pairs of spanning trees for bispanning graphs with at most twelve vertices. All the difficult graphs have only the minimum of four unique exchanges in their initial configuration. We invite the reader to try to solve the more difficult graphs in the minimum number of necessary steps using the Java applet at \url{http://panthema.net/2016/uegame/}, they are available under the menu ``named graphs''.

With respect to the connectivity of $\tau_3(G)$, one would like to see that $\nu$ grows with the number of vertices. However, figure~\ref{fig:difficult B12,1} has only 24 possible exchange paths of length $\frac{|E|}{2}$ for this particular pair of spanning trees.  Due to these empirical results, we believe that $\tau_3(G)$ is always connected, but that bispanning graphs exist for which $\tau_3(G)$ has a diameter larger than $\frac{|E|}{2}$.

\clearpage
\appendix

\section{Bispanning Graphs Collection}\label{sec:collection}

\begin{figure}[b!]\centering
  \tikzset{every picture/.style={scale=0.25, graphfinal}}

  }
  \hfill\null%

  \caption[The most ``difficult'' bispanning graphs and pairs of spanning trees.]{The most ``difficult'' bispanning graphs and pairs of spanning trees, where $\nu$ is the number of different paths from $(S,T)$ to $(T,S)$ through $\tau_3(G)$.}\label{fig:bispanning difficult}
\end{figure}

\begin{figure}
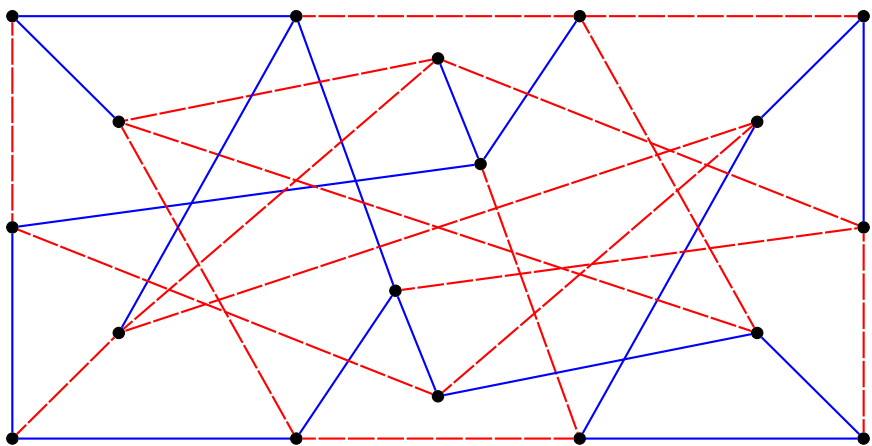
\centering
  \ContinuedFloat

  \hfill%
  \subcaptionbox{$B_{11,1}$, triangle-free, $\nu = 224$}{%

  \caption{$B_{18,1}$, one of eight square-free bispanning graphs with $|V| = 18$.}\label{fig:bispanning-squarefree}
\end{figure}


\clearpage
\phantomsection
\def\indexname{Index and Symbols}
\addcontentsline{toc}{section}{\indexname}
\def\indexspace{\par\medskip}
\renewenvironment{theindex}{%
  \columnsep=15pt
  \columnseprule=0pt
  \twocolumn[\section*{\indexname}\label{sec:index}]%
  \markboth{\MakeMarkcase{\indexname}}{\MakeMarkcase{\indexname}}%
  \parindent=0pt
  \setlength{\parskip}{0pt plus .3 pt}%
  \setlength{\parfillskip}{0pt plus 1fil}%
  \def\textbf##1{{\pdfbookmark[2]{##1}{indexletter.##1}\bfseries{##1}}}

  \printglossary[type=symbols]%
  \indexspace

  \printglossary[type=exgraphs]%
  \indexspace

  \renewcommand*\item{\par\hangindent 40pt}
}{
  \onecolumn
}
\printindex


\clearpage
\phantomsection
\labelalphawidth=10ex
\def\refname{\bibname}
\addcontentsline{toc}{section}{\refname}
\printbibliography

\end{document}